\documentclass[a4paper,10pt,reqno]{amsart}
\usepackage{dsfont,marginnote,amsmath,amsfonts,amscd,amssymb,graphicx,mathrsfs,eufrak}
\usepackage[dvips,all,arc,curve,color,frame]{xy}
\usepackage[usenames]{color}

\usepackage[colorlinks]{hyperref}
\usepackage{tikz,mathrsfs}
\usetikzlibrary{arrows,decorations.pathmorphing,decorations.pathreplacing,positioning,shapes.geometric,shapes.misc,decorations.markings,decorations.fractals,calc,patterns}

\tikzset{>=stealth',
        cvertex/.style={circle,draw=black,inner sep=1pt,outer sep=3pt},
        vertex/.style={circle,fill=black,inner sep=1pt,outer sep=3pt},
        star/.style={circle,fill=yellow,inner sep=0.75pt,outer sep=0.75pt},
        tvertex/.style={inner sep=1pt,font=\scriptsize},
        gap/.style={inner sep=0.5pt,fill=white}}

\addtolength{\hoffset}{-0.5cm} \addtolength{\textwidth}{1cm}
\addtolength{\voffset}{-1.5cm} \addtolength{\textheight}{2cm}

\newtheorem{thm}{Theorem}[section]
\newtheorem{prop}[thm]{Proposition}
\newtheorem{lemma}[thm]{Lemma}
\newtheorem{defin}[thm]{Definition}
\newtheorem{cor}[thm]{Corollary}
\newtheorem{summary}[thm]{Summary}

\theoremstyle{definition} 
 
\newtheorem{example}[thm]{Example}

\newtheorem{remark}[thm]{Remark}
\newtheorem{question}[thm]{Question}
\newtheorem{conj}[thm]{Conjecture}

\numberwithin{equation}{section}

\newcommand{\C}[1]{\mathbb{C}^{#1}}
\newcommand{\m}{\mathfrak{m}}

\newcommand{\p}{\mathfrak{p}}

\newcommand{\s}[1]{\mathscr{#1}}

\renewcommand{\u}[1]{\underline{#1}}

\renewcommand{\t}[1]{\textnormal{#1}}

\newcommand{\looptop}[2]{\xy \SelectTips{cm}{10}
\POS(0,0) \endxy}

\def\RHom{\mathop{\rm {\bf R}Hom}\nolimits}

\def\op{\mathop{\rm op}\nolimits}
\def\GL{\mathop{\rm GL}\nolimits}
\def\SL{\mathop{\rm SL}\nolimits}
\def\CM{\mathop{\rm CM}\nolimits}

\def\depth{\mathop{\rm depth}\nolimits}
\def\fl{\mathop{\sf fl}\nolimits}
\def\hgt{\mathop{\rm ht}\nolimits}
\def\mod{\mathop{\rm mod}\nolimits}

\def\coh{\mathop{\rm coh}\nolimits}
\def\Mod{\mathop{\rm Mod}\nolimits}
\def\refl{\mathop{\rm ref}\nolimits}
\def\proj{\mathop{\rm proj}\nolimits}

\def\pd{\mathop{\rm proj.dim}\nolimits}
\def\id{\mathop{\rm inj.dim}\nolimits}
\def\Hom{\mathop{\rm Hom}\nolimits}
\def\End{\mathop{\rm End}\nolimits}
\def\Ext{\mathop{\rm Ext}\nolimits}
\def\Tor{\mathop{\rm Tor}\nolimits}
\def\Tr{\mathop{\rm Tr}\nolimits}
\def\add{\mathop{\rm add}\nolimits}
\def\Cok{\mathop{\rm Cok}\nolimits}
\def\Ker{\mathop{\rm Ker}\nolimits}
\def\ker{\mathop{\rm ker}\nolimits}
\def\Im{\mathop{\rm Im}\nolimits}
\def\Sing{\mathop{\rm Sing}\nolimits}
\def\Supp{\mathop{\rm Supp}\nolimits}
\def\Ass{\mathop{\rm Ass}\nolimits}

\def\Spec{\mathop{\rm Spec}\nolimits}
\def\Max{\mathop{\rm Max}\nolimits}
\def\gl{\mathop{\rm gl.dim}\nolimits}

\def\D{\mathop{\rm{D}^{}}\nolimits}
\def\Db{\mathop{\rm{D}^b}\nolimits}
\def\Kb{\mathop{\rm{K}^b}\nolimits}

\CompileMatrices
\MakeOutlines


\edef\marginnotetextwidth{\the\textwidth} 

\def\MU#1{\mathop{\mu^+_{#1}}}
\def\NU#1{\mathop{\mu^-_{#1}}}

\begin{document}
\title[\textsc{MM Modules and AR Duality} for Non-isolated Singularities.]{\textsc{Maximal Modifications and Auslander--Reiten Duality for Non-isolated Singularities.}}
\dedicatory{In memory of Kentaro Nagao}
\author{Osamu Iyama}
\address{Osamu Iyama\\ Graduate School of Mathematics\\ Nagoya University\\ Chikusa-ku, Nagoya, 464-8602, Japan}
\email{iyama@math.nagoya-u.ac.jp}
\author{Michael Wemyss}
\address{Michael Wemyss, The Maxwell Institute, School of Mathematics, James Clerk Maxwell Building, The King's Buildings, Mayfield Road, Edinburgh, EH9 3JZ, UK.}
\email{wemyss.m@googlemail.com}
\thanks{The first author was partially supported by JSPS Grant-in-Aid for Scientific Research 24340004, 23540045, 20244001 and 22224001.  The second author was partially supported by a JSPS Postdoctoral Fellowship and by the EPSRC}
\begin{abstract}
We first generalize classical Auslander--Reiten duality for isolated singularities to cover singularities with a one-dimensional singular locus.  We then define the notion of CT modules for non-isolated singularities and we show that these are intimately related to noncommutative crepant resolutions (NCCRs).  When $R$ has isolated singularities, CT modules recover the classical notion of cluster tilting modules but in general the two concepts differ.  Then, wanting to generalize the notion of NCCRs to cover partial resolutions of $\Spec R$, in the main body of this paper we introduce a theory of modifying and maximal modifying modules.  Under mild assumptions all the corresponding endomorphism algebras of the maximal modifying modules for three-dimensional Gorenstein rings are shown to be derived equivalent.  We then develop a theory of mutation for modifying modules which is similar but different to mutations arising in cluster tilting theory.  Our mutation works in arbitrary dimension, and in dimension three the behavior of our mutation
 strongly depends on whether a certain factor algebra is artinian.
\end{abstract}
\maketitle
\parindent 20pt
\parskip 0pt

\tableofcontents

\section{Introduction}

\subsection{Motivation and History}

One of the basic results in representation theory of commutative
algebras is Auslander--Reiten (=AR) duality \cite{Aus78, AusReiten,Y} for isolated singularities, which gives us many important consequences, e.g.\ the existence of almost split sequences and the Calabi-Yau property of the stable categories of Cohen--Macaulay (=CM) modules over Gorenstein isolated singularities. One of the aims of this paper is to establish a version of AR duality for singularities with one dimensional singular loci.  As an application, the stable categories of CM modules over
Gorenstein singularities with one dimensional singular loci
enjoy a generalized Calabi-Yau property. This is a starting
point of our research to apply the methods of cluster tilting
in representation theory to study singularities.

One of the highlights of representation theory of commutative
algebras is AR theory of simple surface singularities \cite{Auslander_rational}.
They have only finitely many indecomposable CM modules, and
the Auslander algebras (i.e.\ the endomorphism algebras of the
direct sums of all indecomposable CM modules) enjoy many nice
properties. 
If we consider singularities of dimension greater than two, then
there are very few representation-finite singularities, and
their Auslander algebras do not satisfy such nice properties.
The reason is that the categories of CM modules do not behave
nicely in the sense that the homological properties of simple
functors corresponding to free modules are very hard to control.
Motivated to obtain the correct category on which higher AR theory should be performed, in \cite{IyamaAR} the first author introduced the notion of a maximal $n$-orthogonal subcategory and maximal $n$-orthogonal module for the category $\mod\Lambda$, later renamed cluster tilting subcategories and cluster tilting modules respectively.  Just as classical AR theory on $\mod\Lambda$ was moved to AR theory on  $\CM \Lambda$ following the work of Auslander on the McKay correspondence for surfaces $\Lambda$ \cite{Auslander_rational}, this suggests that in the case of a higher dimensional CM singularity $R$ we should apply the definition of a maximal $n$-orthogonal subcategory/modules to $\CM R$ and hope that this provides a suitable framework for tackling higher-dimensional geometric problems.  Strong evidence for this is provided when $R$ is a three dimensional normal isolated Gorenstein singularity, since in this case it is known \cite[8.13]{IR} that such objects have an intimate relationship with Van den Bergh's noncommutative crepant resolutions (NCCRs) \cite{VdBNCCR}.   Requiring $R$ to be isolated is absolutely crucial to this relationship (by normality the singularities are automatically isolated in the surfaces case); from an algebraic perspective this perhaps should not be surprising since AR theory only works well for isolated singularities.   It turns out that the study of maximal $n$-orthogonal modules in $\CM R$ is not well-suited to non-isolated singularities since the Ext vanishing condition is far too strong;  the question arises as to what subcategories of $\CM R$ should play the role of the maximal $n$-orthogonal subcategories above.

Although in this paper we answer this question, in fact we say much more.  Philosophically, the point is that we are asking ourselves the wrong question. The restriction to studying maximal orthogonal modules is unsatisfactory since crepant resolutions need not exist (even for 3-folds) and so we develop a theory which can deal with singularities in the crepant partial resolutions.  Since the endomorphism rings of maximal orthogonal modules have finite global dimension, these will not do the job for us.  

We introduce the notion of maximal modifying modules (see \ref{MMintro} below) which intuitively we think of as corresponding to shadows of maximal crepant partial resolutions. Geometrically this level always exists, but only sometimes will it be smooth. With regards to this viewpoint maximal modifying modules are a more natural class of objects to work with compared to noncommutative crepant resolutions;  we should thus always work in this level of generality and simply view the case when the geometry is smooth as being a happy coincidence.  Pushing this philosophy further, everything that we are currently able to do with NCCRs we should be able to do with maximal modifying modules, and this motivates much of the work in this paper.

In fact in many regards restricting our attention to only studying maximal crepant partial resolutions misses much of the picture and so we should (and do) work even more generally.  When one wants to flop curves between varieties with canonical singularities which are not terminal this does not take place on the maximal level but we should still be able to understand this homologically.  This motivates our definition and the study of modifying modules (see \ref{MMintro} below).  Viewing our modifying modules $M$ as conjectural shadows of partial resolutions we should thus be able to track the birational transformations between the geometrical spaces by using some kind of homological transformation between the corresponding modifying modules.  This leads us to develop a theory of mutation for modifying modules, which we do in Section~\ref{mutations}.

We note that some parts of the theory of (maximal) modifying modules developed in this paper are analogues of cluster tilting theory \cite{GLS,IR}, especially when the ring has Krull dimension three.  One main difference is that we do not assume that the ring is an isolated singularity, so we need to introduce (maximal) modifying modules which are much more general than (maximal) rigid modules in cluster tilting theory. Some of the main properties in cluster tilting theory are still true in our setting, for example, mutation is an involution in dimension three (see \ref{mainintro} below) and gives a derived equivalence in any dimension (see \ref{ddimmut_intro1} below). On the other hand, new features also appear in our setting. For example, mutation sometimes does not change the given modifying modules (see \ref{mainintro} below).  This feature is necessary, since it exactly reflects the geometry of partial crepant resolutions.

Although in this paper we are principally interested in the geometrical and commutative algebraic statements, the proofs of our theorems require a slightly more general noncommutative setting.  For this reason throughout this paper we use the language of singular Calabi--Yau algebras:
\begin{defin}
Let $\Lambda$ be a module finite $R$-algebra, then for $d\in\mathbb{Z}$ we call $\Lambda$ $d$-Calabi--Yau (=$d$-CY) if there is a functorial isomorphism 
\[
\Hom_{\D(\Mod \Lambda)}(X,Y[d])\cong D_0\Hom_{\D(\Mod \Lambda)}(Y,X)
\]
for all $X\in\Db(\fl \Lambda)$, $Y\in\Db(\mod\Lambda)$, where $D_0$ is the Matlis dual (see \S 2.4 for more details).  Similarly we call $\Lambda$ singular $d$-Calabi--Yau (=$d$-sCY) if the above functorial isomorphism holds for all $X\in\Db(\fl\Lambda)$ and $Y\in\Kb(\proj \Lambda)$.
\end{defin}
Clearly $d$-sCY (respectively $d$-CY) algebras are closed under derived equivalence \cite[3.1(1)]{IR}.  When $\Lambda=R$, it is known (see \ref{3.2IR}) that $R$ is $d$-sCY if and only if $R$ is Gorenstein and equi-codimensional with $\dim R=d$.  Thus throughout this paper, we use the phrase `$R$ is $d$-sCY' as a convenient shorthand for this important property.

We remark that by passing to mildly noncommutative $d$-sCY algebras we increase the technical difficulty, but we emphasize that we are forced to do this since we are unable to prove the purely commutative statements without passing to the noncommutative setting.

We now describe our results rigorously, and in more detail.

\subsection{Auslander--Reiten Duality for Non-Isolated Singularities}\label{ARsectionintro} 
Throughout this subsection let $R$ be an equi-codimensional (see \ref{equicodim} below) CM ring of dimension $d$ with a canonical module $\omega_{R}$.  Recall that for a non-local CM ring $R$, a finitely generated $R$-module $\omega_{R}$ is called a \emph{canonical module} if $({\omega_{R}})_{\m}$ is a canonical $R_{\m}$-module for all $\m\in\Max R$ \cite[3.3.16]{BH}.  In this case $(\omega_{R})_{\p}$ is a canonical $R_{\p}$-module for all $\p\in\Spec R$ since canonical modules localize for local CM rings \cite[3.3.5]{BH}.

We denote $\CM R$ to be the category of CM $R$-modules (see \S\ref{DepthandCMmodules}), $\underline{\CM}R$ to be the \emph{stable category} and $\overline{\CM}R$ to be the \emph{costable category}.  The \emph{AR translation} is defined to be
\[
\tau:=\Hom_{R}(\Omega^{d}\Tr(-),\omega_{R}):\underline{\CM}R\to\overline{\CM}R.
\]
When $R$ is an isolated singularity one of the fundamental properties of the category $\CM R$ is the existence of Auslander--Reiten duality \cite[I.8.8]{Aus78} \cite[1.1(b)]{AusReiten}, namely
\[
\u{\Hom}_R(X,Y)\cong D_0 \Ext^1_R(Y,\tau X)
\]
for all $X,Y\in\CM R$ where $D_0$ is the Matlis dual (see \S\ref{AR}).   Denoting $D_1:=\Ext_R^{d-1}(-,\omega_R)$ to be the duality on the category of Cohen--Macaulay modules of dimension $1$, we show that AR duality generalizes to mildly non-isolated singularities as follows:
\begin{thm}[=\ref{ARduality}]\label{ARdualityIntro}
Let $R$ be a $d$-dimensional, equi-codimensional CM ring with a canonical module $\omega_{R}$ and singular locus of Krull dimension less than or equal to one.  Then there exist functorial isomorphisms
\begin{align*}
\fl \u{\Hom}_{R}(X,Y)&\cong D_0(\fl\Ext^1_{R}(Y,\tau X)),\\
\frac{\u{\Hom}_{R}(X,\Omega Y)}{\fl\u{\Hom}_{R}(X,\Omega Y)}&\cong
D_1\left(\frac{\Ext^1_{R}(Y,\tau X)}{\fl\Ext^1_{R}(Y,\tau X)}\right)
\end{align*}
for all $X,Y\in\CM R$, where for an $R$-module $M$ we denote $\fl M$ to be the largest finite length $R$-submodule of $M$.
\end{thm}
In fact we prove \ref{ARduality} in the setting of noncommutative $R$-orders (see \S\ref{AR} for precise details). In the above and throughout this paper, for many of the global-local arguments to work we often have to add the following mild technical assumption. 
\begin{defin}\label{equicodim}
A commutative ring $R$ is \emph{equi-codimensional} if all its maximal ideals have the same height.
\end{defin}
Although technical, such rings are very common; for example all domains finitely generated over a field are equi-codimensional \cite[13.4]{Eisenbud}.  Since our main applications are three-dimensional normal domains finitely generated over $\mathbb{C}$, in practice this adds no restrictions to what we want to do.  We will use the following well-known property \cite[17.3(i), 17.4(i)(ii)]{Mat}:
\begin{lemma}\label{heightcoheight}
Let $R$ be an equi-codimensional CM ring, and let $\p\in\Spec R$.  Then
\[
\hgt\p+\dim R/\p=\dim R.
\]
\end{lemma}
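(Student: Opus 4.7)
The plan is to reduce to the classical local Cohen--Macaulay dimension formula by localizing at a maximal ideal containing $\p$. Fix $\p\in\Spec R$ and any $\m\in\Max R$ with $\m\supseteq\p$. Equi-codimensionality gives $\hgt\m=\dim R$, and since $R_\m$ is a localization of a CM ring it is itself a local CM ring with $\dim R_\m=\hgt\m=\dim R$.

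I would then invoke the standard local result (Matsumura 17.4(ii)): in a local Cohen--Macaulay ring,
\[
\hgt(\p R_\m)+\dim(R_\m/\p R_\m)=\dim R_\m.
\]
This local formula is ultimately a consequence of the fact that in a CM local ring every system of parameters is a regular sequence and $\hgt\p$ equals the length of any maximal $R_\m$-regular sequence contained in $\p R_\m$. Combined with the general localization identity $\hgt(\p R_\m)=\hgt\p$ (17.3(i)), this yields
\[
\dim(R_\m/\p R_\m)=\dim R-\hgt\p,
\]
a value independent of the choice of $\m\supseteq\p$.

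To globalize, I would use that the maximal ideals of $R/\p$ correspond bijectively to the maximal ideals of $R$ containing $\p$, with $(R/\p)_{\m/\p}\cong R_\m/\p R_\m$. Hence
\[
\dim R/\p=\sup_{\m\in\Max R,\,\m\supseteq\p}\dim(R_\m/\p R_\m)=\dim R-\hgt\p,
\]
which rearranges to the stated identity. There is no real obstacle; the only subtle point is the step $\dim R_\m=\dim R$. Without equi-codimensionality the local CM formula would still give $\dim(R_\m/\p R_\m)=\dim R_\m-\hgt\p$, but $\dim R_\m$ would vary with $\m$ and the supremum would not assemble into a clean expression in $\dim R$ and $\hgt\p$. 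It is precisely equi-codimensionality that rigidifies the local dimensions and allows the local formula to globalize.
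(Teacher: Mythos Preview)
Your proof is correct and essentially follows the same approach as the paper, which does not give an explicit argument but simply cites Matsumura 17.3(i) and 17.4(ii); you have spelled out precisely how those references combine with equi-codimensionality to yield the global formula.
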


The above generalized Auslander--Reiten duality implies the following generalized ($d-1$)-Calabi-Yau property of the triangulated category $\underline{\CM} R$.

\begin{cor}[=\ref{CMsymmcomplete}]\label{2CYgen} 
Let $R$ be a $d$-sCY ring with $\dim\Sing R\leq 1$.  Then\\
\t{(1)} There exist functorial isomorphisms
\begin{align*}
\fl \u{\Hom}_{R}(X,Y)&\cong D_0(\fl\u{\Hom}_{R}(Y,X[d-1])),\\
\frac{\u{\Hom}_{R}(X,Y)}{\fl\u{\Hom}_{R}(X,Y)}&\cong
D_1\left(\frac{\u{\Hom}_{R}(Y,X[d-2])}{\fl\u{\Hom}_{R}(Y,X[d-2])}\right)
\end{align*}for all $X,Y\in\CM R$.\\
\t{(2)} (=\ref{HomsymmNOTnormal}) For all $X,Y\in\CM R$, $\Hom_R(X,Y)\in \CM R$ if and only if $\Hom_R(Y,X)\in \CM R$.
\end{cor}
Note that \ref{2CYgen}(2) also holds (with no assumptions on the singular locus) provided that $R$ is normal (see \ref{HomsymmDirect}).  This symmetry in the Hom groups gives us the technical tool we require to move successfully from the cluster tilting level to the maximal modification level below, and is entirely analogous to the symmetry given by \cite[Lemma 1]{BcB} as used in cluster theory (e.g.\ \cite{GLS}).

\subsection{Maximal Modifications and NCCRs}  Here we introduce our main definitions, namely modifying, maximal modifying and CT modules, and then survey our main results. 

Throughout, an $R$-algebra is called \emph{module finite} if it is a finitely generated $R$-module.  As usual, we denote $(-)_\p:=-\otimes_RR_\p$ to be the localization functor.  For an $R$-algebra $\Lambda$, clearly $\Lambda_\p$ is an $R_\p$-algebra and we have a functor $(-)_\p:\mod\Lambda\to\mod\Lambda_\p$. Recall \cite{Aus78, Auslanderisolated, CR90}:
\begin{defin}\label{nonsingorder}
Let $R$ be a CM ring and let $\Lambda$ be a module finite $R$-algebra.  We say\\
\t{(1)}  $\Lambda$ is an \emph{$R$-order} if $\Lambda\in\CM R$.\\ 
\t{(2)}  An $R$-order $\Lambda$ is \emph{non-singular} if $\gl \Lambda_\p=\dim R_\p$ for all primes $\p$ of $R$.\\
\t{(3)} An $R$-order $\Lambda$ has \emph{isolated singularities} if $\Lambda_{\p}$ is a non-singular $R_{\p}$-order for all non-maximal primes $\p$ of $R$.

\end{defin}
In the definition of non-singular $R$-order above, $\gl \Lambda_\p=\dim R_\p$ means that $\gl\Lambda_\p$ takes the smallest possible value.  In fact for an $R$-order $\Lambda$ we always have that $\gl\Lambda_\p\geq \dim R_\p:=t_{\p}$ for all primes $\p$ of $R$ since $\pd_{\Lambda_\p}\Lambda_\p/(x_1,...,x_{t_{\p}})\Lambda_\p= \dim R_{\p}$ for a $\Lambda_\p$-regular sequence $x_1,\hdots,x_{t_{\p}}$.  We also remark that since the localization functor is exact and dense, we always have $\gl\Lambda_\p\leq\gl\Lambda$ for all $\p\in\Spec R$.

Throughout this paper we denote 
\[
(-)^{*}:=\Hom_{R}(-,R):\mod R\to\mod R
\]
and we say that $X\in\mod R$ is \emph{reflexive} if the natural map $X\to X^{**}$ is an isomorphism.  We denote $\refl R$ to be the category of reflexive $R$-modules.  By using Serre's ($S_2$)-condition (see for example \cite[3.6]{EG}, \cite[1.4.1(b)]{BH}), when $R$ is a normal domain the category $\refl R$ is closed under both kernels and extensions.
\begin{defin}\label{NCCR}
Let $R$ be CM, then by a \emph{noncommutative crepant resolution} (NCCR) of $R$ we mean $\Gamma:=\End_R(M)$ where $M\in\refl R$ is non-zero such that $\Gamma$ is a non-singular $R$-order.
\end{defin}
We show in \ref{nonsingGoren} that under very mild assumptions the condition in \ref{nonsingorder}(2) can in fact be checked at only maximal ideals, and we show in \ref{NCCRdefequiv} that \ref{NCCR} is equivalent to the definition of NCCR due to Van den Bergh \cite{VdBNCCR} when $R$ is a Gorenstein normal domain.  Recall the following:
\begin{defin}
Let $A$ be a ring.  We say that an $A$-module $M$ is a \emph{generator} if $A\in\add M$.  A projective $A$-module $M$ which is a generator is called a {\em progenerator}.
\end{defin}
Motivated by wanting a characterization of the reflexive generators which give NCCRs, we define:
\begin{defin}
Let $R$ be a $d$-dimensional CM ring with a canonical module $\omega_{R}$.  We call $M\in\CM R$ a \emph{CT module} if
\[
\add M=\{ X\in\CM R : \Hom_R(M,X)\in\CM R \}=\{ X\in\CM R : \Hom_R(X,M)\in\CM R\}.
\]
\end{defin}
Clearly a CT module $M$ is always a generator and cogenerator (i.e.\ $\add M$ contains both $R$ and $\omega_{R}$). 
We show in \ref{rigidisolated} that this recovers the established notion of maximal ($d-2$)-orthogonal modules when $R$ is $d$-dimensional and has isolated singularities. The following result in the complete case is shown in \cite[2.5]{IyamaAR} under the assumption that $G$ is a small subgroup of $\GL(d,k)$, and $S^G$ is an isolated singularity.  We can drop all assumptions under our definition of CT modules:
\begin{thm}[=\ref{skewgp}]\label{notsmallCTintro}
Let $k$ be a field of characteristic zero, and
let $S$ be the polynomial ring $k[x_1,\hdots,x_d]$ (respectively formal power series ring $k[[x_1,\hdots,x_d]]$). For a finite subgroup $G$ of $\GL(d,k)$, let $R=S^G$.  Then $S$ is a CT $R$-module.
\end{thm}

One of our main results involving CT modules is the following, where part (2) answers a question of Van den Bergh posed in \cite[4.4]{VdBNCCR}.
\begin{thm}[=\ref{NCCRhasCM}]\label{NCCRhasCMintro}
Let $R$ be a normal 3-sCY ring.   Then \\
\t{(1)} CT modules are precisely those reflexive generators which give NCCRs.\\
\t{(2)} $R$ has a NCCR $\iff$ $R$ has a NCCR given by a CM generator $M$ $\iff$ $R$ has a CT module.
\end{thm}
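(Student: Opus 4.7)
The plan is to reduce both parts to the twin facts: (i) for any $M$, the functor $\Hom_R(M,-)$ induces an equivalence $\add M \simeq \proj \End_R(M)$, and (ii) over a non-singular $R$-order $\Lambda$, the noncommutative Auslander--Buchsbaum formula forces $\CM \Lambda = \proj \Lambda$.

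\textbf{Part (1), ``CT $\Rightarrow$ NCCR''.} Let $M$ be a CT module. Since $M\in \CM R$ over Gorenstein $R$, $M$ is automatically reflexive. Taking $X=R$ in the right-hand equality of the CT definition forces $R\in\add M$, so $M$ is a generator; taking $X=M$ gives $\Lambda := \End_R(M)\in \CM R$, so $\Lambda$ is an $R$-order. For non-singularity, by Proposition~\ref{nonsingGoren} it suffices to check $\gl \Lambda_{\m}=3$ at each maximal ideal $\m$ of $R$. The two CT equalities make $\add M$ simultaneously contravariantly and covariantly functorially finite in $\CM R$, and a standard Auslander-type iterative syzygy argument (the two-sided CT condition providing the termination) then bounds the projective dimension of every simple $\Lambda$-module by $3$.

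\textbf{Part (1), ``reflexive generator giving NCCR $\Rightarrow$ CT''.} Let $M$ be a reflexive generator with $\Lambda := \End_R(M)$ non-singular. The idempotent $e\in \Lambda$ corresponding to the summand $R\hookrightarrow M$ satisfies $\Lambda e \cong M$, so $M$ is a summand of $\Lambda\in \CM R$ and hence $M\in \CM R$. The inclusion $\add M \subseteq \{X\in \CM R : \Hom_R(M,X)\in \CM R\}$ is immediate. For the reverse inclusion, suppose $X\in \CM R$ with $\Hom_R(M,X)\in \CM R$; then $\Hom_R(M,X)\in \CM \Lambda$, so by (ii) it is projective, and via (i) we obtain $X\in \add M$. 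The second CT equality $\add M = \{X\in \CM R : \Hom_R(X,M)\in \CM R\}$ follows by applying the same reasoning on the opposite side, using the Gorenstein duality $(-)^{*}=\Hom_R(-,R)$ on $\CM R$ together with the identification $\End_R(M^{*})\cong \Lambda^{\op}$, which is again a non-singular $R$-order.

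\textbf{Part (2).} The implications ``CT module exists $\Rightarrow$ NCCR with CM generator $\Rightarrow$ NCCR'' follow immediately from Part (1). The content lies in ``NCCR $\Rightarrow$ CT module''. Given an NCCR $\End_R(N)$ with $N$ reflexive, the plan is to construct a CM generator $M$ with $\End_R(M)$ still a non-singular order by replacing each non-CM reflexive summand $N_i$ by a suitable Cohen--Macaulay modification (for instance via a maximal Cohen--Macaulay approximation $0\to Y_i\to M_i\to N_i\to 0$ with $\pd Y_i<\infty$), and setting $M := R\oplus \bigoplus M_i$. The main obstacle is showing that this substitution preserves non-singularity of the endomorphism ring at every prime; this is precisely where the three-dimensional Gorenstein hypothesis and the Hom-symmetry of Corollary~\ref{2CYgen} do the essential work, by controlling the relevant Ext-vanishing on $\CM R$. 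Once $M$ is produced, Part (1) automatically upgrades it to a CT module.
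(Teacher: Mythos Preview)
Your Part~(1) is essentially the paper's argument: it is exactly Theorem~\ref{CT} (the equivalence $(1)\Leftrightarrow(3)$) together with the observation that a reflexive generator $M$ giving an NCCR is automatically CM because $M\cong\Hom_R(R,M)$ is a summand of $\End_R(M)\in\CM R$. One minor point: your ``two-sided CT condition providing the termination'' for the global dimension bound is more than is needed; the paper's proof of $(2)\Rightarrow(3)$ in Theorem~\ref{CT} uses only one CT equality, taking a $d$-th syzygy $K_d$ of an arbitrary $\Lambda$-module and observing $\Hom_R(M,K_d)\in\CM R$ by depth count, whence $K_d\in\add M$.

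Your Part~(2), however, has a genuine gap. Replacing each reflexive summand $N_i$ by a maximal Cohen--Macaulay approximation $M_i$ and setting $M=R\oplus\bigoplus M_i$ gives no a priori control over $\End_R(M)$: there is no obvious derived or Morita relationship between $\End_R(M)$ and the original NCCR $\End_R(N)$, so neither $\End_R(M)\in\CM R$ nor $\gl\End_R(M)<\infty$ is visible. Your appeal to the Hom-symmetry of Corollary~\ref{2CYgen} does not bridge this; that symmetry only tells you $\Hom_R(X,Y)\in\CM R\Leftrightarrow\Hom_R(Y,X)\in\CM R$ for $X,Y\in\CM R$, which says nothing about finiteness of global dimension for the new endomorphism ring. (Concretely: in dimension~$3$ an MCM approximation of a reflexive module is essentially a first syzygy, and there is no reason that $\End_R(\Omega N\oplus R)$ should remain non-singular.)

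The paper avoids this entirely by quoting \cite[8.9(4)]{IR}: if $R$ has an NCCR then it has one given by a reflexive \emph{generator}, a fact proved there via the tilting/reflexive-equivalence machinery rather than by any direct CM-approximation construction. Once one has an NCCR from a reflexive generator, that generator is CM by the remark above, and Part~(1) finishes the job.
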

However in many cases $R$ need not have a NCCR so we must weaken the notion of CT module and allow for our endomorphism rings to have infinite global dimension.  We do so as follows:
\begin{defin}\label{MMintro}
Let $R$ be a $d$-dimensional CM ring.  We call $N\in\refl R$ a \emph{modifying module} if $\End_R(N)\in\CM R$, whereas we call $N$ a \emph{maximal modifying (MM) module} if $N$ is modifying and furthermore it is maximal with respect to this property, that is to say if there exists $X\in\refl R$ with $N\oplus X$ modifying, necessarily $X\in\add N$.  Equivalently, we say $N$ is maximal modifying if
\[
\add N=\{ X\in\refl R: \Hom_{R}(N \oplus X,N\oplus X)\in\CM R  \}.
\]
If $N$ is an MM module (respectively modifying module), we call $\End_R(N)$ a \emph{maximal modification algebra (=MMA)} (respectively modification algebra). 
\end{defin}
In this paper we will mainly be interested in the theoretical aspects of MMAs, but there are many natural examples.  In fact NCCRs are always MMAs (see \ref{NCCRgiveMM}) and so this gives one rich source of examples.  However MMAs need not be NCCRs, and for examples of this type of behaviour, together with the links to the geometry, we refer the reader to \cite{IW5}.  

When $R$ is $d$-dimensional with isolated singularities we show in \ref{rigidisolated} that modifying modules recover the established notion of ($d-2$)-rigid modules, whereas MM modules recover the notion of maximal ($d-2$)-rigid modules.   However, other than pointing out this relationship, throughout we never assume that $R$ has isolated singularities. 

When an NCCR exists, we show that MMAs are exactly the same as NCCRs:
\begin{prop}[=\ref{NCCRCTisgMR}]\label{NCCRCTisgMRintro}
Let $R$ be a normal 3-sCY ring, and assume that $R$ has a NCCR (equivalently, by \ref{NCCRhasCMintro}, a CT module).  Then\\
\t{(1)} MM modules are precisely the reflexive modules which give NCCRs.\\
\t{(2)} MM modules which are CM (equivalently, by \ref{CMiffgen}, the MM generators) are precisely the CT modules.\\
\t{(3)} CT modules are precisely those CM modules which give NCCRs.
\end{prop}
The point is that $R$ need not have a NCCR, and our definition of maximal modification algebra is strictly more general.

\subsection{Derived Equivalences}  We now explain some of our results involving derived equivalences of modifying modules.  We say that two module finite $R$-algebras $A$ and $B$ are  \emph{derived equivalent} if $\D(\Mod A)\simeq\D(\Mod B)$ as triangulated categories, or equivalently $\Db(\mod A)\simeq\Db(\mod B)$ by adapting \cite[8.1, 8.2]{Rickard}.  First, we show that any algebra derived equivalent to a modifying algebra also has the form $\End_R(M)$.

\begin{thm}[=\ref{closed under derived equivalences}]\label{closed under derived equivalences intro}
Let $R$ be a normal $d$-sCY ring, then\\
\t{(1)} Modifying algebras of $R$ are closed under derived equivalences, i.e.\ any ring derived equivalent to a modifying algebra is isomorphic to a modifying algebra.\\
\t{(2)} NCCRs of $R$ are closed under derived equivalences.
\end{thm}

The corresponding statement for MMAs is slightly more subtle, but we show it is true in dimension three (\ref{MMAs db intro}), and also slightly more generally in \ref{MMAs dim3 closed}(2).

Throughout this paper we freely use the notion of a tilting module which we \emph{always} assume has projective dimension less than or equal to one: 
\begin{defin}\label{tiltingdef}
Let $\Lambda$ be a ring.  Then $T\in\mod \Lambda$ is called a \emph{partial tilting module} if $\pd_{\Lambda}T\leq 1$ and $\Ext^{1}_{\Lambda}(T,T)=0$.  If further there exists an exact sequence
\[
0\to \Lambda\to T_{0}\to T_{1}\to 0
\]
with each $T_{i}\in\add T$, we say that $T$ is a \emph{tilting module}.
\end{defin}

Our next result details the relationship between modifying and maximal modifying modules on the level of derived categories.

\begin{thm}[=\ref{allgMRderived}]\label{allgMRderivedintro} 
Let $R$ be a normal 3-sCY ring with MM module $M$.  Then \\
\t{(1)} If $N$ is any modifying module, then $T:=\Hom_R(M,N)$ is a partial tilting $\End_{R}(M)$-module that induces a recollement \cite[\S1.4]{BBD}
\[
{\SelectTips{cm}{10}
\xy
(-23,0)*+{\rm K}="0",
(0,0)*+{\D(\Mod\End_R(M))}="1",
(36,0)*+{\D(\Mod\End_R(N))}="2",
\ar"0";"1",
\ar@<-1.5ex>"1";"0",
\ar@<1.5ex>"1";"0",
\ar|{F}"1";"2",
\ar@<-1.5ex>"2";"1",
\ar@<1.5ex>"2";"1",
\endxy}
\]
where $F=\mathbf{R}{\rm Hom}(T,-)$ and ${\rm K}$ is a certain triangulated subcategory of $\D(\Mod\End_R(M))$.\\
\t{(2)} If further $N$ is maximal modifying then the above functor $F$ is an equivalence.
\end{thm}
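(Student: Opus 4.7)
The plan is to reduce everything to the standard theory of derived categories induced by (partial) tilting modules of projective dimension at most one. Writing $\Lambda := \End_R(M)$, the main work is to establish that $T = \Hom_R(M,N)$ really is a partial tilting $\Lambda$-module, and that when $N$ is maximal modifying it is in fact a classical tilting module; then (1) follows from Cline--Parshall--Scott / Miyashita-style recollements (with ${\rm U}=\ker F$) and (2) from Brenner--Butler / Happel-type derived Morita theory, noting in both cases that $\End_\Lambda(T) = \End_R(N)$ because the progenerator property of $M$ makes $\Hom_R(M,-) : \add M \to \proj \Lambda$ fully faithful and this extends to give the desired endomorphism identification on reflexives.

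The first substantive step is the construction, for any modifying $N$, of a two-term resolution $0 \to M_1 \to M_0 \to N \to 0$ with $M_i \in \add M$. Since $M$ is MM and the canonical module is $R$ itself (Gorenstein), $M$ is a generator, so a surjective right $\add M$-approximation $M_0 \to N$ exists with reflexive kernel $M_1$. To upgrade $M_1$ to an object of $\add M$ I would invoke maximality of $M$: it suffices to show $\End_R(M \oplus M_1 \oplus N) \in \CM R$. The nontrivial components $\Hom_R(M,N)$ and $\Hom_R(N,M)$ are both CM by the Hom-symmetry of Corollary \ref{2CYgen}(2) combined with the hypothesis that $M,N$ are individually modifying, and the remaining components involving $M_1$ drop out of the short exact sequence via a routine depth-lemma argument. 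Applying $\Hom_R(M,-)$ to the resolution yields $0 \to P_1 \to P_0 \to T \to 0$ with $P_i \in \proj \Lambda$, giving $\pd_\Lambda T \leq 1$.

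For the vanishing $\Ext^1_\Lambda(T,T)=0$, compute $\mathbf{R}\Hom_\Lambda(T,T)$ using this resolution: since $\Hom_R(M,-):\add M \to \proj \Lambda$ is an equivalence, $\Hom_\Lambda(P_i,T) \cong \Hom_R(M_i, N)$, and the resulting two-term complex $\Hom_R(M_0,N) \to \Hom_R(M_1,N)$ is the one obtained by applying $\Hom_R(-,N)$ to the resolution. Its zeroth cohomology is $\End_R(N)$ and its first cohomology embeds into $\Ext^1_R(N,N)$; but both $\End_R(N) \in \CM R$ and the Hom-symmetry / AR duality of Section~\ref{ARsectionintro} force these Ext terms to vanish (the symmetry $\fl \Ext^1_R(N,N) \cong D(\fl \Ext^1_R(N,N))$ combined with the depth estimates from $\End_R(N) \in \CM R$ is the key input). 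With partial tilting established, standard recollement theory yields (1).

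For (2), promoting $T$ to a full tilting module amounts to constructing an exact sequence $0 \to \Lambda \to T_0 \to T_1 \to 0$ with $T_i \in \add T$. This is obtained by a dual construction: a left $\add N$-approximation $M \hookrightarrow N_0$ (existence using MM-ness of $N$ and that $R \in \add N$), with cokernel $N_1$ forced into $\add N$ by the maximality of $N$ applied exactly as $M_1 \in \add M$ was deduced above. Arranging $N_0 \to N_1$ to also be a right $\add M$-approximation makes $\Hom_R(M,-)$ exact on this sequence, delivering the tilting sequence and hence the derived equivalence. The main obstacle throughout is the MM-maximality step that forces the kernels/cokernels into $\add M$ or $\add N$: without rigidity one cannot mimic cluster-tilting approximation arguments directly, and the whole construction rests on the non-isolated AR duality of Theorem \ref{ARdualityIntro} providing the symmetry that lets the depth bookkeeping close up.
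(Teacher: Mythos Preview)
Your overall shape is right---reduce to showing $T=\Hom_R(M,N)$ is a (partial) tilting $\Lambda$-module and then invoke standard recollement/derived Morita theory---but the argument rests on a false premise: you repeatedly assume that an MM module is a generator (indeed a progenerator). This is not true in general. By \ref{CMiffgen}(2), MM generators are precisely the MM modules which are CM, and there exist MM modules that are not CM (e.g.\ reflexive modules giving NCCRs which are not CM generators). So your ``surjective right $\add M$-approximation $M_0\to N$'' need not be surjective, and your sequence $0\to M_1\to M_0\to N\to 0$ need not be exact on the right.

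The paper's substitute for the generator hypothesis is the \emph{reflexive equivalence} $\Hom_R(M,-):\refl R\to\refl\Lambda$ of \ref{reflequiv}(4), available because $R$ is a normal domain (every nonzero reflexive is a height one progenerator by \ref{reflequiv}(3)). Thus one takes a right $\add M$-approximation $M_0\stackrel{f}{\to}N$ (not necessarily surjective), sets $L=\Ker f$, and obtains the exact sequence $0\to\mathbb{F}L\to\mathbb{F}M_0\to\mathbb{F}N\to 0$ after applying $\mathbb{F}=\Hom_R(M,-)$. Showing $L\in\add M$ requires $\End_R(L\oplus M)\in\CM R$, not $\End_R(M\oplus L\oplus N)\in\CM R$ as you wrote; and your proposed verification fails because you cannot deduce $\Hom_R(M,N)\in\CM R$ from $M$ and $N$ being individually modifying---the Hom-symmetry of \ref{2CYgen}(2) is an equivalence, not a production mechanism. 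The paper instead proves $\End_R(L\oplus M)\in\CM R$ by passing through reflexive equivalence to reduce to the CM case (\ref{4.5refl}, via the delicate depth-chasing of \ref{technical2} and \ref{cokerfl}).

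Your argument for $\Ext^1_\Lambda(T,T)=0$ is also incomplete: embedding into $\Ext^1_R(N,N)$ and knowing $\fl\Ext^1_R(N,N)=0$ does not kill $\Ext^1_\Lambda(T,T)$, since $\Ext^1_R(N,N)$ need not have finite length. The paper instead localizes: for height~$2$ primes $\p$, $T_\p\in\CM R_\p$ and $\pd_{\Lambda_\p}T_\p\le 1$ force $T_\p$ projective (Auslander--Buchsbaum for the Gorenstein order $\Lambda_\p$), so $\Ext^1_\Lambda(T,T)$ is supported only at maximal ideals; then $\fl\Ext^1_\Lambda(T,T)=0$ by \ref{reflandCM} (using $\End_\Lambda(T)\cong\End_R(N)\in\CM R$) finishes it. For part~(2), the same generator issue recurs (``$R\in\add N$'' is unjustified); the paper avoids this either via Bongartz completion (\ref{Bongartz}) or by dualizing via $(-)^*$ and running the approximation argument on $M^*$, $N^*$.
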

Theorems~\ref{closed under derived equivalences intro} and \ref{allgMRderivedintro} now give the following, which we view as the noncommutative analogue of a result of Chen \cite[1.1]{Chen}.
\begin{cor}[=\ref{closed in dimension three}]\label{MMAs db intro}
Let $R$ be a normal 3-sCY ring with an MM module.  Then all MMAs are derived equivalent, and further any algebra derived equivalent to an MMA is also an MMA.
\end{cor}

In our study of modifying modules, reflexive modules over noncommutative $R$-algebras play a crucial role.

\begin{defin}\label{reflexivedefinition} Let $R$ be any commutative ring. If $A$ is any $R$-algebra then we say that $M\in\mod A$ is a \emph{reflexive $A$-module} if it is reflexive as an $R$-module.  
\end{defin}
Note that we do not require that the natural map $M\to \Hom_{A^{\op}}(\Hom_A(M,A),A)$ is an isomorphism.  However when $A$ is 3-sCY  and $M$ is a reflexive $A$-module in the sense of \ref{reflexivedefinition}, automatically it is.  Our main theorem regarding maximal modifying modules is the following remarkable relationship between modifying modules and tilting modules.  Note that (3) below says that $R$ has a maximal modification algebra if and only if it has a maximal modification algebra $\End_R(N)$ where $N$ is a CM generator, a generalization of \ref{NCCRhasCMintro}(2).
\begin{thm}[=\ref{gRgMRrigid}, \ref{gRgMRrigid2}]\label{gRgMRrigidintro}
Let $R$ be a normal 3-sCY ring with an MM module $M$.  Then\\
\t{(1)} The functor $\Hom_R(M,-):\mod R\to\mod\End_{R}(M)$ induces bijections
\[
\begin{array}{rcl}
\{\mbox{maximal modifying $R$-modules}\}&\stackrel{1:1}\longleftrightarrow&\{\mbox{reflexive tilting $\End_R(M)$-modules}\}.\\
\{\mbox{modifying $R$-modules}\}&\stackrel{1:1}\longleftrightarrow&\{\mbox{reflexive partial tilting $\End_R(M)$-modules}\}.
\end{array}
\]
\t{(2)} $N$ is modifying $\iff$ $N$ is a direct summand of a maximal modifying module.\\
\t{(3)} $R$ has an MM module which is a CM generator.
\end{thm}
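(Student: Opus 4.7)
The plan is to fix $\Lambda := \End_R(M)$ and work with the functor $F := \Hom_R(M,-)\colon \mod R \to \mod \Lambda$. Two earlier results drive the proof: the 2-Calabi-Yau symmetry of \ref{2CYgen}, which freely moves Ext- and CM-conditions between modules, and the recollement Theorem~\ref{allgMRderivedintro}(1), which already tells us that $F(N)$ is a partial tilting $\Lambda$-module whenever $N$ is modifying.

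For part (1), a modifying $R$-module $N$ is sent to $T := F(N)$. Reflexivity of $T$ as an $R$-module is standard given $N \in \refl R$, and the partial tilting property of $T$ is precisely \ref{allgMRderivedintro}(1). When $N$ is \emph{maximal} modifying I must upgrade this to a Bongartz exact sequence $0 \to \Lambda \to F(N_0) \to F(N_1) \to 0$; the idea is to produce a short exact sequence $0 \to M \to N_0 \to N_1 \to 0$ in $\refl R$ with $N_0, N_1 \in \add N$ by embedding $M$ into a power of $N$ via an $\add N$-approximation and controlling the cokernel up to double dualizing, then using the 2-CY symmetry to show $M \oplus N_1$ is modifying so that maximality of $N$ forces $N_1 \in \add N$. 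For the reverse direction, given a reflexive (partial) tilting $T$ over $\Lambda$, define $N := \Hom_\Lambda(\Hom_R(M,R), T)$, or equivalently pull $T$ back along the quasi-inverse of the derived equivalence of \ref{allgMRderivedintro}(2); reflexivity of $T$ and the 2-CY symmetry give $N \in \refl R$ with $F(N) \cong T$ and $\End_R(N) \cong \End_\Lambda(T) \in \CM R$, so that the partial tilting property of $T$ transfers back to $N$ being modifying, and similarly tilting corresponds to maximal modifying.

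Part (2) then follows from (1) together with a Bongartz completion inside reflexive tilting modules: given $N$ modifying, $T := F(N)$ is a reflexive partial tilting $\Lambda$-module, one completes it to a reflexive tilting $T \oplus T'$, and the bijection of (1) pulls this back to an MM module having $N$ as a summand. Part (3) is then immediate: $R$ itself is modifying since $\End_R(R) = R \in \CM R$, so by (2) $R$ is a summand of some MM module $M'$; but then $M' \cong \Hom_R(R, M')$ appears as a summand of $\End_R(M') \in \CM R$, forcing $M' \in \CM R$.

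The main obstacle I expect lies in the Bongartz-type constructions needed in both the "tilting $\Leftrightarrow$ MM" half of (1) and the completion step of (2). One must produce short exact sequences inside the category of reflexive modules over a non-regular ring, verify that all intermediate modules remain reflexive, and then exploit maximality of $M$ (respectively of $N$) to close up $\add$-containments. Naive Bongartz completion need not preserve reflexivity, so one must combine the derived-equivalence content of \ref{allgMRderivedintro}(2) with carefully controlled approximations in $\refl R$ to ensure the completion lands in reflexive tilting modules.
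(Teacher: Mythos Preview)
Your plan is broadly aligned with the paper's strategy, and parts (2) and (3) are handled correctly. The forward direction of (1) --- sending modifying $N$ to partial tilting $F(N)$, and MM $N$ to tilting $F(N)$ --- is also essentially right; the paper carries it out via Proposition~\ref{gMRapprox}, whose proof (via Lemmas~\ref{technical2} and~\ref{4.5refl}) is exactly the ``2-CY symmetry shows the kernel/cokernel is modifying'' argument you sketch, dualized to avoid the problem that a left $\add N$-approximation of $M$ need not be injective and its cokernel need not be reflexive. Your Bongartz concern is real, and the paper resolves it (Lemma~\ref{Bongartz}) by citing \cite[2.8]{IR}, which shows completion can be done inside reflexives.

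There is, however, a genuine gap in the reverse direction. Given a reflexive tilting $T=\Hom_R(M,N)$, you correctly get that $\End_R(N)\cong\End_\Lambda(T)$ is derived equivalent to $\Lambda$, hence 3-CY$^-$, hence CM, so $N$ is modifying. But your claim that ``similarly tilting corresponds to maximal modifying'' skips the hard part: why is $N$ \emph{maximal}? If $\End_R(N\oplus X)\in\CM R$, one needs $X\in\add N$. The paper proves this separately (Corollary~\ref{gMRclosedunderderived}) via a nontrivial idempotent argument (Proposition~\ref{idempotent}): Bongartz complete $\Hom_R(M,N\oplus X)$ to a tilting module $\Hom_R(M,N\oplus X\oplus L)$; then $\End_R(N)$ and $\Gamma:=\End_R(N\oplus X\oplus L)$ are both derived equivalent to $\End_R(M)$, hence to each other; since $\End_R(N)=e\Gamma e$ for the idempotent $e$ projecting onto $N$, one must show that $e\Gamma e$ derived equivalent to $\Gamma$ forces $\Gamma e$ to be a progenerator. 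This last step is checked by passing to completions and comparing Grothendieck groups --- it is not a formal consequence of 2-CY symmetry. Relatedly, your appeal to ``the quasi-inverse of the derived equivalence of \ref{allgMRderivedintro}(2)'' to recover $N$ from $T$ is circular: that equivalence is only established once $N$ is already known to be MM. The correct inverse to use is the abelian reflexive equivalence $\Hom_R(M,-):\refl R\to\refl\Lambda$ of Lemma~\ref{reflequiv}(4), which needs no such hypothesis.
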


\subsection{Mutation of Modifications} Recall:

\begin{defin}\label{add approx def} Let $\Lambda$ be a ring.  For $\Lambda$-modules $M$ and $N$, we say that
a morphism $f:N_0\to M$ is a \emph{right $(\add N)$-approximation}
if $N_0\in \add N$ and further
\[
\Hom_\Lambda(N,N_0)\stackrel{\cdot f}{\to}\Hom_\Lambda(N,M)
\] 
is surjective.  Dually we define a \emph{left $(\add N)$-approximation}.
\end{defin}
Now let $R$ be a normal $d$-sCY ring.  We introduce categorical mutations as a method of producing modifying modules (together with a derived equivalence) from a given one.  
For a given modifying $R$-module $M$, and $N$ such that $0\neq N\in \add M$ we consider 
\begin{itemize}
\item[(1)] a right $(\add N)$-approximation of $M$, denoted $N_0\stackrel{a}\to M$.
\item[(2)] a right $(\add N^{*})$-approximation of $M^{*}$, denoted $N_{1}^{*}\stackrel{b}\to M^{*}$.  
\end{itemize}
Note that the above $a$ and $b$ are surjective if $N$ is a generator.  
In what follows we denote the kernels by
\[
\begin{array}{ccc}
0\to K_{0}\stackrel{c}\to N_0\stackrel{a}\to M&\mbox{and}&
0\to K_{1}\stackrel{d}\to N_1^{*}\stackrel{b}\to M^{*}
\end{array}
\]
and call these \emph{exchange sequences}.
\begin{defin}\label{mutationdef}
With notation as above, we define the \emph{right mutation} of $M$ at $N$ to be $\MU{N}(M):=N \oplus K_{0}$ and we define the \emph{left mutation} of $M$ at $N$ to be $\NU{N}(M):=N \oplus K_{1}^{*}$.
\end{defin} 
Note that by definition $\NU{N}(M)=( \MU{N^{*}}(M^{*}))^{*}$. 
\begin{thm}[=\ref{stillmodifying}, \ref{inverseoperations}]
\t{(1)} Both $\MU{N}(M)$ and $\NU{N}(M)$ are modifying $R$-modules.\\
\t{(2)} $\MU{N}$ and $\NU{N}$ are mutually inverse operations, i.e. we have that $\NU{N}(\MU{N}(M))= M$ and $\MU{N}(\NU{N}(M))= M$, up to additive closure.
\end{thm}

What is remarkable is that this process always produces derived equivalences, even in dimension $d$:
\begin{thm}[=\ref{welldefined2}, \ref{stillmodifying}]\label{ddimmut_intro1}
Let $R$ be a normal $d$-sCY ring with modifying module $M$.   Suppose that $0\neq N\in \add M$.  Then\\
\t{(1)} $\End_R(M)$, $\End_R(\NU{N}(M))$ and $\End_R(\MU{N}(M))$ are all derived equivalent. \\
\t{(2)} If $M$ gives an NCCR, so do $\MU{N}(M)$ and $\NU{N}(M)$.\\
\t{(3)} Whenever $N$ is a generator, if $M$ is a CT module so are $\MU{N}(M)$ and $\NU{N}(M)$.\\
\t{(4)} Whenever $\dim\Sing R\leq 1$ (e.g.\ if $d=3$), if $M$ is a MM module so are $\MU{N}(M)$ and $\NU{N}(M)$.
\end{thm}

In particular the above allows us to mutate any NCCR, in any dimension, at any direct summand, and will give another NCCR together with a derived equivalence.  In particular, we can do this when the ring $R$ is not complete local, and also we can do this when the NCCR may be given by a quiver with relations where the quiver has both loops and 2-cycles, in contrast to cluster theory.  This situation happens very frequently in the study of one-dimensional fibres, where this form of mutation seems to have geometric consequences.

One further corollary in full generality is the following surprising result on syzygies $\Omega$ and cosyzygies $\Omega^{-1}$, since they are a special case of left and right mutation.  Note that we have to be careful when defining our syzygies and cosyzygies so that they have free summands; see \S\ref{mutations1} for more details.

\begin{cor}[=\ref{syzcosyz result}]
Suppose that $R$ is a normal $d$-sCY ring and $M\in\refl R$ is a modifying generator.  Then\\
\t{(1)} $\Omega^iM\in\CM R$ is a modifying generator for all $i\in\mathbb{Z}$, and further all $\End_R(\Omega^iM)$ are derived equivalent.\\
\t{(2)} If $M$ is CT (i.e.\ gives an NCCR), then all $\Omega^iM\in\CM R$ are CT, and further all $\End_R(\Omega^iM)$ are derived equivalent.
\end{cor}

We remark that when $\dim R=3$, in nice situations we can calculate the mutated algebra from the original algebra by using various combinatorial procedures (see e.g.\ \cite[5.1]{BIRS}, \cite[3.2]{KY} and \cite[3.5]{V}), but we note that our mutation is categorical and much more general, and expecting a combinatorial rule is unfortunately too optimistic a hope.  We also remark that since we are dealing with algebras that have infinite global dimension, there is no reason to expect that they possess superpotentials and so explicitly describing their relations is in general a very difficult problem.

When $\dim R=3$ and $R$ is complete local, we can improve the above results.  In this setting, under fairly weak assumptions it turns out that left mutation is the same as right mutation, as in the case of cluster theory \cite{Iyama-Yoshino}.  If $0\neq N\in \add M$ then we define $[N]$ to be the two-sided ideal of $\End_R(M)$ consisting of morphisms $M\to M$ which factor through a member of $\add N$, and denote $\Lambda_N:=\Lambda/[N]$.  The behaviour of mutation at $N$ is controlled by $\Lambda_N$, in particular whether or not $\Lambda_N$ is artinian.  Note that when $R$ is finitely generated over a field $k$, $\Lambda_N$ is artinian if and only if it is finite dimensional over $k$ (see \ref{flartinian}).  

For maximal modifying modules the mutation picture is remarkably clear, provided that we mutate at only one indecomposable summand at a time:
\begin{thm}[=\ref{main}]\label{mainintro}
Suppose $R$ is complete normal $3$-sCY with MM module $M$.  Denote $\Lambda=\End_R(M)$, let $M_i$ be an indecomposable summand of $M$ and consider $\Lambda_i:=\Lambda/\Lambda(1-e_i)\Lambda$ where  $e_i$ is the idempotent in $\Lambda$ corresponding to $M_i$.  To ease notation denote $\MU{i}=\MU{\frac{M}{M_i}}$ and $\NU{i}=\NU{\frac{M}{M_i}}$.  Then\\
\t{(1)}  If $\Lambda_i$ is not artinian then $\MU{i}(M)=M=\NU{i}(M)$.\\
\t{(2)}  If $\Lambda_i$ is artinian then $\MU{i}(M)=\NU{i}(M)$ and this is not equal to $M$.\\
In either case denote $\mu_{i}:=\MU{i}=\NU{i}$ then it is also true that\\
\t{(3)} $\mu_{i}\mu_{i}(M)=M$.\\
\t{(4)} $\mu_{i}(M)$ is a MM module.\\
\t{(5)} $\End_R(M)$ and $\End_R(\mu_{i}(M))$ are derived equivalent, via the tilting $\End_{R}(M)$-module $\Hom_{R}(M,\mu_{i}(M))$.
\end{thm}

Some of the above proof works in greater generality, but we suppress the details here.

\subsection{Conventions}
We now state our conventions.  All modules will be left modules, so for a ring $A$  we denote $\mod A$ to be the category of finitely generated left $A$-modules.  Throughout when composing maps $fg$ will mean $f$ then $g$, similarly for quivers $ab$ will mean $a$ then $b$.  Note that with this convention $\Hom_R(M,X)$ is a $\End_R(M)$-module and $\Hom_R(X,M)$ is a $\End_R(M)^{\rm op}$-module.  For $M\in\mod A$ we denote $\add M$ to be the full subcategory consisting of summands of finite direct sums of copies of $M$ and we denote $\proj A:=\add A$ to be the category of finitely generated projective $A$-modules.  Throughout we will always use the letter $R$ to denote some kind of \emph{commutative noetherian} ring.  We always strive to work in the global setting, so we write $(R,\m)$ if $R$ is local.  We use the notation $\widehat{R_\p}$ to denote the completion of the localization $R_\p$ at its unique maximal ideal.

\section{Preliminaries}
\subsection{Depth and CM Modules}\label{DepthandCMmodules}

Here we record the preliminaries we shall need in subsequent sections, especially some global-local arguments that will be used extensively.  For a commutative noetherian local ring $(R,\m)$ and $M\in\mod R$ recall that the \emph{depth} of $M$ is defined to be
\[
\depth_R M:=\inf \{ i\geq 0: \Ext^i_R(R/\m,M)\neq 0 \},
\]
which coincides with the maximal length of a $M$-regular sequence.  Keeping the assumption that $(R,\m)$ is local we say that $M\in\mod R$ is \emph{maximal Cohen-Macaulay} (or simply, \emph{CM}) if $\depth_R M=\dim R$.  This definition generalizes to the non-local case as follows: if $R$ is an arbitrary commutative noetherian ring we say that $M\in\mod R$ is \emph{CM} if $M_\p$ is CM for all prime ideals $\p$ in $R$, and we say that $R$ is a \emph{CM ring} if $R$ is a CM $R$-module.

It is often convenient to lift the CM property to noncommutative rings, which we do as follows:
\begin{defin}\label{CMLambda}
Let $\Lambda$ be a module finite $R$-algebra, then we call $M\in\mod\Lambda$ a \emph{CM $\Lambda$-module} if it is CM when viewed as an $R$-module.  We denote the category of CM $\Lambda$-modules by $\CM\Lambda$.
\end{defin}
To enable us to bring the concept of positive depth to non-local rings, the following is convenient:
\begin{defin}
Let $R$ be a commutative noetherian ring and $M\in \mod R$.  We denote $\fl M$ to be the unique maximal finite length $R$-submodule of $M$.
\end{defin}
It is clear that $\fl M$ exists because of the noetherian property of $M$; when $(R,\m)$ is local $\fl M=\{ x\in M:\exists\, r\in\mathbb{N} \mbox{ with } \m^rx=0 \}$.  The following is well-known.
\begin{lemma}\label{depthofhom}
Let $(R,\m)$ be a local ring of dimension $d\geq 2$ and let $\Lambda$ be a module finite $R$-algebra. Then for all $M,N\in\mod \Lambda$ with $\depth_{R} N\geq 2$ we have $\depth_{R}\Hom_{\Lambda}(M,N)\geq 2$.
\end{lemma}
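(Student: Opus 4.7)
The plan is to reduce the question about $\Hom_{\Lambda}(M,N)$ to questions about $N$ itself, by using a finite presentation of $M$ over $\Lambda$ to realize $\Hom_{\Lambda}(M,N)$ as a left-exact functor value, i.e.\ as a kernel inside a finite direct sum of copies of $N$. Once this is done, everything is controlled by standard ``depth lemma'' inequalities for short exact sequences.

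More precisely, first I would choose a finite $\Lambda$-presentation
\[
\Lambda^{a}\to\Lambda^{b}\to M\to 0,
\]
which exists because $\Lambda$ is noetherian and $M$ is finitely generated. Applying $\Hom_{\Lambda}(-,N)$ yields an exact sequence
\[
0\to\Hom_{\Lambda}(M,N)\to N^{b}\to N^{a}.
\]
Let $K$ denote the image of the right-hand map, so that $K$ is a $\Lambda$-submodule of $N^{a}$, and we have a short exact sequence of $R$-modules $0\to\Hom_{\Lambda}(M,N)\to N^{b}\to K\to 0$.

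Now I would invoke the two classical depth inequalities. Since $\depth_{R}N\geq 2$, any element of $\m$ that is $N$-regular is automatically regular on the submodule $K\subseteq N^{a}$, giving $\depth_{R}K\geq 1$. Applied to the displayed short exact sequence, the depth lemma
\[
\depth_{R}\Hom_{\Lambda}(M,N)\geq\min\bigl(\depth_{R}N^{b},\ \depth_{R}K+1\bigr)
\]
then yields $\depth_{R}\Hom_{\Lambda}(M,N)\geq\min(2,2)=2$, which is what we want.

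There is essentially no obstacle here; the only subtlety is noting that one cannot simply invoke depth-preserving properties of kernels applied to a non-surjective map $N^{b}\to N^{a}$, which is why I factor through the image $K$ before applying the depth lemma. The hypothesis $d\geq 2$ is used implicitly in that the statement $\depth_{R}N\geq 2$ makes sense (otherwise it would force $N=0$).
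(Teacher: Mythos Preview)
Your proof is correct and follows essentially the same approach as the paper: take a finite presentation $\Lambda^a\to\Lambda^b\to M\to 0$, apply $\Hom_\Lambda(-,N)$, and conclude via the depth lemma. The paper's proof is the one-line version of what you wrote; your extra step of factoring through the image $K$ to obtain a genuine short exact sequence before invoking the depth inequality is exactly the unpacking that the paper's phrase ``so the result follows from the depth lemma'' leaves implicit.
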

\begin{proof}
A free presentation $\Lambda^a \to \Lambda^b\to M\to 0 $ gives $0 \to  \Hom_{\Lambda}(M,N) \to N^b\to  N^a$ so the result follows from the depth lemma.
\end{proof}
In particular if $\depth R\geq 2$ then reflexive $R$-modules always have depth at least two. 

\begin{lemma}\label{CMinheight2}
Suppose $R$ is a $d$-dimensional CM ring with $d\geq 2$ and let $\Lambda$ be a module finite $R$-algebra.  For any $X\in\refl \Lambda$ we have $X_\p\in\CM \Lambda_\p$  for all $\p\in\Spec R$ with $\hgt\p\leq 2$.  
\end{lemma}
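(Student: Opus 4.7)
The plan is to localize at each prime $\p$ with $\hgt\p \leq 2$ and verify that $\depth_{R_\p} X_\p \geq \dim R_\p = \hgt\p$, splitting into three cases on the height. First I note that since $X$ is reflexive as an $R$-module (Definition \ref{reflexivedefinition}) and finitely generated, and since $R$ is noetherian, $X_\p$ remains reflexive as an $R_\p$-module for every $\p$. Also $R_\p$ is CM of dimension $\hgt\p$.

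The cases $\hgt\p = 0$ and $\hgt\p = 1$ are handled elementarily. When $\hgt\p = 0$, $R_\p$ is artinian, so $X_\p$ is automatically CM. When $\hgt\p = 1$, a reflexive $R_\p$-module embeds into $X_\p^{**} \hookrightarrow R_\p^{\oplus n}$ (as the double dual of a finitely presented module) and hence is torsion-free; since $R_\p$ is a $1$-dimensional CM local ring, any $R_\p$-regular element $t$ (which exists because $\depth R_\p = 1$) is $X_\p$-regular, giving $\depth_{R_\p} X_\p \geq 1$.

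The interesting case is $\hgt\p = 2$, where I would invoke Lemma \ref{depthofhom}. Using reflexivity over $R_\p$ I write $X_\p \cong \Hom_{R_\p}(X_\p^{*},R_\p)$, and since $R_\p$ is $2$-dimensional CM we have $\depth_{R_\p} R_\p = 2$; applying Lemma \ref{depthofhom} to the module-finite $R_\p$-algebra $\Lambda = R_\p$ with $M = X_\p^{*}$ and $N = R_\p$ yields $\depth_{R_\p} X_\p \geq 2$, as required.

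There is no serious obstacle: the only thing to verify carefully is that reflexivity descends to localizations (a standard consequence of $\Hom$ commuting with flat base change for finitely presented modules) and that Lemma \ref{depthofhom} applies in the $\Lambda = R_\p$ case — both are routine. The whole content of the lemma is essentially a repackaging of Lemma \ref{depthofhom} together with the observation that reflexives are torsion-free.
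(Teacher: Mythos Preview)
Your proof is correct. The paper's argument is more uniform: it observes that any reflexive $R$-module $X$ sits in an exact sequence $0\to X\to P\to Q$ with $P,Q\in\add R$ (obtained by dualizing a presentation of $X^*$), so $X_\p$ is a second syzygy over $R_\p$ for every prime $\p$, and the depth lemma then gives $\depth_{R_\p}X_\p\geq\min(\dim R_\p,2)$ in one stroke. Your case-by-case approach reaches the same conclusion but splits the work: the height~$2$ case via Lemma~\ref{depthofhom} is in fact the same depth-lemma computation as the paper's (the proof of \ref{depthofhom} builds exactly the sequence $0\to\Hom(M,N)\to N^b\to N^a$), while you handle heights~$0$ and~$1$ separately by elementary means. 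The paper's route is shorter and makes the syzygy viewpoint explicit; yours is more hands-on and perhaps clarifies what each height contributes, but there is no genuine difference in mathematical content.
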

\begin{proof}
Since $X$ is reflexive as an $R$-module we can find an exact sequence $0\to X\to P\to Q$ with $P,Q\in\add R$ and so on localizing we see that $X_\p$ is a second syzygy for all primes $\p$. Consequently if $\p$ has height $\leq 2$ then $X_\p$ is a second syzygy for the CM ring $R_\p$ which has $\dim R_\p\leq 2$ and so $X_\p\in\CM R_{\p}$.
\end{proof}

\subsection{Reflexive Equivalence and Symmetric Algebras}  Here we introduce and fix notation for reflexive modules and symmetric algebras.  All the material in this subsection can be found in \cite{IR}. Recall from the introduction (\ref{reflexivedefinition}) our convention on the definition of reflexive modules.  Recall also that if $\Lambda$ is a module finite $R$-algebra, we say $M\in\refl \Lambda$ is called a \emph{height one progenerator} (respectively, \emph{height one projective}) if $M_{\p}$ is a progenerator (respectively, projective) over $\Lambda_{\p}$ for all $\p\in\Spec R$ with $\hgt\p\leq 1$.

In this paper, when the underlying commutative ring $R$ is a normal domain the following reflexive equivalence is crucial: 
\begin{lemma}\label{reflequiv}
If $\Lambda$ is a module finite $R$-algebra, then\\
\t{(1)} If $M\in\mod \Lambda$ is a generator then
\[\Hom_{\Lambda}(M,-):\mod {\Lambda}\to\mod\End_{\Lambda}(M)\]
is fully faithful, restricting to an equivalence $\add M\stackrel{\simeq}{\to}\proj\End_{\Lambda}(M)$.\\
If further $R$ is a normal domain, then the following assertions hold.\\
\t{(2)} $\Hom_\Lambda(X,Y)\in\refl R$ for any $X\in\mod\Lambda$ and any $Y\in\refl\Lambda$.\\
\t{(3)} Every non-zero $M\in \refl R$ is a height one progenerator.\\
\t{(4)} Suppose $\Lambda$ is a reflexive $R$-module and let $M\in\refl\Lambda$ be a height one progenerator.  Then 
\[\Hom_{\Lambda}(M,-):\refl {\Lambda}\to\refl\End_{\Lambda}(M)\]
is an equivalence.  In particular $\Hom_{R}(N,-):\refl R\to\refl\End_{R}(N)$ is an equivalence for all non-zero $N\in\refl R$.
\end{lemma}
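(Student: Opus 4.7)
The plan is to dispatch (1)--(3) by fairly direct arguments and then combine them for (4), which is the substantive point.

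For (1), the paper's convention ``$M$ is a generator'' means $\Lambda \in \add M$, so every $X \in \mod\Lambda$ admits a surjection $M^a \twoheadrightarrow X$ whose kernel also receives a surjection from some $M^b$. Full-faithfulness of $F := \Hom_\Lambda(M,-)$ into $\mod\End_\Lambda(M)$ is then a classical Morita-style argument: given a $\Gamma$-linear $\phi \in \Hom_\Gamma(F(X), F(Y))$ with $\Gamma := \End_\Lambda(M)$, take a surjection $p : M^a \twoheadrightarrow X$ and use $\Gamma$-linearity of $\phi$ together with generation of $\ker p$ by $M$ to produce the required lift $X \to Y$. The restriction $\add M \simeq \proj\Gamma$ is the standard Yoneda argument.

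For (2), present $X$ as $\Lambda^a \to \Lambda^b \to X \to 0$ and apply $\Hom_\Lambda(-, Y)$ to exhibit $\Hom_\Lambda(X, Y)$ as the kernel of a map $Y^b \to Y^a$ between reflexive $R$-modules; since $\refl R$ is closed under kernels over a normal domain (noted just before the lemma), this yields the claim. For (3), a nonzero reflexive $M$ over the normal domain $R$ is torsion-free, so $M_\p$ is a nonzero $R_\p$-vector space at the generic point and a nonzero free $R_\p$-module at any height-one $\p$ (reflexives over a DVR are free); in both cases $M_\p$ is a progenerator.

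For (4), combining (2) and (3) gives $\Gamma \in \refl R$ and shows that $F = \Hom_\Lambda(M,-)$ sends $\refl\Lambda$ into $\refl\Gamma$. Full-faithfulness on reflexives follows from a reflexive-descent trick: for $X,Y \in \refl\Lambda$, both sides of the natural map $\Hom_\Lambda(X,Y) \to \Hom_\Gamma(F(X), F(Y))$ lie in $\refl R$ by (2), while after localizing at any $\p$ with $\hgt\p \leq 1$ the hypothesis that $M_\p$ is a progenerator together with (1) makes this map a classical Morita isomorphism; since a morphism of reflexive modules over a normal domain that is an isomorphism in codimension $\leq 1$ is globally an isomorphism, the map is an iso.

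Essential surjectivity is the main obstacle. Given $Z \in \refl\Gamma$, I would take a presentation $\Gamma^a \to \Gamma^b \to Z \to 0$, lift the first map via $\add M \simeq \proj\Gamma$ from (1) to $g : M^a \to M^b$, and set $X := (\Cok g)^{**} \in \refl\Lambda$. Left-exactness of $F$ applied to $M^a \to M^b \to \Cok g \to 0$ produces an injection $Z \hookrightarrow F(\Cok g)$, and composing with the map induced by $\Cok g \to X$ gives a morphism $Z \to F(X)$ of reflexive $R$-modules. At each $\p$ with $\hgt\p \leq 1$, Morita equivalence forces $(\Cok g)_\p$ to already be reflexive (since $Z_\p$ is), so $X_\p = (\Cok g)_\p$ and the map is an isomorphism there, hence globally. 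The final sentence of the lemma is then immediate, since (3) confirms that any nonzero $N \in \refl R$ is automatically a height-one progenerator.
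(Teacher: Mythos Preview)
Your argument is essentially correct and supplies the details the paper omits by citing \cite{RV89} and \cite[2.4]{IR}; the reflexive-descent strategy you use for (4) is precisely the standard one behind those references.

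One inaccuracy to flag: the claim that left-exactness of $F$ yields an \emph{injection} $Z \hookrightarrow F(\Cok g)$ is not correct in general. What you get is a well-defined map $Z = \Cok(Fg) \to F(\Cok g)$, but its kernel is $F(\Im g)/\Im(Fg)$, which vanishes only when $F$ preserves the image of $g$ --- something left-exactness alone does not guarantee. Fortunately you never use this injectivity: your actual argument is that the composite $Z \to F(X)$ is a morphism of reflexive $R$-modules which localizes to an isomorphism at every prime of height $\leq 1$ (by Morita), hence is an isomorphism globally. That reasoning is sound; just drop the word ``injection''. You might also make explicit that Morita equivalence over $\Lambda_\p$ preserves $R_\p$-torsion-freeness (e.g.\ because $M_\p\in\add\Lambda_\p$ forces $\Hom_{\Lambda_\p}(M_\p,-)$ to send torsion-free modules to torsion-free modules, and similarly for the inverse), since this is what ensures $(\Cok g)_\p$ is already reflexive.
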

\begin{proof}
(1) is standard.\\
(2) follows easily from the fact that reflexives are closed under kernels; see \cite[2.4(1)]{IR}.\\
(3) If $\p$ is a height one prime then by \ref{CMinheight2} $M_\p\in\CM R_\p$.  But $R$ is normal so $R_\p$ is regular; thus $M_\p$ is free.\\
(4) follows by (3) and \cite[1.2]{RV89} (see also \cite[2.4(2)(i)]{IR}).
\end{proof}

Throughout this paper, we will often use the following observation.
\begin{lemma}\label{Extisfl}
Let $R$ be a CM ring, $X,Y\in \CM R$.  Then $\Supp_R\Ext^i_R(X,Y)\subseteq\Sing R$ for all $i>0$.  In particular, if $R$ has isolated singularities then $\Ext^i_R(X,Y)$ is a finite length $R$-module for all $X,Y\in \CM R$ and $i>0$.
\end{lemma}
\begin{proof} 
This is well-known \cite{Aus78}, \cite[3.3]{Y}.
\end{proof}

The following lemma is convenient and will be used extensively.

\begin{lemma}\label{reflandCM}
Let $R$ be a 3-dimensional, equi-codimensional CM ring and let $\Lambda$ be a module finite $R$-algebra.  If $X\in\mod\Lambda$ and $Y\in\refl \Lambda$ then
\[
\Hom_{\Lambda}(X,Y)\in\CM R\Rightarrow \fl \Ext^1_{\Lambda}(X,Y)=0 .
\]
If further $Y\in\CM\Lambda$ then the converse holds.
\end{lemma}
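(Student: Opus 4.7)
The plan is a straightforward depth-chase along a projective presentation of $X$, after reducing to the local case.

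Both the hypothesis $\Hom_\Lambda(X,Y)\in\CM R$ and the conclusion $\fl\Ext^1_\Lambda(X,Y)=0$ are conditions that can be checked locally at maximal ideals: by equi-codimensionality every $\m\in\Max R$ has height $3$, so the CM condition translates to $\depth_{R_\m}\Hom_\Lambda(X,Y)_\m=3$ for all such $\m$, while for any finitely generated $R$-module $N$ the identity $\Hom_R(R/\m,N)_\m=\Hom_{R_\m}(R_\m/\m R_\m,N_\m)$ shows $\fl N=0$ is equivalent to $\depth_{R_\m}N_\m\geq 1$ for every $\m\in\Max R$ (using the convention $\depth_R 0=\infty$). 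I would therefore reduce to the case where $(R,\m)$ is local CM of dimension $3$.

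Next, choose a projective presentation $\Lambda^{b_1}\to\Lambda^{b_0}\to X\to 0$ and apply $\Hom_\Lambda(-,Y)$ to produce the four-term exact sequence
\[
0\to\Hom_\Lambda(X,Y)\to Y^{b_0}\to Y^{b_1}\to\Ext^1_\Lambda(X,Y)\to 0.
\]
Since $Y\in\refl\Lambda$, the remark following \ref{depthofhom} gives $\depth_R Y\geq 2$, and hence $\depth_R Y^{b_i}\geq 2$; if further $Y\in\CM\Lambda$ then $\depth_R Y^{b_i}=3$.

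Splitting this four-term sequence into two short exact sequences via $K:=\Im(Y^{b_0}\to Y^{b_1})$,
\[
0\to\Hom_\Lambda(X,Y)\to Y^{b_0}\to K\to 0,\qquad 0\to K\to Y^{b_1}\to\Ext^1_\Lambda(X,Y)\to 0,
\]
I would then run the depth lemma twice. For the forward implication, $\depth_R\Hom_\Lambda(X,Y)=3$ together with the first sequence forces $\depth_R K\geq 2$, and then the second sequence forces $\depth_R\Ext^1_\Lambda(X,Y)\geq 1$, i.e.\ $\fl\Ext^1_\Lambda(X,Y)=0$. For the converse, the extra hypothesis $Y\in\CM\Lambda$ gives $\depth_R Y^{b_i}=3$, so $\depth_R\Ext^1_\Lambda(X,Y)\geq 1$ combined with the second sequence forces $\depth_R K\geq 2$, and then the first sequence forces $\depth_R\Hom_\Lambda(X,Y)\geq 3$, i.e.\ $\Hom_\Lambda(X,Y)\in\CM R$.

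There is no serious obstacle here: the only points to watch are that the local--global reduction is clean (which is where equi-codimensionality is used, to ensure $\dim R_\m=3$ at every maximal ideal) and that the depth lemma inequalities are tracked correctly in each of the two directions, with the asymmetric role of $Y$ versus $Y\in\CM\Lambda$ making the converse require the stronger assumption.
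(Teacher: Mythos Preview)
Your overall approach is the same as the paper's, but the four-term exact sequence you write down is wrong. Applying $\Hom_\Lambda(-,Y)$ to a two-term projective presentation $\Lambda^{b_1}\to\Lambda^{b_0}\to X\to 0$ gives only the left-exact sequence $0\to\Hom_\Lambda(X,Y)\to Y^{b_0}\to Y^{b_1}$; the cokernel of $Y^{b_0}\to Y^{b_1}$ is \emph{not} $\Ext^1_\Lambda(X,Y)$ in general (take for instance $\Lambda=k[s,t]$, $X=k$, $Y=\Lambda$ with the Koszul presentation: the cokernel has positive rank while $\Ext^1_\Lambda(k,\Lambda)=0$). What you need instead is the short exact sequence $0\to\Omega X\to\Lambda^{b_0}\to X\to 0$, which upon applying $\Hom_\Lambda(-,Y)$ gives
\[
0\to\Hom_\Lambda(X,Y)\to Y^{b_0}\to\Hom_\Lambda(\Omega X,Y)\to\Ext^1_\Lambda(X,Y)\to 0,
\]
exactly as the paper does.

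Once you make this correction your depth-chase goes through unchanged: by Lemma~\ref{depthofhom} the term $\Hom_\Lambda(\Omega X,Y)$ still has depth $\geq 2$ (since $Y$ is reflexive), which is all you actually use in the forward direction; and in the converse direction only $\depth Y^{b_0}=3$ is needed to conclude $\depth\Hom_\Lambda(X,Y)\geq 3$, so the weaker bound $\depth\Hom_\Lambda(\Omega X,Y)\geq 2$ again suffices. With this fix your argument is essentially identical to the paper's.
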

\begin{proof}
($\Rightarrow$) For each $\m\in\Max R$ there is an exact sequence 
\begin{eqnarray}
0 \to  \Hom_{{\Lambda}}(X,Y)_\m \stackrel{f_\m}{\to}  \Hom_{{\Lambda}}(P,Y)_\m\to \Hom_{{\Lambda}}(\Omega X,Y)_\m \to  \Ext^1_{{\Lambda}}(X,Y)_\m\to  0\label{sesfllemma}
\end{eqnarray}
obtained by localizing the exact sequence obtained from $0 \to  \Omega X \to P\to X\to 0 $ with $P\in\add {\Lambda}$.  Now $\depth_{R_\m} \Hom_{{\Lambda}_\m}(X_\m,Y_\m)=3$ and further by \ref{depthofhom} $\depth_{R_\m}\Hom_{{\Lambda}_\m}(P_\m,Y_\m)\geq 2$, thus $\depth_{R_\m}\Cok f_\m\geq 2$.  Since again by \ref{depthofhom} $\depth_{R_\m}\Hom_{{\Lambda}_\m}(\Omega X_\m,Y_\m)\geq 2$, we conclude that $\depth_{R_\m}\Ext^1_{{\Lambda}_\m}(X_\m,Y_\m)>0$ for all $\m\in\Max R$ and so $\fl \Ext^1_{\Lambda}(X,Y)=0$.\\
($\Leftarrow$) Suppose now that $Y\in\CM\Lambda$.  Then in (\ref{sesfllemma}) $\depth_{R_\m}\Hom_{{\Lambda}_\m}(P_\m,Y_\m)=3$ and so by a similar argument $\depth_{R_\m}\Ext^1_{{\Lambda}_\m}(X_\m,Y_\m)>0$ implies that $\depth_{R_\m} \Hom_{{\Lambda}_\m}(X_\m,Y_\m)=3$.
\end{proof}

\begin{defin}\label{symmdefin}
Let $\Lambda$ be a module finite $R$-algebra where $R$ is an arbitrary commutative ring.  We call $\Lambda$ a \emph{symmetric $R$-algebra} if $\Hom_R(\Lambda,R)\cong\Lambda$ as $\Lambda$-bimodules.  We call $\Lambda$ a \emph{locally symmetric $R$-algebra} if $\Lambda_{\p}$ is a symmetric $R_{\p}$-algebra for all $\p\in\Spec R$.
\end{defin}
Note that if $\Lambda$ is a symmetric $R$-algebra then as functors $\mod\Lambda\to\mod\Lambda^{\rm op}$
\[
\Hom_{\Lambda}(-,\Lambda)\cong\Hom_{\Lambda}(-,\Hom_{R}(\Lambda,R))\cong\Hom_{R}(\Lambda\otimes_{\Lambda}-,R)=\Hom_{R}(-,R).
\]
We have the following well-known observation.  Recall that throughout our paper, $(-)^*:=\Hom_R(-,R)$.
\begin{lemma}\label{HomsymmDirect}
Let $R$ be a normal domain and $\Lambda$ be a symmetric $R$-algebra.
Then there is a functorial isomorphism
$\Hom_\Lambda(X,Y)\cong \Hom_\Lambda(Y,X)^*$ for all
$X,Y\in \refl\Lambda$ such that $Y$ is height one projective.
\end{lemma}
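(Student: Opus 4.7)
My plan is to construct a natural $R$-linear map $\Phi \colon \Hom_\Lambda(Y,X)^* \to \Hom_\Lambda(X,Y)$, verify that both source and target are reflexive $R$-modules, and then reduce to checking that $\Phi$ is an isomorphism after localizing at each prime of $R$ of height at most one, where the hypothesis that $Y$ is height one projective makes $Y_\p$ a finitely generated projective $\Lambda_\p$-module and puts the check within easy reach.

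To construct $\Phi$, I use the symmetric structure to identify $Y^* = \Hom_R(Y,R) \cong \Hom_\Lambda(Y,\Lambda)$ as right $\Lambda$-modules (a consequence of $\Lambda \cong \Hom_R(\Lambda,R)$ together with tensor-hom adjunction, as recorded in the paragraph immediately preceding the lemma). The natural evaluation map
\[
\alpha \colon Y^* \otimes_\Lambda X \longrightarrow \Hom_\Lambda(Y,X), \qquad \tilde f \otimes x \longmapsto \bigl[\, y \mapsto \tilde f(y)\cdot x \,\bigr]
\]
(using the right $\Lambda$-structure on $Y^*$ to form the tensor product) is functorial in both variables and becomes an isomorphism whenever $Y$ is finitely generated projective over $\Lambda$. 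Applying $(-)^{*}=\Hom_{R}(-,R)$ to $\alpha$ and invoking tensor-hom adjunction, then using that $Y\in\refl\Lambda$ is reflexive as an $R$-module, yields natural identifications
\[
(Y^* \otimes_\Lambda X)^* \;\cong\; \Hom_\Lambda(X, \Hom_R(Y^*,R)) \;=\; \Hom_\Lambda(X,Y^{**}) \;\cong\; \Hom_\Lambda(X,Y),
\]
and I define $\Phi$ to be $\alpha^*$ followed by this composite.

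The target $\Hom_\Lambda(X,Y)$ lies in $\refl R$ by Lemma \ref{reflequiv}(2); the source is the $R$-dual of $\Hom_\Lambda(Y,X)\in\refl R$ (again by Lemma \ref{reflequiv}(2)), hence is itself reflexive since the $R$-dual of any reflexive module is reflexive. At any prime $\p$ of $R$ with $\hgt\p\leq 1$, $Y_\p$ is projective over $\Lambda_\p$ by hypothesis, so $\alpha_\p$, and hence $\Phi_\p$, is an isomorphism. A morphism between two reflexive $R$-modules over a normal domain which is an isomorphism at every height one prime is automatically an isomorphism, since its kernel and cokernel are supported in codimension $\geq 2$ and the $S_2$ property of reflexive modules forces any such submodule or quotient to vanish. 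Functoriality of $\Phi$ in $X$ and $Y$ is inherited from its constituent pieces. The only real obstacle will be the careful bookkeeping of left- versus right-$\Lambda$-module structures on $Y^*$ when invoking the symmetric bimodule isomorphism and tensor-hom adjunction in this noncommutative context; once these are fixed, everything else is formal.
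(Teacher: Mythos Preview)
Your proof is correct and follows essentially the same route as the paper's: construct the evaluation map $Y^{*}\otimes_{\Lambda}X\to\Hom_{\Lambda}(Y,X)$ (using the symmetric structure to identify $Y^{*}$ with $\Hom_{\Lambda}(Y,\Lambda)$), dualize, identify $(Y^{*}\otimes_{\Lambda}X)^{*}\cong\Hom_{\Lambda}(X,Y^{**})\cong\Hom_{\Lambda}(X,Y)$ via tensor--hom adjunction and reflexivity of $Y$, and then check that the resulting map between reflexive $R$-modules is an isomorphism by localizing at height $\leq 1$ primes where $Y_{\p}$ is projective. The only cosmetic difference is that the paper applies the height-one localization argument to $\alpha^{*}$ alone and then composes with the remaining identifications, whereas you first form the composite $\Phi$ and then localize; the content is identical.
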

\begin{proof}
For the convenience of the reader, we give a detailed proof here.
We have a natural map
$f:\Hom_\Lambda(Y,\Lambda)\otimes_\Lambda X\to \Hom_\Lambda(Y,X)$
sending $a\otimes x$ to $(y\mapsto a(y)x)$. Consider the map
$f^*:\Hom_\Lambda(Y,X)^*\to (\Hom_\Lambda(Y,\Lambda)\otimes_\Lambda X)^*$
between reflexive $R$-modules. Since $Y$ is height one projective,
$f_\p$ and $(f^*)_\p$ are isomorphisms for any prime $\p$ of height at most one.
Thus $f^*$ is an isomorphism since $R$ is normal. Thus we have
\[
\Hom_{\Lambda}(Y,X)^{*}\stackrel{f^{*}}{\cong} (\Hom_{\Lambda}(Y,\Lambda)\otimes_{\Lambda} X)^{*} 
\cong (Y^{*}\otimes_{\Lambda} X)^{*}
\cong \Hom_{\Lambda}(X,Y^{**})
\cong \Hom_{\Lambda}(X,Y)
\]
as required.
\end{proof}
This immediately gives the following result, which implies that symmetric algebras are closed under reflexive equivalence.
\begin{lemma}\cite[2.4(3)]{IR}\label{Endissymm}
If $\Lambda$ is a symmetric $R$-algebra then so is $\End_{\Lambda}(M)$ for any height one projective $M\in\refl \Lambda$.  In particular if $R$ is a normal domain and $N\in\refl R$ then $\End_R(N)$ is a symmetric $R$-algebra.
\end{lemma}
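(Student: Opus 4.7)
The plan is to reduce the statement to Lemma~\ref{HomsymmDirect} by specialising both variables to $M$. Explicitly, I would write $\End_{\Lambda}(M) = \Hom_{\Lambda}(M,M)$ and apply \ref{HomsymmDirect} with $X = Y = M$. The hypotheses $M \in \refl \Lambda$ and $M$ height one projective are exactly what the lemma requires (and, as remarked before \ref{Endissymm} in the context of $N\in\refl R$ over a normal domain $R$, this is automatic), so we obtain a natural $R$-linear isomorphism $\End_{\Lambda}(M)^{*} \cong \End_{\Lambda}(M)$.

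The one issue not immediate from a single application of \ref{HomsymmDirect} is that this is a \emph{bimodule} isomorphism. For this I would trace through the construction in the proof of \ref{HomsymmDirect}: the map
\[
f\colon \Hom_{\Lambda}(M,\Lambda)\otimes_{\Lambda} M \to \Hom_{\Lambda}(M,M),\quad a\otimes x\mapsto (y\mapsto a(y)x),
\]
is natural in each of the two $M$-slots separately. Acting on the left-hand $M$ by an endomorphism $g\in\End_{\Lambda}(M)$ (pre-composition, inducing the right action on $\Hom_{\Lambda}(M,-)$) and on the right-hand $M$ by an endomorphism $h\in\End_{\Lambda}(M)$ (post-composition, inducing the left action) commute with $f$ and with the subsequent identification $(M^{*}\otimes_{\Lambda} M)^{*} \cong \Hom_{\Lambda}(M,M^{**}) \cong \Hom_{\Lambda}(M,M)$. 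A short diagram chase then shows the resulting $R$-linear isomorphism $\End_{\Lambda}(M)^{*}\cong \End_{\Lambda}(M)$ intertwines both the left and the right actions of $\End_{\Lambda}(M)$ on the two sides, hence is a bimodule isomorphism. By definition, $\End_{\Lambda}(M)$ is then a symmetric $R$-algebra.

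For the ``in particular'' assertion I would apply the first part with $\Lambda = R$. The ring $R$ is trivially a symmetric $R$-algebra via $\Hom_{R}(R,R)\cong R$, and $\refl\Lambda = \refl R$ in this case. By \ref{reflequiv}(3), since $R$ is a normal domain, any nonzero $N\in\refl R$ is a height one progenerator over $R$, and in particular height one projective. The first part therefore gives $\End_{R}(N)$ symmetric.

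The main obstacle, if any, is the bookkeeping in the middle paragraph verifying that the isomorphism produced is compatible with both the left and right $\End_{\Lambda}(M)$-actions; once bifunctoriality of $f$ in the two slots is set up cleanly this is formal, and the rest of the proof is simply invoking \ref{HomsymmDirect} and \ref{reflequiv}(3).
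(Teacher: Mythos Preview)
Your proposal is correct and is essentially the approach the paper takes: the paper states that \ref{Endissymm} ``immediately'' follows from \ref{HomsymmDirect}, and you have simply unpacked this by specialising $X=Y=M$ and then checking the bimodule compatibility via the bifunctoriality of the construction in the proof of \ref{HomsymmDirect}. Your treatment of the ``in particular'' via \ref{reflequiv}(3) is exactly right as well.
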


When discussing derived equivalence of modifying algebras later, we will require the following result due to Auslander--Goldman.

\begin{prop}[{cf.\ \cite[4.2]{AG}}]\label{Auslander-Goldman}
Let $R$ be a normal domain with $\dim R\geq 1$, and let $\Lambda$ be a module finite $R$-algebra. Then the following conditions are equivalent:\\
\t{(1)} There exists $M\in\refl R$ such that $\Lambda\cong\End_R(M)$ as $R$-algebras.\\
\t{(2)} $\Lambda\in\refl R$ and further $\Lambda_\p$ is Morita equivalent to $R_\p$ for all $\p\in\Spec R$ with $\hgt\p=1$.
\end{prop}
\begin{proof}
(1)$\Rightarrow$(2) is trivial.\\
(2)$\Rightarrow$(1) For the convenience of the reader, we give a simple direct proof.   Let $K$ be the quotient field of $R$.
As we have assumed that $\Lambda_\p$ is Morita equivalent to $R_\p$, $K\otimes_R\Lambda\cong M_n(K)$ as $K$-algebras for some $n>0$.  Thus for any $M\in \mod\Lambda$, we can regard $K\otimes_RM$ as an $M_n(K)$-module. We denote by $V$ the simple $M_n(K)$-module.

First, we show that there exists $M\in \refl\Lambda$ such that $K\otimes_RM \simeq V$ as $M_n(K)$-modules. For example, take a split epimorphism $f:M_n(K)\to V$ of $M_n(K)$-modules
and let $M:=(f(\Lambda))^{**}$. Then clearly $M$ satisfies the desired properties.

Next, for $M$ above, we show that the natural map $g:\Lambda\to \End_R(M)$ given by the action of $\Lambda$ on $M$ is an isomorphism. Since both $\Lambda$ and $\End_R(M)$ are reflexive $R$-modules by our assumption, it is enough to show that $g_\p$ is an isomorphism for all $\p\in \Spec R$ with $\hgt \p=1$. Since $K\otimes_R\End_R(M)=\End_K(K\otimes_RM)=\End_K(V)=M_n(K)$, we have that $K\otimes g:K\otimes_R\Lambda\to K\otimes_R\End_R(M)$ is an isomorphism.
In particular, $g_\p:\Lambda_\p\to \End_R(M)_\p$ is injective.
Since $R_\p$ is local and $\Lambda_\p$ is Morita equivalent to $R_\p$, we have that $\Lambda_\p$ is a full matrix ring over $R_\p$, which is well-known to be a maximal order over $R_\p$ (see e.g.\ \cite[3.6]{AG}, \cite[\S37]{CR90}). Thus we have that $g_\p$ is an isomorphism. 
\end{proof}

\subsection{Non-Singular and Gorenstein Orders} Recall from \ref{nonsingorder} the definition of a non-singular $R$-order.  By definition the localization of a non-singular $R$-order is again a non-singular $R$-order --- we shall see in \ref{nonsingGoren} that in most situations we may check whether an algebra is a non-singular $R$-order by checking only at the maximal ideals. 

For some examples of non-singular $R$-orders, recall that for a ring $\Lambda$ and a finite group $G$ together with a group homomorphism $G\to {\rm Aut}_{k\rm{-alg}}(\Lambda)$, we define the \emph{skew group ring} $\Lambda\# G$ as follows \cite{Auslander_rational, Y}:
As a set, it is a free $\Lambda$-module with the basis $G$.
The multiplication is given by
\[(sg)(s'g'):=(s\cdot g(s'))(gg')\]
for any $s,s'\in S$ and $g,g'\in G$.

\begin{lemma}\label{skew is non-singular}
Let $R$ be a CM ring containing a field $k$. Let $\Lambda$ be a non-singular $R$-order, let $G$ be a finite group together with a group homomorphism $G\to {\rm Aut}_{R\mbox{-}{\rm{alg}}}(\Lambda)$ and suppose $|G|\neq0$ in $k$. Then $\Lambda\# G$ is a non-singular $R$-order.
\end{lemma}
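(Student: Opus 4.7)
My strategy is to reduce the claim to the classical fact that the global dimension of a skew group ring agrees with that of the underlying algebra when the group order is invertible, after first confirming the order hypothesis globally.

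First I would check that $\Lambda\#G$ is an $R$-order. Since the structure map $R\to\Lambda$ lands in $\Lambda^G$ (the $G$-action being the one on the $R$-order), $\Lambda\#G$ is naturally an $R$-algebra with $R$ central. As a left $\Lambda$-module, and hence as an $R$-module, $\Lambda\#G$ is free of rank $|G|$, so $\Lambda\#G\in\CM R$ because $\Lambda\in\CM R$. Next, for each $\p\in\Spec R$, the multiplicative set $R\setminus\p$ is $G$-stable (it lies in the $G$-fixed part) so localization commutes with the skew group construction, giving $(\Lambda\#G)_\p\cong\Lambda_\p\#G$ as $R_\p$-algebras. By non-singularity of $\Lambda$ we have $\gl\Lambda_\p=\dim R_\p$, so it suffices to prove $\gl(\Lambda_\p\#G)=\gl\Lambda_\p$. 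This reduces everything to showing, for any noetherian $k$-algebra $\Gamma$ with $G$-action (with $|G|$ invertible in $k$), that $\gl(\Gamma\#G)=\gl\Gamma$.

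For that equality I would invoke the standard restriction/induction argument. The induction functor $(\Gamma\#G)\otimes_\Gamma -$ sends $\Gamma$-projectives to $\Gamma\#G$-projectives (induction preserves projectivity), and restriction preserves projectivity because $\Gamma\#G$ is free of rank $|G|$ over $\Gamma$. The key point, which uses $|G|$ invertible in $k$, is that every $\Gamma\#G$-module $M$ is a $\Gamma\#G$-direct summand of the induced module $(\Gamma\#G)\otimes_\Gamma M$: the section is the averaging map $m\mapsto\tfrac{1}{|G|}\sum_{g\in G}g\otimes g^{-1}m$, which one checks is $\Gamma\#G$-linear and splits the natural multiplication map. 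Consequently $\pd_{\Gamma\#G}M=\pd_\Gamma M$ for any $\Gamma\#G$-module $M$, which yields $\gl(\Gamma\#G)\le\gl\Gamma$. Conversely, given a $\Gamma$-module $N$, the induced module $(\Gamma\#G)\otimes_\Gamma N$ restricts to $\bigoplus_{g\in G}{}^gN$, and each twist ${}^gN$ has the same projective dimension as $N$ (via the automorphism $g$); therefore $\pd_\Gamma N=\pd_{\Gamma\#G}((\Gamma\#G)\otimes_\Gamma N)\le\gl(\Gamma\#G)$, giving the reverse inequality.

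The only nontrivial step is the averaging argument in Step 3, but it is entirely standard and hinges on $|G|$ being a unit in $k$ (and hence in $\Gamma$); the rest is bookkeeping about localization and freeness. Combining the three steps shows $\gl(\Lambda\#G)_\p=\dim R_\p$ for every prime $\p$, so $\Lambda\#G$ is a non-singular $R$-order.
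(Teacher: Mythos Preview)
Your proof is correct and complete. It differs from the paper's argument in how the key inequality $\gl(\Lambda\#G)_\p\le\gl\Lambda_\p$ is established: the paper observes the functorial identification $\Hom_{\Lambda\#G}(X,Y)=\Hom_\Lambda(X,Y)^G$, then uses that $(-)^G$ is exact (since $kG$ is semisimple when $|G|$ is invertible) to conclude $\Ext^i_{\Lambda\#G}(X,Y)=\Ext^i_\Lambda(X,Y)^G$, which immediately bounds the global dimension. You instead work at the level of modules, using the averaging section to realise every $\Gamma\#G$-module as a summand of an induced one and tracking projective dimensions through induction/restriction. Both arguments are standard and rest on the same Maschke-type input; the paper's functorial version is shorter, while your restriction/induction version is more hands-on and additionally yields the \emph{equality} $\gl(\Gamma\#G)=\gl\Gamma$, though only the inequality $\le$ is needed here (the reverse inequality is automatic for any $R$-order). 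Your explicit localization step $(\Lambda\#G)_\p\cong\Lambda_\p\#G$ is also a useful clarification that the paper leaves implicit. One small caveat: your claim that the image of $R$ lies in $\Lambda^G$ is not literally part of the hypothesis (the action is only assumed $k$-linear), but it is needed for $\Lambda\#G$ to be an $R$-algebra and is satisfied in every application in the paper, so this is a point about the statement rather than your argument.
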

\begin{proof}
Since $\Lambda\# G$ is a direct sum of copies of $\Lambda$ as an $R$-module and $\Lambda\in\CM R$, we have $\Lambda\# G\in \CM R$.
Now if $X,Y\in \mod\Lambda\# G$ then $G$ acts on $\Hom_\Lambda(X,Y)$ by
\[
(gf)(x):=g\cdot f(g^{-1}x)
\]
for all $g\in G$, $f\in\Hom_\Lambda(X,Y)$ and $x\in X$. Clearly we have a functorial isomorphism
\[
\Hom_{\Lambda\# G}(X,Y)=\Hom_\Lambda(X,Y)^G
\]
for all $X,Y\in\mod\Lambda\# G$. Taking $G$-invariants $(-)^G$ is an exact functor since $kG$ is semisimple. Thus we have a functorial isomorphism
\[\Ext^i_{\Lambda\# G}(X,Y)=\Ext^i_\Lambda(X,Y)^G\]
for all $X,Y\in\mod\Lambda\# G$ and $i\ge 0$. In particular, $\gl\Lambda\# G\le \gl\Lambda$ holds, and we have the assertion.
\end{proof}

In the remainder of this subsection we give basic properties of non-singular orders.
\begin{lemma}\label{nonsingMorita}
Non-singular $R$-orders are closed under Morita equivalence.
\end{lemma}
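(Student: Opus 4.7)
The plan is to use the standard characterization of Morita equivalence in terms of a progenerator, together with the fact that the CM property and global dimension are preserved under Morita equivalence. Specifically, if $\Gamma$ is Morita equivalent to a non-singular $R$-order $\Lambda$, then I can write $\Gamma \cong \End_\Lambda(P)$ for some progenerator $P \in \proj \Lambda$. I need to verify the two defining conditions of a non-singular $R$-order for $\Gamma$: that $\Gamma \in \CM R$ and that $\gl \Gamma_\p = \dim R_\p$ for all primes $\p$ of $R$.

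For the CM condition, since $P$ is a finitely generated projective $\Lambda$-module, it is a direct summand of $\Lambda^n$ for some $n \geq 1$. Therefore $\Gamma = \End_\Lambda(P)$ is a direct summand of $\End_\Lambda(\Lambda^n) \cong M_n(\Lambda)$ as an $R$-module, and the latter is just a direct sum of copies of $\Lambda$. Since $\Lambda \in \CM R$ by assumption, $M_n(\Lambda) \in \CM R$, and because direct summands of CM modules are CM (depth is preserved under taking summands), we conclude $\Gamma \in \CM R$.

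For the global dimension condition, I use that Morita equivalence localizes: for each $\p \in \Spec R$, the localization $P_\p$ is a progenerator over $\Lambda_\p$, and $\Gamma_\p \cong \End_\Lambda(P)_\p \cong \End_{\Lambda_\p}(P_\p)$, so $\Gamma_\p$ is Morita equivalent to $\Lambda_\p$. Since Morita equivalent rings have the same global dimension, $\gl \Gamma_\p = \gl \Lambda_\p = \dim R_\p$, which is exactly what is required.

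There is no real obstacle in this argument, as both facts (closure of CM under direct summands, invariance of global dimension under Morita equivalence) are standard. The only minor point to verify carefully is the commutation of $\End_\Lambda(P)$ with localization at primes of $R$, which holds because $P$ is finitely presented over the module-finite $R$-algebra $\Lambda$.
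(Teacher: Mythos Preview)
Your proof is correct and follows essentially the same approach as the paper: both write $\Gamma\cong\End_\Lambda(P)$ for a progenerator $P$, deduce $\Gamma\in\CM R$ from $\Gamma$ being an $R$-summand of $M_n(\Lambda)$, and obtain the global dimension condition by localizing the Morita equivalence. Your version simply spells out a few routine points (commutation of $\End$ with localization, closure of CM under summands) in slightly more detail.
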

\begin{proof}
Suppose that $\Lambda$ is a non-singular $R$-order and $\Gamma$ is Morita equivalent to $\Lambda$. Then $\Lambda_\p$ is Morita equivalent to $\Gamma_\p$ for all primes $\p$. Thus since global dimension is an invariant of the abelian category, we have $\gl \Gamma_\p=\gl \Lambda_\p=\dim R_\p$ for all primes $\p$.  To see why the CM property passes across the Morita equivalence let $P$ denote the progenerator in $\mod \Lambda$ such that $\Gamma\cong \End_\Lambda(P)$.  Since $P$ is a summand of $\Lambda^{n}$ for some $n$ we know that $\Gamma$ is a summand of $\End_{\Lambda}(\Lambda^{n})=M_{n}(\Lambda)$ as an $R$-module.  Since $\Lambda$ is CM, so is $\Gamma$.
\end{proof}

Recall from the introduction (\S\ref{ARsectionintro}) the definition of a canonical module $\omega_{R}$ for a non-local CM ring $R$.   If $\Lambda$ is an $R$-order we have an exact duality
\[
\Hom_R(- ,\omega_R):\CM\Lambda\leftrightarrow \CM\Lambda^{\rm op}
\]
and so the $\Lambda$-module $\omega_\Lambda:=\Hom_R(\Lambda,\omega_R)$ is an injective cogenerator in the category $\CM\Lambda$.
\begin{defin}\cite{CR90, GN}
Assume $R$ has a canonical module $\omega_{R}$.  An $R$-order $\Lambda$ is called \emph{Gorenstein} if $\omega_{\Lambda}$ is a projective $\Lambda$-module.
\end{defin}

It is clear that if $\Lambda$ is a Gorenstein $R$-order then $\Lambda_{\p}$ is a Gorenstein $R_{\p}$-order for all $\p\in\Spec R$.  If $R$ is Gorenstein and $\Lambda$ is a symmetric $R$-order (i.e.\ an $R$-order which is a symmetric $R$-algebra), then $\Lambda$ is clearly a Gorenstein $R$-order. Moreover if both $R$ and $\Lambda$ are $d$-sCY (see \S\ref{3CY}), we shall see in \ref{2.14a} that $\Lambda$ is a Gorenstein order.    Also, we have the following.

\begin{lemma}\label{addLambda=addOmega}
Let $\Lambda$ be an $R$-order, then the following are equivalent.\\
\t{(1)} $\Lambda$ is Gorenstein.\\
\t{(2)} $\add_\Lambda(\Lambda)=\add_\Lambda(\omega_{\Lambda})$.\\
\t{(3)} $\add_{\Lambda^{\op}}(\Lambda)=\add_{\Lambda^{\op}}(\omega_{\Lambda})$.\\
\
\t{(4)} $\Lambda^{\op}$ is Gorenstein.
\end{lemma}
\begin{proof}
Due to  \ref{addlocal}, we can assume that $R$ is complete local, and so $\mod \Lambda$ is Krull--Schmidt \cite[6.12]{CR90}.\\
(1)$\Rightarrow$(2) If $\Lambda$ is Gorenstein then by definition $\add_\Lambda(\omega_\Lambda)\subseteq\add_\Lambda(\Lambda)$.  The number of non-isomorphic indecomposable projective $\Lambda$-modules is equal to that of $\Lambda^{\op}$-modules.  Moreover, the latter is equal to the number of non-isomorphic indecomposable summands of $\omega_\Lambda$ by the duality $\Hom_R(- ,\omega_R)$.  By the Krull--Schmidt property, $\add_\Lambda(\Lambda)\subseteq\add_\Lambda(\omega_\Lambda)$.
(2)$\Rightarrow$(1) is trivial.\\
(2)$\Leftrightarrow$(3) follows by applying the duality $\Hom_R(- ,\omega_R)$.\\
(3)$\Leftrightarrow$(4) is identical to (1)$\Leftrightarrow$(2).
\end{proof}

When $R$ is local, Gorenstein $R$-orders $\Lambda$ are especially important since we have the following Auslander--Buchsbaum type equality, which in particular says that the $\Lambda$-modules which have finite projective dimension and are CM as $R$-modules are precisely the projective $\Lambda$-modules.

\begin{lemma}\label{ABlocal}
Let $(R,\m)$ be a local CM ring with a canonical module $\omega_{R}$ and let $\Lambda$ be a Gorenstein $R$-order.  Then for any $X\in\mod \Lambda$ with $\pd_\Lambda X<\infty$,
\[
\depth_R X+\pd_{\Lambda} X=\dim R.
\]
\end{lemma}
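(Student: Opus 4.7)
The plan is to bypass induction on $\pd_{\Lambda}X$ and instead derive the equality from two classical facts about local CM rings, using the Gorenstein order hypothesis as a bridge. First I would identify the Ext groups over $\Lambda$ with those over $R$: since $\Lambda\in\CM R$ we have $\depth_{R}\Lambda=\dim R$, and the standard Ischebeck-type vanishing over local CM rings with canonical module gives $\Ext^{i}_{R}(\Lambda,\omega_{R})=0$ for all $i>0$. Thus the usual adjunction
\[
\Hom_{\Lambda}(-,\Hom_{R}(\Lambda,\omega_{R}))\cong\Hom_{R}(-,\omega_{R})
\]
derives to a functorial isomorphism $\Ext^{i}_{\Lambda}(X,\omega_{\Lambda})\cong\Ext^{i}_{R}(X,\omega_{R})$ for every $X\in\mod\Lambda$ and $i\geq 0$, since any $\Lambda$-projective resolution of $X$ consists of summands of $\Lambda^{m}$, hence of CM $R$-modules, which are acyclic for $\Hom_{R}(-,\omega_{R})$.

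Next I would bring in the Gorenstein hypothesis. As stated immediately before the lemma, $\Lambda$ being Gorenstein is equivalent to $\add\Lambda=\add\omega_{\Lambda}$, so $\omega_{\Lambda}$ is a summand of some $\Lambda^{n}$ and vice versa. Consequently $\Ext^{i}_{\Lambda}(X,\omega_{\Lambda})=0$ if and only if $\Ext^{i}_{\Lambda}(X,\Lambda)=0$, and combined with the previous step
\[
\Ext^{i}_{R}(X,\omega_{R})=0\Longleftrightarrow\Ext^{i}_{\Lambda}(X,\Lambda)=0.
\]

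It remains to evaluate the two sides. On the commutative side, the classical formula $\sup\{i:\Ext^{i}_{R}(X,\omega_{R})\neq 0\}=\dim R-\depth_{R}X$ (valid over local CM rings with canonical module) handles the depth. On the noncommutative side, when $\pd_{\Lambda}X$ is finite I would argue that $\pd_{\Lambda}X=\sup\{i:\Ext^{i}_{\Lambda}(X,\Lambda)\neq 0\}$: since $\Lambda$ is semiperfect (being module finite over the local ring $R$) a minimal projective resolution $P_{\bullet}\to X$ exists, its differentials have image in the Jacobson radical of $\Lambda$, and a standard Nakayama argument then forces $\Ext^{\pd_{\Lambda}X}_{\Lambda}(X,\Lambda)\neq 0$. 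Equating the two suprema yields $\pd_{\Lambda}X=\dim R-\depth_{R}X$ as claimed.

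The only non-formal step is the last semiperfect/minimal-resolution argument identifying $\pd_{\Lambda}X$ with the top non-vanishing $\Ext$ against $\Lambda$; the rest is a routine translation between the $R$- and $\Lambda$-sides via the adjunction with $\omega_{\Lambda}$ and the defining property $\add\Lambda=\add\omega_{\Lambda}$ of Gorenstein orders. As a consistency check, the base case $\pd_{\Lambda}X=0$ forces $X$ to be a summand of $\Lambda^{m}$, hence CM as an $R$-module with $\depth_{R}X=\dim R$, matching the formula.
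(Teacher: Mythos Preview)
Your argument is correct and takes a genuinely different route from the paper's. The paper proceeds in two steps: first it shows that any $X\in\CM\Lambda$ with $\pd_\Lambda X<\infty$ is projective (using that $\omega_\Lambda$ is an injective object in $\CM\Lambda$ together with $\add\Lambda=\add\omega_\Lambda$), and then it squeezes $\pd_\Lambda X$ against $d-\depth_R X$ by two applications of the depth lemma plus the first step. You instead identify both quantities with the single number $\sup\{i:\Ext^i_\Lambda(X,\omega_\Lambda)\neq 0\}$, the bridge being the isomorphism $\Ext^i_\Lambda(X,\omega_\Lambda)\cong\Ext^i_R(X,\omega_R)$ obtained from the adjunction and the fact that $\Lambda$-projectives are $\Hom_R(-,\omega_R)$-acyclic. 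Your version is more conceptual and explains directly why the two sides coincide; the paper's is more self-contained, needing only the depth lemma rather than the commutative formula $\sup\{i:\Ext^i_R(X,\omega_R)\neq 0\}=\dim R-\depth_R X$.

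One small gap: a module-finite algebra over a Noetherian local ring need not be semiperfect unless the base is Henselian. For instance $\Lambda=\mathbb{Q}[x]_{(x)}[t]/(t^2+t+x)$ is a domain, yet $\Lambda/J(\Lambda)\cong\mathbb{Q}\times\mathbb{Q}$, so idempotents do not lift. Hence your minimal-resolution justification of $\pd_\Lambda X=\sup\{i:\Ext^i_\Lambda(X,\Lambda)\neq 0\}$ does not go through as written. The fix is standard and does not require semiperfectness: if $n=\pd_\Lambda X<\infty$, choose any $M$ with $\Ext^n_\Lambda(X,M)\neq 0$ and a surjection $\Lambda^m\twoheadrightarrow M$; since $\Ext^{n+1}_\Lambda(X,-)=0$ the induced map $\Ext^n_\Lambda(X,\Lambda^m)\to\Ext^n_\Lambda(X,M)$ is surjective, forcing $\Ext^n_\Lambda(X,\Lambda)\neq 0$. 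With this adjustment your proof is complete.
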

\begin{proof}
Let $X$ be a $\Lambda$-module with $\pd_{\Lambda} X<\infty$.\\
(i)  We will show that if $X\in\CM \Lambda$ then $X$ is projective.  We know that $\Ext^{i}_{\Lambda}(-,\omega_{\Lambda})=0$ on $\CM\Lambda$ for all $i>0$ since $\omega_{\Lambda}$ is injective in $\CM\Lambda$.  Since $\Lambda$ is Gorenstein $\add\Lambda=\add\omega_\Lambda$ by \ref{addLambda=addOmega} and so we have $\Ext^{i}_{\Lambda}(-,\Lambda)=0$ on $\CM\Lambda$ for all $i>0$.  Since $\Ext^{n}_{\Lambda}(X,\Lambda)\neq 0$ for $n=\pd X$, we have that $X$ is projective.\\
(ii)  Let $n=\pd X$ and $t=\depth X$.  Take a minimal projective resolution
\[
0\to P_{n}\to \hdots\to P_{0}\to X\to 0.
\]
By the depth lemma necessarily $t\geq d-n$.  On the other hand by the depth lemma we have $\Omega^{d-t}X\in\CM\Lambda$.  By (i) we know $\Omega^{d-t}X$ is projective so $n\leq d-t$.  Thus $d=n+t$. 
\end{proof}

The following result is well-known to experts (e.g.\ \cite[1.5]{Auslanderisolated}).
\begin{prop}\label{nonsingGoren}
Let $\Lambda$ be an $R$-order where $R$ is a CM ring with a canonical module $\omega_{R}$. Then the following are equivalent:\\
\t{(1)} $\Lambda$ is non-singular.\\
\t{(2)} $\gl\Lambda_{\m}=\dim R_{\m}$ for all $\m\in\Max R$.\\
\t{(3)} $\CM\Lambda=\proj\Lambda$.\\
\t{(4)} $\Lambda$ is Gorenstein and $\gl\Lambda<\infty$.
\end{prop}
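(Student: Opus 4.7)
The plan is to establish the cycle $(1)\Rightarrow (2)\Rightarrow (3)\Rightarrow (4)\Rightarrow (1)$, the first implication being immediate. Throughout I will use two standing facts: for a module-finite $R$-algebra $\Lambda$, both projectivity of a finitely generated $\Lambda$-module and the value of $\gl\Lambda$ can be computed from the localizations at $\m\in\Max R$; and the existence of $\omega_R$ forces $\dim R<\infty$. A preliminary that I will use repeatedly is the following \emph{syzygy observation}: since $\Lambda\in\CM R$, for any $X\in\mod\Lambda$ and any $\m\in\Max R$ with $d=\dim R_\m$, the $d$-th syzygy $\Omega^d X_\m$ lies in $\CM\Lambda_\m$. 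This follows by iterating the depth lemma on the short exact sequences $0\to\Omega^{i+1}X\to P_i\to\Omega^iX\to 0$ extracted from a projective resolution, using that each $\depth_{R_\m}P_i=d$.

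For $(3)\Rightarrow (4)$: one always has $\omega_\Lambda\in\CM\Lambda$, so the hypothesis gives $\omega_\Lambda\in\proj\Lambda$, i.e., $\Lambda$ is Gorenstein. For finite global dimension, the syzygy observation gives $\Omega^dX_\m\in\CM\Lambda_\m=\proj\Lambda_\m$ for any $X$ and any $\m$, so $\pd_{\Lambda_\m}X_\m\le\dim R_\m\le\dim R<\infty$, which yields $\gl\Lambda<\infty$.

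For $(4)\Rightarrow (1)$: fix $\p\in\Spec R$. Both Gorensteinness and finiteness of global dimension pass to the localization $\Lambda_\p$, so \ref{ABlocal} applies; in particular, part (i) of its proof gives $\CM\Lambda_\p=\proj\Lambda_\p$. The syzygy observation applied at $\p$ then yields $\gl\Lambda_\p\le\dim R_\p$, and the reverse inequality noted after \ref{nonsingorder} gives the required equality.

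The genuine difficulty is $(2)\Rightarrow (3)$, since one cannot yet invoke Gorensteinness and hence cannot apply \ref{ABlocal} directly. The plan is to appeal to the classical noncommutative Auslander--Buchsbaum formula: for a module-finite $R$-algebra $\Lambda$ over a local CM ring $R$ with $\Lambda\in\CM R$, every $M\in\mod\Lambda$ with $\pd_\Lambda M<\infty$ satisfies
\[
\pd_\Lambda M+\depth_R M=\dim R.
\]
Taking $X\in\CM\Lambda$ and localizing at any $\m\in\Max R$, the hypothesis $\gl\Lambda_\m=\dim R_\m<\infty$ guarantees $\pd_{\Lambda_\m}X_\m<\infty$, while $X\in\CM\Lambda$ gives $\depth_{R_\m}X_\m=\dim R_\m$; the formula then forces $\pd_{\Lambda_\m}X_\m=0$. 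As projectivity is local, $X\in\proj\Lambda$. The AB formula itself admits a direct proof by induction on the length of a minimal projective resolution combined with the depth lemma; alternatively one may bootstrap by first applying it only to $\omega_\Lambda$ (which is always CM and now has finite pd by the hypothesis) to deduce Gorensteinness of $\Lambda$, and then reduce to \ref{ABlocal}.
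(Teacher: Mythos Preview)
Your proof is correct and follows the same cycle $(1)\Rightarrow(2)\Rightarrow(3)\Rightarrow(4)\Rightarrow(1)$ as the paper. The arguments for $(1)\Rightarrow(2)$, $(3)\Rightarrow(4)$, and $(4)\Rightarrow(1)$ are essentially identical to the paper's (your $(4)\Rightarrow(1)$ is slightly more roundabout, passing through $\CM\Lambda_\p=\proj\Lambda_\p$ rather than applying \ref{ABlocal} directly to an arbitrary $Y\in\mod\Lambda_\p$, but both routes rely on the same ingredients and both tacitly use that $\gl\Lambda_\p\le\gl\Lambda<\infty$).

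The one place where the paper differs in presentation is $(2)\Rightarrow(3)$. Rather than invoking a general noncommutative Auslander--Buchsbaum formula as a black box, the paper carries out the key computation directly: for $X\in\CM\Lambda$ and $\m\in\Max R$, choose a system of parameters $x_1,\dots,x_d$ for $R_\m$ (automatically regular on the CM modules $X_\m$ and $\Lambda_\m$) and use that $\pd_{\Lambda_\m}(X_\m/(x_1,\dots,x_d)X_\m)=\pd_{\Lambda_\m}X_\m+d$; since the left side is at most $\gl\Lambda_\m=d$, one gets $\pd_{\Lambda_\m}X_\m=0$. This regular-sequence step is of course exactly the engine behind the AB formula you cite, so the two arguments are morally the same --- the paper's version is simply more self-contained, avoiding the need to state and justify the general formula (which, as you correctly note, is \emph{not} \ref{ABlocal} since Gorensteinness is unavailable at that stage). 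Your bootstrap alternative via $\omega_\Lambda$ is a nice observation and would also work.
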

\begin{proof}
(1)$\Rightarrow$(2) This is immediate.\\
(2)$\Rightarrow$(3) Let $X\in\CM\Lambda$. Then $X_{\m}\in\CM\Lambda_{\m}$.
Let $x_{1},\hdots,x_{d}$ be a $X_{\m}$-regular sequence with $d=\dim R_{\m}$.  Since we have an exact sequence
\[
0 \to X_\m \xrightarrow{x_1} X_\m \to X_\m/x_1X_\m \to 0
\]
which induces an exact sequence
\[
\Ext^i_{\Lambda_\m}(X_\m,-)\xrightarrow{x_1}\Ext^i_{\Lambda_\m}(X_\m,-)\to\Ext^{i+1}_{\Lambda_\m}(X_\m/x_1X_\m,-)\to\Ext^{i+1}_{\Lambda_\m}(X_\m,-),
\]
we have $\pd_{\Lambda_\m}(X_\m/x_1X_\m)=\pd_{\Lambda_\m}X_\m+1$ by Nakayama's Lemma. Using this repeatedly, we have $\pd_{\Lambda_{\m}} (X_{\m}/(x_{1},\hdots,x_{d})X_{\m})=\pd_{\Lambda_{\m}}X_{\m}+d$.  Since $\gl\Lambda_\m=d$, this implies that $X_{\m}$ is a projective $\Lambda_{\m}$-module.  Since this holds for all $\m\in\Max R$, $X$ is a projective $\Lambda$-module (see e.g.\ \ref{addlocal}).\\
(3)$\Rightarrow$(4) We have $\omega_\Lambda\in \CM\Lambda=\proj\Lambda$.
Since $\Omega^{\dim R}X\in \CM\Lambda=\proj\Lambda$ for any $X\in \mod\Lambda$,
we have $\gl\Lambda\leq \dim R$.\\
(4)$\Rightarrow$(1) Pick $\p\in\Spec R$ and suppose $Y\in\mod\Lambda_{\p}$.  Since $\gl\Lambda_\p<\infty$, by Auslander--Buchsbaum \ref{ABlocal} $\pd_{\Lambda_{\p}}Y\leq \dim R_{\p}$ and so $\gl\Lambda_{\p}\leq \dim R_{\p}$.  Since $\Lambda_{\p}$ is an $R_{\p}$-order, the $\Lambda_\p$-regular sequence $x_1,\ldots,x_d$ with $d=\dim R_\p$ gives an $\Lambda_\p$-module $X:=\Lambda_\p/(x_1,\ldots,x_d)\Lambda_\p$ with $\pd_{\Lambda_\p}X=d$. Thus we have $\gl\Lambda_{\p}\geq \dim R_{\p}$. 
\end{proof}

\subsection{$d$-sCY Algebras}\label{3CY}

Throughout this paper we shall freely use the notion of $d$-CY and $d$-sCY as in \cite[\S 3]{IR}:  let $R$ be a commutative noetherian ring with $\dim R=d$ and let $\Lambda$ be a module finite $R$-algebra.  For any $X\in\mod \Lambda$ denote by $E(X)$ the injective hull of $X$, and set $E:=\bigoplus_{\m\in\Max R}E(R/\m)$.  This gives rise to Matlis duality $D_0:=\Hom_R(-,E)$ (see for example \cite[\S 1]{oo}).  Matlis duality always gives a duality from
the category of finite length $R$-modules to itself.
This is true without assuming that $R$ is (semi-)local
because any finite length $R$-module is the finite direct sum
of finite length $R_\m$-modules for maximal ideals $\m$,
so the statement follows from that for the local setting \cite[3.2.12]{BH}.  Recall from the introduction:
\begin{defin}
For $n\in\mathbb{Z}$ we call $\Lambda$ $n$-CY if there is a functorial isomorphism 
\[
\Hom_{\D(\Mod \Lambda)}(X,Y[n])\cong D_0\Hom_{\D(\Mod \Lambda)}(Y,X)
\]
for all $X\in\Db(\fl \Lambda)$, $Y\in\Db(\mod\Lambda)$.  Similarly we call $\Lambda$ $n$-sCY if the above functorial isomorphism holds for all $X\in\Db(\fl\Lambda)$ and $Y\in\Kb(\proj \Lambda)$.
\end{defin}

The next three results can be found in \cite{IR}; we include them here since we will use and refer to them extensively.

\begin{prop}\label{IRDB}
\t{(1)}  $\Lambda$ is $d$-CY if and only if
it is $d$-sCY and $\gl\Lambda<\infty$.\\
\t{(2)} $d$-sCY (respectively $d$-CY) algebras are closed under derived equivalences.
\end{prop}
\begin{proof}
(1) is \cite[3.1(7)]{IR} whilst (2) is \cite[3.1(1)]{IR}.
\end{proof}

\begin{prop}\label{3.2IR}\cite[3.10]{IR}
Let $R$ be a commutative noetherian ring, $d\in\mathbb{N}$.  Then $R$ is $d$-sCY if and only if $R$ is Gorenstein and equi-codimensional with $\dim R=d$.
\end{prop}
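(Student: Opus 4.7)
The plan is to prove both directions by localizing at maximal ideals and invoking Grothendieck local duality for Gorenstein local rings.

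For $(\Rightarrow)$, I would apply the $d$-CY$^-$ isomorphism to $Y=R\in\Kb(\proj R)$ and $X=R/\m[j]\in\Db(\fl R)$ for each maximal ideal $\m$ and each $j\in\mathbb{Z}$. The left-hand side is $\Ext^{d-j}_R(R/\m,R)$, while $\Hom_{\D(\Mod R)}(R,R/\m[j])$ equals $R/\m$ for $j=0$ and $0$ otherwise, whose Matlis dual is again $R/\m$ or $0$ (using $D(R/\m)=R/\m$). Hence $\Ext^i_R(R/\m,R)=R/\m$ for $i=d$ and $0$ otherwise. Localizing at $\m$, this is Bass's characterization of $R_\m$ as Gorenstein local of dimension $d$. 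Since this holds for every maximal $\m$, $R$ is Gorenstein, equi-codimensional, and $\dim R=d$.

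For $(\Leftarrow)$, I would first use perfectness of $Y$ to rewrite the isomorphism. Setting $Y^\vee:=\mathbf{R}\Hom(Y,R)$ and $W:=Y^\vee\otimes^{\mathbf L} X$, the biduality $Y^{\vee\vee}\cong Y$ and tensor-hom adjunction give $\mathbf{R}\Hom(Y,X)\cong W$ and $\mathbf{R}\Hom(X,Y[d])\cong\mathbf{R}\Hom(W,R[d])$. Since $Y^\vee$ is perfect and $X$ has finite length cohomology, $W\in\Db(\fl R)$, so decomposes canonically as $\bigoplus_{\m\in\Max R}W_\m$ with $W_\m\in\Db(\fl R_\m)$ (the sum finite). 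For each $\m$, $W_\m$ is supported only at $\m$, so
\[
\mathbf{R}\Hom_R(W_\m,R[d])\cong\mathbf{R}\Hom_{R_\m}(W_\m,R_\m[d])\cong D_\m W_\m,
\]
the second isomorphism being Grothendieck local duality for the Gorenstein local ring $R_\m$ of dimension $d$ (where $R_\m[d]$ is a normalized dualizing complex). Summing over $\m$ and using that the global Matlis dual $D$ restricted to $\m$-torsion modules agrees with $D_\m$, one obtains $\mathbf{R}\Hom(X,Y[d])\cong D\mathbf{R}\Hom(Y,X)$, and taking $H^0$ gives the required functorial isomorphism.

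The main obstacle is the invocation of Grothendieck local duality in the form needed: one requires the statement for the (possibly non-complete) Gorenstein local ring $R_\m$, whereas the standard formulation is usually given over complete local rings. One either cites a non-complete version directly or reduces to the complete case using the faithful flatness of $R_\m\to\widehat{R_\m}$ together with the fact that $W_\m$, being of finite length, is already $\m$-adically complete and so unchanged under the base change. The remaining pieces --- the biduality manipulations, the $\bigoplus_\m$ decomposition of finite length complexes, and the compatibility of Matlis duality with that decomposition --- are routine bookkeeping once one is careful that $Y\in\Kb(\proj R)$ gives $\mathbf{R}\Hom(Y,-)$ the form $Y^\vee\otimes^{\mathbf L}(-)$, which is what forces $W$ to land back in $\Db(\fl R)$ and makes the local reduction possible.
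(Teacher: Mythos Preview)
The paper does not give its own proof of this proposition; it is simply quoted from \cite[3.10]{IR}.  Your argument is a correct self-contained proof and is in fact the standard one.  The forward direction---plugging $Y=R$ and $X=(R/\m)[j]$ into the duality and reading off $\Ext^i_R(R/\m,R)$, then invoking Bass's criterion locally---is exactly how one extracts Gorensteinness and equi-codimensionality from the $d$-CY$^-$ condition.  For the converse, your reduction of both sides to the complex $W=Y^\vee\otimes^{\mathbf L}X\in\Db(\fl R)$ via perfectness of $Y$, followed by the $\m$-primary decomposition of $W$ and Grothendieck local duality at each $R_\m$, is the natural route; the completeness issue you flag is handled precisely as you indicate, since finite-length modules over $R_\m$ are already complete and Matlis duality is insensitive to completion on such modules.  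Nothing is missing.
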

From this, for brevity we often say `$R$ is $d$-sCY' instead of saying
`$R$ is Gorenstein and equi-codimensional with $\dim R=d$'.

\begin{prop}\label{2.14a} Let $R$ be $d$-sCY and let $\Lambda$ be a module finite $R$-algebra. Then
\begin{center} 
$\Lambda$ is $d$-sCY $\iff$ $\Lambda$ is an $R$-order which is a locally symmetric $R$-algebra.
\end{center}
Thus if $\Lambda$ is $d$-sCY then $\Lambda$ is a Gorenstein $R$-order.
\end{prop}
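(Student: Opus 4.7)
The plan is to reduce to the local case and use local duality to build a bridging isomorphism via the bimodule $\omega_\Lambda:=\mathbf{R}\Hom_R(\Lambda,R)$. Membership in $\CM R$, local symmetry, and the $d$-CY$^{-}$ property are each suitably local on $\Spec R$: for the first two this is tautological, while for the third one uses that any finite-length $\Lambda$-module decomposes as a direct sum indexed by $\Max R$, so that the Hom pairing defining $d$-CY$^{-}$ splits correspondingly. Since symmetric is preserved by localisation, it suffices to verify the equivalence after localising at each $\m\in\Max R$, so I may assume $(R,\m)$ is Gorenstein local of dimension $d$. Tensor-hom adjunction together with local duality for the Gorenstein ring $R$ (i.e.\ $R$ is $d$-CY$^{-}$, \ref{3.2IR}) gives the bridging identity
\[
\mathbf{R}\Hom_\Lambda(X,\omega_\Lambda)\cong \mathbf{R}\Hom_R(X,R)\cong DX[-d]
\]
for every $X\in\Db(\fl\Lambda)$. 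Since $\Kb(\proj\Lambda)$ is the thick subcategory generated by $\Lambda$ and both sides of the $d$-CY$^{-}$ equation are cohomological functors in $Y$, the $d$-CY$^{-}$ condition on $\Lambda$ is equivalent to the existence of a functorial isomorphism $\mathbf{R}\Hom_\Lambda(-,\Lambda)\cong D[-d]\cong \mathbf{R}\Hom_\Lambda(-,\omega_\Lambda)$ on $\Db(\fl\Lambda)$.

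The $(\Leftarrow)$ direction is then immediate: if $\Lambda\in\CM R$ then $\omega_\Lambda=\Hom_R(\Lambda,R)$ is concentrated in degree zero, and if further $\Lambda\cong\omega_\Lambda$ as bimodules, substituting into the bridging identity gives $\mathbf{R}\Hom_\Lambda(X,\Lambda)\cong DX[-d]$, which is the $d$-CY$^{-}$ property. The final clause of the proposition follows at once, as $\omega_\Lambda\cong\Lambda$ as left $\Lambda$-modules is exactly the statement $\omega_\Lambda\in\proj\Lambda$, i.e.\ $\Lambda$ is a Gorenstein order.

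For the $(\Rightarrow)$ direction, assume $\Lambda$ is $d$-CY$^{-}$. First I would show $\Lambda\in\CM R$: applying the hypothesis to $X=\Lambda/\m\Lambda$ gives $\Ext^i_\Lambda(\Lambda/\m\Lambda,\Lambda)=0$ for $i<d$, and because $\m$ sits inside the Jacobson radical of the module-finite $R$-algebra $\Lambda$ every simple $\Lambda$-module appears in the composition series of $\Lambda/\m\Lambda$; the standard identity $\depth_R M=\inf\{i:\Ext^i_\Lambda(\Lambda/\m\Lambda,M)\neq0\}$ then forces $\depth_R\Lambda\geq d$ and hence $\Lambda\in\CM R$. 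Thus $\omega_\Lambda=\Hom_R(\Lambda,R)$ is also concentrated in degree zero, and the bridging identity reads as a functorial isomorphism $\Ext^d_\Lambda(-,\Lambda)\cong \Ext^d_\Lambda(-,\omega_\Lambda)\cong D$ on $\fl\Lambda$ with vanishing lower $\Ext$s on both sides. The main obstacle is upgrading this Serre-duality-type agreement to an honest $\Lambda$-bimodule isomorphism $\omega_\Lambda\cong\Lambda$: there is no a priori natural map between the two modules, so one must extract the iso directly from the $d$-CY$^{-}$ data using Matlis duality together with $\m$-adic completion (since finitely generated modules over a complete local ring are determined by their finite-length quotients), and then verify compatibility with both $\Lambda$-actions to promote it to a bimodule iso.
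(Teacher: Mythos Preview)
The paper does not actually prove the equivalence: it simply cites \cite[3.3(1)]{IR} for it, and then deduces the Gorenstein order statement in two lines from local symmetry. You have instead outlined the argument behind that citation, and your approach is essentially the one used there. Your $(\Leftarrow)$ direction and the deduction of ``Gorenstein order'' are complete and correct.

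For the $(\Rightarrow)$ direction your outline is right, but the sketch of the last step hides the two points that actually require care. First, to get a \emph{bimodule} comparison you need to use bifunctoriality in both variables of the $d$-CY$^{-}$ isomorphism: functoriality in $X$ gives the left $\End(X)$-compatibility, while functoriality in $Y$ at $Y=\Lambda$ (through $\End_\Lambda(\Lambda)\cong\Lambda^{\op}$) supplies the right $\Lambda$-action, so that $\Ext^d_\Lambda(X,\Lambda)\cong DX$ is an isomorphism of $\Lambda$-bimodules for every $X\in\fl\Lambda$. Second, ``finitely generated modules over a complete local ring are determined by their finite-length quotients'' is not by itself a mechanism for producing an isomorphism; you need a concrete way to evaluate the two functors compatibly. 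One clean way: pass to the completion, choose a system of parameters $x_1,\dots,x_d$ of $R$, and use the self-duality of the Koszul complex to compute, for any $M\in\CM\Lambda$,
\[
\Ext^d_\Lambda(\Lambda/(x_1^n,\dots,x_d^n)\Lambda,\,M)\;\cong\;M/(x_1^n,\dots,x_d^n)M
\]
naturally in $M$ and compatibly in $n$. Applying this with $M=\Lambda$ and $M=\omega_\Lambda$ turns your functorial agreement $\Ext^d_\Lambda(-,\Lambda)\cong\Ext^d_\Lambda(-,\omega_\Lambda)$ into a compatible system of bimodule isomorphisms $\Lambda/(x^n)\Lambda\cong\omega_\Lambda/(x^n)\omega_\Lambda$; taking the inverse limit gives $\widehat{\Lambda}\cong\widehat{\omega_\Lambda}$ as bimodules, and faithful flatness of completion then yields $\Lambda\cong\omega_\Lambda$. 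With these two points made explicit, your proof is complete and matches the cited one.
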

\begin{proof}
The first statement is \cite[3.3(1)]{IR}.  For the second, suppose $\Lambda$ is $d$-sCY then since it is locally symmetric we have $\Lambda_{\m}\cong\Hom_{R_{\m}}(\Lambda_{\m},R_{\m})=\Hom_{R}(\Lambda,R)_{\m}$ is a projective $\Lambda_{\m}$-module for all $\m\in\Max R$.  Hence $\Hom_{R}(\Lambda,R)$ is a projective $\Lambda$-module, as required.
\end{proof}

The following picture for $d$-sCY rings $R$ may help the reader navigate the terminology introduced above.
\[
{\SelectTips{cm}{10}
\xy0;/r.37pc/:
(7,0)*+{\mbox{{\rm $d$-CY $R$-algebra}}}="1",
(63,0)*+{\mbox{{\rm non-singular $R$-order}}}="3",
(35,-18)*+{\mbox{{\rm symmetric $R$-order}}}="c2",
(7,-9)*+{\mbox{{\rm $d$-sCY $R$-algebra}}}="b1",
(35,-9)*+{\mbox{{\rm locally symmetric $R$-order}}}="b2",
(63,-9)*+{\mbox{{\rm Gorenstein $R$-order}}}="b3"
\ar@{=>}"1";"3"
\ar@<-3ex>@{=>}"b1";"1"_{{\scriptsize \textnormal{if}\,\gl<\infty}}
\ar@{<=>}^(0.425){\scriptsize\ref{2.14a}}"b1";"b2"
\ar@{=>}^(0.54){\scriptsize\ref{2.14a}}"b2";"b3"
\ar@{=>}"1";"b1"
\ar@{=>}_{{\scriptsize\ref{nonsingGoren}}}"3";"b3"
\ar@<-3ex>@{=>}"b3";"3"_{{\scriptsize \textnormal{if}\,\gl<\infty}}
\ar@{=>}"c2";"b2"
\endxy}
\]

The following non-local result is also useful.
\begin{lemma}\label{3CY-nonlocal}
Suppose that $R$ is a $d$-sCY normal domain.\\
\t{(1)} If $\Lambda$ is a module finite $R$-algebra which is $d$-sCY and $M\in\refl \Lambda$ is a height one projective, then $
\End_{\Lambda}(M) \mbox{ is $d$-sCY} \iff \End_{\Lambda}(M)\in\CM R
$.\\
\t{(2)} If $N\in\refl R$ then $\End_{R}(N)$ is $d$-sCY $\iff \End_{R}(N)\in\CM R$. 
\end{lemma}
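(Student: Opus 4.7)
The plan is to reduce both (1) and (2) to the characterization of $d$-CY$^-$ algebras given in Proposition~\ref{2.14a}: a module-finite $R$-algebra is $d$-CY$^-$ if and only if it is an $R$-order (equivalently, lies in $\CM R$) and is locally symmetric. Under this criterion the implication $(\Rightarrow)$ in both parts is immediate, since $d$-CY$^-$ forces the $R$-order condition. The content therefore lies in the converse, where the CM hypothesis must be upgraded by establishing local symmetry of the relevant endomorphism algebra.

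For part (2) I would appeal directly to the ``in particular'' clause of Lemma~\ref{Endissymm}: since $R$ is a normal domain and $N \in \refl R$, the algebra $\End_R(N)$ is already a globally symmetric $R$-algebra, and global symmetry trivially localizes to give local symmetry. Combined with the CM hypothesis, Proposition~\ref{2.14a} then yields that $\End_R(N)$ is $d$-CY$^-$.

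For part (1), $\Lambda$ is only \emph{locally} symmetric (by \ref{2.14a} applied to $\Lambda$), so Lemma~\ref{Endissymm} cannot be applied globally; instead I would localize and apply it at each prime. Fix $\p \in \Spec R$. Then $\Lambda_\p$ is a symmetric $R_\p$-algebra, $M_\p$ is reflexive over $R_\p$ (reflexivity localizes), and $M_\p$ is height one projective as a $\Lambda_\p$-module over $R_\p$: primes of $R_\p$ of height $\leq 1$ correspond to primes $\q \subseteq \p$ of $R$ with $\hgt_R \q \leq 1$, and at each such $\q$ the module $M_\q$ is $\Lambda_\q$-projective since $M$ is a height one progenerator over $\Lambda$. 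Lemma~\ref{Endissymm} applied over $R_\p$ then gives that $\End_\Lambda(M)_\p = \End_{\Lambda_\p}(M_\p)$ is a symmetric $R_\p$-algebra. Varying $\p$ produces local symmetry of $\End_\Lambda(M)$ globally, and combined with the $\CM R$ hypothesis, \ref{2.14a} finishes the proof. The only mildly subtle point is the localization bookkeeping that confirms height one projectivity is inherited by $M_\p$ from $M$; this is routine for Noetherian rings and presents no serious obstacle.
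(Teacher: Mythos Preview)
Your argument is correct and is essentially the same as the paper's: both reduce to Proposition~\ref{2.14a} and establish local symmetry of the endomorphism algebra via Lemma~\ref{Endissymm} applied after localization. The only cosmetic differences are that the paper checks symmetry just at maximal ideals (which suffices and then localizes further), and deduces (2) from (1) via \ref{reflequiv}(3) rather than invoking the ``in particular'' clause of \ref{Endissymm} directly.
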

\begin{proof}
(1) Let $\Gamma:=\End_{R}(M)$.  By \ref{2.14a} $\Lambda_\m$ is a symmetric $R_\m$-algebra for all $\m\in \Max R$, thus $\Gamma_\m$ is a symmetric $R_\m$-algebra by \ref{Endissymm}.  By \ref{2.14a}, $\Gamma$ is $d$-sCY if and only if $\Gamma_\m\in \CM R_\m$ for all $\m\in \Max R$ if and only if $\Gamma\in \CM R$.  Thus the assertion follows.\\
(2) This follows immediately from (1) since any $N\in\refl R$ is a height one progenerator by \ref{reflequiv}(3). 
\end{proof}

Throughout we shall use the  definition of NCCR in the introduction (\ref{NCCR}) due to its suitability for global-local arguments. However, we have the following:
\begin{lemma}\label{NCCRdefequiv}
Let $R$ be a $d$-sCY normal domain, then $M\in\refl R$ gives a NCCR if and only if $\gl\End_R(M)<\infty$ and $\End_R(M)\in\CM R$.
\end{lemma}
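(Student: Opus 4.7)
The plan is to show the two conditions are equivalent by stitching together Propositions \ref{nonsingGoren} and \ref{3CY-nonlocal}(2); there is essentially no new work, the lemma is really a bookkeeping statement that records how our Definition \ref{NCCR} sits next to Van den Bergh's. Write $\Lambda := \End_R(M)$.

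For the direction ($\Rightarrow$), assume $\Lambda$ gives an NCCR in the sense of Definition \ref{NCCR}, so $\Lambda$ is a non-singular $R$-order. Then $\Lambda \in \CM R$ is part of the definition of an $R$-order, and the implication (1)$\Rightarrow$(4) of Proposition \ref{nonsingGoren} gives $\gl \Lambda < \infty$ (in fact $\gl\Lambda \leq d$).

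For the direction ($\Leftarrow$), assume $\gl\Lambda < \infty$ and $\Lambda \in \CM R$. Since $R$ is a normal domain and $M \in \refl R$, Lemma \ref{3CY-nonlocal}(2) immediately gives that $\Lambda$ is $d$-CY$^-$, and hence by Proposition \ref{2.14a} $\Lambda$ is a Gorenstein $R$-order. Combining this with the finite global dimension assumption, the implication (4)$\Rightarrow$(1) of Proposition \ref{nonsingGoren} says that $\Lambda$ is non-singular, i.e.\ gives an NCCR in the sense of Definition \ref{NCCR}.

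There is no real obstacle here; the only thing to check is that the hypothesis of \ref{3CY-nonlocal}(2) truly applies in the reverse direction, which is automatic because $M \in \refl R$ is assumed nonzero (else $\Lambda = 0$ and the statement is vacuous), and \ref{reflequiv}(3) supplies the height one progenerator property used inside \ref{3CY-nonlocal}.
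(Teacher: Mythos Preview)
Your proof is correct and follows essentially the same route as the paper's own proof: the forward direction is immediate from the definition of non-singular order together with Proposition~\ref{nonsingGoren}, and the backward direction chains Lemma~\ref{3CY-nonlocal}(2), Proposition~\ref{2.14a}, and Proposition~\ref{nonsingGoren} in exactly the order you use. The paper merely writes ``obvious'' for $(\Rightarrow)$ where you spell out the appeal to \ref{nonsingGoren}(1)$\Rightarrow$(4), but the content is identical.
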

\begin{proof}
($\Rightarrow$) obvious.\\
($\Leftarrow$) Set $\Lambda:=\End_{R}(M)$, $d:=\dim R$.  By \ref{3CY-nonlocal}(2)  $\Lambda$ is $d$-sCY hence by \ref{2.14a} $\Lambda$ is a Gorenstein order, with $\gl\Lambda<\infty$.  By \ref{nonsingGoren} $\Lambda$ is non-singular.
\end{proof}

\subsection{Global--local properties}  In this paper we work in the global setting of non-local rings so that we can apply our work to algebraic geometry \cite{IW5}.   To do this requires the following technical lemmas.
\begin{lemma}\label{derivedlocalcomp}
Derived equivalences between module finite $R$-algebras are preserved under localization and completion.
\end{lemma}
\begin{proof}
Let $A$ and $B$ be module finite $R$-algebras with $A$  derived equivalent to $B$ via a tilting complex $T$ \cite[6.5]{Rickard}.  Since Ext groups localize (respectively, complete), $T_\p$ and $\widehat{T_\p}$ both have no self-extensions.  Further $A$ can be constructed from $T$ using cones, shifts and summands of $T$, so using the localizations (respectively, completions) of these triangles we conclude that $A_\p$ can be constructed from $T_\p$ and also $\widehat{A_\p}$ can be reached from $\widehat{T_\p}$.  Thus $T_\p$ is a tilting $A_\p$ complex and $\widehat{T_\p}$ is a tilting $\widehat{A_\p}$ complex.
\end{proof}
The following ensure that membership of $\add M$ can be shown locally or even complete locally, and we will use this often.
\begin{lemma}\label{addsurj}
Let $\Lambda$ be a module finite $R$-algebra, where $R$ is a commutative noetherian ring, and let $M,N\in\mod \Lambda$.  Denote by  $N_0\stackrel{g}\to M$ a right $(\add N)$-approximation of $M$.  Then $\add M\subseteq \add N$ if and only if the induced map $\Hom_\Lambda(M,N_0)\xrightarrow{(\cdot g)} \End_\Lambda(M)$ is surjective.
\end{lemma}
\begin{proof}
($\Leftarrow$) If $\Hom_\Lambda(M,N_0)\xrightarrow{(\cdot g)}\End_\Lambda(M)$ is surjective we may lift ${\rm id}_M$ to obtain a splitting for $g$ and hence $M$ is a summand of $N_0$.\\
($\Rightarrow$) If $M\in\add N$ then there exists $M\stackrel{a}\to N^n\stackrel{b}\to M$ with $ab={\rm id}_M$.  Since $g$ is an approximation, for every $\varphi\in\End_\Lambda(M)$ there is a commutative diagram  
\[
{\SelectTips{cm}{10}
\xy
(30,0)*+{N^n}="2",(45,0)*+{M}="3",
(30,-12.5)*+{N_0}="b2",(45,-12.5)*+{M}="b3"
\ar^{b}"2";"3"
\ar^{g}"b2";"b3"
\ar@{.>}^{\psi}"2";"b2"
\ar^{\varphi}"3";"b3"
\endxy}
\]
Consequently $\varphi=ab\varphi=a\psi g$ and so $\varphi$ is the image of $a\psi$ under the map $(\cdot g)$.
\end{proof}

\begin{prop}\label{addlocal}
Let $\Lambda$ be a module finite $R$-algebra, where $R$ is a commutative noetherian ring, and let $M,N\in\mod \Lambda$.  Then the following are equivalent:\\
\t{1.} $\add M\subseteq\add N$.\\
\t{2.} $\add M_\p\subseteq\add N_\p$ for all $\p\in\Spec R$.\\
\t{3.} $\add M_\m\subseteq\add N_\m$ for all $\m\in\Max R$.\\
\t{4.} $\add \widehat{M}_\p\subseteq\add \widehat{N}_\p$ for all $\p\in\Spec R$.\\
\t{5.} $\add \widehat{M}_\m\subseteq\add \widehat{N}_\m$ for all $\m\in\Max R$.\\
Furthermore we can replace $\subseteq$ by equality throughout and the result is still true.
\end{prop}
\begin{proof}
Let $g$ be as in \ref{addsurj}. Then $g_\p:(N_0)_\p\to M_\p$ is a right $(\add N_\p)$-approximation and $\widehat{g}_\p:\widehat{(N_0)_\p}\to\widehat{M}_\p$ is a right $(\add\widehat{N_\p})$-approximation for any $\p\in \Spec R$.  Since the vanishing of $\Cok(\Hom_\Lambda(M,N_0)\xrightarrow{(\cdot g)}\End_\Lambda(M))$ can be checked locally
or complete locally, all conditions are equivalent.  The last statement holds by symmetry.
\end{proof}

\section{Auslander--Reiten Duality for Non-Isolated Singularities}\label{AR}

Let $R$ be a $d$-dimensional, equi-codimensional CM ring 
with a canonical module $\omega_{R}$, and let $\Lambda$ be an $R$-order.  If $R$ is $d$-sCY, we always choose $\omega_R:=R$. We denote $\u{\CM}\Lambda$ to be the stable category of maximal CM $\Lambda$-modules and $\overline{\CM}\Lambda$ to be the costable category.  By definition these have the same objects as $\CM\Lambda$, but  morphism spaces are defined as $\u{\Hom}_{\Lambda}(X, Y):=\Hom_{\Lambda}(X, Y)/\mathcal{P}(X, Y)$ (respectively $\overline{\Hom}_{\Lambda}(X, Y):=\Hom_{\Lambda}(X, Y)/\mathcal{I}(X, Y)$) where $\mathcal{P}(X, Y)$ (respectively $\mathcal{I}(X, Y)$) is the subspace of morphisms factoring through $\add \Lambda$ (respectively $\add\omega_\Lambda$).  

We denote $\Tr:=\Tr_\Lambda:\u{\mod}\Lambda\to\u{\mod}\Lambda^{\op}$ the Auslander--Bridger transpose duality, and $\Omega_{\Lambda^{\op}}:
\u{\mod}\Lambda^{\op}\to\u{\mod}\Lambda^{\op}$
the syzygy functor.  Then we have \emph{AR translation}
\[
\tau:=\Hom_R(\Omega_{\Lambda^{\op}}^d\Tr_{\Lambda}(-) ,\omega_R):
\underline{\CM}\Lambda\to\overline{\CM}\Lambda.
\]
We denote $D_i:=\Ext_R^{d-i}(-,\omega_R)$ to be the duality of the category of Cohen--Macaulay modules of dimension $i$, so $D_0$ is the Matlis dual (as in \S \ref{3CY}).

If $\Lambda$ is an $R$-order as above we define $\Sing_R \Lambda:=\{\p\in\Spec R: \gl\Lambda_\p>\dim R_\p \}$ to be the singular locus of $\Lambda$ (see \ref{nonsingorder}(2)).  Our main theorem is the following:
\begin{thm}\label{ARduality}
Let $R$ be a $d$-dimensional, equi-codimensional CM ring with a canonical module $\omega_{R}$.  Let $\Lambda$ be an $R$-order with $\dim\Sing_R \Lambda\leq 1$.  Then there exist functorial isomorphisms
\begin{eqnarray*}
\fl \u{\Hom}_{\Lambda}(X,Y)&\cong& D_0(\fl\Ext^1_{\Lambda}(Y,\tau X)),\\
\frac{\u{\Hom}_{\Lambda}(X,\Omega Y)}{\fl\u{\Hom}_{\Lambda}(X,\Omega Y)}&\cong&
D_1\left(\frac{\Ext^1_{\Lambda}(Y,\tau X)}{\fl\Ext^1_{\Lambda}(Y,\tau X)}\right)
\end{eqnarray*}
for all $X,Y\in\CM \Lambda$.
\end{thm}
In fact \ref{ARduality} immediately follows from the more general \ref{ARduality2} below.  Recall for $X\in\mod \Lambda$ that ${\rm NP}(X):=\{\p\in\Spec R: X_\p \notin\proj\Lambda_\p \}$ and $\CM_1\Lambda:=\{ X\in\CM \Lambda:\dim {\rm NP}(X)\leq 1 \}$. 
\begin{thm}\label{ARduality2}
Let $R$ be a $d$-dimensional, equi-codimensional CM ring with a canonical module $\omega_{R}$.  Let $\Lambda$ be an $R$-order.  Then there exist functorial isomorphisms
\begin{eqnarray*}
\fl \u{\Hom}_{\Lambda}(X,Y)&\cong& D_0(\fl\Ext^1_{\Lambda}(Y,\tau X)),\\
\frac{\u{\Hom}_{\Lambda}(X,\Omega Y)}{\fl\u{\Hom}_{\Lambda}(X,\Omega Y)}&\cong&
D_1\left(\frac{\Ext^1_{\Lambda}(Y,\tau X)}{\fl\Ext^1_{\Lambda}(Y,\tau X)}\right)
\end{eqnarray*}
for all $X\in\CM_1\Lambda$ and $Y\in\CM \Lambda$
\end{thm} 
The proof of \ref{ARduality2} requires the next three easy lemmas.  For a finitely generated $R$-module $M$, denote $E_{R}(M)$ to be the injective hull of $M$.
\begin{lemma}\label{Homzero}
If $X\in \mod R$ and $Y\in \Mod R$ satisfies $\Supp X\cap\Ass Y=\emptyset$, then $\Hom_R(X,Y)=0$.  
\end{lemma}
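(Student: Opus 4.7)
The plan is to argue by contradiction via a standard game with associated primes and supports, passing to the image of a hypothetical nonzero map in order to reduce to a finitely generated situation (recall that $Y$ itself need not be finitely generated, only $X$ is).

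Suppose, for contradiction, that there is a nonzero $f \colon X \to Y$, and set $Z := \Im f$. Then $Z$ is a nonzero submodule of $Y$ and at the same time a quotient of $X$. The first key observation is that, since $Z \subseteq Y$, every associated prime of $Z$ is an associated prime of $Y$, giving $\Ass Z \subseteq \Ass Y$. The second is that, since $Z$ is a quotient of $X$, we have $\Supp Z \subseteq \Supp X$, and — because $Z$ is finitely generated over the noetherian ring $R$ — $\Ass Z \subseteq \Supp Z$.

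Combining these two inclusions yields
\[
\Ass Z \;\subseteq\; \Supp X \cap \Ass Y \;=\; \emptyset.
\]
However, $Z$ is a nonzero finitely generated module over a noetherian ring, so by the standard existence theorem for associated primes, $\Ass Z \neq \emptyset$. This contradiction forces $f = 0$, so $\Hom_R(X,Y) = 0$.

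There is no real obstacle in this argument; the only mild subtlety worth flagging is that, because $Y$ is not assumed finitely generated, one cannot reason directly with $\Ass Y$ being nonempty or compare $\Supp Y$ to anything useful. Passing to $\Im f$ — which is finitely generated as a quotient of $X$ — is precisely what allows the two standard facts $\Ass(\text{submodule}) \subseteq \Ass(\text{ambient})$ and $\Ass \subseteq \Supp$ (for finitely generated modules) to be used in tandem.
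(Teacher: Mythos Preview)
Your proof is correct and is essentially identical to the paper's: both pass to the image $Z=\Im f$ of an arbitrary (or hypothetical nonzero) map, note that $\Ass Z\subseteq\Ass Y$ and $\Ass Z\subseteq\Supp Z\subseteq\Supp X$, and conclude $\Ass Z=\emptyset$, forcing $Z=0$ since $Z$ is finitely generated over a noetherian ring. The only cosmetic difference is that the paper phrases it directly (``let $f$ be any map \ldots\ then $\Im f=0$'') rather than by contradiction.
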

\begin{proof}
Let $f:X\to Y$ be any map and $X':=\Im f$.  Then $X'\subset Y$ is a finitely generated submodule such that $\Ass X'\subset \Supp X\cap\Ass Y$.  Thus $\Ass X'=\emptyset$ and so since $X'$ is finitely generated, $X'=0$.
\end{proof}

\begin{lemma}\cite[3.2.7(a)]{BH}\label{asslemma}
We have $\Ass E_{R}(R/\p)=\{ \p\}$.
\end{lemma}

Now recall that if $R$ is a $d$-dimensional equi-codimensional CM ring with canonical $\omega_R$ then the minimal $R$-injective resolution of $\omega_R$, denoted
\begin{align}
0\to \omega_{R}\to I_0\rightarrow I_1\to\hdots \to I_{d-1}\to I_d\to 0,\label{min inj res}
\end{align}
satisfies
\begin{equation}\label{terms}
I_i\stackrel{\mbox{\scriptsize\cite[3.2.9, 3.3.10(b)]{BH}}}{=}\bigoplus_{\p: \t{ht}\p=i} E(R/\p)\stackrel{\scriptsize{\ref{heightcoheight}}}{=}\bigoplus_{\p:\dim R/\p=d-i} E(R/\p).
\end{equation}
In particular the Matlis dual is $D_0=\Hom_R(-,I_d)$.  

\begin{lemma}\label{key exact sequence}
Let $R$ be a $d$-dimensional equi-codimensional CM ring with canonical module $\omega_R$.  If $N\in\mod R$ with $\dim_RN\leq 1$, then\\
\t{(1)} $\Ext^{d-1}_R(N,\omega_R)\cong \Ext^{d-1}_R(\frac{N}{\fl N},\omega_R)$.\\
\t{(2)} $\Ext^d_R(N,\omega_R)\cong \Ext^d_R(\fl N,\omega_R)$.\\
\t{(3)} There is an exact sequence
\[
0\to\Ext^{d-1}_R(\tfrac{N}{\fl N},\omega_R)\to\Hom_R(N,I_{d-1})\to\Hom_R(N,I_{d})\to\Ext^d_R(\fl N,\omega_R)\to 0.
\]
\end{lemma}
\begin{proof}
There is an exact sequence $0\to\fl N\to N\to\tfrac{N}{\fl N}\to 0$ from which applying $\Hom_R(-,\omega_R)$ gives
\[
\Ext^{d-2}_R(\fl N,\omega_R)\to\Ext^{d-1}_R(\tfrac{N}{\fl N},\omega_R)\to\Ext^{d-1}_R(N,\omega_R)\to \Ext^{d-1}_R({\fl N},\omega_R).
\]
Since $\dim_R(\fl N)=0$, it is well-known that $\Ext^{i}_R(\fl N,\omega_R)=0$ for all $i\neq d$ \cite[3.5.11]{BH}, hence the outer two ext groups vanish, establishing (1).  But we also have an exact sequence
\[
\Ext^{d}_R(\tfrac{N}{\fl N},\omega_R)\to\Ext^{d}_R(N,\omega_R)\to \Ext^{d}_R({\fl N},\omega_R)\to\Ext^{d+1}_R(\tfrac{N}{\fl N},\omega_R)
\]
and so since $\tfrac{N}{\fl N}$ has positive depth (or is zero) at all maximal ideals, $\Ext^i_R(\tfrac{N}{\fl N},\omega_R)=0$ for all $i>d-1$, again by \cite[3.5.11]{BH}.  This establishes (2).  For (3), note first that $\Hom_R(N,I_{d-2})=0$ by \ref{Homzero}, since by \ref{asslemma} and the assumption that $\dim N\leq 1$ we have that $\Supp N\cap\Ass I_{d-2}=\emptyset$.  Consequently simply applying $\Hom_R(N,-)$ to (\ref{min inj res}) gives an exact sequence
\[
0\to\Ext^{d-1}_R(N,\omega_R)\to\Hom_R(N,I_{d-1})\to\Hom_R(N,I_{d})\to\Ext^d_R(N,\omega_R)\to 0,
\]
and so (3) follows from (1) and (2).
\end{proof}

We are now ready to prove \ref{ARduality2}.  To ease notation, we often drop $\Tor$ and $\Ext$, and for example write ${}_{R}^1(X,Y)$ for $\Ext_{R}^1(X,Y)$, and ${}^{R}_1(X,Y)$ for $\Tor^{R}_1(X,Y)$.
\begin{proof}
Denote $T:=\Tr X$. Now since $Y\in\CM \Lambda$ we have $\Ext^{i}_{R}(Y,\omega_{R})=0$ for all $i>0$ and so applying $\Hom_R(Y,-)$ to (\ref{min inj res}) gives an exact sequence
\[
0\to {}_{R}(Y,\omega_{R})\to {}_{R}(Y,I_0)\to {}_{R}(Y,I_1)\to\hdots\to {}_{R}(Y,I_{d-1})\to {}_{R}(Y,I_d)\to 0
\]
of left $\Lambda^{\rm op}$-modules, which we split into short exact sequences as
\[
\SelectTips{cm}{10}
\xy0;/r.275pc/:
\POS(5,0)*+{0}="0",(15,0)*+{{}_{R}(Y,\omega_{R})}="1",(30,0)*+{{}_{R}(Y,I_0)}="2",(45,0)*+{{}_{R}(Y,I_1)}="3",(60,0)*+{{}_{R}(Y,I_2)}="4",(69,0)*+{\,}="4a",(69.5,0)*+{\,}="4b",(80,0)*+{{}_{R}(Y,I_{d-2})}="5",(97,0)*+{{}_{R}(Y,I_{d-1})}="6",(112,0)*+{{}_{R}(Y,I_{d})}="7",(121,0)*+{0}="8",(37.5,-6.5)*+{C_1}="b1",(52.5,-6.5)*+{C_2}="b2",(87.5,-6.5)*+{C_{d-1}}="b3"
\ar"0";"1"
\ar"1";"2"
\ar"2";"3"
\ar"3";"4"
\ar"4";"4a"
\ar@{}"4a";"4b"_{\hdots}
\ar"4b";"5"
\ar"5";"6"
\ar"6";"7"
\ar"7";"8"
\ar@{->>}"2";"b1"
\ar@{^{(}->}"b1";"3"
\ar@{->>}"3";"b2"
\ar@{^{(}->}"b2";"4"
\ar@{->>}"5";"b3"
\ar@{^{(}->}"b3";"6"
\endxy .
\]
Applying $\Hom_{\Lambda^{\rm op}}(T,-)$ gives exact sequences
{
\[
\begin{array}{c}
{\SelectTips{cm}{10}
\xy0;/r.37pc/:
(0,0)*+{{}^{1}_{\Lambda^{\rm op}}(T,{}_{R}(Y,I_{d-1}))}="1",
(18,0)*+{{}^{1}_{\Lambda^{\rm op}}(T,{}_{R}(Y,I_d))}="2",
(34,0)*+{{}^{2}_{\Lambda^{\rm op}}(T,C_{d-1})}="3",
(51,0)*+{{}^{2}_{\Lambda^{\rm op}}(T,{}_{R}(Y,I_{d-1}))}="4",
(69,0)*+{{}^{2}_{\Lambda^{\rm op}}(T,{}_{R}(Y,I_d))}="5"
\ar"1";"2"
\ar"2";"3"
\ar"3";"4"
\ar"4";"5"
\endxy}\\
{\SelectTips{cm}{10}
\xy0;/r.37pc/:
(10,0)*+{{}^{2}_{\Lambda^{\rm op}}(T,{}_{R}(Y,I_{d-2}))}="1",
(28,0)*+{{}^{2}_{\Lambda^{\rm op}}(T,C_{d-1})}="2",
(46,0)*+{{}^{3}_{\Lambda^{\rm op}}(T,C_{d-2})}="3",
(64,0)*+{{}^{3}_{\Lambda^{\rm op}}(T,{}_{R}(Y,I_{d-2}))}="4"
\ar"1";"2"
\ar"2";"3"
\ar"3";"4"
\endxy}\\
\vdots\\
{\SelectTips{cm}{10}
\xy0;/r.37pc/:
(10,0)*+{{}^{d-1}_{\Lambda^{\rm op}}(T,{}_{R}(Y,I_1))}="1",
(28,0)*+{{}^{d-1}_{\Lambda^{\rm op}}(T,C_2)}="2",
(46,0)*+{{}^{d}_{\Lambda^{\rm op}}(T,C_1)}="3",
(64,0)*+{{}^{d}_{\Lambda^{\rm op}}(T,{}_{R}(Y,I_1))}="4",(70,0)*+{\,}="5"
\ar"1";"2"
\ar"2";"3"
\ar"3";"4"
\ar@{}"4";"5"
\endxy}\\
{\SelectTips{cm}{10}
\xy0;/r.37pc/:
(10,0)*+{{}^{d}_{\Lambda^{\rm op}}(T,{}_{R}(Y,I_0))}="1",
(28,0)*+{{}^{d}_{\Lambda^{\rm op}}(T,C_1)}="2",
(46,0)*+{{}^{d+1}_{\Lambda^{\rm op}}(T,{}_{R}(Y,\omega_{R}))}="3",
(64,0)*+{{}^{d+1}_{\Lambda^{\rm op}}(T,{}_{R}(Y,I_0))}="4"
\ar"1";"2"
\ar"2";"3"
\ar"3";"4"
\endxy}.
\end{array}
\]
}By the functorial isomorphism \cite[VI.5.1]{CE99}
\[
\Ext^j_{\Lambda}(A,{}_{R}(B,I))\cong\Hom_R(\Tor^{\Lambda}_j(A,B),I)
\]
where $I$ is an injective $R$-module, we have exact sequences
\begin{eqnarray}
{\SelectTips{cm}{10}
\xy0;/r.37pc/:
(0,0)*+{{}_R({}^{\Lambda}_1(T,Y),I_{d-1})}="1",
(17,0)*+{{}_R({}^{\Lambda}_1(T,Y),I_d)}="2",
(32,0)*+{{}^{2}_{\Lambda^{\rm op}}(T,C_{d-1})}="3",
(48,0)*+{{}_R({}_{2}^{\Lambda}(T,Y),I_{d-1})}="4",
(65,0)*+{{}_R({}_{2}^{\Lambda}(T,Y),I_d)}="5"
\ar"1";"2"
\ar"2";"3"
\ar"3";"4"
\ar"4";"5"
\endxy}\label{1stline}\\
\left.\begin{array}{c}
{\SelectTips{cm}{10}
\xy0;/r.37pc/:
(10,0)*+{{}_R({}_{2}^{\Lambda}(T,Y),I_{d-2})}="1",
(28,0)*+{{}^{2}_{\Lambda^{\rm op}}(T,C_{d-1})}="2",
(46,0)*+{{}^{3}_{\Lambda^{\rm op}}(T,C_{d-2})}="3",
(64,0)*+{{}_R({}_{3}^{\Lambda}(T,Y),I_{d-2})}="4"
\ar"1";"2"
\ar"2";"3"
\ar"3";"4"
\endxy}\\
\vdots\\
{\SelectTips{cm}{10}
\xy0;/r.37pc/:
(10,0)*+{{}_R({}_{d-1}^{\Lambda}(T,Y),I_1)}="1",
(28,0)*+{{}^{d-1}_{\Lambda^{\rm op}}(T,C_2)}="2",
(46,0)*+{{}^{d}_{\Lambda^{\rm op}}(T,C_1)}="3",
(64,0)*+{{}_R({}_{d}^{\Lambda}(T,Y),I_1)}="4",(70,0)*+{\,}="5"
\ar"1";"2"
\ar"2";"3"
\ar"3";"4"
\ar@{}"4";"5"
\endxy}\\
{\SelectTips{cm}{10}
\xy0;/r.37pc/:
(10,0)*+{{}_R({}_{d}^{\Lambda}(T,Y),I_0)}="1",
(28,0)*+{{}^{d}_{\Lambda^{\rm op}}(T,C_1)}="2",
(46,0)*+{{}^{d+1}_{\Lambda^{\rm op}}(T,{}_{R}(Y,\omega_{R}))}="3",
(64,0)*+{{}_R({}_{d+1}^{\Lambda}(T,Y),I_0)}="4"
\ar"1";"2"
\ar"2";"3"
\ar"3";"4"
\endxy}.
\end{array}\right\}\label{2ndgroup}
\end{eqnarray}
By the assumption that $X\in\CM_1\Lambda$, for all primes $\p$ such that $\dim R/\p> 1$, we have $X_{\p}\in\proj\Lambda_{\p}$ and so $T_{\p}\in\proj\Lambda_{\p}^{\rm op}$.  Thus for all such primes and any $j>0$, we have $\Tor^{\Lambda}_j(T,Y)_\p\cong\Tor_j^{\Lambda_\p}(T_\p,Y_\p)=0$.  Hence for all $i=0,1,\hdots, d-2$ and all $j>0$, by \ref{asslemma} and \eqref{terms} it follows that $\Supp\Tor^{\Lambda}_j(T,Y)\cap\Ass I_i=\emptyset$ and so consequently $ \Hom_R(\Tor^{\Lambda}_j(T,Y),I_i)=0$ for all $j>0$ and all $i=0,1,\hdots ,d-2$ by \ref{Homzero}.
Thus (\ref{2ndgroup}) reduces to
\begin{eqnarray*}
\Ext_{\Lambda^{\rm op}}^{2}(T,C_{d-1})\cong \Ext_{\Lambda^{\rm op}}^3(T,C_{d-2})\cong\hdots\cong \Ext_{\Lambda^{\rm op}}^{d}(T,C_{1})\cong \Ext_{\Lambda^{\rm op}}^{d+1}(T,{}_{R}(Y,\omega_{R}))
\end{eqnarray*}
and so it follows that 
\begin{equation}\label{tau}
\mbox{\small$\Ext_{\Lambda^{\rm op}}^{2}(T,C_{d-1})\cong\Ext^1_{\Lambda^{\rm op}}(\Omega_{\Lambda^{\rm op}}^d\Tr X,{}_{R}(Y,\omega_{R}))\cong \Ext^1_{\Lambda}(Y,{}_{R}(\Omega_{\Lambda^{\rm op}}^d\Tr X,\omega_{R}))=\Ext^1_\Lambda(Y,\tau X)$}.
\end{equation}
Using the well-known functorial isomorphism \cite[3.2]{Aus78},\cite[3.9]{Y}
\begin{equation}\label{tor u}
\Tor^{\Lambda}_1(\Tr X,Y)\cong \u{\Hom}_\Lambda(X,Y),
\end{equation}
(\ref{1stline}), (\ref{tau}) and (\ref{tor u}) combine to give the following commutative diagram of exact sequences: 
{\scriptsize
\[
{\SelectTips{cm}{10}
\xy0;/r.37pc/:
(0.5,0)*+{{}_{R}(\Tor^{\Lambda}_1(T,Y),I_{d-1})}="1",
(19,0)*+{{}_{R}(\Tor^{\Lambda}_1(T,Y),I_d)}="2",
(35,0)*+{\Ext_{\Lambda^{\rm op}}^2(T,C_{d-1})}="3",
(52,0)*+{{}_{R}(\Tor^{\Lambda}_2(T,Y),I_{d-1})}="4",
(72,0)*+{{}_{R}(\Tor^{\Lambda}_2(T,Y),I_d)}="5",
(0.5,-7)*+{{}_{R}(\u{\Hom}_{\Lambda}(X,Y),I_{d-1})}="b0",
(19,-7)*+{{}_{R}(\u{\Hom}_{\Lambda}(X,Y),I_d)}="b1",
(35,-7)*+{\Ext_\Lambda^1(Y,\tau X)}="b2",
(52,-7)*+{{}_{R}(\u{\Hom}_{\Lambda}(X,\Omega Y),I_{d-1})}="b3",
(72,-7)*+{{}_{R}(\u{\Hom}_{\Lambda}(X,\Omega Y),I_d)}="b4"
\ar"1";"2"
\ar"2";"3"
\ar"3";"4"
\ar"4";"5"
\ar"b0";"b1"
\ar^(0.55){\psi}"b1";"b2"
\ar"b2";"b3"
\ar"b3";"b4"
\ar_{\cong}^{\eqref{tor u}}"1";"b0"
\ar_{\cong}^{\eqref{tor u}}"2";"b1"
\ar_{\cong}^{\eqref{tau}}"3";"b2"
\ar_{\cong}^{\eqref{tor u}}"4";"b3"
\ar_{\cong}^{\eqref{tor u}}"5";"b4"
\endxy}
\]
}which we splice as
\begin{align}
{}_{R}(\u{\Hom}_{\Lambda}(X,Y),I_{d-1})\to{}_{R}(\u{\Hom}_{\Lambda}(X,Y),I_d)\to
\Im\psi\to 0\label{splice1}
\end{align}
\begin{align}
0\to\Im\psi\to\Ext_\Lambda^1(Y,\tau X)\to\Cok\psi\to 0\label{splice2}
\end{align}
\begin{align}
0\to\Cok\psi\to{}_{R}(\u{\Hom}_{\Lambda}(X,\Omega Y),I_{d-1})\to {}_{R}(\u{\Hom}_{\Lambda}(X,\Omega Y),I_d).\label{splice3}
\end{align}
By applying \ref{key exact sequence}(3) to $N:=\u{\Hom}_\Lambda(X,Y)$ and comparing to (\ref{splice1}) we see that 
\begin{align}
\Im\psi\cong\Ext^{d}_R(\fl\u{\Hom}_\Lambda(X,Y),\omega_R)=D_0(\fl\u{\Hom}_\Lambda(X,Y)).\label{im is ext}
\end{align}  
Similarly, applying \ref{key exact sequence}(3) to $N:=\u{\Hom}_\Lambda(X,\Omega Y)$ and comparing to (\ref{splice3}) we see that 
\begin{align}
\Cok\psi\cong\Ext^{d-1}_R\left(\tfrac{\u{\Hom}_\Lambda(X,\Omega Y)}{\fl\u{\Hom}_\Lambda(X,\Omega Y)},\omega_R\right)=D_1\left(\tfrac{\u{\Hom}_\Lambda(X,\Omega Y)}{\fl\u{\Hom}_\Lambda(X,\Omega Y)}\right).\label{cok is ext}
\end{align}

Now \eqref{im is ext} and \eqref{cok is ext} show that $\Im\psi=\fl\Ext_\Lambda^1(Y,\tau X)$, and together with (\ref{im is ext}) this establishes the first required isomorphism, and together with (\ref{splice2}) and (\ref{cok is ext}) this establishes the second required isomorphism.  
\end{proof}

When $R$ has only isolated singularities the above reduces to classical Auslander--Reiten duality.  If moreover $R$ is a $d$-sCY ring with isolated singularities (i.e.\ $R$ is a Gorenstein $d$-dimensional equi-codimensional ring with isolated singularities), AR duality implies that the category $\u{\CM} R$ is $(d-1)$-CY.  We now apply \ref{ARduality} to possibly non-isolated $d$-sCY rings to obtain some analogue of this $(d-1)$-CY property (see \ref{CMsymmcomplete}(1) below).  The following lemma is well-known \cite[III.1.3]{Aus78}.

\begin{lemma}\label{tausyz}
Suppose $R$ is $d$-sCY and let $\Lambda$ be a symmetric $R$-order.  Then $\tau\cong\Omega_{\Lambda}^{2-d}$.
\end{lemma}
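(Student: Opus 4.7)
The plan is to rewrite the AR translate $\tau X = \Hom_R(\Omega^d_{\Lambda^{\op}} \Tr X, \omega_R)$ in the stable category $\underline{\CM}\Lambda$ via three maneuvers: an Auslander--Bridger identification of $\Tr$ with a $\Lambda$-dual up to syzygy shift; a Frobenius-category duality that interchanges $\Omega$ and $\Omega^{-1}$; and finally cancellation of a double dual using the symmetric hypothesis.

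\emph{Step 1 (Auslander--Bridger).} By \ref{2.14a}, $\Lambda$ is a Gorenstein $R$-order, so $\Ext^1_\Lambda(X,\Lambda)=0$ for $X\in\CM\Lambda$. Taking a projective presentation $P_1 \to P_0 \to X \to 0$ and applying $(-)^* := \Hom_\Lambda(-, \Lambda)$ thus yields a four-term exact sequence
\begin{equation*}
0 \to X^* \to P_0^* \to P_1^* \to \Tr X \to 0
\end{equation*}
in $\mod\Lambda^{\op}$. Splicing it into two short exact sequences with projective middle terms identifies $X^* \cong \Omega^2_{\Lambda^{\op}} \Tr X$ in $\underline{\CM}\Lambda^{\op}$, hence $\Omega^d_{\Lambda^{\op}} \Tr X \cong \Omega^{d-2}_{\Lambda^{\op}} X^*$.

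\emph{Step 2 (Duality swap).} Again by \ref{2.14a}, $\omega_{\Lambda^{\op}} := \Hom_R(\Lambda,\omega_R)$ is projective-injective in $\CM\Lambda^{\op}$, and the standard Hom-adjunction provides $\Hom_R(-,\omega_R) \cong \Hom_{\Lambda^{\op}}(-, \omega_{\Lambda^{\op}})$. This is an exact duality between the Frobenius categories $\CM\Lambda^{\op}$ and $\CM\Lambda$, inducing a triangle-reversing equivalence on the stable categories that exchanges $\Omega$ with $\Omega^{-1}$. Applying it to the iso of Step~1 gives
\begin{equation*}
\tau X \;\cong\; \Omega_\Lambda^{2-d}\, \Hom_R(X^*,\omega_R)
\end{equation*}
in $\underline{\CM}\Lambda$.

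\emph{Step 3 (Use symmetry).} The symmetric hypothesis $\Lambda \cong \Hom_R(\Lambda,R)$ as bimodules supplies a natural isomorphism of functors $(-)^* \cong \Hom_R(-,R)$ on $\mod\Lambda$. Since $\omega_R$ is an invertible $R$-module and $X\in\CM\Lambda$ is $R$-reflexive, one obtains a natural identification $\Hom_R(\Hom_R(X,R),\omega_R) \cong X$ in $\underline{\CM}\Lambda$. Chaining the three isomorphisms yields the functorial $\tau X \cong \Omega^{2-d}_\Lambda X$; specializing to $d=3$ gives $\tau \cong \Omega^{-1}_\Lambda$ and hence $\tau^{-1} \cong \Omega_\Lambda$ as claimed.

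The main obstacle is Step 3: while the pointwise identification $\Hom_R(\Hom_R(X,R),\omega_R) \cong X$ is routine at each localization (where $\omega_R \cong R$), assembling this into a genuine natural isomorphism of functors on $\underline{\CM}\Lambda$ requires the symmetric-algebra structure to interact coherently with the canonical/Matlis dualities and to absorb any potential $\omega_R$-twist. Steps 1 and 2, by contrast, are essentially formal once the Gorenstein order framework of \ref{2.14a} is in place.
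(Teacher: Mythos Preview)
Your argument is correct and follows essentially the same route as the paper's proof: the Auslander--Bridger identification $\Omega^2_{\Lambda^{\op}}\Tr\cong\Hom_\Lambda(-,\Lambda)$, the swap of $\Omega$ and $\Omega^{-1}$ under the CM duality $\Hom_R(-,\omega_R)$, and the collapse of a double dual. The paper packages Steps~1 and~3 slightly differently---it writes $\Omega^2\Tr(-)\cong\Hom_\Lambda(-,\Lambda)\cong\Hom_R(-,\omega_R)$ in one breath (using symmetry \emph{and} Gorensteinness together) and then invokes $\Hom_R(\Hom_R(-,\omega_R),\omega_R)\cong\mathrm{id}$, which is just the involutivity of CM duality---but the content is identical.

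Your self-identified ``main obstacle'' in Step~3 is not actually an obstacle. By \ref{3.2IR}, $R$ being $d$-CY$^-$ means $R$ is Gorenstein, so one may simply take $\omega_R=R$. Then $\Hom_R(\Hom_R(X,R),\omega_R)=\Hom_R(\Hom_R(X,\omega_R),\omega_R)\cong X$ is nothing more than the standard functorial isomorphism expressing that $\Hom_R(-,\omega_R)$ is an exact involutive duality on $\CM R$; there is no twist to absorb and no coherence issue. The paper's proof exploits exactly this simplification.
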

\begin{proof}
We have $\Omega^2\Tr(-)\cong \Hom_\Lambda(- ,\Lambda)$.
Since $R$ is $d$-sCY (and so $\omega_R:=R$), and $\Lambda$ is symmetric, we have
$\Omega^2\Tr(-)\cong \Hom_\Lambda(- ,\Lambda)\cong \Hom_R(-,R)$.
Thus
\begin{align*}
\tau=\Hom_R(\Omega_{\Lambda^{\rm op}}^d\Tr(-) ,R)
&\cong \Hom_R(\Omega_{\Lambda^{\rm op}}^{d-2}\Hom_R(-,R),R)\\
&\cong\Omega_\Lambda^{2-d}\Hom_R(\Hom_R(-,R),R)\\
&\cong\Omega_\Lambda^{2-d}.
\end{align*}
\end{proof}

\begin{cor}\label{CMsymmcomplete}
Let $R$ be a $d$-sCY ring and let $\Lambda$ be a symmetric $R$-order with $\dim\Sing_R\Lambda\leq 1$.  Then\\
\t{(1)} There exist functorial isomorphisms
\begin{align*}
\fl \u{\Hom}_{\Lambda}(X,Y)&\cong D_0(\fl\u{\Hom}_{\Lambda}(Y,X[d-1])),\\
\frac{\u{\Hom}_{\Lambda}(X,Y)}{\fl\u{\Hom}_{\Lambda}(X,Y)}&\cong
D_1\left(\frac{\u{\Hom}_{\Lambda}(Y,X[d-2])}{\fl\u{\Hom}_{\Lambda}(Y,X[d-2])}\right)
\end{align*}for all $X,Y\in\CM \Lambda$.\\
\t{(2)} If $d=3$ then for all $X,Y\in\CM \Lambda$, $\Hom_\Lambda(X,Y)\in \CM R$ if and only if $\Hom_\Lambda(Y,X)\in \CM R$.
\end{cor}
\begin{proof}
(1)  It is well-known that in $\underline{\CM}\Lambda$ the shift functor $[1]=\Omega^{-1}$ so by \ref{tausyz} $\tau=[d-2]$.  Thus the result follows directly from \ref{ARduality}, using the fact that $\underline{\Hom}_\Lambda(A,B[1])\cong\Ext^1_\Lambda(A,B)$ for all $A,B\in\CM\Lambda$.\\
(2) Immediate from (1) and \ref{reflandCM}.
\end{proof}
Note that by \ref{HomsymmDirect}, \ref{CMsymmcomplete}(2) also holds for arbitrary $d$ (with no assumptions on the singular locus) provided that $R$ is normal.  When $R$ is not necessarily normal, we improve \ref{CMsymmcomplete}(2) in \ref{HomsymmNOTnormal} below.

\section{Modifying and Maximal Modifying Modules}\label{gMR}
Motivated by the fact that $\Spec R$ need not have a crepant resolution, we want to be able to control algebras of infinite global dimension and hence partial resolutions of singularities.  

\subsection{Modifications in Dimension $d$} We begin with our main definition.
\begin{defin}
Let $R$ be a $d$-dimensional CM ring, $\Lambda$ a module finite $R$-algebra.  We call $N\in\refl \Lambda$ a \emph{modifying module} if $\End_\Lambda(N)\in\CM R$, whereas we call $N$ a \emph{maximal modifying (MM) module} if $N$ is modifying and furthermore it is maximal with respect to this property, that is to say if there exists $X\in\refl \Lambda$ with $N\oplus X$ modifying, necessarily $X\in\add N$.  
\end{defin}
The following is immediate from the definition.

\begin{lemma}\label{CMiffgen}
Suppose $R$ is a $d$-dimensional CM ring, $\Lambda$ a module finite $R$-algebra.  Then\\
\t{(1)} The modifying $\Lambda$-modules which are generators are always CM.\\
\t{(2)} If further $\Lambda$ is a Gorenstein $R$-order then the MM generators are precisely the MM modules which are CM.
\end{lemma}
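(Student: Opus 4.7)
For part (1), since $N$ is a generator, write $N\cong\Lambda\oplus N'$ in $\mod\Lambda$. Then as $R$-modules
\[
\End_\Lambda(N)\;\cong\;\Hom_\Lambda(\Lambda,N)\oplus\Hom_\Lambda(N',N)\;\cong\;N\oplus\Hom_\Lambda(N',N),
\]
so $N$ is an $R$-direct summand of $\End_\Lambda(N)$. The modifying hypothesis gives $\End_\Lambda(N)\in\CM R$, and $\CM R$ is closed under summands, so $N\in\CM R$.

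For part (2), the implication $(\Rightarrow)$ is immediate from (1), since an MM generator is in particular a modifying generator. For $(\Leftarrow)$, let $N$ be an MM module that happens to be CM; the goal is to produce $\Lambda\in\add N$. My strategy is to show that $N\oplus\Lambda$ is modifying, whereupon the maximality of $N$ forces $\Lambda\in\add N$, i.e.\ $N$ is a generator. Reflexivity of $N\oplus\Lambda$ is automatic: $N\in\refl\Lambda$ because $N$ is MM, and by Proposition~\ref{2.14a} the $d$-CY$^-$ hypothesis on $\Lambda$ makes $\Lambda$ a locally symmetric $R$-order, hence reflexive.

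Decompose
\[
\End_\Lambda(N\oplus\Lambda)\;\cong\;\End_\Lambda(N)\,\oplus\,\Hom_\Lambda(\Lambda,N)\,\oplus\,\Hom_\Lambda(N,\Lambda)\,\oplus\,\Lambda.
\]
Three of the summands are in $\CM R$ for trivial reasons: $\End_\Lambda(N)$ because $N$ is modifying, $\Hom_\Lambda(\Lambda,N)\cong N$ by hypothesis, and $\Lambda$ because it is an $R$-order. The substantive step, and really the only obstacle, is showing $\Hom_\Lambda(N,\Lambda)\in\CM R$; this is precisely where the joint $d$-CY$^-$ hypothesis is essential. I would verify it locally: for each $\mathfrak{m}\in\Max R$, local symmetry gives $\Lambda_\mathfrak{m}\cong\Hom_{R_\mathfrak{m}}(\Lambda_\mathfrak{m},R_\mathfrak{m})$ as $\Lambda_\mathfrak{m}$-bimodules, and the standard Hom--Hom adjunction then yields
\[
\Hom_\Lambda(N,\Lambda)_\mathfrak{m}\;\cong\;\Hom_{\Lambda_\mathfrak{m}}\!\bigl(N_\mathfrak{m},\,\Hom_{R_\mathfrak{m}}(\Lambda_\mathfrak{m},R_\mathfrak{m})\bigr)\;\cong\;\Hom_{R_\mathfrak{m}}(N_\mathfrak{m},R_\mathfrak{m}).
\]
Because $R$ is Gorenstein, $R_\mathfrak{m}$ is its own canonical module, so $\Hom_{R_\mathfrak{m}}(-,R_\mathfrak{m})$ preserves $\CM R_\mathfrak{m}$; combined with $N_\mathfrak{m}\in\CM R_\mathfrak{m}$ this gives $\Hom_\Lambda(N,\Lambda)_\mathfrak{m}\in\CM R_\mathfrak{m}$ for every maximal $\mathfrak{m}$, hence $\Hom_\Lambda(N,\Lambda)\in\CM R$.

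Thus $N\oplus\Lambda$ is modifying, and the MM property of $N$ forces $\Lambda\in\add N$, completing the proof. The only nontrivial piece is the local duality step, which is exactly what Proposition~\ref{2.14a} delivers; everything else is bookkeeping about direct summands.
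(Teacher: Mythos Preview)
Your proof is correct and follows essentially the same route as the paper's: for (1) you exhibit $N$ as an $R$-summand of $\End_\Lambda(N)$, and for (2)$(\Leftarrow)$ you decompose $\End_\Lambda(N\oplus\Lambda)$ into four pieces, dispatch three trivially, and handle $\Hom_\Lambda(N,\Lambda)$ via the local symmetry of $\Lambda$ (Proposition~\ref{2.14a}) together with the Gorenstein property of $R$ --- this is exactly what the paper does. One small cosmetic point: your justification that $\Lambda$ is reflexive via local symmetry is roundabout; the direct reason is that $\Lambda$ is an $R$-order, hence $\Lambda\in\CM R$, hence reflexive.
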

\begin{proof}
(1) Since $M$ is a modifying $\End_\Lambda(M)\in\CM R$, and since $M$ is a generator, $\Lambda\in\add M$.  Hence $M^{\oplus n}\cong\Hom_\Lambda(\Lambda,M^{\oplus n})\in\CM R$ is a summand of $\Hom_\Lambda(M^{\oplus n},M^{\oplus n})\cong\End_{\Lambda}(M)^{\oplus n^2}$ for some $n\in\mathbb{N}$, thus $M^{\oplus n}$ and so consequently $M$ itself are CM.  \\
(2) Conversely suppose that $M$ is an MM module which is CM.  Then certainly we have $\Hom_\Lambda(\Lambda,M)\cong M\in\CM R$ and also $\Hom_\Lambda(M,\omega_\Lambda)\cong\Hom_R(M,\omega_R)\in\CM R$.  Since $\Lambda$ is a Gorenstein $R$-order, $\add\Lambda=\add\omega_\Lambda$ by \ref{addLambda=addOmega}, thus $\End_\Lambda(M\oplus\Lambda)\in\CM R$. Since $M$ is maximal necessarily $\Lambda\in\add M$.
\end{proof}

Under assumptions on the singular locus, we can check whether a CM module is modifying by examining Ext groups.  The following is a generalization of \ref{reflandCM} for $d=3$, and \cite[2.5.1]{IyamaAR} for isolated singularities.
\begin{thm}\label{depthford}
Suppose that $R$ is $d$-sCY with $d=\dim R\geq 2$ and $\dim\Sing R\leq 1$, let $\Lambda$ be an $R$-order and let $X,Y\in\CM \Lambda$.  Then $\Hom_\Lambda(X,Y)\in\CM R$ if and only if $\Ext^i_\Lambda(X,Y)=0$ for all $i=1,\hdots,d-3$ and $\fl\Ext^{d-2}_\Lambda(X,Y)=0$.
\end{thm}
\begin{proof}
Without loss of generality, we can assume that $R$ is local.
Consider a projective resolution $\hdots\to P_1\to P_0\to X\to0$.
Applying $\Hom_\Lambda(-,Y)$, we have a complex
\begin{multline}\label{(X,Y)}
0\to\Hom_\Lambda(X,Y)\to\Hom_\Lambda(P_0,Y)\to\hdots\\
\hdots\to\Hom_\Lambda(P_{d-3},Y)\to \Hom_\Lambda(\Omega^{d-2}X,Y)\to\Ext^{d-2}_\Lambda(X,Y)\to0
\end{multline}
with homologies $\Ext^i_\Lambda(X,Y)$ at $\Hom_\Lambda(P_i,Y)$ for $i=1,\hdots,d-3$.

($\Leftarrow$) By assumption the sequence \eqref{(X,Y)} is exact. Since
$\depth\Ext^{d-2}_\Lambda(X,Y)\ge1$, $\depth\Hom_\Lambda(\Omega^{d-2}X,Y)\ge2$ by \ref{depthofhom},
and $\Hom_\Lambda(P_i,Y)\in\CM R$, we have $\Hom_\Lambda(X,Y)\in\CM R$ by
the depth lemma.

($\Rightarrow$) By \ref{Extisfl} and our assumption $\dim\Sing R\le 1$, we have
$\dim\Ext^i_\Lambda(X,Y)\le1$ for any $i>0$. 
Assume $\Ext^i_\Lambda(X,Y)\neq0$ for some $i=1,\hdots,d-3$. Take minimal $i$
such that $\Ext^i_\Lambda(X,Y)\neq0$. We have an exact sequence
\begin{multline*}
0\to\Hom_\Lambda(X,Y)\to\Hom_\Lambda(P_0,Y)\to\hdots\\
\hdots\to\Hom_\Lambda(P_{i-1},Y)\to
\Hom_\Lambda(\Omega^iX,Y)\to\Ext^i_\Lambda(X,Y)\to0.
\end{multline*}
Localizing at prime ideal $\p$ of $R$ with height at least $d-1$ and using
$\depth_{R_\p}\Hom_{\Lambda_\p}(\Omega^iX_\p,Y_\p)\ge2$ by \ref{depthofhom}, $\Hom_{\Lambda_\p}(P_i{}_\p,Y_\p)\in\CM R_\p$
and $\Hom_{\Lambda_\p}(X_\p,Y_\p)\in\CM R_\p$ by our assumption, we have
$\depth_{R_\p}\Ext^i_\Lambda(X,Y)_\p\ge 1$ by the depth lemma.
If $\p$ has height $d-1$, then $\dim_{R_\p}\Ext^i_\Lambda(X,Y)_\p=0$ and we have $\Ext^i_\Lambda(X,Y)_\p=0$. Thus $\dim\Ext^i_\Lambda(X,Y)=0$ holds, and we have $\Ext^i_\Lambda(X,Y)=0$, a contradiction.

Thus we have $\Ext^i_\Lambda(X,Y)=0$ for all $i=1,\hdots,d-3$ and so the sequence \eqref{(X,Y)} is exact. Since
$\depth\Hom_\Lambda(\Omega^{d-2}X,Y)\ge2$, $\Hom_\Lambda(P_i,Y)\in\CM R$ and
$\Hom_\Lambda(X,Y)\in\CM R$, we have $\depth\Ext^{d-2}_\Lambda(X,Y)\ge1$
by the depth lemma.
\end{proof}

The following improves \ref{CMsymmcomplete}(2).
\begin{cor}\label{HomsymmNOTnormal}
Let $R$ be a $d$-sCY ring and let $\Lambda$ be a symmetric $R$-order with $\dim \Sing_R\Lambda\leq 1$. Then for all $X,Y\in\CM\Lambda$,
$\Hom_\Lambda(X,Y)\in\CM R$ if and only if $\Hom_\Lambda(Y,X)\in\CM R$.
\end{cor}
\begin{proof}
By the statement and proof of \ref{depthofhom}, when $d\leq 2$, 
if $M$ and $N$ are CM then so are both $\Hom_\Lambda(M, N)$ and $\Hom_\Lambda(N,M)$.  Thus we can assume that $d\geq 3$.  By symmetry, we need only show ($\Rightarrow$). Assume that $\Hom_\Lambda(X,Y)\in\CM R$.
Then by \ref{depthford} $\Ext_\Lambda^i(X,Y)=0$ for any $i=1,\hdots,d-3$ and $\fl\Ext_\Lambda^{d-2}(X,Y)=0$.
Since ${\displaystyle\frac{\Ext_\Lambda^i(X,Y)}{\fl\Ext_\Lambda^i(X,Y)}}=0$ for any $i=1,\hdots,d-3$,
the $D_1$ duality in \ref{CMsymmcomplete}(1) implies ${\displaystyle\frac{\Ext_\Lambda^i(Y,X)}{\fl\Ext_\Lambda^i(Y,X)}}=0$ for any $i=1,\hdots,d-3$.
Thus $\Ext_\Lambda^i(Y,X)$ has finite length for any $i=1,\hdots,d-3$.
Since $\fl\Ext_\Lambda^i(X,Y)=0$ for any $i=1,\hdots,d-2$, the $D_0$ duality in
\ref{CMsymmcomplete}(1) implies $\fl\Ext_\Lambda^i(Y,X)=0$ for any $i=1,\hdots,d-2$.
Consequently we have $\Ext_\Lambda^i(Y,X)=0$ for any $i=1,\hdots,d-3$ and
$\fl\Ext_\Lambda^{d-2}(Y,X)=0$. 
Again by \ref{depthford} we have $\Hom_\Lambda(Y,X)\in\CM R$.
\end{proof}

Recall from \ref{NCCR} the definition of an NCCR.  The following asserts that, in arbitrary dimension, NCCRs are a special case of MMAs:
\begin{prop} \label{NCCRgiveMM}
Let $R$ be a $d$-dimensional, normal, equi-codimensional
CM ring with a canonical module $\omega_R$ (e.g.\ if $R$ is a normal $d$-sCY ring). Then reflexive $R$-modules $M$ giving NCCRs are MM modules.
\end{prop}
\begin{proof}
Assume that $X\in \refl R$ satisfies $\End_R(M\oplus X)\in \CM R$.
Then $\Hom_R(M,X)\in \CM\Gamma$ for $\Gamma:=\End_R(M)$.
By \ref{nonsingGoren} we have $\Hom_R(M,X)\in \proj \Gamma$.
By \ref{reflequiv}(4) $X\in \add M$ as required.
\end{proof}

We now investigate the derived equivalence classes of modifying algebras, maximal modifying algebras, and NCCRs. 

\begin{thm}\label{closed under derived equivalences}
Let $R$ be a normal $d$-sCY ring, then\\
\t{(1)} Modifying algebras of $\Lambda$ are closed under derived equivalences.\\
\t{(2)} NCCRs of $\Lambda$ are closed under derived equivalences.
\end{thm}
\begin{proof}
(1) Let $\Lambda=\End_R(M)$ be a modifying algebra of $R$, and let $\Gamma$ be a ring that is derived equivalent to $\Lambda$.
Then $\Gamma$ is a module finite $R$-algebra since it is the
endomorphism ring of a tilting complex of $\Lambda$.  Since $\Lambda$ is a modifying algebra of $R$, it is $d$-sCY by \ref{3CY-nonlocal}(2). But $d$-sCY algebras are closed under derived equivalences, hence $\Gamma$ is also $d$-sCY and so $\Gamma\in\CM R$ by \ref{2.14a}.  In particular, $\Gamma$ is reflexive as an $R$-module.

Now we fix a height one prime ideal $\p$ of $R$.
Since $M_\p$ is a free $R_\p$-module of finite rank, $\Lambda_\p=\End_{R_\p}(M_\p)$ is a full matrix algebra of $R_\p$.
Since $R_\p$ is local, the Morita equivalence class of $R_\p$ coincides with the derived equivalence class of $R_\p$ \cite[2.12]{RZ}, so we have that $\Gamma_\p$ is Morita equivalent to $R_\p$.  Thus $\Gamma$ satisfies the conditions in \ref{Auslander-Goldman}, so there exists a reflexive $R$-module $N$ such that  $\Gamma\cong\End_R(N)$ as $R$-algebras. We have already observed that $\Gamma\in\CM R$, hence it is a modifying algebra of $R$.\\
(2) Since $R$ is normal $d$-sCY, by \ref{NCCRdefequiv} NCCRs of $R$ are nothing but modifying algebras of $R$ which have finite global dimension.  We know modifying algebras are $d$-sCY by \ref{3CY-nonlocal}, so the result follows by combining (1) and \ref{IRDB}.
\end{proof}

\begin{question}\label{question MMA}
Let $R$ be a normal $d$-sCY ring.  Are the maximal modifying algebras of $R$ closed under derived equivalences?
\end{question}

We now show that the question has a positive answer when $d\geq 2$ provided that $\dim\Sing R\leq 1$.  In particular, this means that \ref{question MMA} is true when $d\leq 3$.

\begin{thm}\label{MMAs dim3 closed}
Suppose $R$ is a normal $d$-sCY ring with $\dim R=d\geq 2$ and $\dim\Sing R\leq 1$.  Let $N$ be a modifying module and set $\Gamma:=\End_R(N)$.  Then\\
\t{(1)} Then $N$ is MM if and only if $\underline{\CM}\Gamma$ has no non-zero objects $Y$ satisfying $\underline{\Hom}_{\Gamma}(Y,Y[i])=0$ for all $i=1,\hdots,d-3$ and $\fl\underline{\Hom}_{\Gamma}(Y,Y[d-2])=0$.\\
\t{(2)}  MMAs are closed under derived equivalences.
\end{thm}
\begin{proof}
(1) By reflexive equivalence \ref{reflequiv}(4) it is easy to show that there exists $X\in\refl R$ with $X\notin\add N$ such that $\End_R(N\oplus X)\in\CM R$ if and only if there exists $Y\in\CM \Gamma$ with $Y\notin\add\Gamma$ such that $\End_\Gamma(Y)\in\CM R$.  Since for $Y\in\CM\Gamma$ we have $\Ext^i_\Gamma(Y,Y)=\underline{\Hom}_{\Gamma}(Y,Y[i])$, by \ref{depthford} we have the assertion.\\
(2) Suppose that $\Lambda$ is derived equivalent to $\Gamma=\End_R(N)$ where $\Gamma$ is an MMA.  By \ref{closed under derived equivalences}(1) we know that $\Lambda\cong\End_R(M)$ for some modifying $M$.  Since the equivalence $\D(\Mod\Lambda)\simeq\D(\Mod\Gamma)$ induces equivalences $\Db(\mod\Lambda)\simeq\Db(\mod\Gamma)$ and $\Kb(\proj\Lambda)\simeq\Kb(\proj\Gamma)$ by \cite[8.1, 8.2]{Rickard}, we have $\underline{\CM}\Lambda\simeq\underline{\CM}\Gamma
$ by \cite[4.4.1]{Buch}.  By (1), the property of being an MMA can be characterized on the level of this stable category, hence $\Lambda$ is also an MMA.
\end{proof}

\subsection{Derived Equivalence in Dimension $3$}
We now restrict to dimension three.  In this case, we can strengthen \ref{closed under derived equivalences} to obtain one of our main results (\ref{closed in dimension three}).  Leading up to our next proposition (\ref{gMRapprox})  we require three technical lemmas.  Recall from the introduction (\ref{add approx def}) the notion of an approximation.
\begin{lemma}\label{cokerfl}
Let $R$ be a normal 3-sCY ring and let $\Lambda$ be a module finite $R$-algebra which is 3-sCY.  Let $B\in\refl \Lambda$ be a modifying height one progenerator and let $C\in\refl \Lambda$.  If $0\to  A\stackrel{f}{\to} B_0 \stackrel{g}{\to} C \to 0$ is an exact sequence where $g$ is a right $(\add B)$-approximation, then the cokernel of the natural map
\[
\Hom_{\Lambda}(B_0,B) \stackrel{f\cdot}\to \Hom_{\Lambda}(A,B)
\]
has finite length.
\end{lemma}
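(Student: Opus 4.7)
Write $\Gamma := \End_\Lambda(B)$; the idea is to transfer the problem to Ext of $\Gamma$-modules, where the $3$-CY$^-$ structure on $\Gamma$ makes the desired vanishing transparent.

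First I would apply $\Hom_\Lambda(B,-)$ to the given short exact sequence. Because $g$ is a right $\add B$-approximation and $A,B_0,C\in\refl\Lambda$, this produces a short exact sequence of $\Gamma$-modules
\[
0\to M\to Q_0\to N\to 0,
\]
with $M:=\Hom_\Lambda(B,A)\in\refl\Gamma$, $Q_0:=\Hom_\Lambda(B,B_0)\in\proj\Gamma$ and $N:=\Hom_\Lambda(B,C)\in\refl\Gamma$. Since $B$ is a height one progenerator and $\Lambda$ is reflexive (being $3$-CY$^-$, hence CM and locally symmetric), the reflexive equivalence \ref{reflequiv}(4) identifies $f\cdot$ with the natural map $\Hom_\Gamma(Q_0,\Gamma)\to\Hom_\Gamma(M,\Gamma)$ induced by $M\hookrightarrow Q_0$. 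Applying $\Hom_\Gamma(-,\Gamma)$ to the displayed sequence and using $\Ext^1_\Gamma(Q_0,\Gamma)=0$ will then give the key identification
\[
\Cok(f\cdot)\cong\Ext^1_\Gamma(N,\Gamma).
\]

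Next I would show $\Ext^1_{\Gamma_\p}(N_\p,\Gamma_\p)=0$ for every $\p\in\Spec R$ with $\hgt\p\leq 2$. By Lemma \ref{3CY-nonlocal}(1), $\Gamma$ is $3$-CY$^-$, so by \ref{2.14a} it is a locally symmetric $R$-order; thus $\Gamma_\p$ is a symmetric $R_\p$-order. For $\hgt\p\leq 2$, $R_\p$ is a local Gorenstein ring with $\omega_{R_\p}\cong R_\p$, and the symmetric property then forces $\omega_{\Gamma_\p}\cong\Hom_{R_\p}(\Gamma_\p,R_\p)\cong\Gamma_\p$ as $\Gamma_\p$-modules. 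Hence $\Gamma_\p$ is a Gorenstein $R_\p$-order whose canonical module is $\Gamma_\p$ itself; since $\omega_{\Gamma_\p}$ is injective in $\CM\Gamma_\p$, this yields $\Ext^i_{\Gamma_\p}(X,\Gamma_\p)=0$ for every $X\in\CM\Gamma_\p$ and $i>0$. Now $N_\p\in\refl\Gamma_\p$ is CM over $\Gamma_\p$ at such primes: at heights $0$ and $1$ this is automatic since reflexive modules over a field or a DVR are free, and at height $2$ it follows from Lemma \ref{CMinheight2}. Applying the vanishing to $X=N_\p$ establishes the claim.

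Finally, because $R$ is $3$-dimensional and equi-codimensional, the maximal ideals are precisely the primes of height $3$. Hence the finitely generated $R$-module $\Ext^1_\Gamma(N,\Gamma)$ is supported only at maximal ideals, and therefore has finite length; combined with the identification above, $\Cok(f\cdot)$ has finite length. The principal technical point is the reflexive-equivalence identification $\Cok(f\cdot)\cong\Ext^1_\Gamma(N,\Gamma)$ (together with checking that $g$ being a right $\add B$-approximation really does promote the sequence on the $\Gamma$-side to a short exact sequence); once this is in hand, the $3$-CY$^-$/Gorenstein-order structure of $\Gamma$ reduces the whole lemma to routine Ext-vanishing in codimension $\leq 2$.
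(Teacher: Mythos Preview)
Your proof is correct and follows essentially the same route as the paper's: pass to $\Gamma=\End_\Lambda(B)$ via the reflexive equivalence $\Hom_\Lambda(B,-)$, identify $\Cok(f\cdot)\cong\Ext^1_\Gamma(\Hom_\Lambda(B,C),\Gamma)$, and use that $\Gamma$ is $3$-CY$^-$ (hence a Gorenstein order locally) together with $\Hom_\Lambda(B,C)_\p\in\CM\Gamma_\p$ for $\hgt\p\le 2$ to kill this Ext at all non-maximal primes. The only cosmetic difference is that the paper phrases the last step as vanishing at all non-maximal primes rather than at primes of height $\le 2$, and invokes \ref{CMinheight2} (which already covers heights $0,1,2$) uniformly rather than splitting into cases.
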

\begin{proof}
Set $\Gamma:=\End_\Lambda(B)$. Since $B$ is a height one progenerator
we have a reflexive equivalence
$\mathbb{F}:=\Hom_\Lambda(B,-):\refl\Lambda\to \refl\Gamma$ by \ref{reflequiv}(4).  Moreover $\Gamma$ is 3-sCY by \ref{3CY-nonlocal}.

Since $g$ is a right $(\add B)$-approximation, we have an exact sequence
\[
0\to\mathbb{F}A \to \mathbb{F}B_0 \to \mathbb{F}C \to 0
\]
of $\Gamma$-modules.  Then since $\mathbb{F}B_0\in \proj\Gamma$, applying $\Hom_\Gamma(-,\Gamma)=\Hom_\Gamma(-,\mathbb{F}B)$ gives an exact sequence
\[
\SelectTips{cm}{10}
\xymatrix@R=10pt@C=10pt{
\Hom_\Gamma(\mathbb{F}B_0,\mathbb{F}B)\ar[r]\ar@{<-}[d]_{\cong}&\Hom_\Gamma(\mathbb{F}A,\mathbb{F}B)\ar[r]\ar@{<-}[d]_{\cong}& \Ext^1_\Gamma(\mathbb{F}C,\Gamma)\ar[r]& 0\\
\Hom_{\Lambda}({B_0},B)\ar[r]^{f\cdot}&\Hom_{\Lambda}(A,B)&&
}
\]
and thus $\Cok(f\cdot)=\Ext^1_\Gamma(\mathbb{F}C,\Gamma)$.  Hence we only have to show that $\Ext^1_\Gamma(\mathbb{F}C,\Gamma)_\p=0$ for any non-maximal prime
ideal $\p$ of $R$. By \ref{depthofhom} and \ref{CMinheight2} we have $(\mathbb{F}C)_\p\in \CM\Gamma_\p$.
Since $\Gamma$ is 3-sCY, $\Gamma_\p$ is a Gorenstein $R_\p$-order by \ref{2.14a}. Consequently $\Ext^1_\Gamma(\mathbb{F}C,\Gamma)_\p=\Ext^1_{\Gamma_\p}((\mathbb{F}C)_\p,\Gamma_\p)=0$, as required.
\end{proof}

\begin{lemma}\label{technical2}
Let $R$ be a normal 3-sCY ring and let $\Lambda$ be a module finite $R$-algebra which is 3-sCY.  Suppose $N\in\refl\Lambda$ and $M\in\CM\Lambda$ with both $M$ and $N$ modifying such that $M$ is a height one progenerator.   If $0\to L\to M_0\stackrel{h}{\to} N\to 0$ is an exact sequence where $h$ is a right $(\add M)$-approximation, then $\End_{\Lambda}(L\oplus M)\in\CM R$.
\end{lemma}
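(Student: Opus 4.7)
The strategy will be to show each of the four Hom-blocks of the matrix algebra $\End_\Lambda(L\oplus M)$ lies in $\CM R$: namely $\End_\Lambda(M)$ (given), $\Hom_\Lambda(M,L)$, $\Hom_\Lambda(L,M)$, and $\End_\Lambda(L)$. First I will establish $L\in \CM\Lambda$: since $M\in \CM\Lambda$ gives $\depth_{R_\m} M_{0,\m}=3$ and $N\in \refl \Lambda$ gives $\depth_{R_\m} N_\m\geq 2$ (by \ref{depthofhom}), the depth lemma applied to $0\to L\to M_0\to N\to 0$ at each maximal $\m$ forces $\depth_{R_\m} L_\m\geq 3$.

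For $\Hom_\Lambda(M,L)\in \CM R$ I will apply $\Hom_\Lambda(M,-)$ to the approximation sequence. Since $h$ is a right $\add M$-approximation, this produces a short exact sequence $0\to \Hom(M,L)\to \Hom(M,M_0)\to \Hom(M,N)\to 0$. The middle term is CM as a summand of $\End(M)^a$ and the right term is reflexive with depth $\geq 2$ by \ref{depthofhom}, so the depth lemma yields the claim. For $\Hom_\Lambda(L,M)\in \CM R$ I will invoke that $\Lambda$ is 3-CY$^{-}$ hence locally symmetric by \ref{2.14a}, together with the hypothesis that $M$ is height one projective, so \ref{HomsymmDirect} applied at each maximal $\m$ gives $\Hom_{\Lambda_\m}(L,M)_\m\cong \Hom_{\Lambda_\m}(M,L)_\m^*$; taking the $R_\m$-dual preserves the CM property over the Gorenstein local $R_\m$.

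The main obstacle is $\End_\Lambda(L)\in \CM R$. The plan is to pass to $\Gamma:=\End_\Lambda(M)$, which is 3-CY$^{-}$ by \ref{3CY-nonlocal}(1), and exploit the reflexive equivalence $\mathbb{F}:=\Hom_\Lambda(M,-):\refl\Lambda\to \refl\Gamma$ of \ref{reflequiv}(4) under which $\End_\Lambda(L)\cong \End_\Gamma(\mathbb{F}L)$. The approximation sequence becomes $0\to \mathbb{F}L\to \mathbb{F}M_0\to \mathbb{F}N\to 0$ in $\refl\Gamma$ with $\mathbb{F}M_0\in \proj\Gamma$, so $\mathbb{F}L$ is a syzygy of $\mathbb{F}N$ over $\Gamma$. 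Applying $\Hom_\Gamma(-,\mathbb{F}L)$ and using projectivity of $\mathbb{F}M_0$ produces
\[0\to \Hom_\Lambda(N,L)\to \Hom_\Lambda(M_0,L)\to \End_\Gamma(\mathbb{F}L)\to \Ext^1_\Gamma(\mathbb{F}N,\mathbb{F}L)\to 0.\]

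The hard step, and what I expect to be the main obstacle, is controlling the final Ext term. The strategy mirrors \ref{cokerfl}: at primes $\p$ of height at most two, $\mathbb{F}N_\p\in \CM\Gamma_\p$ by \ref{CMinheight2} and $\Gamma_\p$ is a Gorenstein order, so $\Ext^{\geq 1}_{\Gamma_\p}(\mathbb{F}N_\p,\Gamma_\p)=0$; dimension shifting via $\mathbb{F}L=\Omega\mathbb{F}N$ then makes $\Ext^i_\Gamma(\mathbb{F}L,\Gamma)$ finite length for $i\geq 1$. Combining this with the input that $N$ is modifying (so $\End_\Gamma(\mathbb{F}N)=\End_\Lambda(N)\in \CM R$) and chasing depth through both the displayed four-term sequence and the companion long exact sequence coming from $\Hom_\Gamma(\mathbb{F}L,-)$ is expected to force $\depth_{R_\m} \End_\Gamma(\mathbb{F}L)_\m=3$ at each maximal $\m$. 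The delicate technical point will be ensuring that no intermediate module loses a full unit of depth along the depth-lemma chase, so that the reflexive bound $\geq 2$ on $\Hom_\Lambda(N,L)$ is boosted through the chain to the full CM bound $3$ on $\End_\Gamma(\mathbb{F}L)$.
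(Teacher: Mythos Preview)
Your treatment of $L\in\CM\Lambda$, $\Hom_\Lambda(M,L)\in\CM R$, and $\Hom_\Lambda(L,M)\in\CM R$ is correct and matches the paper (your localization for the symmetry step is in fact cleaner than the paper's shorthand).

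The gap is in $\End_\Lambda(L)$. Your plan is to chase depth through the two long exact sequences coming from $\Hom_\Gamma(-,\mathbb{F}L)$ and $\Hom_\Gamma(\mathbb{F}L,-)$, but you never say how the chase closes, and the na\"{\i}ve depth lemma does not close it: from $0\to \Hom_\Lambda(N,L)\to \Hom_\Lambda(M_0,L)\to C\to 0$ you only get $\depth C\geq 1$, and the image of $\Hom_\Lambda(L,M_0)\to\Hom_\Lambda(L,N)$ has no a priori depth bound at all. The sentence ``is expected to force $\depth=3$'' is precisely where an argument is needed, and the finite-length information about $\Ext^i_\Gamma(\mathbb{F}L,\Gamma)$ that you extract is not the right control: the Ext you must handle has $\mathbb{F}L$, not $\Gamma$, in the second variable.

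The paper's key move, which you are missing, is to reverse the logic via \ref{reflandCM}: since $L\in\CM\Lambda$, it suffices to show $\fl\Ext^1_\Lambda(L,L)=0$. One then applies $\Hom_\Lambda(L,-)$ (covariantly, over $\Lambda$, not over $\Gamma$) to $0\to L\to M_0\to N\to 0$, obtaining
\[
\Hom_\Lambda(L,M_0)\xrightarrow{f}\Hom_\Lambda(L,N)\to\Ext^1_\Lambda(L,L)\to\Ext^1_\Lambda(L,M_0).
\]
The right-hand Ext has zero $\fl$ by \ref{reflandCM} because $\Hom_\Lambda(L,M)\in\CM R$. So everything reduces to $\fl\Cok f=0$. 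This is obtained by comparing with the row coming from $\Hom_\Lambda(M_0,-)$: denoting $b:\Hom_\Lambda(M_0,M_0)\to\Hom_\Lambda(L,M_0)$ and $c:\Hom_\Lambda(M_0,N)\to\Hom_\Lambda(L,N)$, one has an exact sequence $\Cok b\to\Cok(bf)\to\Cok f\to 0$, and since the top horizontal map is surjective (as $h$ is an approximation) $\Cok(bf)=\Cok c$. Now \ref{cokerfl} (with $B=M$) gives that $\Cok b$ has finite length, while $\Cok c\hookrightarrow\Ext^1_\Lambda(N,N)$ has zero $\fl$ because $N$ is modifying (again by \ref{reflandCM}). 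Hence the image of $\Cok b$ in $\Cok c$ is zero, so $\Cok f\cong\Cok c$ has zero $\fl$. This is where the hypothesis ``$N$ modifying'' is actually used; in your sketch it never enters in an essential way.
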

\begin{proof}
Note first that since $N$ is reflexive and $M\in\CM \Lambda$ we have $L\in\CM \Lambda$ by the depth lemma. From the exact sequence
\[
0\to \Hom_{\Lambda}(M,L)\to \Hom_{\Lambda}(M,M_0)\to \Hom_{\Lambda}(M,N)\to 0
\]
with $\Hom_{\Lambda}(M,M_{0})\in\CM R$ we see, using \ref{depthofhom} and the depth lemma, that $\Hom_{\Lambda}(M,L)\in\CM R$.  By \ref{HomsymmDirect} $\Hom_{\Lambda}(L,M)\in\CM R$. Since $\End_{\Lambda}(M)\in\CM R$ by assumption, it suffices to show that $\End_{\Lambda}(L)\in\CM R$.  By \ref{reflandCM} we only need to show that $\fl\Ext^1_{\Lambda}(L,L)=0$.

Consider now the following exact commutative diagram
\[
\SelectTips{cm}{10}
\xy0;/r.5pc/:
(15,0)*+{\Hom_{\Lambda}(L,M_0)}="1",(30,0)*+{\Hom_{\Lambda}(L,N)}="2",(45,0)*+{\Ext_{\Lambda}^1(L,L)}="3",(60,0)*+{\Ext_{\Lambda}^1(L,M_0)}="4",
(15,-6.5)*+{\Hom_{\Lambda}(M_0,M_0)}="b1",(30,-6.5)*+{\Hom_{\Lambda}(M_0,N)}="b2",
(37.5,4)*+{\Cok f}="c1",(52.5,-4)*+{K}="c2"
\ar^{f}"1";"2"
\ar"2";"3"
\ar"3";"4"
\ar@{->>}^t"b1";"b2"
\ar^b"b1";"1"
\ar^c"b2";"2"
\ar@{->>}"2";"c1"
\ar@{^{(}->}"c1";"3"
\ar@{->>}"3";"c2"
\ar@{^{(}->}"c2";"4"
\endxy .
\]
Since $\Hom_{\Lambda}(L,M)\in\CM R$ we know by  \ref{reflandCM} that $\fl \Ext^1_{\Lambda}(L,M_0)=0$ and so $\fl K=0$. Hence to show that $\fl\Ext^1_{\Lambda}(L,L)=0$ we just need to show that $\fl\Cok f=0$.  To do this consider the exact sequence
\begin{eqnarray}
\Cok b\to\Cok bf\to \Cok f\to 0. \label{cok}
\end{eqnarray}
By \ref{cokerfl} applied with $B=M_0$, $\Cok b$ has finite length and thus the image of the first map in (\ref{cok}) has finite length.  Second, note that $\Cok bf=\Cok tc=\Cok c$ and $\fl\Cok c=0$ since $\Cok c$ embeds inside $\Ext^1_{\Lambda}(N,N)$ and furthermore $\fl\Ext^1_{\Lambda}(N,N)=0$ by \ref{reflandCM}.  This means that the image of the first map is zero, hence $\Cok f\cong \Cok c$ and so in particular $\fl\Cok f=0$.
\end{proof}

In fact, using reflexive equivalence we have the following improvement of \ref{technical2} which does not assume that $M$ is CM, which is the analogue of \cite[5.1]{GLS}.
\begin{lemma}\label{4.5refl}
Let $R$ be a normal 3-sCY ring and let $M$ and $N$ be modifying modules.
If $0\to L\to M_0\stackrel{h}{\to} N$ is an exact sequence where $h$ is a
right $(\add M)$-approximation, then $L\oplus M$ is modifying.
\end{lemma}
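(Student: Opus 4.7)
The plan is to bootstrap \ref{technical2} via reflexive equivalence, using $M$ itself to build the equivalence. Set $\Lambda := \End_R(M)$. By hypothesis $\Lambda \in \CM R$, and since $R$ is normal $3$-CY$^-$, \ref{3CY-nonlocal}(2) gives that $\Lambda$ is itself $3$-CY$^-$; furthermore $\Lambda = \Hom_R(M,M) \in \refl R$ by \ref{reflequiv}(2). Since $0 \ne M \in \refl R$ it is a height one progenerator by \ref{reflequiv}(3), so by \ref{reflequiv}(4) we obtain a reflexive equivalence
\[
\mathbb{F} := \Hom_R(M,-) : \refl R \stackrel{\approx}{\longrightarrow} \refl \Lambda,
\]
which sends $M$ to $\Lambda$ and satisfies $\End_\Lambda(\mathbb{F}X) \cong \End_R(X)$ for all $X \in \refl R$.

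Next I would transport the given exact sequence through $\mathbb{F}$. Since $R$ is a normal domain, $\refl R$ is closed under kernels, so $L \in \refl R$. Applying the left exact functor $\mathbb{F}$ to $0 \to L \to M_0 \stackrel{h}{\to} N$ gives $0 \to \mathbb{F}L \to \mathbb{F}M_0 \stackrel{\mathbb{F}h}{\to} \mathbb{F}N$ in $\refl \Lambda$. The map $\mathbb{F}h$ is surjective: any element of $\mathbb{F}N = \Hom_R(M,N)$ is a morphism $M \to N$, which lifts through $h$ precisely because $h$ is a right $\add M$-approximation. Thus we obtain a short exact sequence
\[
0 \to \mathbb{F}L \to \mathbb{F}M_0 \to \mathbb{F}N \to 0
\]
in $\mod \Lambda$. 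Since $\mathbb{F}M_0 \in \add(\mathbb{F}M) = \add \Lambda = \proj \Lambda$, the surjection $\mathbb{F}h$ is automatically a right $\add \Lambda$-approximation.

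Now I apply \ref{technical2} to the module-finite $3$-CY$^-$ algebra $\Lambda$ with the roles of $M$ and $N$ played by $\Lambda$ and $\mathbb{F}N$ respectively. The module $\Lambda$ is trivially a CM height-one progenerator and is modifying since $\End_\Lambda(\Lambda) = \Lambda \in \CM R$; the module $\mathbb{F}N \in \refl \Lambda$ is modifying because $\End_\Lambda(\mathbb{F}N) \cong \End_R(N) \in \CM R$ by reflexive equivalence. Hence \ref{technical2} yields $\End_\Lambda(\mathbb{F}L \oplus \Lambda) \in \CM R$. Transporting back through $\mathbb{F}$,
\[
\End_R(L \oplus M) \;\cong\; \End_\Lambda(\mathbb{F}(L \oplus M)) \;=\; \End_\Lambda(\mathbb{F}L \oplus \Lambda) \;\in\; \CM R,
\]
so $L \oplus M$ is modifying as required.

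There is no real obstacle: the hint in the statement (\emph{using reflexive equivalence}) points directly to the mechanism, and the only verifications are the standard ones above — that $\mathbb{F}h$ is surjective (from the approximation property), and that modifying/reflexive/CM properties are preserved across the equivalence via the identification $\End_\Lambda(\mathbb{F}X) \cong \End_R(X)$. The content is essentially that reflexive equivalence with respect to a modifying generator (here $\mathbb{F}M = \Lambda$ serving as the ``CM progenerator'' witness on the $\Lambda$-side) removes the extra CM and progenerator hypotheses imposed on $M$ in \ref{technical2}.
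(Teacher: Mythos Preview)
Your proof is correct and follows essentially the same route as the paper: set $\Lambda=\End_R(M)$, use the reflexive equivalence $\mathbb{F}=\Hom_R(M,-)$ from \ref{reflequiv}(4), transport the sequence to a short exact sequence in $\mod\Lambda$ (surjectivity of $\mathbb{F}h$ coming from the approximation property), verify the hypotheses of \ref{technical2} with $\Lambda$ in the role of the CM height-one progenerator, and pull the conclusion back via $\End_\Lambda(\mathbb{F}X)\cong\End_R(X)$. Your justifications match the paper's; if anything you are slightly more explicit about why $\mathbb{F}h$ is onto and why the resulting map is automatically a right $\add\Lambda$-approximation.
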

\begin{proof}
Note that $L$ is reflexive since $R$ is normal.  Denote $\Lambda:=\End_{R}(M)$ and $\mathbb{F}:=\Hom_{R}(M,-):\refl R\to \refl \Lambda$ the reflexive equivalence in \ref{reflequiv}(4). Then $\Lambda$ is 3-sCY by \ref{3CY-nonlocal}, $\mathbb{F}N\in\refl \Lambda$, $\mathbb{F}M\in\CM \Lambda$ and both $\mathbb{F}N$ and $\mathbb{F}M$ are modifying $\Lambda$-modules.  Further
\[
0\to\mathbb{F}L\to\mathbb{F}M_{0}\xrightarrow{\mathbb{F}h}\mathbb{F}N\to 0
\]
is exact and $\mathbb{F}M=\Lambda$ so trivially $\mathbb{F}h$ is a right $(\add\mathbb{F}M)$-approximation.  It is also clear that $\mathbb{F}M=\Lambda$  is a height one progenerator.
By \ref{technical2} we see that $\End_{\Lambda}(\mathbb{F}L\oplus\mathbb{F}M)\in\CM R$, hence  $\End_{R}(L\oplus M)\cong\End_{\Lambda}(\mathbb{F}L\oplus\mathbb{F}M)\in\CM R$ as required.   
\end{proof}

Now we are ready to prove the following crucial result (c.f.\ \ref{CT2} later), which is the analogue of \cite[5.2]{GLS}.
\begin{thm}\label{gMRapprox}
Let $R$ be a normal 3-sCY ring and let $M$ be a non-zero modifying module. Then the following are equivalent\\
\t{(1)} $M$ is an MM module.\\
\t{(2)} If $N$ is any modifying module then there exists an exact sequence $0\to M_1\to M_0\stackrel{f}{\to} N$ with each $M_i\in\add M$ such that $f$ is a right $(\add M)$-approximation.
\end{thm}
\begin{proof}
Set $\Lambda:=\End_R(M)$.  Since $M$ is a height one progenerator, we have a reflexive equivalence $\mathbb{F}:=\Hom_R(M,-):\refl R\to \refl\Lambda$ by \ref{reflequiv}(4). Moreover $\Lambda$ is 3-sCY by \ref{3CY-nonlocal} and so a Gorenstein $R$-order by \ref{2.14a}.\\
(1)$\Rightarrow$(2) We have an exact sequence $0\to L\to M_0\stackrel{f}{\to}N$ where $f$ is a right $(\add M)$-approximation of $N$.  By \ref{4.5refl} $\End_{R}(L\oplus M)\in\CM R$ thus since $M$ is an MM module, $L\in\add M$.\\
(2)$\Rightarrow$(1) Suppose $N$ is reflexive with $\End_{R}(M\oplus N)\in\CM R$. Then $\mathbb{F}N\in \CM R$. We have $\pd_\Lambda \mathbb{F}N\leq 1$ since $N$ is a modifying module and so there is an exact sequence $0\to \mathbb{F}M_1\to \mathbb{F}M_0\to \mathbb{F}N\to0$ by assumption. Since $\Lambda$ is a Gorenstein $R$-order it follows that $\mathbb{F}N$ is a projective $\Lambda$-module by using localization and Auslander--Buchsbaum \ref{ABlocal}. Hence $N\in \add M$.
\end{proof}

The following version of the Bongartz completion \cite{Bongartz}\cite[VI.2.4]{ASS} is convenient for us.  Recall from the introduction that throughout this paper when we say tilting module we mean a tilting module of projective dimension $\leq 1$ (see \ref{tiltingdef}).
\begin{lemma}\label{Bongartz}
Suppose $R$ is normal, $M\in\refl R$ and denote $\Lambda:=\End_R(M)$.  If $N\in\refl R$ is such that $\Hom_R(M,N)$ is a partial tilting $\Lambda$-module then there exists $L\in\refl R$ such that $\Hom_R(M,N\oplus L)$ is a tilting $\Lambda$-module. 
\end{lemma}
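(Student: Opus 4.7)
The plan is to invoke the classical Bongartz completion inside $\mod\Lambda$ and then transport it back to $\refl R$ via the reflexive equivalence of \ref{reflequiv}(4).

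Set $T:=\Hom_R(M,N)$, a partial tilting $\Lambda$-module with $\pd_\Lambda T\leq 1$ and $\Ext^1_\Lambda(T,T)=0$. Following the standard Bongartz construction, I would view $\Ext^1_\Lambda(T,\Lambda)$ as a right $\End_\Lambda(T)$-module. Because $T$ is finitely generated, $\Lambda$ is noetherian, and $\pd_\Lambda T\leq 1$, this Ext group is finitely generated, so a choice of generators produces a class in $\Ext^1_\Lambda(T^n,\Lambda)$ realized by a short exact sequence
\[
0\to \Lambda \to L'\to T^n\to 0
\]
in $\mod\Lambda$. The familiar diagram chases then show $\Ext^1_\Lambda(T\oplus L',T\oplus L')=0$ and $\pd_\Lambda(T\oplus L')\leq 1$; combined with the sequence above (which already exhibits $\Lambda$ as an extension of two modules in $\add(T\oplus L')$), this says that $T\oplus L'$ is a tilting $\Lambda$-module in the sense of \ref{tiltingdef}.

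Next, I would verify that $L'\in \refl\Lambda$, i.e.\ that $L'$ is reflexive as an $R$-module. The outer terms $\Lambda=\Hom_R(M,M)$ and $T^n=\Hom_R(M,N^n)$ both lie in $\refl R$ by \ref{reflequiv}(2); since $R$ is a normal domain, $\refl R$ is closed under extensions by Serre's normality criterion, so $L'\in \refl R$.

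Finally, $M$ is a height one progenerator by \ref{reflequiv}(3), and $\Lambda=\End_R(M)$ is reflexive as an $R$-module, so by \ref{reflequiv}(4) the functor $\Hom_R(M,-)\colon \refl R\to \refl\Lambda$ is an equivalence. Thus there exists $L\in \refl R$ with $\Hom_R(M,L)\cong L'$, and then
\[
\Hom_R(M,N\oplus L)\;\cong\; T\oplus L'
\]
is the desired tilting $\Lambda$-module. The two steps that actually need care are the finite generation of $\Ext^1_\Lambda(T,\Lambda)$ underlying Bongartz (immediate here from noetherianity and $\pd_\Lambda T\leq 1$) and the reflexivity of the Bongartz module $L'$ (immediate from extension-closedness of $\refl R$); everything else is transport of structure along the reflexive equivalence.
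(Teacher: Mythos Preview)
Your proof is correct and follows essentially the same route as the paper: the paper simply outsources the existence of a \emph{reflexive} Bongartz complement to \cite[2.8]{IR} and then applies the reflexive equivalence \ref{reflequiv}(4), whereas you have unpacked that citation by carrying out the classical universal-extension construction and checking directly that the Bongartz module $L'$ is reflexive via extension-closedness of $\refl R$ over a normal domain. The only point worth noting is that the finite generation you invoke is as a right $\End_\Lambda(T)$-module; this is fine because $\Ext^1_\Lambda(T,\Lambda)$ is finitely generated over the central subring $R$, hence over $\End_\Lambda(T)$.
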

\begin{proof}
By \ref{reflequiv} $T:=\Hom_R(M,N)$ and $\Lambda$ are both reflexive.  Thus since $R$ is normal we can invoke \cite[2.8]{IR} to deduce that there exists an $X\in\refl \Lambda$ such that $T\oplus X$ is tilting.  Again by \ref{reflequiv} $X=\Hom_R(M,L)$ for some $L\in\refl R$.  
\end{proof}

We have the following analogue of \cite[8.7]{IR}.
\begin{prop}\label{Homtilting}
Let $R$ be a normal 3-sCY ring and assume $M$ is an MM module. Then \\
\t{(1)} $\Hom_R(M,-)$ sends modifying $R$-modules to partial tilting $\End_R(M)$-modules.\\
\t{(2)} $\Hom_R(M,-)$ sends MM $R$-modules to tilting $\End_R(M)$-modules.
\end{prop}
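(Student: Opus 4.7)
Set $\Lambda := \End_R(M)$ and $\mathbb{F} := \Hom_R(M,-)$; by \ref{reflequiv}(4) and \ref{3CY-nonlocal}(2), $\mathbb{F} : \refl R \to \refl \Lambda$ is an equivalence with $\mathbb{F}M = \Lambda$, and $\Lambda$ is 3-CY$^-$. Write $T := \mathbb{F}N$. For part (1), the bound $\pd_\Lambda T \leq 1$ is immediate: applying \ref{gMRapprox} to the MM module $M$ and modifying module $N$ yields an exact sequence $0 \to M_1 \to M_0 \to N$ with $M_i \in \add M$, which under $\mathbb{F}$ becomes a projective resolution $0 \to \mathbb{F}M_1 \to \mathbb{F}M_0 \to T \to 0$.

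For the vanishing $\Ext^1_\Lambda(T,T) = 0$, I will combine two facts. Since $N$ is modifying, $\End_\Lambda(T) \cong \End_R(N) \in \CM R$, so \ref{reflandCM} gives $\fl \Ext^1_\Lambda(T,T) = 0$. On the other hand, I claim $\Ext^1_\Lambda(T,T)$ is of finite length. Since this Ext group is a finitely generated $R$-module, it is enough to check that its support lies in $\Max R$, i.e.\ that $T_\p$ is projective over $\Lambda_\p$ for all primes $\p$ of height $\leq 2$. At heights $0$ and $1$, $R_\p$ is regular by normality, $M_\p$ is free, and $\Lambda_\p$ is Morita equivalent to $R_\p$; hence $T_\p$ is projective. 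At height $2$, $\Lambda_\p$ is a Gorenstein $2$-dimensional order (it is $2$-CY$^-$), $T_\p$ is reflexive and therefore CM by \ref{CMinheight2}, and $\pd_{\Lambda_\p} T_\p \leq 1$; the 2-dimensional version of Auslander--Buchsbaum \ref{ABlocal} then forces $\pd_{\Lambda_\p} T_\p = 0$. Thus $\Ext^1_\Lambda(T,T)$ is of finite length and equals $\fl \Ext^1_\Lambda(T,T) = 0$.

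For part (2), $T$ is partial tilting by (1), so the Bongartz completion \ref{Bongartz} supplies $L \in \refl R$ such that $T \oplus \mathbb{F}L$ is a tilting $\Lambda$-module. It suffices to show $L \in \add N$: then $\mathbb{F}L \in \add T$, so the tilting coresolution of $\Lambda$ by $\add(T \oplus \mathbb{F}L) = \add T$ is a coresolution by $\add T$, making $T$ tilting. By maximality of $N$, proving $L \in \add N$ reduces to verifying that $N \oplus L$ is modifying, i.e.\ that $\End_R(N \oplus L) \cong \End_\Lambda(T \oplus \mathbb{F}L)$ lies in $\CM R$. This is the main obstacle: although $T \oplus \mathbb{F}L$ being tilting gives vanishing of $\Ext^1_\Lambda$ among all pairs of summands, this cannot be combined directly with \ref{reflandCM} because $T$ and $\mathbb{F}L$ need not be CM over $\Lambda$. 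The cleanest route I can see is to invoke that the tilting module $T \oplus \mathbb{F}L$ induces a derived equivalence between $\Lambda$ and $\End_\Lambda(T \oplus \mathbb{F}L)$ which preserves the 3-CY$^-$ property, forcing the latter ring into $\CM R$. Controlling the depth of endomorphism rings at maximal ideals of $R$ without CM hypotheses on the summands is precisely the recurring technical subtlety of the non-isolated setting, and it is here that I expect the proof must be carried out carefully.
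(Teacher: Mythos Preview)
Your proposal is correct and follows essentially the same route as the paper: part (1) is proved via \ref{gMRapprox} for $\pd_\Lambda T\le 1$, then Auslander--Buchsbaum at height-$2$ primes plus \ref{reflandCM} to kill $\Ext^1_\Lambda(T,T)$; part (2) is Bongartz completion together with the fact that derived equivalence preserves the 3-CY$^-$ property (the paper cites \cite[3.1(1)]{IR}), which via \ref{3CY-nonlocal} forces $\End_R(N\oplus L)\in\CM R$ and hence $L\in\add N$. Your closing hesitation is unwarranted --- the derived-equivalence argument you outline is exactly what the paper does, and it works without any CM hypothesis on the summands.
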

\begin{proof}
(1) Denote $\Lambda:=\End_R(M)$, let $N$ be a modifying module and denote $T:=\Hom_R(M,N)$. Note first that $\pd_{\Lambda}T\leq 1$ by \ref{gMRapprox} and also $\Lambda$ is a Gorenstein $R$-order by \ref{2.14a} and \ref{3CY-nonlocal}.

Since projective dimension localizes $\pd_{\Lambda_{\p}}T_{\p}\leq 1$ for all primes $\p$, and further if $\hgt\p=2$ then $T_{\p}\in\CM R_{\p}$ by \ref{depthofhom}. Since $\Lambda_{\p}$ is a Gorenstein $R_{\p}$-order, Auslander--Buchsbaum (\ref{ABlocal}) implies that $T_{\p}$ is a projective $\Lambda_{\p}$-module and so $\Ext_{\Lambda_{\p}}^{1}(T_{\p},T_{\p})=0$ for all primes $\p$ with $\hgt\p=2$.  Consequently $\Ext_{\Lambda_{\m}}^{1}(T_{\m},T_{\m})$ has finite length for all $\m\in\Max R$.  On the other hand $\Lambda$ is 3-sCY by \ref{3CY-nonlocal} and $\End_{\Lambda}(T)\cong\End_{R}(N)\in\CM R$ by \ref{reflequiv}.  Thus $\fl \Ext_{\Lambda_{\m}}^{1}(T_{\m},T_{\m})=0$ for all $\m\in\Max R$ by \ref{reflandCM} and so $\Ext^{1}_{\Lambda}(T,T)=0$, as required.\\
(2) Now suppose that $N$ is also MM.  By Bongartz completion \ref{Bongartz} we may find $L\in\refl R$ such that $\Hom_{R}(M,N\oplus L)$ is a tilting $\End_R(M)$-module, thus $\End_{R}(M)$ and $\End_{R}(N\oplus L)$ are derived equivalent.  Since $\End_R(M)$ is 3-sCY so is $\End_{R}(N\oplus L)$ by \ref{IRDB} and thus by \ref{3CY-nonlocal} $\End_R(N\oplus L)\in\CM R$.  Consequently $L\in\add N$ and so $\Hom_R(M,N)$ is a tilting module. 

Now for the convenience of the reader we give a second proof, which shows us more explicitly how our tilting module generates the derived category.  If $N$ is also MM then since $(-)^{*}:\refl R\to \refl R$ is a duality, certainly $N^{*}$ (and $M^{*}$) is MM.  By \ref{gMRapprox} we can find
\[
0\to N_{1}^{*}\to N_{0}^{*}\to M^{*}
\]
such that
\begin{eqnarray}
0\to\mathbb{F}N_{1}^{*}\to \mathbb{F}N_{0}^{*}\to\mathbb{F}M^{*}\to 0\label{rem1}
\end{eqnarray}
is exact, where $\mathbb{F}=\Hom_{R}(N^{*},-)$.  Denote $\Gamma:=\End_{R}(N^{*})$, then $\Ext^{1}_{\Gamma}(\mathbb{F}M^{*},\mathbb{F}M^{*})=0$ by (1).  Thus applying $\Hom_{\Gamma}(-,\mathbb{F}M^{*})$ to (\ref{rem1}) gives us the following commutative diagram
\[
{\SelectTips{cm}{10}
\xy
(0,0)*+{0}="0",
(20,0)*+{\Hom_{\Gamma}(\mathbb{F}M^{*},\mathbb{F}M^{*})}="1",
(55,0)*+{\Hom_{\Gamma}(\mathbb{F}N_{0}^{*},\mathbb{F}M^{*})}="2",
(90,0)*+{\Hom_{\Gamma}(\mathbb{F}N_{1}^{*},\mathbb{F}M^{*})}="3",
(110,0)*+{0}="4",
(0,-10)*+{0}="0a",
(20,-10)*+{\Hom_{R}(M^{*},M^{*})}="1a",
(55,-10)*+{\Hom_{R}(N_{0}^{*},M^{*})}="2a",
(90,-10)*+{\Hom_{R}(N_{1}^{*},M^{*})}="3a",
(110,-10)*+{0}="4a",
\ar"0";"1"
\ar"1";"2"
\ar"2";"3"
\ar"3";"4"
\ar"0a";"1a"
\ar"1a";"2a"
\ar"2a";"3a"
\ar@{=}"1";"1a"
\ar@{=}"2";"2a"
\ar@{=}"3";"3a"
\ar"3a";"4a"
\endxy}
\]
where the top row is exact and the vertical maps are isomorphisms by \ref{reflequiv}.  Hence the bottom row is exact.  Since $(-)^{*}:\refl R\to \refl R$ is a duality, this means that
\[
0\to\Hom_{R}(M,M)\to\Hom_{R}(M,N_{0})\to\Hom_{R}(M,N_{1})\to 0
\]
is exact.  But denoting $\Lambda:=\End_{R}(M)$ and $T:=\Hom_{R}(M,N)$, this means we have an exact sequence
\[
0\to\Lambda\to T_{0}\to T_{1}\to 0
\]
with each $T_{i}\in \add T$.  Hence $T$ is a tilting $\Lambda$-module.
\end{proof}

The following is now immediate:
\begin{cor}\label{allgMRderived} 
Let $R$ be a normal 3-sCY ring and assume $M$ is an MM module.  Then \\
\t{(1)} If $N$ is any modifying module then the partial tilting $\End_{R}(M)$-module $T:=\Hom_R(M,N)$ induces a recollement
\[
{\SelectTips{cm}{10}
\xy
(-23,0)*+{\rm K}="0",
(0,0)*+{\D(\Mod\End_R(M))}="1",
(36,0)*+{\D(\Mod\End_R(N))}="2",
\ar"0";"1",
\ar@<-1.5ex>"1";"0",
\ar@<1.5ex>"1";"0",
\ar|{F}"1";"2",
\ar@<-1.5ex>"2";"1",
\ar@<1.5ex>"2";"1",
\endxy}
\]
where $F=\mathbf{R}{\rm Hom}(T,-)$ and ${\rm K}$ is a certain triangulated subcategory of $\D(\Mod\End_R(M))$.\\
\t{(2)} If further $N$ is maximal modifying then the above functor $F$ is an equivalence.
\end{cor}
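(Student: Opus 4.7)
The plan is to observe that this corollary follows essentially mechanically from Proposition \ref{Homtilting} combined with well-established tilting theory, so the only real work is bookkeeping and citing the right general results.

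First I would record that by \ref{Homtilting}(1), $T:=\Hom_R(M,N)$ is a partial tilting $\End_R(M)$-module; by Lemma \ref{reflequiv}(4) and standard reflexive equivalence, $\End_{\End_R(M)}(T)\cong \End_R(N)$. This identifies the target of the functor $F=\mathbf{R}\Hom(T,-)$ as $\D(\Mod \End_R(N))$.

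For part (1), I would invoke the general theory of recollements arising from classical partial tilting modules of projective dimension one. The statement is that whenever $\Lambda$ is a ring and $T\in \mod\Lambda$ satisfies $\pd_\Lambda T\leq 1$ and $\Ext^1_\Lambda(T,T)=0$, the functor $\mathbf{R}\Hom_\Lambda(T,-):\D(\Mod \Lambda)\to \D(\Mod \End_\Lambda(T))$ admits both adjoints and fits into a recollement, with the left-hand triangulated subcategory $\rm U$ given as $\Ker(\mathbf{R}\Hom_\Lambda(T,-))$ (equivalently, the image of the full embedding coming from the universal localization at $T$). Applying this to $\Lambda=\End_R(M)$ and our specific $T$ immediately produces the diagram in the statement. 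The main thing to verify is really just that we are in the hypothesis of this general theorem, which \ref{Homtilting}(1) guarantees.

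For part (2), if $N$ is maximal modifying then by \ref{Homtilting}(2) the module $T$ is actually a tilting module, not merely partial tilting. In this case the general fact specializes: Rickard's theorem says $F=\mathbf{R}\Hom(T,-)$ is an equivalence of triangulated categories $\D(\Mod \End_R(M))\stackrel{\approx}{\to}\D(\Mod \End_{\End_R(M)}(T))=\D(\Mod \End_R(N))$. Inside the recollement of part (1), this is equivalent to the statement $\rm U=0$, so $F$ becomes an equivalence.

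I expect no serious obstacle here: once \ref{Homtilting} is in hand, part (1) is an appeal to the standard partial-tilting-module recollement, and part (2) is a direct application of Rickard. The only mild care needed is in the identification $\End_{\End_R(M)}(T)\cong \End_R(N)$, which is precisely what \ref{reflequiv}(4) delivers since $M$ is a height one progenerator over $R$ and $N\in\refl R$.
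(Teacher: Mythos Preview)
Your proposal is correct and follows essentially the same route as the paper. The only packaging difference is that where you invoke a black-box ``partial tilting module of projective dimension $\leq 1$ yields a recollement'' result, the paper makes the mechanism explicit: it Bongartz-completes $T$ to a tilting module $U$ (using Lemma~\ref{Bongartz}), obtains a derived equivalence $\D(\Mod\End_R(M))\approx\D(\Mod\End_\Lambda(U))$, and then uses the idempotent $e$ with $e\End_\Lambda(U)e\cong\End_\Lambda(T)\cong\End_R(N)$ to get the recollement via \cite[2.17]{M03}. This is exactly how one proves the general statement you cite, so the two arguments coincide once unpacked; part~(2) is identical in both.
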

\begin{proof}
(1) Set $\Lambda:=\End_R(M)$ then $T:=\Hom_R(M,N)$ is a partial tilting $\Lambda$-module by \ref{Homtilting}.  The fact that $\End_{\Lambda}(T)\cong\End_R(N)$ follows since $\Hom_R(M,-)$ is a reflexive equivalence by \ref{reflequiv}(4).  By Bongartz completion $T$ is a summand of a tilting $\Lambda$-module $U$. We have a derived equivalence $\D(\Mod \End_R(M))\simeq \D(\Mod \End_\Lambda(U))$. Moreover there exists an idempotent $e$ of $\End_\Lambda(U)$ such that
$e\End_\Lambda(U)e\cong \End_\Lambda(T)\cong \End_R(N)$.
Thus we have the desired recollement (e.g. \cite[4.16]{Krause}, see also \cite{M03}).\\
(2) is an immediate consequence by taking $U:=T$ in the argument above.
\end{proof}

We can now improve \ref{MMAs dim3 closed}.

\begin{thm}\label{closed in dimension three}
Let $R$ be a normal $3$-sCY ring.  Then MMAs of $R$ form a complete derived equivalence class.
\end{thm}
\begin{proof}
By \ref{MMAs dim3 closed}(2), MMAs of $R$ are closed under derived equivalence.  On the other hand, all MMAs are derived equivalent by \ref{allgMRderived}(2).
\end{proof}

Moreover, we have the following bijections (cf.\ \cite[8.9]{IR}).

\begin{thm}\label{gRgMRrigid}
Let $R$ be a normal 3-sCY ring and assume $M$ is an MM module. Then the functor $\Hom_R(M,-):\mod R\to\mod\End_{R}(M)$ induces bijections
\[
\begin{array}{rrcl}
\t{(1)} &\{\mbox{maximal modifying $R$-modules}\}&\stackrel{1:1}\longleftrightarrow&\{\mbox{reflexive tilting $\End_R(M)$-modules}\}.\\
\t{(2)} & \{\mbox{modifying $R$-modules}\}&\stackrel{1:1}\longleftrightarrow&\{\mbox{reflexive partial tilting $\End_R(M)$-modules}\}.
\end{array}
\]
\end{thm}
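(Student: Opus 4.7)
The plan is to use the reflexive equivalence \ref{reflequiv}(4) to put the two claimed bijections in place, invoke \ref{Homtilting} for the forward directions, and then bootstrap the backward directions via Bongartz completion and the derived-equivalence closure \ref{gMRclosedunderderived}. Set $\Lambda:=\End_R(M)$. Since $M\in\refl R$, the functor $\mathbb{F}:=\Hom_R(M,-)\colon\refl R\to\refl\Lambda$ is an equivalence by \ref{reflequiv}(4), hence bijective on isomorphism classes and restricting to an algebra isomorphism $\End_R(N)\cong\End_\Lambda(\mathbb{F}N)$ for any $N\in\refl R$. The forward directions of (1) and (2) are then precisely the two statements of \ref{Homtilting}. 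What remains is to check the backward directions, i.e., that every reflexive (resp.\ reflexive partial) tilting $\Lambda$-module comes from a maximal modifying (resp.\ modifying) $R$-module.

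For (1), let $T\in\refl\Lambda$ be tilting and write $T=\mathbb{F}N$ for the unique $N\in\refl R$. The key subclaim is that $T$ is a height-one progenerator over $\Lambda$. By \ref{3CY-nonlocal} $\Lambda$ is $3$-CY$^-$, so by \ref{2.14a} $\Lambda_\p$ is a Gorenstein $R_\p$-order for every $\p\in\Spec R$. For any $\p$ of height at most two, \ref{CMinheight2} gives $T_\p\in\CM R_\p$, and since $\pd_{\Lambda_\p}T_\p\le1<\infty$, Auslander--Buchsbaum \ref{ABlocal} forces $T_\p$ to be $\Lambda_\p$-projective. Localising the tilting sequence $0\to\Lambda\to T_0\to T_1\to0$ and using projectivity of $T_{1,\p}$ to split it then exhibits $\Lambda_\p\in\add T_\p$ at each such prime. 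Hence $T$ is a height-one progenerator, and reflexive equivalence (now applied with $T$ in place of $M$) yields $\End_\Lambda(T)\cong\End_R(N)$. Standard tilting theory gives a derived equivalence between $\Lambda$ and $\End_\Lambda(T)\cong\End_R(N)$, so \ref{gMRclosedunderderived} implies that $N$ is maximal modifying.

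For (2), let $T\in\refl\Lambda$ be partial tilting and write $T=\mathbb{F}N$. Bongartz completion \ref{Bongartz} produces $L\in\refl R$ with $T\oplus\mathbb{F}L=\mathbb{F}(N\oplus L)$ a reflexive tilting $\Lambda$-module. Part (1), now proved, says $N\oplus L$ is maximal modifying, so $\End_R(N\oplus L)\in\CM R$, and its direct summand $\End_R(N)$ then also lies in $\CM R$, exhibiting $N$ as modifying. The main obstacle is the height-one progenerator argument in (1); once it is in hand, the rest is formal, and the $3$-CY$^-$ hypothesis on $\Lambda$ is exactly what is needed to feed the local Auslander--Buchsbaum equality into this step.
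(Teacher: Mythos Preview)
Your proof is correct and follows essentially the same route as the paper: reflexive equivalence gives the correspondence on objects, \ref{Homtilting} handles the forward directions, and the backward directions go via tilting-induced derived equivalence plus \ref{gMRclosedunderderived} for (1), then Bongartz completion for (2).

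One remark on economy: the ``key subclaim'' you isolate in (1)---that $T$ is a height-one progenerator over $\Lambda$---is true but unnecessary. You already observed at the outset that $\mathbb{F}=\Hom_R(M,-)$ is an equivalence $\refl R\to\refl\Lambda$, hence fully faithful, which immediately gives $\End_\Lambda(T)\cong\End_R(N)$; the derived equivalence between $\Lambda$ and $\End_\Lambda(T)$ is then automatic from $T$ being tilting. The paper's proof uses exactly this shortcut. Your localisation/Auslander--Buchsbaum argument is a valid way to re-establish the endomorphism isomorphism, but it duplicates work already done.
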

\begin{proof}
(1) In light of \ref{Homtilting}(2) we only need to show that every reflexive tilting module is the image of some MM module.  Thus let $X$ be a reflexive tilting $\End_R(M)$-module; by reflexive equivalence \ref{reflequiv}(4) there exists some $N\in\refl R$ such that $\Hom_R(M,N)\cong X$.  We claim that $N$ is MM.
Since $\Hom_R(M,N)$ is a tilting $\End_R(M)$-module certainly $\End_R(M)$ and $\End_R(N)$ are derived equivalent; the fact that $N$ is MM follows from \ref{MMAs dim3 closed}(2) above.\\
(2) By \ref{Homtilting}(1) we only need to show that every reflexive partial tilting $\End_R(M)$-module is the image of some modifying module.  Suppose $X$ is a reflexive partial tilting $\End_R(M)$-module, say $X\cong\Hom_R(M,N)$.  Then by Bongartz completion \ref{Bongartz} there exists $N_1\in\refl R$ such that $\Hom_R(M,N\oplus N_1)$ is a tilting $\End_R(M)$-module.  By (1) $N\oplus N_1$ is MM, thus $\End_R(N)$ is a summand of the CM $R$-module $\End_R(N\oplus N_1)$ and so is itself CM. 
\end{proof}

\begin{cor}\label{gRgMRrigid2}
Let $R$ be a normal 3-sCY ring and assume $M$ is an MM module.  Then\\
\t{(1)} $N$ is a modifying module $\iff$ $N$ is the direct summand of an MM module.\\
\t{(2)} $R$ has an MM module which is a CM generator.
\end{cor}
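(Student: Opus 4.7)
The plan is to harvest the infrastructure already built in Theorem~\ref{gRgMRrigid} and Bongartz completion~\ref{Bongartz}, so the corollary essentially becomes a bookkeeping exercise.

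For part (1), I would treat the two directions separately. The ($\Leftarrow$) direction is purely formal: if $N$ is a summand of a MM module $L = N \oplus N'$, then $N \in \refl R$ (since $\refl R$ is closed under summands) and $\End_R(N)$ sits as an $R$-direct summand of $\End_R(L) \in \CM R$ via the matrix block decomposition $\End_R(L) \cong \End_R(N) \oplus \Hom_R(N,N') \oplus \Hom_R(N',N) \oplus \End_R(N')$, so $\End_R(N) \in \CM R$, making $N$ modifying. For the ($\Rightarrow$) direction, suppose $N$ is modifying. By \ref{gRgMRrigid}(2), $\Hom_R(M,N)$ is a reflexive partial tilting $\End_R(M)$-module. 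Applying Bongartz completion \ref{Bongartz}, there exists $L \in \refl R$ such that $\Hom_R(M, N \oplus L)$ is a reflexive tilting $\End_R(M)$-module. The bijection in \ref{gRgMRrigid}(1) then identifies $N \oplus L$ as a MM module, exhibiting $N$ as a direct summand of an MM module.

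For part (2), I would start from the trivial observation that $R$ itself is a modifying module: it is reflexive, and $\End_R(R) = R$ lies in $\CM R$ because $R$ is $3$-CY$^{-}$, hence Gorenstein and in particular CM. Applying part (1) now delivers an MM module $N$ with $R \in \add N$. Since $R$ is a summand of $N$, the module $N$ is automatically a generator; but modifying generators are CM by \ref{CMiffgen}(1), so $N \in \CM R$, as claimed.

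There is no serious obstacle thanks to the preceding machinery; the only subtlety worth flagging is philosophical rather than technical, namely that summands of MM modules need not be MM, only modifying — which is precisely why the broader notion of modifying module was introduced in the first place. Part (1) gives the clean characterization of summands of MM modules, and part (2) then follows by feeding the ever-present modifying module $R$ into that characterization.
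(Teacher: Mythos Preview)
Your proof is correct and follows essentially the same route as the paper's. The paper is slightly terser---for $(\Rightarrow)$ in (1) it cites \ref{Homtilting}(1) directly and then points to the proof of \ref{gRgMRrigid}(2) for the Bongartz completion step, whereas you unpack those same ingredients explicitly via \ref{gRgMRrigid}(2), \ref{Bongartz}, and \ref{gRgMRrigid}(1); for (2) both proofs apply (1) to $R$ and invoke \ref{CMiffgen}.
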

\begin{proof}
(1) `if' is clear.  For the `only if' let $N$ be a modifying module, then by \ref{Homtilting}(1) $\Hom_R(M,N)$ is a partial tilting $\End_R(M)$-module, so the proof of \ref{gRgMRrigid}(2) shows that there exists $N_1\in\refl R$ such that $N\oplus N_1$ is MM.\\
(2) Apply (1) to $R$.  The corresponding MM module is necessarily CM by \ref{CMiffgen}.
\end{proof}

Recall from \ref{nonsingorder}(3) we say that an $R$-order $\Lambda$ has \emph{isolated singularities} if $\gl \Lambda_\p=\dim R_{\p}$ for all non-maximal primes $\p$ of $R$.

\begin{remark}
It is unclear in what generality every maximal modification algebra $\End_R(M)$ has isolated singularities.  In many cases this is true --- for example if $R$ is itself an isolated singularity this holds, as it does whenever $M$ is CT by \ref{CT}.  Also, if $R$ is Gorenstein, $X\to\Spec R$ projective birational with $M\in\refl R$ modifying such that $\Db(\coh X)\cong\Db(\mod\End_R(M))$, then provided $X$ has at worst isolated singularities (e.g. if $X$ is a 3-fold with terminal singularities) then $\End_R(M)$ has isolated singularities too.  This is a direct consequence of the fact that in this case the singular derived category has finite dimensional Hom-spaces.  Also note that if $R$ is normal 3-sCY then the existence of an MM algebra $\End_R(M)$ with isolated singularities implies that $R_\p$ has finite CM type for all primes $\p$ of height 2 by a result of Auslander (see \cite[2.13]{IW}).  Finally note that it follows immediately from \ref{allgMRderived}(2) (after combining \ref{IRDB} and \ref{derivedlocalcomp}) that if $R$ is  normal 3-sCY and there is one MM algebra $\End_R(M)$ with isolated singularities then necessary all MM algebras $\End_R(N)$ have isolated singularities.
\end{remark}
The above remark suggests the following conjecture.
\begin{conj}
Let $R$ be a normal 3-sCY ring with rational singularities.  Then\\
(1)  $R$ always has an MM module $M$ (which may be $R$).\\
(2)  For all such $M$, $\End_R(M)$ has isolated singularities. 
\end{conj}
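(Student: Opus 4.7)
The plan for (1) is to exploit the existence of $\mathbb{Q}$-factorial terminalizations. Since $R$ is a three-dimensional Gorenstein normal domain with rational singularities, by Elkik's theorem it has at worst canonical singularities, and so by the Minimal Model Program (in this generality, Birkar--Cascini--Hacon--McKernan; for the Gorenstein three-fold case, earlier results of Mori and others) there exists a projective crepant birational morphism $f\colon Y\to\Spec R$ with $Y$ having only $\mathbb{Q}$-factorial terminal singularities. I would then seek a tilting object $\mathcal{T}\in\Db(\coh Y)$; setting $M:=\mathbf{R}f_{*}\mathcal{T}$, one expects $M\in\refl R$ and $\End_Y(\mathcal{T})\cong\End_R(M)$, and the resulting derived equivalence $\Db(\coh Y)\approx\Db(\mod\End_R(M))$ combined with the minimality of $Y$ in the MMP should translate into maximality of $M$ among modifying $R$-modules. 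Note that $R$ itself is always a modifying module (as observed immediately after \ref{CMiffgen}), so the content of (1) is really about extending $R$ to a maximal one.

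For (2), once such a derived equivalence is in place, the terminal Gorenstein three-fold singularities on $Y$ are automatically isolated (they are compound Du Val points), and so by the observation made in the remark preceding the conjecture, $\End_R(M)$ has isolated singularities. The corollary to \ref{allgMRderivedintro} then shows that every other maximal modification algebra is derived equivalent to $\End_R(M)$, and the final sentence of the same remark upgrades the isolated singularity property to every MM algebra, establishing (2) in full.

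The principal obstacle lies in (1), specifically in the construction of the tilting object $\mathcal{T}$ on $Y$. Van den Bergh handled the case when $Y$ is smooth and $f$ is small, producing a NCCR; in the genuinely singular setting --- when $\Spec R$ admits no crepant resolution --- producing tilting objects on singular minimal models is known only in restricted classes (flopping contractions, low-dimensional fibres, quotient-like situations), and the general statement is an open and deep problem. A purely algebraic alternative would combine the mutation theory developed later in the paper with the symmetry in \ref{CMsymmcomplete}(2) and the approximation-kernel construction of \ref{gMRapprox}: starting from $R$, one would iteratively enlarge $M$ by taking kernels of right $\add M$-approximations of new reflexive modules, hoping to control termination through a geometric bound coming from $Y$ on the number of indecomposable summands. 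Either route is delicate, and the conjecture should be regarded as a program rather than a statement directly accessible from the results of this paper.
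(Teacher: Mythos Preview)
The statement you are addressing is a \emph{conjecture}, not a theorem: the paper offers no proof for it, only the brief remark that it is closely related to Van den Bergh's conjecture on the equivalence of crepant and noncommutative crepant resolutions, and that ``given the assumption on rational singularities, any proof is likely to be geometric.'' There is therefore nothing in the paper to compare your argument against at the level of proof.

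That said, your proposal is honest about its status --- you explicitly call it a program rather than a proof --- and the geometric strategy you outline (pass to a $\mathbb{Q}$-factorial terminalization $Y\to\Spec R$, build a tilting object on $Y$, transport the isolated-singularity property across the derived equivalence, and propagate to all MM algebras via \ref{allgMRderived}) is exactly the kind of argument the authors are gesturing at. Your identification of the main gap is also correct: the existence of tilting objects on singular minimal models is not known in the required generality, and this is the genuine obstruction. The alternative algebraic route you mention (iterated enlargement via approximation kernels with termination controlled by geometry) is plausible in spirit but has no known implementation either. In short, your write-up is a reasonable commentary on an open problem, not a proof, and should not be presented as one; the paper itself makes no stronger claim.
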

This is closely related to a conjecture of Van den Bergh regarding the equivalence of the existence of crepant and noncommutative crepant resolutions when $R$ is a rational normal Gorenstein 3-fold \cite[4.6]{VdBNCCR}.  We remark that given the assumption on rational singularities, any proof is likely to be geometric.  We also remark that the restriction to rational singularities is strictly necessary, since we can consider any normal surface singularity $S$ of infinite CM type.  Since $S$ is a surface $\End_S(M)\in\CM S$ for all $M\in\CM S$, so since $S$ has infinite CM type it cannot admit an MMA.  Now consider $R:=S\otimes_{\mathbb{C}}\C{}[t]$, then $R$ is a 3-fold that does not admit an MMA.  A concrete example is given by $R=\mathbb{C}[x,y,z,t]/x^3+y^3+z^3$.

\section{Relationship Between CT modules, NCCRs and MM modules}

In this section we define CT modules for singularities that are not necessarily isolated, and we show that they are a special case of the MM modules introduced in \S\ref{gMR}.  We also show (in \ref{rigidisolated}) that all these notions recover the established ones when $R$ is an isolated singularity.

When $R$ is a normal 3-sCY domain, below we prove the implications in the following picture which summarizes the relationship between CT modules, NCCRs and MM modules:
\[
{
\xy
(0,0)*+{\mbox{{\rm CT modules}}}="1",
(37,0)*+{\mbox{{\rm modules giving NCCRs}}}="2",
(74,0)*+{\mbox{{\rm MM modules}}}="3",
(105,0)*+{\mbox{{\rm modifying modules}}}="4",
\ar@<1.5ex>@{=>}^(0.39){\scriptsize\ref{CT}}"1";"2"
\ar@<-1.5ex>@{<=}_(0.4){\scriptsize \begin{array}{c}\mbox{if generator} \\ \scriptsize\ref{CT}\end{array}}"1";"2"
\ar@<1.5ex>@{=>}^(0.6){\scriptsize\ref{NCCRgiveMM}}"2";"3"
\ar@<-1.5ex>@{<=}_(0.6){\scriptsize\begin{array}{c} \mbox{if } \exists \mbox{NCCR}\\ \ref{NCCRCTisgMR} \end{array}}"2";"3"
\ar@{=>}"3";"4"
\endxy}
\]
We remark that the relationship given by \ref{NCCRgiveMM} and \ref{CT} holds in arbitrary dimension $d$, whereas \ref{NCCRCTisgMR} requires $d=3$.
 
\begin{defin}\label{CTdefin}
Let $R$ be a $d$-dimensional CM ring with a canonical module $\omega_{R}$.  We call $M\in\CM R$ a CT module if
\[
\add M=\{ X\in\CM R : \Hom_R(M,X)\in\CM R \}=\{ X\in\CM R : \Hom_R(X,M)\in\CM R\}.
\]
\end{defin}
The name CT is inspired by (but is normally different than) the notion of `cluster tilting' modules.  We explain the connection in \ref{rigidisolated}. 

\begin{lemma}\t{(1)} Any CT module is a generator-cogenerator.\\
\t{(2)} Any CT module is maximal modifying.
\end{lemma}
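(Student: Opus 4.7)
The plan is to observe that both assertions fall out directly from the defining equalities of a CT module, together with the standard fact that $\Hom_R(-,\omega_R)$ preserves the CM property.

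For (1), to see $M$ is a generator I would take $X=R$: since $R\in\CM R$ and $\Hom_R(R,M)=M\in\CM R$ by hypothesis, $R$ lies in the right-hand set $\{X\in\CM R:\Hom_R(X,M)\in\CM R\}=\add M$. To see $M$ is a cogenerator I would take $X=\omega_R$: since $\omega_R\in\CM R$ and $\Hom_R(M,\omega_R)\in\CM R$ by the canonical duality $\Hom_R(-,\omega_R):\CM R\leftrightarrow\CM R$, $\omega_R$ lies in the left-hand set $\{X\in\CM R:\Hom_R(M,X)\in\CM R\}=\add M$.

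For (2) I would first check that $M$ is modifying, which is automatic: taking $X=M$ in either equality gives $\End_R(M)\in\CM R$. For maximality, suppose $N\in\refl R$ with $M\oplus N$ modifying, so $\Hom_R(M,N)\in\CM R$. The key trick, using part (1), is that $R\in\add M$, so $M\cong R\oplus M'$ and hence
\[
\Hom_R(M,N)\cong N\oplus \Hom_R(M',N),
\]
which exhibits $N$ as a summand of a CM module; thus $N\in\CM R$. Now $N\in\CM R$ together with $\Hom_R(M,N)\in\CM R$ places $N$ in the left-hand set of the CT-defining equality, giving $N\in\add M$ as required.

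Neither step involves a real obstacle; the only subtle point worth emphasizing is precisely the reduction $N\in\refl R \Rightarrow N\in\CM R$, which is where we genuinely exploit that $M$ is a generator (which in turn comes from the \emph{two} equalities in the CT definition, not just one). I would therefore present the argument in this order: (1) first, then invoke the generator conclusion of (1) inside the maximality step of (2).
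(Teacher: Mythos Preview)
Your proposal is correct and follows essentially the same approach as the paper's proof. The paper argues identically but more tersely: for (1) it cites exactly $\Hom_R(R,M)\in\CM R$ and $\Hom_R(M,\omega_R)\in\CM R$; for (2) it uses $R\in\add M$ from (1) to deduce $N\in\CM R$ from $\Hom_R(M,N)\in\CM R$, then applies the CT equality---precisely your ``key trick''.
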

\begin{proof}
Let $M$ be a CT module.\\
(1) This is clear from $\Hom_R(M,\omega_{R})\in\CM R$ and  $\Hom_R(R,M)\in \CM R$.\\
(2) Suppose $N\in\refl R$ with $\End_R(M\oplus N)\in \CM R$, then certainly $\Hom_R(M,N)\in \CM R$. Since $R\in \add M$ by (1), we have $N\in \CM R$.
Hence since $M$ is a CT module, necessarily $N\in \add M$.
\end{proof}
Not every MM module is CT, however in the situation when $R$ has a CT module (equivalently, by  \ref{NCCRhasCM}(2) below, $R$ has a NCCR) we give a rather remarkable relationship between CT modules, MM modules and NCCRs in \ref{NCCRCTisgMR} at the end of this subsection.

If $R$ is a CM ring with a canonical module $\omega_{R}$ we denote the duality $(-)^\vee:=\Hom_R(-,\omega_{R})$. We shall see shortly that if $M$ or $M^\vee$ is a generator then we may test the above CT condition on one side (see \ref{CT} below), but before we do this we need the following easy observation.

\begin{lemma}\label{Romega}
Let $R$ be a CM ring with a canonical module $\omega_{R}$, $M\in\CM R$.  If $\End_R(M)$ is a non-singular $R$-order, then $R\in\add M\iff \omega_{R}\in\add M$.
\end{lemma}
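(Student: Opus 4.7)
The plan is to leverage the reflexive equivalence $\Hom_R(M,-)\colon \add M \to \proj \Lambda$ (where $\Lambda := \End_R(M)$), combined with the fact that for a non-singular $R$-order $\CM \Lambda = \proj \Lambda$ (by \ref{nonsingGoren}(3)). The canonical duality $(-)^\vee = \Hom_R(-,\omega_R)$ on $\CM R$ then lets us swap the roles of $R$ and $\omega_R$.

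For the forward direction, suppose $R \in \add M$, so $M$ is a generator. By \ref{reflequiv}(1) the functor $\Hom_R(M,-)$ restricts to an equivalence $\add M \xrightarrow{\approx} \proj \Lambda$. Consider $N := \Hom_R(M,\omega_R) = M^\vee$. Since $M \in \CM R$ and $(-)^\vee$ is a duality on $\CM R$, we have $N \in \CM R$, and by our convention $N$ is naturally a left $\Lambda$-module, hence $N \in \CM \Lambda$. Since $\Lambda$ is non-singular, \ref{nonsingGoren}(3) gives $N \in \proj \Lambda$. Pulling back through the equivalence yields $X \in \add M$ with $\Hom_R(M,X) \cong \Hom_R(M,\omega_R)$; by full faithfulness $X \cong \omega_R$, and therefore $\omega_R \in \add M$.

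For the reverse direction, I would argue by symmetry using the canonical duality. Applying $(-)^\vee$ to $\omega_R \in \add M$ gives $R = \omega_R^\vee \in \add M^\vee$. Now $M^\vee \in \CM R$ and $\End_R(M^\vee) \cong \End_R(M)^{\op} = \Lambda^{\op}$ is still a non-singular $R$-order (being CM is a property of the underlying $R$-module, and global dimension is invariant under $\op$). Applying the forward direction already proved to $M^\vee$ yields $\omega_R \in \add M^\vee$, and applying $(-)^\vee$ once more gives $R \in \add M^{\vee\vee} = \add M$.

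There is no real obstacle here beyond being careful about the $\Lambda$-module structure on $\Hom_R(M,\omega_R)$ and the compatibility of the duality $(-)^\vee$ with the hypothesis on $\End_R(M)$; both are immediate from the conventions and \ref{nonsingGoren}.
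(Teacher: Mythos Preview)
Your proof is correct and follows essentially the same route as the paper: use the duality $(-)^\vee$ to reduce to one direction, and for that direction observe $\Hom_R(M,\omega_R)=M^\vee\in\CM\Lambda=\proj\Lambda$ (via \ref{nonsingGoren}) and pull back through the equivalence $\add M\simeq\proj\Lambda$ of \ref{reflequiv}(1). The only cosmetic difference is that the paper states the symmetry first and then proves one implication, whereas you prove the forward implication and then invoke symmetry.
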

\begin{proof}
Since $(-)^\vee :\CM R\to \CM R$ is a duality we know that $\End_R(M^\vee)\cong \End_R(M)^{\op}$ and so $\End_R(M^\vee)$ is also a non-singular $R$-order.  Moreover $R^\vee=\omega_{R}$ and $\omega_{R}^\vee=R$ so by the symmetry of this situation we need only prove the `only if' part.  Thus assume that $R\in\add M$.  In this case since $\Hom_R(M,\omega_{R})=M^\vee\in\CM R$, by \ref{nonsingGoren} $\Hom_R(M,\omega_{R})$ is a projective $\End_R(M)$-module and thus $\omega_{R}\in \add M$ by \ref{reflequiv}(1).
\end{proof}

We reach one of our main characterizations of CT modules.  Note that if $R$ is a normal $d$-sCY ring, then by \ref{HomsymmDirect} the definition of CT modules is equivalent to 
\[
\add M=\{ X\in\CM R\mid\Hom_R(M,X)\in\CM R  \},
\]
however the following argument works in greater generality:

\begin{thm}\label{CT}
Let $R$ be a $d$-dimensional, equi-codimensional CM ring (e.g.\ if $R$ is $d$-sCY) with a canonical module $\omega_{R}$.  Then for any $M\in \CM R$ the following are equivalent\\
$\t{(1)}$ $M$ is a CT module.\\
$\t{(2)}$  $R\in\add M$ and $\add M=\{ X\in\CM R : \Hom_R(M,X)\in\CM R \}$.\\
$\t{(2)}^\prime$  $\omega_{R}\in\add M$ and $\add M=\{ X\in\CM R : \Hom_R(X,M)\in\CM R \}$.\\
$\t{(3)}$ $R\in\add M$ and $\End_R(M)$ is a non-singular $R$-order.\\
$\t{(3)}^\prime$ $\omega_{R}\in\add M$ and $\End_R(M)$ is a non-singular $R$-order.\\
In particular CT modules are precisely the CM generators which give NCCRs.
\end{thm}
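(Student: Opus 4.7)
The logical backbone is: (1) $\iff$ (2)+(2)' essentially by definition, (3) $\iff$ (3)' by Lemma~\ref{Romega}, so the substantive content is to establish (2) $\iff$ (3); then (2)' $\iff$ (3)' will follow by a symmetric argument applied to $M^{\vee}$. Writing $\Lambda := \End_R(M)$ throughout, the implication (1) $\Rightarrow$ (2),(2)' is a direct substitution: taking $X=R$ in the second CT-condition gives $\Hom_R(R,M) = M \in \CM R$, hence $R \in \add M$, and dually $X=\omega_R$ in the first condition gives $\omega_R \in \add M$. Moreover (2) already forces $\omega_R \in \add M$: putting $X = \omega_R$ in the $\add M$-characterization, $\Hom_R(M,\omega_R) = M^{\vee} \in \CM R$, so $\omega_R \in \add M$ automatically.

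The direction (3) $\Rightarrow$ (2) is short. By Proposition~\ref{nonsingGoren}, non-singularity of $\Lambda$ is equivalent to $\CM \Lambda = \proj \Lambda$. If $X \in \CM R$ with $\Hom_R(M,X) \in \CM R$, then $\Hom_R(M,X) \in \CM \Lambda = \proj \Lambda$, and reflexive equivalence \ref{reflequiv}(1) (applicable since $R \in \add M$ makes $M$ a generator) forces $X \in \add M$. The reverse inclusion $\add M \subseteq \{X \in \CM R : \Hom_R(M,X) \in \CM R\}$ is immediate.

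The essential content is (2) $\Rightarrow$ (3). Given (2), $M \in \add M$ furnishes $\Lambda \in \CM R$, so $\Lambda$ is an $R$-order; what remains is non-singularity, which by Proposition~\ref{nonsingGoren} amounts to $\CM \Lambda = \proj \Lambda$. The plan is first to show $\pd_{\Lambda} \Hom_R(M,X) \leq \max\{0,d-2\}$ for every $X \in \CM R$. Iterating right $\add M$-approximations (each surjective since $R \in \add M$) builds a resolution
\[
\cdots \to M_1 \to M_0 \to X \to 0
\]
whose successive syzygies $X_i$ lie in $\CM R$ by the depth lemma. Applying $\Hom_R(M,-)$ yields a projective resolution of $\Hom_R(M,X)$ over $\Lambda$ with syzygies $\Hom_R(M,X_i)$. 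A depth computation using $\Lambda \in \CM R$ and Lemma~\ref{depthofhom}, parallel to the $(\Rightarrow)$-half of Lemma~\ref{reflandCM}, yields $\depth_{R} \Hom_R(M,X_i) \geq \min\{d, 2+i\}$, so $\Hom_R(M,X_{d-2}) \in \CM R$; hypothesis (2) then places $X_{d-2} \in \add M$, hence $\Hom_R(M,X_{d-2}) \in \proj \Lambda$. To extend the bound from the image of $\Hom_R(M,-)$ to arbitrary $Y \in \CM \Lambda$, one observes that any $Y \in \mod \Lambda$ is presented by a map of the form $\Hom_R(M,\phi)$ for some $\phi : M_1 \to M_0$ in $\add M$ (by reflexive equivalence); exploiting that $M$ is a generator (so the counit of the $(M \otimes_{\Lambda} -, \Hom_R(M,-))$-adjunction is surjective on $\mod R$) reduces the finite-projective-dimension question for $Y$ to the same question for $\Hom_R(M, \Cok\phi)$, already handled.

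The main obstacle is precisely this last reduction, from general $\Lambda$-modules to the image of $\Hom_R(M,-)$; it requires a careful comparison of $\Lambda$-syzygies with $R$-level $\add M$-approximations, leveraging the generator property of $M$ and the double-centralizer equality $\End_{\Lambda}(M) = R$. Once (2) $\iff$ (3) is in hand, (3) $\iff$ (3)' is Lemma~\ref{Romega}, and (2)' $\iff$ (3)' is obtained by rerunning the (2) $\iff$ (3) argument for $M^{\vee}$, using $\End_R(M^{\vee}) \cong \Lambda^{\op}$ (whose non-singularity coincides with that of $\Lambda$) and the natural isomorphism $\Hom_R(X,M) \cong \Hom_R(M^{\vee}, X^{\vee})$ for $X,M \in \CM R$, which interchanges the two CT-conditions. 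The concluding assertion of the theorem is immediate from (3): $R \in \add M$ together with $M \in \CM R$ makes $M$ a CM generator, and non-singularity of $\End_R(M)$ is exactly Definition~\ref{NCCR}.
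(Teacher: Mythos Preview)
Your logical skeleton is correct and matches the paper: the equivalence (1)$\iff$(2)+(2)$'$ is definitional, (3)$\iff$(3)$'$ is Lemma~\ref{Romega}, (2)$'\iff$(3)$'$ follows from (2)$\iff$(3) applied to $M^{\vee}$, and your argument for (3)$\Rightarrow$(2) via Proposition~\ref{nonsingGoren} and \ref{reflequiv}(1) is exactly the paper's.

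The problem is your (2)$\Rightarrow$(3). You correctly identify the ``main obstacle'' but do not actually overcome it, and your proposed reduction is misdirected. You want to reduce the finite projective dimension of an arbitrary $Y\in\mod\Lambda$ (or $Y\in\CM\Lambda$) to that of $\Hom_R(M,\Cok\phi)$ where $Y=\Cok\Hom_R(M,\phi)$. But $\Cok\phi$ is not in general CM, so your first step (which was only established for $X\in\CM R$) does not apply to it. The counit argument you gesture at shows $M\otimes_\Lambda Y\cong\Cok\phi$, but the unit $Y\to\Hom_R(M,\Cok\phi)$ is not an isomorphism in general, and you give no mechanism to compare projective dimensions across it. The double-centralizer property you invoke is not established in this generality and is in any case not what is needed.

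The paper avoids this detour entirely by starting directly from an arbitrary $Y\in\mod\Lambda$. Take the first $d$ terms of a projective resolution $P_{d-1}\to\cdots\to P_0\to Y\to 0$; by the equivalence $\add M\simeq\proj\Lambda$ of \ref{reflequiv}(1) this lifts to a complex $M_{d-1}\stackrel{f}{\to}\cdots\to M_0$ in $\add M$, and since $R\in\add M$ the lifted complex is exact as a complex of $R$-modules. Setting $K_d:=\Ker f$, one has $\Omega^d_\Lambda Y\cong\Hom_R(M,K_d)$, and a depth count (each $P_i\in\CM R$) gives $\Hom_R(M,K_d)\in\CM R$. Since $R\in\add M$, $K_d$ is an $R$-summand of $\Hom_R(M,K_d)$, hence $K_d\in\CM R$; now hypothesis (2) gives $K_d\in\add M$, so $\Hom_R(M,K_d)\in\proj\Lambda$ and $\pd_\Lambda Y\le d$. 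This yields $\gl\Lambda\le d$ directly, and with equi-codimensionality and the standard lower bound $\gl\Lambda_\m\ge\dim R_\m$ one concludes non-singularity via \ref{nonsingGoren}(2). Note this is essentially your depth-count argument run once, for an arbitrary $Y$, rather than first for $X\in\CM R$ and then attempting a reduction that does not go through.
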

\begin{proof}
(2)$\Rightarrow$(3)  By assumption we have $R\in\add M$ and $\End_R(M)\in\CM R$.   Now let $Y\in\mod\End_R(M)$ and consider a projective resolution $P_{d-1}\to P_{d-2}\to...\to P_0\to Y\to0$.
By \ref{reflequiv}(1) there is an exact sequence
$M_{d-1}\stackrel{f}{\to}M_{d-2}\to...\to M_0$ with each $M_i\in \add M$
such that the projective resolution above is precisely
\[
\Hom_R(M,M_{d-1})\stackrel{\cdot f}{\to}\Hom_R(M,M_{d-2})\to...\to \Hom_R(M,M_0)\to Y\to0.
\]
Denote $K_d=\Ker f$. Then we have an exact sequence
\[ 
0\to \Hom_R(M,K_d)\to P_{d-1}\to P_{d-2}\to...\to P_0\to Y\to 0.
\]
Localizing the above and counting depths we see that $\Hom_R(M,K_d)_\m\in \CM R_\m$ for all $\m\in \Max R$, thus $\Hom_R(M,K_d)\in \CM R$ and so by definition $K_d\in \add M$. Hence $\pd_{\End_R(M)} Y\leq d$ and so $\gl \End_R(M)\leq d$. \\
(3)$\Rightarrow$(2)  Since $\End_R(M)\in\CM R$, automatically $\add M\subseteq \{ X\in\CM R : \Hom_R(M,X)\in\CM R \}$.  To obtain the reverse inclusion assume that $X\in \CM R$ with $ \Hom_R(M,X)\in\CM R$, then since $\End_R(M)$ is a non-singular $R$-order $\Hom_R(M,X)$ is a projective $\End_R(M)$-module by \ref{nonsingGoren}.  This implies that $X\in\add M$ by \ref{reflequiv}(1).\\
$\t{(2)}^{\prime}\iff\t{(3)}^{\prime}$ We have a duality $(-)^\vee:\CM R\to\CM R$ thus apply (2)$\iff$(3) to $M^\vee$ and use the fact that $\End_R(M^\vee)=\End_{R}(M)^{\rm op}$ has finite global dimension if and only if  $\End_R(M)$ does.\\
$\t{(3)}\iff\t{(3)}^{\prime}$ This is immediate from \ref{Romega}.\\
In particular by the above we have (2)$\iff$(2)$^{\prime}$.  Since we clearly have (1)$\iff$(2)$+$(2)$^\prime$, the proof is completed.
\end{proof}
Note that the last assertion in \ref{CT} is improved when $R$ is a 3-sCY normal domain in \ref{NCCRCTisgMR}(3). From the definition it is not entirely clear that CT is a local property:

\begin{cor}\label{CTislocal}
Let $R$ be a $d$-dimensional, equi-codimensional CM ring (e.g.\ if $R$ is $d$-sCY) with a canonical module $\omega_{R}$.  Then the following are equivalent\\
\t{(1)} $M$ is a CT $R$-module\\
\t{(2)} $M_\p$ is a CT $R_\p$-module for $\p\in\Spec R$.\\
\t{(3)} $M_\m$ is a CT $R_\m$-module for $\m\in\Max R$.\\
\t{(4)} $\widehat{M_\p}$ is a CT $\widehat{R_\p}$-module for $\p\in\Spec R$.\\
\t{(5)} $\widehat{M_\m}$ is a CT $\widehat{R_\m}$-module for $\m\in\Max R$.\\
Thus CT can be checked locally, or even complete locally.
\end{cor}
\begin{proof}
By \ref{CT}(3) $M$ is a CT $R$-module if and only if $R\in \add M$ and $\End_R(M)$ is a non-singular $R$-order.  Non-singular $R$-orders can be checked either locally or complete locally (\ref{nonsingGoren}), and $R\in\add M$ can be checked locally or complete locally by  \ref{addlocal}.
\end{proof}

Theorem~\ref{CT} also gives an easy method to find examples of CT modules. Recall that an element $g\in\GL(d,k)$ is called \emph{pseudo-reflection} if the rank of $g-1$ is at most one. A finite subgroup $G$ of $\GL(d,k)$ is called \emph{small} if it does not contain pseudo-reflections except the identity.   The following is well-known:

\begin{prop}\label{reflection}
Let $k$ be a field of characteristic zero, let $S$ be the polynomial ring $k[x_1,\hdots,x_d]$ (respectively formal power series ring $k[[x_1,\hdots,x_d]]$) and let $G$ be a finite subgroup of $\GL(d,k)$.\\
(1) If $G$ is generated by pseudo-reflections, then $S^G$ is a polynomial ring (respectively a formal power series ring) in $d$ variables.\\
(2) If $G$ is small, then the natural map $S\# G\to\End_R(S)$ given by $sg\mapsto(t\mapsto s\cdot g(t))$ ($s,t\in S,\ g\in G$) is an isomorphism.
\end{prop}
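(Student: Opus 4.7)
Both parts of this proposition are classical results, so my plan is to sketch each argument rather than give full details.

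\medskip

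\textbf{Plan for (1).} This is the Chevalley--Shephard--Todd theorem. The polynomial case I would simply cite from the classical invariant theory literature. For the formal power series case, note that since $G$ acts through $\GL(d,k)$ the action is linear and hence preserves the polynomial subring $k[x_1,\ldots,x_d] \subset k[[x_1,\ldots,x_d]]$. Applying Chevalley--Shephard--Todd to the polynomial case yields $k[x_1,\ldots,x_d]^G = k[f_1,\ldots,f_d]$ for algebraically independent homogeneous invariants $f_i$. Since $G$ preserves the maximal ideal $(x_1,\ldots,x_d)$ and the Reynolds operator $\frac{1}{|G|}\sum_{g\in G} g$ commutes with $\mathfrak{m}$-adic completion in characteristic zero, passing to the completion yields $k[[x_1,\ldots,x_d]]^G = k[[f_1,\ldots,f_d]]$.

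\medskip

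\textbf{Plan for (2).} I would first verify directly from the multiplication rule in $S\#G$ that $\Phi: S\#G \to \End_R(S)$, $sg \mapsto (t \mapsto s\cdot g(t))$, is a well-defined $R$-algebra homomorphism. Injectivity does not require smallness: if $\Phi(\sum_g s_g g) = 0$, then $\sum_g s_g \cdot g(t) = 0$ for every $t\in S$; passing to the field of fractions $L := \mathrm{Frac}(S)$, Dedekind's independence of characters applied to the distinct automorphisms $g \colon L \to L$ forces every $s_g$ to vanish.

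For surjectivity the key observation is that $S\#G \cong S^{|G|}$ as an $R$-module, and since $S \in \CM R$ is reflexive, both $S\#G$ and $\End_R(S)$ are reflexive $R$-modules (using \ref{reflequiv}(2) for the latter). It therefore suffices to verify that $\Phi_\p$ is an isomorphism for every prime $\p$ of $R$ with $\hgt \p \leq 1$. At the generic point, writing $K := \mathrm{Frac}(R)$, the extension $L/K$ is Galois with group $G$ and the classical identification $L\#G \cong \End_K(L)$ (both are central simple $K$-algebras of dimension $|G|^2$ and $\Phi_K$ is already known to be injective) gives the isomorphism. At a height-one prime $\p$ of $R$, smallness guarantees that $R_\p \to S_\p$ is \'etale: a height-one prime of $R$ ramifies in $S$ precisely when its inertia subgroup of $G$ contains a non-trivial pseudo-reflection, and smallness excludes this. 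Etaleness of rank $|G|$ then reduces the surjectivity of $\Phi_\p$ to the same twisted-group-ring identity as at the generic point.

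\medskip

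\textbf{Main obstacle.} The non-trivial step is the codimension-one argument and its precise link with smallness. Specifically one must justify that the ramified height-one primes of $R$ in $S$ are exactly those whose inertia contains a pseudo-reflection, so that smallness is equivalent to \'etaleness in codimension one. Once this identification is in place, the reflexivity of both sides of $\Phi$ immediately upgrades the codimension-one isomorphism to a global isomorphism, completing the proof.
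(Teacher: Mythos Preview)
Your sketches are correct and go well beyond what the paper actually does: the paper simply cites Bourbaki for (1) and Auslander (with Yoshino's book, and a forthcoming paper of Iyama--Takahashi for details) for (2), without giving any argument at all. So there is no ``paper's approach'' to compare against in any substantive sense.

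That said, your outline for (2) is exactly the classical reflexivity-plus-codimension-one strategy that underlies Auslander's result, and you have correctly isolated the crucial point: smallness of $G$ is equivalent to $R\to S$ being \'etale in codimension one, because the inertia group at a height-one prime of $S$ is generated by pseudo-reflections fixing the corresponding hyperplane. One small remark on your height-one step: once you know $R_\p\to S_\p$ is \'etale, the cleanest way to finish is to note that the natural map $S_\p\otimes_{R_\p}S_\p\to\prod_{g\in G}S_\p$, $s\otimes t\mapsto (s\cdot g(t))_g$, is an isomorphism (this is the standard characterisation of a Galois \'etale cover), and then identify $\End_{R_\p}(S_\p)\cong S_\p\otimes_{R_\p}\Hom_{R_\p}(S_\p,R_\p)$ with $S_\p\# G$ componentwise. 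Simply observing that both sides are free of the same rank over the DVR $R_\p$ is not enough, since an injection of free modules of equal rank need not be surjective; you do need the \'etale structure, as you say.
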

\begin{proof}
(1) See \cite[\S5 no.~5]{Bou68} for example.\\
(2) This is due to Auslander \cite[\S4]{Auslander_rational}, \cite[10.8]{Y}.  See also \cite[3.2]{IT} for a detailed proof.
\end{proof}

This immediately gives us a rich source of CT modules.  The following result is shown in \cite[2.5]{IyamaAR} under the assumption that $G$ is a small subgroup of $\GL(d,k)$ and $S^G$ is an isolated singularity. We can drop both assumptions under our definition of CT modules.
\begin{thm}\label{skewgp}
Let $k$ be a field of characteristic zero, and
let $S$ be the polynomial ring $k[x_1,\hdots,x_d]$ (respectively formal power series ring $k[[x_1,\hdots,x_d]]$). For
a finite subgroup $G$ of $\GL(d,k)$, let $R=S^G$.  Then $S$ is a CT $R$-module.
\end{thm}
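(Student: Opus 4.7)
The plan is to verify directly the characterization in Theorem~\ref{CT}(3): $M$ is a CT $R$-module if and only if $R \in \add_R M$ and $\End_R(M)$ is a non-singular $R$-order. I would apply this to $M = S$. First note that $R = S^G$ is an affine domain (resp.\ complete local CM domain), hence equi-codimensional (by \ref{equicodim} and \cite[13.4]{Eisenbud}, resp.\ trivially) with a canonical module, so Theorem~\ref{CT} applies. The first condition is immediate: since $\mathrm{char}\,k = 0$, the order $|G|$ is invertible in $k$, so the Reynolds operator $\rho(s) = \tfrac{1}{|G|}\sum_{g\in G} g(s)$ defines an $R$-linear retraction $S \to R$ of the inclusion $R \hookrightarrow S$, making $R$ a direct summand of $S$ as an $R$-module.

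For the second condition, I would invoke Proposition~\ref{reflection}(2), which uses smallness of $G$, to obtain the canonical identification $\End_R(S) \cong S\# G$. Since $|G|$ is invertible in $k$, Lemma~\ref{skew is non-singular} then reduces the problem to showing that $S$ itself is a non-singular $R$-order, i.e.\ that (i) $S$ is module finite over $R$, (ii) $S \in \CM R$, and (iii) $\gl S_\p = \dim R_\p$ for every $\p \in \Spec R$. Here (i) is Noether's finiteness theorem for rings of invariants. For (ii), $S$ is regular hence CM, and for every prime $\q$ of $S$ over $\p$ one has $\depth_{R_\p} S_\q = \depth_{S_\q} S_\q = \hgt_S \q$; combined with the height identity below this gives depth $\dim R_\p$ at each maximal ideal of $S_\p$. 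For (iii), regularity of $S$ gives $\gl S_\q = \hgt_S \q$ for every prime $\q$ of $S$, and for $\q$ lying over $\p$ one has $\hgt_S \q = \hgt_R \p$ by going-up/going-down for the finite integral extension $R \subseteq S$ of normal domains; taking the supremum over such $\q$ yields $\gl S_\p = \dim R_\p$.

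With $S$ established as a non-singular $R$-order, Lemma~\ref{skew is non-singular} yields that $S\# G \cong \End_R(S)$ is a non-singular $R$-order. Combined with $R \in \add_R S$, Theorem~\ref{CT} delivers that $S$ is a CT $R$-module. The only step that warrants some care is the height/global-dimension computation verifying that $S$ is a non-singular $R$-order; but since $S$ is regular and $R \subseteq S$ is a finite integral extension of CM normal domains, this is a standard application of going-up, going-down, and incomparability, so I do not anticipate serious obstacles. In effect the whole theorem is an assembly of \ref{CT}, \ref{reflection}(2), and \ref{skew is non-singular}, plus the Reynolds operator.
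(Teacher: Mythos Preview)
Your proof is correct and follows essentially the same route as the paper's: invoke \ref{reflection}(2) to get $\End_R(S)\cong S\# G$, then \ref{skew is non-singular} to see this is a non-singular $R$-order, and conclude via \ref{CT}. The paper's argument is terse and leaves the hypotheses of these results implicit, whereas you spell out why $R\in\add_R S$ (Reynolds operator) and why $S$ is a non-singular $R$-order (needed for \ref{skew is non-singular}); these are exactly the details the paper is taking for granted, so your write-up is a fleshed-out version of the same proof rather than a different one.
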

\begin{proof}
We prove the case when $S$ is a polynomial ring, since the case when $S$ is a formal power series ring then follows by \ref{CTislocal}.  We proceed by induction on $|G|$, the case $|G|=1$ being trivial.  If $G$ is small, then  $\End_{R}(S)$ is isomorphic to $S\#G$  by \ref{reflection}(2).  This shows, by \ref{skew is non-singular}, that $\End_{R}(S)$ is a non-singular $R$-order and so $S$ is a CT $R$-module by \ref{CT}.

Hence we can assume that $G$ is not small, so if $N$ denotes the subgroup of $G$ generated by pseudo-reflections, we have $|G/N|<|G|$.  Now $G/N$ acts on $S^N$, which by \ref{reflection}(1) is a polynomial ring.  In fact the graded subring $S^N$ of $S$ has a free generating set of homogeneous polynomials \cite[Thm.\ A]{Chev}.  Let $V(d)$ be the vector space of $N$-invariant polynomials of degree $d$ (with respect to the original grading of $S$).  We prove, by induction on $d$, that generators of $S^N$ can be chosen so that $G/N$ acts linearly on these generators.

Clearly the action of $G/N$ is linear on $U(d_1):=V(d_1)$, where $d_1>0$ is the smallest such that $V(d_1)$ is non-empty.  Consider now $V(d)$.  It has a subspace, say $U(d)$, of linear combinations of products of $N$-invariant polynomials of smaller degree.  By induction, $G/N$ acts on $U(d)$, so $U(d)$ is a $G/N$-submodule.  By Maschke's theorem we can take the $G/N$-complement to $U(d)$ in $V(d)$.  Then $G/N$ acts linearly on this piece, so it acts linearly on $V(d)$.  Hence a $k$-basis of $U(d)$ for each $d>0$ gives a free generating set on which $G/N$ acts linearly.

Hence, with these new generators, we have $S^{G}=(S^N)^{G/N}\cong k[X_1,\hdots,X_d]^{G/N}$ where $G/N$ a subgroup of $\GL(d,k)$.
 Thus 
\[
\CM S^{G}\simeq \CM k[X_1,\hdots,X_d]^{G/N}
\]
and further under this correspondence 
\[
\add_{S^{G}} S\simeq\add_{k[X_1,\hdots,X_d]^{G/N}}S^N=\add_{k[X_1,\hdots,X_d]^{G/N}} k[X_1,\hdots,X_d].
\]
Hence $\End_{S^{G}}(S)$ is Morita equivalent to $\End_{k[X_1,\hdots,X_d]^{G/N}}(k[X_1,\hdots,X_d]):=\Lambda$.  By induction $\Lambda$ is a non-singular $R$-order, so it follows that $\End_{S^G}(S)$ is a non-singular $R$-order by \ref{nonsingMorita}.  Consequently $S$ is a CT $S^G$-module by \ref{CT} since $R$ is a direct summand of the $R$-module $S$ by the Reynolds operator.
\end{proof}

As another source of CT modules, we have:

\begin{example}\label{1dfibre}
Let $Y\stackrel{f}{\to} X=\Spec R$ be a projective birational morphism such that $Rf_*\s{O}_Y=\s{O}_X $ and every fibre has dimension at most one, where $R$ is a $d$-dimensional normal Gorenstein ring $R$ finitely generated over a field.  Then provided $Y$ is smooth and crepant there exists  a NCCR $\End_R(M)$ \cite[3.2.10]{VdB1d} in which $M$ is CM containing $R$ as a summand.  By \ref{CT} $M$ is a CT module. 
\end{example}
We now show that for $R$ normal 3-sCY, the existence of a CT module is equivalent to the existence of a NCCR.  Note that (2) below answers a question of Van den Bergh \cite[4.4]{VdBNCCR}.
\begin{cor}\label{NCCRhasCM}
Let $R$ be a 3-sCY normal domain.   Then \\
\t{(1)} CT modules are precisely those reflexive generators which give NCCRs.\\
\t{(2)} $R$ has a NCCR $\iff$ $R$ has a NCCR given by a CM generator $M$ $\iff$ $\CM R$ contains a CT module.
\end{cor}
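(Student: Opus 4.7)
The plan is to reduce both parts to Theorem \ref{CT}, which already identifies CT modules as the CM generators that give NCCRs, and then to use the machinery of MM modules developed in the previous section to upgrade an arbitrary NCCR to one given by a CM generator.

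For part (1), the proof is essentially formal. One direction is immediate from \ref{CT}: CT modules are CM generators (hence reflexive generators) giving NCCRs. For the converse, suppose $M$ is a reflexive generator with $\End_R(M)$ an NCCR. Then in particular $\End_R(M)\in\CM R$, so $M$ is a modifying generator. By \ref{CMiffgen}(1) any modifying generator is automatically CM, so $M\in\CM R$, and then \ref{CT} says $M$ is CT.

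For part (2), the implications (b)$\Rightarrow$(a) and (c)$\Rightarrow$(b) are trivial (using \ref{CT} for the second). The real content is (a)$\Rightarrow$(c): starting from an NCCR we must produce a CT module. My plan is as follows. Suppose $\End_R(M)$ is an NCCR with $M\in\refl R$. By \ref{NCCRgiveMM}, $M$ is a MM module. Now apply \ref{gRgMRrigid2}(2) to produce a MM module $N$ that contains $R$ as a summand; \ref{gRgMRrigid2}(2) further guarantees that $N$ is CM. It remains to show that $\End_R(N)$ has finite global dimension, so that $N$ gives an NCCR and, being a CM generator, is a CT module by \ref{CT}.

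The key point — and the only nontrivial step — is precisely the passage from the given NCCR $\End_R(M)$ to showing $\gl\End_R(N)<\infty$ for the MM generator $N$ produced above. For this I would invoke \ref{allgMRderived}(2): since both $M$ and $N$ are MM, the functor $\mathbf{R}\Hom(\Hom_R(M,N),-)$ gives a derived equivalence between $\End_R(M)$ and $\End_R(N)$. Derived equivalence preserves finite global dimension, so $\gl\End_R(N)<\infty$. Combined with $\End_R(N)\in\CM R$ (from $N$ being modifying), \ref{NCCRdefequiv} then forces $\End_R(N)$ to be an NCCR, completing the argument. The main obstacle is really just making sure this derived-equivalence upgrade is justified; everything else is a routine chase through the preceding definitions and lemmas.
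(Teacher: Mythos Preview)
Your argument is correct. Part (1) is essentially identical to the paper's proof: both show that a reflexive generator giving an NCCR is automatically CM (you invoke \ref{CMiffgen}(1), the paper spells out the same summand argument inline) and then appeal to \ref{CT}.

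Part (2) is where you genuinely diverge. The paper dispatches the implication ``NCCR $\Rightarrow$ NCCR by a CM generator'' in one line by citing the external result \cite[8.9(4)]{IR}, which produces a reflexive generator giving an NCCR directly; the remark at the start of the proof then shows this generator is CM. Your route is longer but entirely internal to the paper: you pass through the MM machinery (\ref{NCCRgiveMM}, \ref{gRgMRrigid2}(2), \ref{allgMRderived}(2)) and then transfer finite global dimension across the derived equivalence. This is a legitimate and self-contained alternative, and the step you flag as the only nontrivial one---that derived equivalence preserves finiteness of global dimension---is indeed standard for module-finite algebras over a noetherian base (both $\Kb(\proj)$ and $\Db(\mod)$ are preserved, and $\gl\Lambda<\infty$ is exactly the condition that they coincide). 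So both approaches work; the paper's is shorter by outsourcing to \cite{IR}, yours avoids the external dependency at the cost of invoking the heavier MM theory of \S\ref{gMR}.
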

\begin{proof}
Notice that any reflexive generator $M$ which gives a NCCR is CM since
$R$ is a summand of $M$ and further $M\cong \Hom_R(R,M)$ is a summand of
$\End_R(M)\in \CM R$ as an $R$-module.\\
(1) By \ref{CT} CT modules are precisely the CM generators which give NCCRs.
The assertion follows from the above remark.\\
(2) The latter equivalence was shown in \ref{CT}. 
We only have to show ($\Rightarrow$) of the former assertion.   If $R$ has a NCCR $\Lambda$, then $\Lambda$ is an MMA (by \ref{NCCRgiveMM}) and so by \ref{gRgMRrigid2}(2) $R$ has an MMA $\Gamma=\End_R(M)$ where $M$ is a CM generator. But by \ref{allgMRderived}(2) $\Gamma$ and $\Lambda$ are derived equivalent, so since $\Lambda$ is an NCCR, so too is $\Gamma$ (\ref{closed under derived equivalences}(2)).
\end{proof}

Below is another characterization of CT modules, which is analogous to \cite[2.2.3]{IyamaAR}.  Compare this to the previous \ref{gMRapprox}.
\begin{prop}\label{CT2}
Assume $R$ is a 3-sCY normal domain and let $M\in\CM R$ with $R\in\add M$.  Then the following are equivalent\\
\t{(1)} $M$ is a CT module.\\
\t{(2)} $\End_R(M)\in\CM R$ and further for all $X\in\CM R$ there exists an exact sequence $0\to M_1\to M_0\stackrel{f}{\to} X\to 0$ with $M_1,M_0\in\add M$ and   a right $(\add M)$-approximation $f$.
\end{prop}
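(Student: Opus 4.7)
The plan is to prove the two implications separately, with Theorem \ref{CT} doing most of the heavy lifting in the reverse direction.

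For (1) $\Rightarrow$ (2): The condition $\End_R(M) \in \CM R$ is immediate, since $M \in \add M$ and the CT hypothesis forces $\Hom_R(M,M) \in \CM R$. Given $X \in \CM R$, I would construct the right $\add M$-approximation by choosing a surjection $\End_R(M)^{\oplus n} \twoheadrightarrow \Hom_R(M,X)$ of $\End_R(M)^{\op}$-modules (possible since $\Hom_R(M,X)$ is finitely generated), and lifting it through the reflexive equivalence \ref{reflequiv}(1) to obtain $f: M^{\oplus n} \to X$ which is a right $\add M$-approximation. Surjectivity of $f$ follows cheaply from $R \in \add M$: every element of $X$ corresponds to a map $R \to X$, which must factor through $f$ by the approximation property. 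Setting $M_1 := \ker f$, the depth lemma (combined with the fact that $R_\p$ is regular at heights $\le 1$) gives $M_1 \in \CM R$. Applying $\Hom_R(M,-)$ to $0 \to M_1 \to M^{\oplus n} \to X \to 0$ yields a short exact sequence (exact on the right by the approximation property). Since $X$ is reflexive and $M_0 \in \add M$, a local depth count — using $\Hom_R(M,X) \in \refl R$ by \ref{reflequiv}(2), which gives depth $\ge 2$ at primes of height $\ge 2$ — forces $\Hom_R(M,M_1) \in \CM R$. The CT property then gives $M_1 \in \add M$ as required.

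For (2) $\Rightarrow$ (1): My strategy is to verify characterization (2) of Theorem \ref{CT}, namely $\add M = \{X \in \CM R : \Hom_R(M,X) \in \CM R\}$, which together with $R \in \add M$ is equivalent to $M$ being CT. The inclusion $\supseteq$ is immediate from $\End_R(M) \in \CM R$ and the fact that $\add M \subseteq \CM R$. For $\subseteq$, suppose $X \in \CM R$ with $\Hom_R(M,X) \in \CM R$. Applying $\Hom_R(M,-)$ to the sequence supplied by (2) yields an exact sequence
\[
0 \to \Hom_R(M,M_1) \to \Hom_R(M,M_0) \to \Hom_R(M,X) \to 0,
\]
which is a projective resolution of $\Hom_R(M,X)$ over $\End_R(M)$ of length $\le 1$. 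Because $\End_R(M) \in \CM R$, Lemmas \ref{3CY-nonlocal}(2) and \ref{2.14a} identify $\End_R(M)$ as a Gorenstein $R$-order. Localizing at a maximal ideal and invoking Auslander--Buchsbaum \ref{ABlocal}, the identity $\depth + \pd = \dim R_\m$ combined with $\Hom_R(M,X) \in \CM R$ forces $\pd = 0$ locally. Hence $\Hom_R(M,X)$ is projective over $\End_R(M)$ (globally, via \ref{addlocal}), and reflexive equivalence \ref{reflequiv}(1) gives $X \in \add M$.

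The main obstacle I anticipate is the global CM verification of $M_1$ and $\Hom_R(M,M_1)$ in the first direction, since the depth lemma only gives the correct depth at maximal ideals. However, this is routine in the present setting: at primes of height $\le 1$ the normal $3$-CY$^-$ ring $R$ is regular, rendering all reflexive modules free there, while at height $2$ primes the depth lemma combined with \ref{depthofhom} gives depth $\ge 2$, matching the local dimension. No new ideas are needed beyond a careful bookkeeping of depth across all prime heights.
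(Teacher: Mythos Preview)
Your argument for (2)$\Rightarrow$(1) is essentially the paper's: reduce via Theorem~\ref{CT} to checking $\add M = \{X\in\CM R : \Hom_R(M,X)\in\CM R\}$, observe that $\End_R(M)$ is a Gorenstein $R$-order by \ref{3CY-nonlocal}(2) and \ref{2.14a}, localise, and apply Auslander--Buchsbaum \ref{ABlocal}. (You have swapped the labels of the two inclusions --- the immediate one is $\add M\subseteq\{\cdots\}$, not $\supseteq$ --- but the content is correct.)

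For (1)$\Rightarrow$(2) you take a somewhat different route. The paper does not verify $\Hom_R(M,M_1)\in\CM R$ by a direct depth count; instead it uses the fact, already contained in Theorem~\ref{CT}, that $\gl\End_R(M)=3$. Since $R$ is Gorenstein there is a coresolution $0\to X\to P_0\to P_1$ with $P_i\in\add R\subseteq\add M$, and applying $\Hom_R(M,-)$ exhibits $\Hom_R(M,X)$ as a second syzygy over $\End_R(M)$; the global dimension bound then forces $\pd_{\End_R(M)}\Hom_R(M,X)\le 1$. Pulling a length-one projective resolution back through $\add M\simeq\proj\End_R(M)$ yields the desired $0\to M_1\to M_0\to X\to 0$. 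Your approach is more self-contained in that it avoids invoking the global dimension, feeding the kernel directly back into the CT characterisation; the paper's is slicker in that it bypasses the prime-by-prime depth bookkeeping. Both are short and valid.
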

\begin{proof}
(1)$\Rightarrow$(2).  Fix $X\in \CM R$.  Since $R$ is 3-sCY, we have an exact sequence $0\to X\to P_0\to P_1$ with each $P_i\in \add R$. Applying $\Hom_R(M,-)$ gives an exact sequence
\[
0\to\Hom_R(M,X)\to\Hom_R(M,P_0)\stackrel{g}{\to}\Hom_R(M,P_1)\to\Cok g\to 0.
\]
Since both $\Hom_R(M,P_i)$ are projective $\End_R(M)$-modules by \ref{reflequiv}(1), and $\gl \End_R(M)=3$ by \ref{CT}, it follows that  $\pd_{\End_R(M)}\Hom_R(M,X)\leq 1$. Consequently we may take a projective resolution $0\to\Hom_R(M,M_1)\to\Hom_R(M,M_0)\to\Hom_R(M,X)\to 0$ which necessarily comes from a complex $0\to M_1\to M_0\to X\to 0$, again using \ref{reflequiv}(1).  This complex is itself exact since $M$ is a generator.\\
(2)$\Rightarrow$(1). Denote $\Gamma=\End_R(M)$.  By \ref{3CY-nonlocal} and \ref{2.14a} $\Gamma$ is a Gorenstein $R$-order.  By \ref{CT} we only have to show that  $\add M=\{ X\in\CM R : \Hom_R(M,X)\in\CM R \}$.  The assumption $\End_R(M)\in\CM R$ shows that the inclusion $\subseteq$ holds so let $X\in\CM R$ be such that $\Hom_R(M,X)\in\CM R$.  By assumption we may find $M_1,M_0\in\add M$ such that $0\to \Hom_R(M,M_1)\to \Hom_R(M,M_0)\to \Hom_R(M,X)\to 0$ is exact, hence $\pd_{\Gamma_\m}\Hom_R(M,X)_\m\leq 1$ for all $\m\in\Max R$ by \ref{reflequiv}(1).  Since $\Hom_R(M,X)_\m\in\CM R_\m$, Auslander--Buchsbaum (\ref{ABlocal}) implies that $\pd_{\Gamma_\m} \Hom_R(M,X)_\m=0$ for all $\m\in\Max R$ and hence $\Hom_R(M,X)$ is a projective $\Gamma$-module.  Since $M$ is a generator, $X\in\add M$.
\end{proof}

Provided an NCCR exists, the following shows the precise relationship between MM modules, CT modules and NCCRs.  Note that \ref{NCCRCTisgMR}(2) says that CT modules are really a special case of MM modules.
\begin{prop}\label{NCCRCTisgMR}
Let $R$ be a 3-sCY normal domain, and assume that $R$ has a NCCR (equivalently, by \ref{NCCRhasCM}, a CT module).  Then\\
\t{(1)} MM modules are precisely the reflexive modules which give NCCRs.\\
\t{(2)} MM modules which are CM (equivalently, by \ref{CMiffgen}, the MM generators) are precisely the CT modules.\\
\t{(3)} CT modules are precisely those CM modules which give NCCRs.
\end{prop}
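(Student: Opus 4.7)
The plan is to reduce everything to statement (1) and then deduce (2) and (3) as formal consequences, using the machinery of derived equivalences for MM modules developed earlier in the section.

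First I would prove (1). One direction is already given: by \ref{NCCRgiveMM}, any reflexive module giving a NCCR is MM, and this does not require the existence of a NCCR for $R$. For the converse direction, suppose $N\in\refl R$ is a MM module; by hypothesis $R$ admits some NCCR, say given by $M_0\in\refl R$, and by \ref{NCCRgiveMM} $M_0$ is itself MM. Now invoke \ref{Homtilting}(2) (or equivalently \ref{allgMRderived}(2)): $T:=\Hom_R(M_0,N)$ is a tilting $\End_R(M_0)$-module realising a derived equivalence $\Db(\Mod\End_R(M_0))\approx\Db(\Mod\End_R(N))$. Since $\gl\End_R(M_0)<\infty$ and finiteness of global dimension is preserved under derived equivalences between noetherian rings, we obtain $\gl\End_R(N)<\infty$. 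Because $N$ is modifying we already have $\End_R(N)\in\CM R$, so \ref{NCCRdefequiv} tells us that $N$ gives a NCCR.

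For (2), if $N$ is MM and CM then by (1) it gives a NCCR. By \ref{CMiffgen}(2) $N$ is a MM generator, in particular $R\in\add N$, so $N$ is a CM generator giving a NCCR and hence a CT module by \ref{NCCRhasCM}(1) (or directly by \ref{CT}). Conversely, any CT module is a CM generator giving a NCCR by \ref{NCCRhasCM}(1), hence by (1) it is MM, and it is CM by definition.

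For (3), if $N$ is a CM module giving a NCCR, then by \ref{NCCRgiveMM} $N$ is MM, and (2) upgrades this to $N$ being CT (notice that being a generator is forced automatically, even though it was not assumed). The reverse implication is immediate: a CT module is CM by definition and gives a NCCR by \ref{NCCRhasCM}(1).

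The only non-trivial step is establishing (1), and within that the delicate point is transferring finiteness of global dimension from the given NCCR $\End_R(M_0)$ to an arbitrary MM module $N$. The key input is that all MM modules are derived equivalent to each other via \ref{Homtilting}(2); once this is available, the remaining steps (2) and (3) are essentially bookkeeping between the definitions of CT, MM, and NCCR together with \ref{CMiffgen} and \ref{CT}.
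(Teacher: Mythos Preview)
Your proof is correct. Parts (2) and (3) match the paper's argument essentially verbatim, but your proof of (1) takes a genuinely different route.

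For the forward direction of (1), the paper does not invoke the derived equivalence machinery at all. Instead it quotes \cite[8.9(3)]{IR}, which says that when $R$ has a NCCR, any modifying module is a direct summand of a reflexive module giving a NCCR; so for a MM module $M$ one obtains $L$ with $\End_R(M\oplus L)$ a NCCR, whence $\End_R(M\oplus L)\in\CM R$ forces $L\in\add M$ by maximality, and $\End_R(M)$ is itself a NCCR. Your argument instead fixes the given NCCR $\End_R(M_0)$, uses \ref{Homtilting}(2) to get a derived equivalence with $\End_R(N)$, and then transfers finiteness of global dimension across that equivalence before applying \ref{NCCRdefequiv}.

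Both arguments are short. The paper's version is more elementary in that it avoids derived categories entirely, but it imports an external completion result from \cite{IR}. Your version is more self-contained relative to the present paper --- all the heavy lifting (\ref{Homtilting}, \ref{allgMRderived}) has already been done in \S\ref{gMR} --- at the cost of appealing to the standard fact that finiteness of global dimension is a derived invariant. One small remark: you could shorten the transfer step by observing that $\End_R(M_0)$ is $3$-CY by hypothesis, hence $\End_R(N)$ is $3$-CY by \cite[3.1(1)]{IR} (invariance of the CY property under derived equivalence, already used in the proof of \ref{Homtilting}(2)), which immediately gives both $\End_R(N)\in\CM R$ and $\gl\End_R(N)<\infty$ at once.
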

\begin{proof}
(1)  ($\Leftarrow$) This is shown in \ref{NCCRgiveMM} above. \\
($\Rightarrow$)  Suppose that $M$ is an MM module, and let $\End_R(N)$ be a NCCR.  Then $\End_R(N)$ is an MMA by \ref{NCCRgiveMM} and so $\End_R(M)$ and $\End_R(N)$ are derived equivalent by \ref{closed in dimension three}.  This implies that $\End_R(M)$ is also an NCCR by \ref{closed under derived equivalences}(2).\\
(2) By (1) MM generators are precisely the CM generators which give NCCRs.
By \ref{CT} these are precisely the CT modules.\\
(3) Follows immediately from (1) and (2).
\end{proof}

In the remainder of this section we relate our work to that of the more common notions of $n$-rigid, maximal $n$-rigid and maximal $n$-orthogonal (=cluster tilting) modules in the case when $R$ is an isolated singularity.

Recall that $M\in\refl R$ is called \emph{$n$-rigid} if $\Ext^i_R(M,M)=0$ for all $1\leq i\leq n$.  We call $M\in \refl R$ \emph{maximal $n$-rigid} if $M$ is $n$-rigid and furthermore
it is maximal with respect to this property, namely if there exists
$X\in \refl R$ such that $M\oplus X$ is $n$-rigid, then $X\in \add M$.

Recall that $M\in\CM R$ is called a \emph{maximal $n$-orthogonal} module if  
\begin{eqnarray*}
\add M&=&\{ X\in\CM R \mid \Ext^{i}_R(M,X)=0 \mbox{ for all }1\leq i\leq n\}\\
&=&\{ X\in\CM R \mid \Ext^{i}_R(X,M)=0 \mbox{ for all }1\leq i\leq n\}. 
\end{eqnarray*}

\begin{prop}\label{rigidisolated}
Let $R$ be $d$-sCY with only isolated singularities, $M\in\CM R$.  Then \\
\t{(1)} $M$ is a modifying module if and only if it is ($d-2$)-rigid.\\
\t{(2)} $M$ is a maximal modifying module if and only if it is maximal ($d-2$)-rigid.\\
\t{(3)} $M$ is a CT module if and only if it is maximal ($d-2$)--orthogonal.
\end{prop}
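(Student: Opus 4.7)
The main input is that in the isolated $3$-CY$^{-}$ setting, $\Ext^{1}_R(X,Y)$ is a finite length $R$-module not only for $X,Y\in\CM R$ (which is \ref{Extisfl}) but in fact for any $X,Y\in\refl R$: by \ref{heightcoheight} every non-maximal prime has height at most two, so $R_\p$ is regular, and by \ref{CMinheight2} $X_\p$ is then CM over a regular local ring of dimension $\leq 2$ and hence free; consequently $\Ext^{1}_R(X,Y)$ is supported in $\Max R$ and finitely generated, so of finite length. Combined with \ref{reflandCM}, this yields the working principle: for $M\in\CM R$ and any $Y\in\refl R$, $\Hom_R(Y,M)\in\CM R\iff \Ext^{1}_R(Y,M)=0$, and likewise with arguments swapped when both are in $\CM R$.

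Part (1) then follows immediately by applying the principle with $Y=M$: $\End_R(M)\in\CM R\iff \fl\Ext^{1}_R(M,M)=0\iff \Ext^{1}_R(M,M)=0$, i.e.\ modifying iff rigid. Part (3) is equally formal: the definitions of CT and maximal $1$-orthogonal module differ only by trading the CM condition on the Hom-groups for vanishing of the corresponding $\Ext^{1}$, and applying the principle to each of $\Hom_R(M,X)$ and $\Hom_R(X,M)$ (in both cases with $X,M\in\CM R$) translates one definition into the other.

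Part (2) is the nontrivial one. By \ref{CMiffgen}(2) in the $3$-CY$^{-}$ setting, any $M\in\CM R$ that is maximal modifying is automatically a generator, so $R\in\add M$. For maximal rigid $\Rightarrow$ MM, given $X\in\refl R$ with $M\oplus X$ modifying, the forward direction of \ref{reflandCM} (which only needs the second argument to be reflexive) together with the finite-length observation above gives $M\oplus X$ rigid, so $X\in\add M$ by the maximality of $M$. For the converse, let $X\in\refl R$ with $M\oplus X$ rigid. Since $R\in\add M$ we read off $\Ext^{1}_R(X,R)=0$; writing $X\cong\Omega^{2}N$ (any reflexive module over the Gorenstein ring $R$ is a second syzygy after localizing), this becomes $\Ext^{3}_R(N,R)=0$, and local duality on the Gorenstein local ring $R_\m$ of dimension $3$ at each $\m\in\Max R$ translates this into $H^{0}_{\m R_\m}(N_\m)=0$, i.e.\ $\depth N_\m\geq 1$. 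Two applications of the depth lemma then force $\depth X_\m\geq 3$ at each maximal $\m$, so $X\in\CM R$. Now $M\oplus X\in\CM R$ is rigid, so part (1) applied to $M\oplus X$ upgrades this to modifying, and the MM hypothesis on $M$ delivers $X\in\add M$.

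The main obstacle is the last implication: since $M\oplus X$ is only a priori reflexive, one cannot directly apply the converse of \ref{reflandCM} to conclude that rigidity of $M\oplus X$ gives $\End_R(M\oplus X)\in\CM R$. The way around is exactly to exploit $R\in\add M$ (guaranteed by \ref{CMiffgen}(2)) together with three-dimensional Gorenstein duality to force $X$ itself into $\CM R$, after which everything reduces mechanically to \ref{reflandCM}, \ref{Extisfl}, and part (1).
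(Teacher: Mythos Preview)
Your proof is correct. For parts (1) and (3) you follow exactly the paper's approach: the key equivalence $\Hom_R(X,Y)\in\CM R \iff \Ext^1_R(X,Y)=0$ for $X,Y\in\CM R$, obtained by combining \ref{reflandCM} with \ref{Extisfl}, immediately translates the modifying/CT conditions into the rigid/maximal $1$-orthogonal ones, since in both (1) and (3) all modules in play are already CM.

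For part (2) you go further than the paper. The paper's proof establishes the equivalence only for $X,Y\in\CM R$ and then declares that ``the assertions follow'', leaving implicit the fact that both \emph{maximal modifying} and \emph{maximal rigid} are defined by testing against all reflexive $X$, not just CM ones. You correctly observe that this is a genuine issue in the direction MM $\Rightarrow$ maximal rigid: the converse of \ref{reflandCM} requires the second argument to be CM, so one cannot directly conclude that $M\oplus X$ rigid implies $M\oplus X$ modifying when $X$ is merely reflexive. Your fix---using \ref{CMiffgen}(2) to get $R\in\add M$, reading off $\Ext^1_R(X,R)=0$, then writing $X$ as a second syzygy and invoking local duality plus the depth lemma to force $X\in\CM R$---is clean and correct. (One small remark: the parenthetical ``after localizing'' is unnecessary; any reflexive module over a noetherian ring is globally a second syzygy, by dualizing a free presentation of $X^*$.) Your extension of \ref{Extisfl} to reflexive modules in the isolated setting is also correct and needed for the other direction.

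So your argument for (2) is not a different route so much as a completion of what the paper leaves tacit.
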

\begin{proof}
Let $X,Y\in \CM R$. 
By \ref{depthford} and \ref{Extisfl}, it follows that $\Hom_R(X,Y)\in \CM R$ if and only if $ \Ext^i_R(X,Y)=0$ for all $1\leq i\leq d-2$.  Thus the assertions for (1), (2) and (3) follow. 
\end{proof}

\section{Mutations of Modifying Modules}\label{mutations}

\subsection{Mutations and Derived Equivalences in Dimension $d$}\label{mutations1}

Mutation is a technique used to obtain new modifying, maximal modifying and CT modules from a given one.  Many of our arguments work in the full generality of modifying modules although sometimes it is necessary to restrict to the maximal modifying level to apply certain arguments.  

Throughout this section $R$ will be a normal $d$-sCY ring, $d\geq 2$, and $M$ will be a modifying module with $N$ such that $0\neq N\in \add M$.  Note that $N$ may or may not be decomposable.  Given this, we define left and right mutation as in \ref{mutationdef} in the introduction: we have exact sequences 
\begin{align}
&0\to K_{0}\stackrel{c}\to N_0\stackrel{a}\to M\label{K0}\\
&0\to K_{1}\stackrel{d}\to N_1^{*}\stackrel{b}\to M^{*} \label{K1}
\end{align}
where $a$ is a right $(\add N)$-approximation and $b$ is a right $(\add N^{*})$-approximation.  We call them \emph{exchange sequences}.  From this we define $\MU{N}(M):=N \oplus K_{0}$ and $\NU{N}(M):=N \oplus K_{1}^{*}$.

Note that by the definition of approximations, $N_{0}, N_{1}\in\add N$ and we have exact sequences
\begin{align}
&0\to \Hom_{R}(N,K_{0})\stackrel{\cdot c}\to \Hom_{R}(N,N_0)\stackrel{\cdot a}\to \Hom_{R}(N,M)\to 0\label{K0a}\\
&0\to \Hom_{R}(N^{*},K_{1})\stackrel{\cdot d}\to \Hom_{R}(N^{*},N_1^{*})\stackrel{\cdot b}\to \Hom_{R}(N^{*},M^{*})\to 0. \label{K1a}
\end{align}

\begin{remark}
\t{(1)}  In general $\MU{N}(M)$ and $\NU{N}(M)$ are not the same.  Nevertheless, we will see later in some special cases that $\MU{N}(M)=\NU{N}(M)$ holds (\ref{main}), as in cluster tilting theory \cite[5.3]{Iyama-Yoshino}, \cite{GLS,BMRRT}. \\
\t{(2)}  A new feature of our mutation which is different from cluster tilting theory is that $\MU{N}(M)=M=\NU{N}(M)$ can happen.  A concrete example is given by taking $R=k[x,y,z]^G$ with $G=\frac{1}{2}(1,1,0)$, $M=k[x,y,z]$ and $N=R$. 
\end{remark}

\begin{remark}
If $d=3$, then both $\MU{N}(M)$ and $\NU{N}(M)$ are modifying $R$-modules by \ref{4.5refl}.  We will show in \ref{stillmodifying} that this is the case in any dimension.  
\end{remark}

We note that mutation is unique up to additive closure.  This can be improved if $R$ is complete local.
\begin{lemma}\label{uptoadd}
Suppose $N_0\stackrel{a}\to M$ and $N_0^\prime\stackrel{a^\prime}\to M$ are two right $(\add N)$-approximations of $M$.  Then $\add (N\oplus\Ker a)= \add (N\oplus\Ker a^{\prime})$.  A similar statement holds for left approximations.
\end{lemma}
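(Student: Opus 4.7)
My plan is to produce a module that interpolates between the two mutations, namely the kernel of the stacked morphism $(a,a'):N_0\oplus N_0'\to M$, and show directly that it decomposes in two different ways so that the two candidate mutations become stably equivalent after adding summands of $N$.

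First I would use that both $a$ and $a'$ are right $\add N$-approximations, together with $N_0,N_0'\in\add N$, to produce factorizations $a=sa'$ and $a'=ta$ for some $s:N_0\to N_0'$ and $t:N_0'\to N_0$. Then I would consider the automorphism
\[
\phi=\begin{pmatrix} 1 & 0 \\ -s & 1\end{pmatrix}:N_0\oplus N_0'\to N_0\oplus N_0'.
\]
A direct computation shows that the composition $(a,a')\circ\phi^{-1}$ equals $(0,a'):N_0\oplus N_0'\to M$, whose kernel is manifestly $N_0\oplus\Ker a'$. Since $\phi$ is an automorphism, the kernel of $(a,a')$ is isomorphic to $N_0\oplus\Ker a'$. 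By the symmetric argument using $\begin{pmatrix}1&-t\\0&1\end{pmatrix}$, the same kernel is isomorphic to $\Ker a\oplus N_0'$.

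Combining these two descriptions gives $\Ker a\oplus N_0'\cong N_0\oplus\Ker a'$. Since $N_0,N_0'\in\add N$, taking additive closures yields
\[
\add(N\oplus\Ker a)=\add(N\oplus\Ker a\oplus N_0')=\add(N\oplus N_0\oplus\Ker a')=\add(N\oplus\Ker a'),
\]
which is the claim. No obstacle is really expected here; the only thing to keep track of is the bookkeeping of the elementary row operations, and the fact that everything happens inside $\refl R$ (so kernels are well behaved). For the left-approximation statement, I would run the dual argument, forming instead $M\xrightarrow{\binom{a}{a'}}N_0\oplus N_0'$, using elementary automorphisms of $N_0\oplus N_0'$ to identify its cokernel with both $\Cok a\oplus N_0'$ and $N_0\oplus\Cok a'$, and concluding as above.
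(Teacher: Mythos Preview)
Your argument is correct and a touch more streamlined than the paper's. Both proofs arrive at the same intermediate isomorphism $\Ker a\oplus N_0'\cong N_0\oplus\Ker a'$, from which the claim follows immediately since $N_0,N_0'\in\add N$. The paper reaches this isomorphism by building a commutative ladder between the two sequences $0\to K\to N_0\to M$ and $0\to K'\to N_0'\to M$, extracting from it an exact sequence $0\to K\to K'\oplus N_0\to N_0'$, and then using the approximation property of $a'$ to show the last map is split surjective (the identity of $N_0'\in\add N$ lifts). You instead compute the kernel of the stacked map $(a,a')$ in two ways via elementary automorphisms of $N_0\oplus N_0'$; this is more direct and avoids the splitting argument entirely. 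The paper's route has the mild advantage of making explicit why the approximation hypothesis is doing the work, whereas yours hides it inside the existence of the factorizations $s,t$; but both are equally valid and yield exactly the same conclusion.

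One small slip worth fixing: with $\phi=\left(\begin{smallmatrix}1&0\\-s&1\end{smallmatrix}\right)$ and the factorization $a=a'\circ s$, a direct check gives $(a,a')\circ\phi=(0,a')$, not $(a,a')\circ\phi^{-1}$. This is cosmetic and does not affect the argument.
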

\begin{proof}
Let $K:=\Ker a$ and $K':=\Ker a'$.
Then we have a commutative diagram
\[
{\SelectTips{cm}{10}
\xy
(0,0)*+{0}="0",(10,0)*+{K}="1",(25,0)*+{N_0}="2",(40,0)*+{M}="3",
(0,-10)*+{0}="0a",(10,-10)*+{K'}="1a",(25,-10)*+{N_0'}="2a",(40,-10)*+{M}="3a",
\ar"0";"1"
\ar"1";"2"^{c}
\ar"2";"3"^{a}
\ar"0a";"1a"
\ar"1a";"2a"^{c'}
\ar"2a";"3a"^{a'}
\ar"1";"1a"^s
\ar"2";"2a"^{t}
\ar@{=}"3";"3a"
\endxy}
\]
of exact sequences, giving an exact sequence
\begin{eqnarray}
0\to K\xrightarrow{(-s\ c)} K'\oplus N_{0}\xrightarrow{{c'\choose t}} N_{0}'.\label{61}
\end{eqnarray}
From the commutative diagram
\[
{\SelectTips{cm}{10}
\xy
(0,0)*+{0}="0",
(20,0)*+{\Hom_{R}(N,K)}="1",
(50,0)*+{\Hom_{R}(N,N_{0})}="2",
(80,0)*+{\Hom_{R}(N,M)}="3",
(98,0)*+{0}="4",
(0,-10)*+{0}="0a",
(20,-10)*+{\Hom_{R}(N,K')}="1a",
(50,-10)*+{\Hom_{R}(N,N'_{0})}="2a",
(80,-10)*+{\Hom_{R}(N,M)}="3a",
(98,-10)*+{0}="4a",
\ar"0";"1"
\ar^{\cdot c}"1";"2"
\ar^{\cdot a}"2";"3"
\ar"3";"4"
\ar"0a";"1a"
\ar^{\cdot c'}"1a";"2a"
\ar^{\cdot a'}"2a";"3a"
\ar"3a";"4a"
\ar^{\cdot s}"1";"1a"
\ar^{\cdot t}"2";"2a"
\ar@{=}"3";"3a"
\endxy}
\]
we see that 
\[
\Hom_{R}(N,K'\oplus N_{0})\xrightarrow{\cdot{c'\choose t}}\Hom_{R}(N,N_{0}')\to 0
\]
is exact. Thus (\ref{61}) is a split short exact sequence, so in particular $K\in\add (N\oplus K')$. Similarly $K'\in\add (N\oplus K)$.
\end{proof}

\begin{prop}\label{dualityofapprox}
Let $R$ be a normal $d$-sCY ring and let $M$ be a modifying module with $0\neq N\in \add M$ (i.e.\ notation as above). Then \\
\t{(1)} Applying $\Hom_{R}(-,N)$ to (\ref{K0}) induces an exact sequence
\begin{align}
&0\to \Hom_{R}(M,N)\stackrel{a\cdot}\to \Hom_{R}(N_{0},N)\stackrel{c\cdot}\to \Hom_{R}(K_{0},N)\to 0.\label{K0b}
\end{align}
In particular $c$ is a left $(\add N)$-approximation. \\
\t{(2)} Applying $\Hom_{R}(-,N^{*})$ to (\ref{K1}) induces an exact sequence
\begin{align}
&0\to \Hom_{R}(M^{*},N^{*})\stackrel{b\cdot}\to \Hom_{R}(N_{1}^{*},N^{*})\stackrel{d\cdot}\to \Hom_{R}(K_{1},N^{*})\to 0\label{K1b}
\end{align}
In particular $d$ is a left $(\add N^{*})$-approximation.\\
\t{(3)} We have that 
\begin{align}
&0\to M^{*}\stackrel{a^{*}}\to N_0^{*}\stackrel{c^{*}}\to K_{0}^{*}\label{K0D}\\
&0\to M\stackrel{b^{*}}\to N_1\stackrel{d^{*}}\to K_{1}^{*} \label{K1D}
\end{align}
are exact, inducing exact sequences
\begin{align}
&0\to \Hom_{R}(N^{*},M^{*})\stackrel{\cdot a^{*}}\to \Hom_{R}(N^{*},N_0^{*})\stackrel{\cdot c^{*}}\to \Hom_{R}(N^{*},K_{0}^{*})\to 0\label{K0Da}\\
&0\to \Hom_{R}(K_{0}^{*},N^{*})\stackrel{c^{*}\cdot}\to \Hom_{R}(N_{0}^{*},N^{*})\stackrel{a^{*}\cdot }\to \Hom_{R}(M^{*},N^{*})\to 0\label{K0Db}\\
&0\to \Hom_{R}(N,M)\stackrel{\cdot b^{*}}\to \Hom_{R}(N,N_1)\stackrel{\cdot d^{*}}\to \Hom_{R}(N,K_{1}^{*})\to 0 \label{K1Da}\\
&0\to \Hom_{R}(K_{1}^{*},N)\stackrel{d^{*}\cdot }\to \Hom_{R}(N_{1},N)\stackrel{b^{*}\cdot }\to \Hom_{R}(M,N)\to 0 \label{K1Db}
\end{align}
\end{prop}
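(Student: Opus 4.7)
The plan is to derive parts (1) and (2) by dualizing the sequences (\ref{K0a}) and (\ref{K1a}), and then to derive (3) by a combination of a codimension argument for (\ref{K0D}), (\ref{K1D}) and the $(-)^{*}$-duality for (\ref{K0Da})--(\ref{K1Db}). The two key tools are: $(-)^{*}:=\Hom_{R}(-,R)$ is exact on $\CM R$ (since $R$ is $3$-CY$^{-}$, hence Gorenstein, so CM modules have no higher $\Ext$ into $R$), and Lemma \ref{HomsymmDirect} provides, for $N$ a non-zero reflexive module, a natural isomorphism $\Hom_{R}(Y,N)\cong\Hom_{R}(N,Y)^{*}$ in $Y\in\refl R$.

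For (1), first note that (\ref{K0a}) is a short exact sequence in $\CM R$: since $\add N\subseteq \add M$, both $\Hom_{R}(N,N_0)$ and $\Hom_{R}(N,M)$ are summands of matrix algebras over $\End_{R}(M)\in\CM R$, and $\Hom_{R}(N,K_0)\in\CM R$ then follows by the depth lemma. Apply $(-)^{*}$ to (\ref{K0a}), which preserves exactness, and use \ref{HomsymmDirect} to identify the dualized terms with $\Hom_{R}(M,N), \Hom_{R}(N_0,N), \Hom_{R}(K_0,N)$; by naturality the induced maps are $a\cdot$ and $c\cdot$, yielding (\ref{K0b}). That $c$ is a left $\add N$-approximation then follows from the surjectivity of $c\cdot$ together with additivity over summands of $N$. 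Part (2) is entirely analogous, with $N^{*}$ in place of $N$, starting from (\ref{K1a}).

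For (\ref{K0D}) the crucial step is that $a$ is surjective at every prime $\p$ of height $\leq 1$: since $N$ is a non-zero reflexive module, $N_{\p}$ is a non-zero free module over the DVR $R_{\p}$, so $R_{\p}\in\add N_{\p}$, and the right $\add N$-approximation property then forces $N_{0,\p}\twoheadrightarrow M_{\p}$. Hence $\Cok(a)$ has support of codimension $\geq 2$, so by the standard reflexive extension principle (any submodule $X\subseteq M$ of a reflexive $M$ with $M/X$ supported in codimension $\geq 2$ satisfies $X^{**}=M$, hence $X^{*}=M^{*}$) we have $\Im(a)^{*}=M^{*}$. Combining with $(-)^{*}$ applied to $0\to K_0\to N_0\to \Im(a)\to 0$ then yields (\ref{K0D}); the sequence (\ref{K1D}) follows symmetrically using that $b$ is surjective at height $\leq 1$. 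Finally, the four sequences (\ref{K0Da})--(\ref{K1Db}) follow from (\ref{K0a}), (\ref{K0b}), (\ref{K1a}), (\ref{K1b}) by the $(-)^{*}$-duality on $\refl R$, using the natural identification $\Hom_{R}(X^{*},Y^{*})\cong\Hom_{R}(Y,X)$ for $X,Y\in\refl R$: this matches (\ref{K0Db}) with (\ref{K0a}), (\ref{K0Da}) with (\ref{K0b}), (\ref{K1Db}) with (\ref{K1a}), and (\ref{K1Da}) with (\ref{K1b}). The main obstacle is careful bookkeeping---tracking which specific morphism corresponds to which under each natural isomorphism, and re-verifying the $\CM R$ hypothesis at each dualization step.
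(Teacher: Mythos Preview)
Your proof is correct, and the overall architecture matches the paper's: establish (\ref{K0b}) and (\ref{K1b}) first, then (\ref{K0D})/(\ref{K1D}), and finally read off (\ref{K0Da})--(\ref{K1Db}) via the duality $(-)^*$ on $\refl R$ (this last step is identical to the paper's). The technical route differs in two places, and it is worth noting what each buys.

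For (1) and (2) the paper passes through the reflexive equivalence $\mathbb{F}=\Hom_R(N,-):\refl R\to\refl\Lambda$ with $\Lambda=\End_R(N)$, applies $\Hom_\Lambda(-,\Lambda)$ to (\ref{K0a}), and uses that $\Lambda$ is a Gorenstein $R$-order (via $3$-CY$^-$) to kill $\Ext^1_\Lambda(\mathbb{F}M,\Lambda)$. You instead apply $(-)^*$ directly to (\ref{K0a}) and invoke Lemma~\ref{HomsymmDirect}. These are two packagings of the same vanishing: since $\Lambda$ is symmetric one has $\Hom_\Lambda(-,\Lambda)\cong\Hom_R(-,R)$ on $\mod\Lambda$, so $\Ext^1_\Lambda(\mathbb{F}M,\Lambda)\cong\Ext^1_R(\Hom_R(N,M),R)$, which vanishes precisely because $\Hom_R(N,M)\in\CM R$. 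Your version is pleasantly direct and avoids the noncommutative detour; the paper's version makes the role of the Gorenstein-order property explicit and reuses the same mechanism uniformly in (3).

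For (\ref{K0D}) and (\ref{K1D}) the approaches genuinely diverge. The paper obtains (\ref{K0D}) by applying $\Hom_\Lambda(-,\mathbb{F}R)$ to the short exact sequence (\ref{K0a}) (left exactness is automatic) and then identifying terms via the reflexive equivalence. Your codimension argument---$a$ surjective in height $\le 1$ because $N_\p$ is nonzero free, hence $\mathrm{Cok}(a)$ has support of codimension $\ge 2$, hence $\mathrm{Im}(a)^*=M^*$---is a correct and more hands-on alternative that stays entirely on the commutative side. The paper's method is shorter and more uniform with parts (1)--(2); yours makes transparent \emph{why} no surjectivity of $a$ is needed and would adapt more readily to situations where a convenient reflexive equivalence is not available.
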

\begin{proof}
Denote $\Lambda:=\End_{R}(N)$ and $\mathbb{F}:=\Hom_{R}(N,-)$.\\
(1) We note that (\ref{K0a}) is 
\[
0\to\mathbb{F}K_{0}\to\mathbb{F}N_{0}\to\mathbb{F}M\to 0
\]
so applying $\Hom_{\Lambda}(-,\mathbb{F}N)$ gives 
\[
0\to\Hom_{\Lambda}(\mathbb{F}M,\mathbb{F}N)\to\Hom_{\Lambda}(\mathbb{F}N_{0},\mathbb{F}N)\to\Hom_{\Lambda}(\mathbb{F}K_{0},\mathbb{F}N)\to \Ext^{1}_{\Lambda}(\mathbb{F}M,\Lambda).
\]
But by \ref{3CY-nonlocal} $\Lambda$ is $d$-sCY and thus a Gorenstein $R$-order by \ref{2.14a}.  Since $\mathbb{F}M\in\CM \Lambda$ and $\add\Lambda=\add\omega_\Lambda$ by \ref{addLambda=addOmega}, it follows that $\Ext^{1}_{\Lambda}(\mathbb{F}M,\Lambda)=0$ and hence we have a commutative diagram of complexes
\[
{\SelectTips{cm}{10}
\xy
(0,0)*+{0}="0",
(20,0)*+{\Hom_{\Lambda}(\mathbb{F}M,\mathbb{F}N)}="1",
(50,0)*+{\Hom_{\Lambda}(\mathbb{F}N_{0},\mathbb{F}N)}="2",
(80,0)*+{\Hom_{\Lambda}(\mathbb{F}K_{0},\mathbb{F}N)}="3",
(100,0)*+{0}="4",
(0,-10)*+{0}="0a",
(20,-10)*+{\Hom_{R}(M,N)}="1a",
(50,-10)*+{\Hom_{R}(N_{0},N)}="2a",
(80,-10)*+{\Hom_{R}(K_{0},N)}="3a",
(100,-10)*+{0}="4a",
\ar"0";"1"
\ar"1";"2"
\ar"2";"3"
\ar"3";"4"
\ar"0a";"1a"
\ar"1a";"2a"^{a\cdot}
\ar"2a";"3a"^{c\cdot}
\ar"3a";"4a"
\ar@{=}"1";"1a"
\ar@{=}"2";"2a"
\ar@{=}"3";"3a"
\endxy}
\]
in which the top row is exact and the vertical maps are isomorphisms by reflexive equivalence \ref{reflequiv}(4).  It follows that the bottom row is exact.\\
(2) is identical to (1) since $\Hom_{R}(N^{*},M^{*})\in\CM R$.\\
(3) As in (1) applying $\Hom_{\Lambda}(-,\mathbb{F}R)$ to (\ref{K0a}) gives an commutative diagram of complexes
\[
{\SelectTips{cm}{10}
\xy
(0,0)*+{0}="0",
(20,0)*+{\Hom_{\Lambda}(\mathbb{F}M,\mathbb{F}R)}="1",
(50,0)*+{\Hom_{\Lambda}(\mathbb{F}N_{0},\mathbb{F}R)}="2",
(80,0)*+{\Hom_{\Lambda}(\mathbb{F}K_{0},\mathbb{F}R)}="3",
(0,-10)*+{0}="0a",
(20,-10)*+{\Hom_{R}(M,R)}="1a",
(50,-10)*+{\Hom_{R}(N_{0},R)}="2a",
(80,-10)*+{\Hom_{R}(K_{0},R)}="3a",
\ar"0";"1"
\ar"1";"2"
\ar"2";"3"
\ar"0a";"1a"
\ar"1a";"2a"^{a^{*}}
\ar"2a";"3a"^{c^{*}}
\ar@{=}"1";"1a"
\ar@{=}"2";"2a"
\ar@{=}"3";"3a"
\endxy}
\]
in which the top row is exact.  Hence the bottom row (i.e.\ (\ref{K0D})) is exact.  The proof that (\ref{K1D}) is exact is identical.  Now since $(-)^*:\refl R\to \refl R$ is a duality, the sequences (\ref{K0Da}), (\ref{K0Db}), (\ref{K1Da}) and (\ref{K1Db}) are identical with (\ref{K0b}), (\ref{K0a}), (\ref{K1b}) and (\ref{K1a}) respectively.
Thus they are exact.
\end{proof}

\begin{prop}\label{inverseoperations}
$\MU{N}$ and $\NU{N}$ are mutually inverse operations, i.e.\ we have $\NU{N}(\MU{N}(M))= M$ and $\MU{N}(\NU{N}(M))= M$, up to additive closure.
\end{prop}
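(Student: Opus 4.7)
The plan is to compute $\nu_N(\mu_N(M))$ and $\mu_N(\nu_N(M))$ explicitly; both turn out to equal $N\oplus M$, whose additive closure coincides with $\add M$ since by hypothesis $N\in\add M$.

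For the first identity, the computation goes as follows. By definition $\mu_N(M) = N\oplus K_0$, so to apply $\nu_N$ I need a right $\add N^*$-approximation of $(N\oplus K_0)^* = N^*\oplus K_0^*$. The $N^*$ summand admits the identity as a trivial approximation with kernel zero, so the real content lies in approximating $K_0^*$. Here the key observation is that the map $c^*\colon N_0^* \to K_0^*$ obtained by dualising the original exchange sequence~(\ref{K0}) is itself a right $\add N^*$-approximation of $K_0^*$ with kernel $M^*$ --- and both of these facts are exactly what Proposition~\ref{dualityofapprox}(3) provides, via (\ref{K0D}) (which gives $\ker c^* = M^*$) and (\ref{K0Da}) (which gives the approximation property), respectively. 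Therefore the exchange sequence for $\nu_N$ at this summand is $0\to M^*\to N_0^*\to K_0^*$, giving $K_1 = M^*$ and
\[
\nu_N(\mu_N(M)) = N \oplus M^{**} = N \oplus M,
\]
which has the same additive closure as $M$.

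The second identity is proved by the symmetric argument using the remaining sequences of Proposition~\ref{dualityofapprox}(3). Starting from $\nu_N(M) = N\oplus K_1^*$, I need a right $\add N$-approximation of $N\oplus K_1^*$; the non-trivial piece is again on the second summand, where (\ref{K1D}) shows $\ker d^* = M$ and (\ref{K1Da}) shows that $d^*\colon N_1\to K_1^*$ is a right $\add N$-approximation. Hence $\mu_N(\nu_N(M)) = N\oplus M$, again equal to $M$ up to additive closure. (Alternatively one can derive this from the first identity together with the duality $\nu_N(-) = (\mu_{N^*}((-)^*))^*$ noted right after Definition~\ref{mutationdef}, using that $M^*$ is modifying with $\add N^*\subseteq \add M^*$; but the direct argument is no longer.)

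The only subtlety --- and the reason Proposition~\ref{dualityofapprox}(3) was set up --- is that the exchange sequences are only left exact, so one cannot simply transpose a short exact sequence under $(-)^*$ to conclude that the dualised map is an approximation with the expected kernel. Once that proposition is in hand, however, no further obstacle arises; the proof is essentially bookkeeping, and the appearance of the summand $N$ in $N\oplus M$ is precisely why the identity has to be stated only up to additive closure.
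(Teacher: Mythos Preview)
Your proof is correct and is precisely the intended unpacking of the paper's one-line argument, which merely says the result is an immediate consequence of Proposition~\ref{dualityofapprox}. You have identified exactly which pieces of that proposition are needed: (\ref{K0D}) and (\ref{K0Da}) supply the kernel and approximation property for $c^*$, and (\ref{K1D}) and (\ref{K1Da}) do the same for $d^*$, yielding $\nu_N(\mu_N(M))=N\oplus M$ and $\mu_N(\nu_N(M))=N\oplus M$ up to additive closure.
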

\begin{proof}
Since (\ref{K0D}) and (\ref{K0Da}) are exact, we have $\NU{N}(\MU{N}(M))= M$. The other assertion follows dually.
\end{proof}

The following is standard in the theory of tilting mutation \cite{RS}.
\begin{lemma}\label{tiltingmutation}
Let $\Lambda$ be a ring, let $Q$ be a projective $\Lambda$-module and consider an exact sequence $\Lambda\stackrel{f}\to Q^\prime\stackrel{g}{\to}\Cok f\to 0$ where $f$ is a left $(\add Q)$-approximation. If $f$ is injective then $Q\oplus \Cok f$ is a tilting $\Lambda$-module of projective dimension at most one.
\end{lemma}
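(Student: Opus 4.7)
Set $T:=Q\oplus\Cok f$. I would verify the three conditions of Definition~\ref{tiltingdef} in turn.

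First, the projective dimension condition: the short exact sequence
\[
0\to \Lambda\stackrel{f}{\to} Q'\stackrel{g}{\to} \Cok f\to 0
\]
(exact because $f$ is assumed injective, $Q'\in\add Q$ is projective, and $\Cok f$ is defined as the cokernel) is a length-one projective resolution of $\Cok f$, so $\pd_\Lambda \Cok f\leq 1$, and hence $\pd_\Lambda T\le 1$.

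Next, the exact sequence in the definition: the displayed resolution itself provides $0\to \Lambda\to Q'\to \Cok f\to 0$ with $Q'\in\add Q\subseteq\add T$ and $\Cok f\in\add T$, as required.

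The main content of the lemma is therefore the vanishing of $\Ext^1_\Lambda(T,T)$. Splitting into summands I only need to check $\Ext^1_\Lambda(\Cok f,Q)=0$ and $\Ext^1_\Lambda(\Cok f,\Cok f)=0$, since $Q$ is projective. In both cases I apply $\Hom_\Lambda(-,X)$ to the resolution above, with $X\in\{Q,\Cok f\}$; since $Q'$ is projective the sequence
\[
\Hom_\Lambda(Q',X)\stackrel{\cdot f}{\longrightarrow}\Hom_\Lambda(\Lambda,X)\longrightarrow\Ext^1_\Lambda(\Cok f,X)\longrightarrow 0
\]
is exact, so it suffices to prove that the left map is surjective in each case. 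For $X=Q$ this is immediate from the left $\add Q$-approximation property of $f$: any $\phi\colon \Lambda\to Q$ factors as $\phi=hf$ for some $h\colon Q'\to Q$. The main obstacle --- and really the only subtle point --- is the case $X=\Cok f$, where the target is not in $\add Q$ and one cannot invoke the approximation property directly. The key trick will be a two-step lift: given $\phi\colon \Lambda\to\Cok f$, use projectivity of $\Lambda$ together with the surjection $g\colon Q'\twoheadrightarrow\Cok f$ to lift $\phi$ to some $\tilde\phi\colon\Lambda\to Q'$ with $g\tilde\phi=\phi$; then apply the left $\add Q$-approximation property of $f$ to $\tilde\phi\colon\Lambda\to Q'\in\add Q$, obtaining $h'\colon Q'\to Q'$ with $h'f=\tilde\phi$; finally take $h:=gh'\colon Q'\to\Cok f$, which satisfies $hf=gh'f=g\tilde\phi=\phi$. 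This shows the required surjectivity and completes the verification that $T$ is tilting of projective dimension at most one.
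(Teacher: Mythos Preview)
Your proof is correct and follows essentially the same route as the paper's: both verify $\pd_\Lambda T\le1$ and the generating sequence directly from the resolution, obtain $\Ext^1_\Lambda(\Cok f,Q)=0$ from the approximation property, and handle $\Ext^1_\Lambda(\Cok f,\Cok f)=0$ by the same two-step lift (projectivity of $\Lambda$ against the surjection $g$, then the approximation property). The only cosmetic difference is that you use the standard composition convention rather than the paper's left-to-right one, so your ``$\cdot f$'' corresponds to the paper's ``$f\cdot$''.
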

\begin{proof}
For the convenience of the reader we give a complete proof here.
It is clear that $\pd_\Lambda (Q\oplus \Cok f)\leq 1$ and it generates the derived category.  We need only to check that $\Ext^1_\Lambda(Q\oplus\Cok f,Q\oplus\Cok f)=0$. 
Applying $\Hom_\Lambda(-,Q)$, we have an exact sequence
\[
\Hom_\Lambda(Q',Q)\stackrel{f\cdot}{\to}\Hom_\Lambda(\Lambda,Q)\to\Ext^1_\Lambda(\Cok f,Q)\to0.
\]
Since $(f\cdot)$ is surjective, we have $\Ext^1_\Lambda(\Cok f,Q)=0$.
Applying $\Hom_{\Lambda}(-,\Cok f)$, we have an exact sequence
\[
\Hom_\Lambda(Q',\Cok f)\stackrel{f\cdot}{\to}\Hom_\Lambda(\Lambda,\Cok f)\to\Ext^1_\Lambda(\Cok f,\Cok f)\to 0.
\]
Here $(f\cdot)$ is surjective since $\Hom_{\Lambda}(Q',Q')\stackrel{f\cdot}{\to}\Hom_{\Lambda}(\Lambda,Q')$ and $\Hom_{\Lambda}(\Lambda,Q')\stackrel{\cdot g}{\to}\Hom_{\Lambda}(\Lambda,\Cok f)$ are surjective.
Thus we have  $\Ext^1_\Lambda(\Cok f,\Cok f)=0$.
Consequently we have $\Ext^1_{\Lambda}(Q\oplus\Cok f,Q\oplus\Cok f)=0$ since $Q$ is projective.
\end{proof}

The proof of \ref{welldefined2} requires the following technical lemma.  

\begin{lemma}\label{isook}
Let $R$ be a normal domain, let $\Lambda\in\refl R$ be a module finite $R$-algebra and let $T\in\mod\Lambda$ be a height one projective (i.e.\ $T_\p$ is a projective $\Lambda_\p$-module for all $\p\in\Spec R$ with $\hgt\p\leq 1$) such that $\End_\Lambda(T)\in\refl R$.  Then $\End_{\Lambda}(T)\cong \End_{\Lambda^{\op}}(T^{*})^{\op}$.
\end{lemma}
\begin{proof}
Consider the natural ring homomorphism
\[
\End_{\Lambda}(T)\stackrel{\psi:=(-)^*}{\xrightarrow{\hspace*{0.75cm}}} \End_{\Lambda^{\op}}(T^{*})^{\op}
\]
where recall $(-)^*:=\Hom_{R}(-,R)$.  Note that $T^*\in\refl R$ by \ref{reflequiv}(2), i.e. $T^*\in\refl\Lambda^{\op}$. This implies $\End_{\Lambda^{\op}}(T^{*})^{\op}\in\refl R$ by \ref{reflequiv}(2).

Since $T$ is a height one projective and $\Lambda\in\refl R$,  it follows that $T_\p\in\refl \Lambda_\p$ for all height one primes $\p$.  Hence, via the anti--equivalence
\[
\refl\Lambda_{\p}\stackrel{(-)^*_\p}{\xrightarrow{\hspace*{0.5cm}}}\refl\Lambda_{\p}^{\op} ,
\]
we have that $\psi$ is a height one isomorphism.  

By assumption $\End_\Lambda(T)\in\refl R$ holds. Since $R$ is normal, $\psi$, being a height one isomorphism between reflexive $R$-modules, is actually an isomorphism.
\end{proof}

\begin{thm}\label{welldefined2}
Let $R$ be a normal $d$-sCY ring with modifying module $M$.   Suppose $0\neq N\in \add M$.  Then\\
\t{(1)} $\End_R(M)$ and $\End_R(\NU{N}(M))$ are derived equivalent.\\
\t{(2)} $\End_R(M)$ and $\End_R(\MU{N}(M))$ are derived equivalent.
\end{thm}
\begin{proof}
(1) Denote $\Lambda:=\End_{R}(M)$ and $\mathbb{F}:=\Hom_R(M,-)\colon\refl R\to \refl\Lambda$.  Applying $\mathbb{F}$ to (\ref{K1D}) and denoting
$V:=\Cok(\cdot b^*)$, we have an exact sequence 
\begin{equation}
\SelectTips{cm}{10}
\xy0;/r.275pc/:
\POS(18,0)*+{0}="0",(30,0)*+{\mathbb{F}M}="1",(50,0)*+{\mathbb{F}N_{1}}="2",(70,0)*+{\mathbb{F}K_{1}^{*}}="3",(60,-6.5)*+{V}="b1",
\ar"0";"1"
\ar"1";"2"^{(\cdot b^*)}
\ar"2";"3"
\ar@{->>}"2";"b1"
\ar@{^{(}->}"b1";"3"_(0.45){h}
\endxy .\label{splicedsequence}
\end{equation}
We now claim that $(\cdot b^*)$ is a left $(\add Q)$-approximation where $Q:=\Hom_{R}(M,N)=\mathbb{F}N$. Simply applying $\Hom_{\Lambda}(-,Q)=\Hom_{\Lambda}(-,\mathbb{F}N)$ to the above we obtain
\[
{\SelectTips{cm}{10}
\xy
(0,0)*+{\Hom_{\Lambda}(\mathbb{F}N_{1},\mathbb{F}N)}="1",
(40,0)*+{\Hom_{\Lambda}(\mathbb{F}M,\mathbb{F}N)}="2",
(0,-10)*+{\Hom_{R}(N_{1},N)}="1a",
(40,-10)*+{\Hom_{R}(M,N)}="2a",
(60,-10)*+{0}="3a",
\ar"1";"2"
\ar"1a";"2a"
\ar"2a";"3a"
\ar@{=}"1";"1a"
\ar@{=}"2";"2a"
\endxy}
\]
where the bottom is just (\ref{K1Db}) and so is exact, and the vertical maps are isomorphisms by reflexive equivalence \ref{reflequiv}(4). Hence the top is surjective, showing that $(\cdot b^*)$ is a left $(\add Q)$-approximation.  By \ref{tiltingmutation} it follows that $Q\oplus V$ is a tilting $\Lambda$-module.

We now show that $\End_{\Lambda}(V\oplus Q)\cong \End_R(\NU{N}(M))$ by using \ref{isook}.  To do this, note first that certainly $\Lambda\in\refl R$ since $\Lambda\in\CM R$, and further $\Lambda$ is $d$-sCY by \ref{3CY-nonlocal}(2).  Hence $\End_\Lambda(V\oplus Q)$, being derived equivalent to $\Lambda$, is also $d$-sCY and so $\End_\Lambda(V\oplus Q)\in\refl R$ by \ref{2.14a}.  We now claim that $V\oplus Q$ is a height one projective $\Lambda$-module.

Let $\p\in\Spec R$ be a height one prime, then $M_\p\in\refl R_\p=\add R_\p$.  Hence $M_\p$ is a free $R_\p$-module, and so  $\add N_\p=\add M_\p$.  Localizing (\ref{K1Da}) gives an exact sequence
\[
0\to \Hom_{R_\p}(N_\p,M_\p)\to \Hom_{R_\p}(N_\p,(N_1)_\p)\to \Hom_{R_\p}(N_\p,(K_{1}^{*})_\p)\to 0
\]
and so since $\add N_\p=\add M_\p$,  
\[
0\to \Hom_{R_\p}(M_\p,M_\p)\to \Hom_{R_\p}(M_\p,(N_1)_\p)\to \Hom_{R_\p}(M_\p,(K_{1}^{*})_\p)\to 0
\]
is exact.  This is (\ref{splicedsequence}) localized at $\p$, hence we conclude that $h$ is a height one isomorphism.  In particular $V_\p=\Hom_{R_\p}(M_\p,(K_{1}^{*})_\p)$ with both $M_\p,(K_{1}^{*})_\p\in\add R_\p$.  Consequently $V$, thus $V\oplus Q$, is a height one projective $\Lambda$-module. 

Thus by \ref{isook} we have an isomorphism
\[
\End_{\Lambda}(V\oplus Q)\cong \End_{\Lambda^{\op}}(V^{*}\oplus Q^*)^{\op}.
\]
Now since $h$ is a height one isomorphism, it follows that $h^{*}$ is a height one isomorphism.  But $h^{*}$ is a morphism between reflexive modules, so $h^{*}$ must be an isomorphism.  We thus have
\[
V^{*}\oplus Q^*=(\mathbb{F}(K_1^*))^{*}\oplus Q^{*}= (\mathbb{F}(K_1^*))^*\oplus (\mathbb{F}N)^*=(\mathbb{F}(K_1^*\oplus N))^*.
\]
Consequently 
\[
\End_{\Lambda}(V\oplus Q)\cong
\End_{\Lambda^{\op}}((\mathbb{F}(K_1^*\oplus N))^*)^{\op}\cong  
\End_{\Lambda}(\mathbb{F}(K_1^*\oplus N))
\]
since
\[
(\mathbb{F}(K_1^*\oplus N))^*\in\refl\Lambda^{\op}\stackrel{(-)^{*}}{\xrightarrow{\hspace*{0.75cm}}}\refl\Lambda
\]
is an anti--equivalence.  This then yields
\[
\End_{\Lambda}(V\oplus Q)\cong 
\End_{\Lambda}(\mathbb{F}(K_1^*\oplus N))
\cong\End_{R}(K_1^*\oplus N)=\End_R(\NU{N}(M)),
\]
where the second isomorphism follows from reflexive equivalence \ref{reflequiv}.\\
(2) Since $M^*$ is a modifying $R$-module, by (1) $\End_R(M^*)$ and $\End_{R}(\NU{N^*}(M^*))$ are derived equivalent.  But $\NU{N^*}(M^*)=( \MU{N}(M))^{*}$, so $\End_R(M^*)$ and $\End_{R}(( \MU{N}(M))^{*})$ are derived equivalent.  Hence $\End_R(M)^{\op}$ and $\End_{R}(\MU{N}(M))^{\op}$ are derived equivalent, which forces $\End_R(M)$ and $\End_{R}(\MU{N}(M))$ to be derived equivalent \cite[9.1]{Rickard}.
\end{proof}

\begin{remark}\label{definTJ}
By \ref{welldefined2}, for every $0\neq N\in \add M$ we obtain an equivalence 
\[
T_{N}:=\RHom(V\oplus Q,-) :\Db(\mod\End_{R}(M))\to \Db(\mod\End_{R}(\NU{N}(M))).
\]
Sometimes $\NU{N}(M)=M$ can happen (see next subsection),
but the functor $T_N$ is never the identity provided $\add N\neq \add M$.  This gives a way of generating  autoequivalences of the derived category.
\end{remark}

\begin{thm}\label{stillmodifying}
Let $R$ be a normal $d$-sCY ring with modifying module $M$.   Suppose $0\neq N\in \add M$.  Then\\
\t{(1)} $\MU{N}(M)$ and $\NU{N}(M)$ are modifying $R$-modules.\\
\t{(2)} If $M$ gives an NCCR, so do $\MU{N}(M)$ and $\NU{N}(M)$.\\
\t{(3)} Whenever $N$ is a generator, if $M$ is a CT module so are $\MU{N}(M)$ and $\NU{N}(M)$.\\
\t{(4)} Whenever $\dim\Sing R\leq 1$ (e.g.\ if $d=3$), if $M$ is a MM module so are $\MU{N}(M)$ and $\NU{N}(M)$.
\end{thm}
\begin{proof}
Set $\Lambda:=\End_R(M)$.  By \ref{welldefined2}, $\Lambda$, $\End_R(\NU{N}(M))$ and $\End_R(\MU{N}(M))$ are all derived equivalent.  Hence (1) follows from \ref{closed under derived equivalences}(1), (2) follows from \ref{closed under derived equivalences}(2) and (4) follows from \ref{MMAs dim3 closed}(2).\\
(3) Since $M$ is CT, by definition $M\in\CM R$.  But $N$ is a generator, so the $a$ and $b$ in the exchange sequences (\ref{K0}) and (\ref{K1}) are surjective.  Consequently both $\MU{N}(M)$ and $\NU{N}(M)$ are CM $R$-modules, so the result follows from (2) and \ref{CT}.\\
\end{proof}

One further corollary to \ref{stillmodifying} is the following application to syzygies and cosyzygies. Usually syzygies and cosyzygies are only defined up to free summands, so let us first settle some notation.  Suppose that $R$ is a normal $d$-sCY ring and $M$ is a modifying generator.  Since $M$ and $M^*$ are finitely generated we can consider exact sequences
\begin{align}
&0\to K_0\to P_0\to M\to 0\label{syz1}\\
&0\to K_1\to P_1^*\to M^*\to 0 \label{cosyz1}
\end{align}
where $P_0,P_1\in\add R$.   We define $\Omega M:=R\oplus K_0=\MU{R}(M)$ and $\Omega^{-1} M:=R\oplus K_1^*=\NU{R}(M)$.  Inductively we define $\Omega^iM$ for all $i\in\mathbb{Z}$.

Our next result shows that modifying modules often come in infinite families, and that in particular NCCRs often come in infinite families:

\begin{cor}\label{syzcosyz result}
Suppose that $R$ is a normal $d$-sCY ring and $M\in\refl R$ is a modifying generator.  Then\\
\t{(1)}  $\End_R(\Omega^iM)$ are derived equivalent for all $i\in\mathbb{Z}$.\\
\t{(2)} $\Omega^iM\in\CM R$ is a modifying generator for all $i\in\mathbb{Z}$.
\end{cor}
\begin{proof}
The assertions are immediate from \ref{welldefined2} and \ref{stillmodifying}.
\end{proof}

\subsection{Mutations and Derived Equivalences in Dimension $3$}  In the special case $d=3$, we can extend some of the above results, since we have more control over the tilting modules produced from the procedure of mutation.  Recall from the introduction that given $0\neq N\in \add M$ we define $[N]$ to be the two-sided ideal of $\Lambda:=\End_R(M)$ consisting of morphisms $M\to M$ which factor through a member of $\add N$. 

The factors $\Lambda_N:=\Lambda/[N]$ are, in some sense, replacements for simple modules in the infinite global dimension setting.  For example, we have the following necessary condition for a module to be MM.

\begin{prop}\label{almost characterize}
Suppose that $R$ is a normal 3-sCY ring, let $M$ be an MM $R$-module    and denote $\Lambda=\End_R(M)$.  Then $\pd_\Lambda\Lambda_N\leq 3$ for all  $N$ such that $0\neq N\in \add M$.
\end{prop}
\begin{proof}
The sequence (\ref{K0}) $0\to K_{0}\to N_0\to M$ gives 
\[
0\to \Hom_{R}(M,K_{0})\to \Hom_{R}(M,N_{0})\to \Lambda\to \Lambda_N\to 0
\]
where $\Hom_{R}(M,N_{0})$ and $\Lambda$ are projective $\Lambda$-modules.  But  $K_{0}$ is a modifying module by \ref{stillmodifying}, so by \ref{gMRapprox} we know that $\pd_{\Lambda} \Hom_{R}(M,K_{0})\leq 1$.  Hence certainly $\pd_\Lambda\Lambda_N\leq 3$.  
\end{proof}

\begin{remark}
The converse of \ref{almost characterize} is not true, i.e.\ there exists non-maximal modifying modules $M$ such that $\pd_\Lambda\Lambda_N\le 3$ for all $0\neq N\in \add M$.  An easy example is given by $M:=R\oplus (a,c^2)$ for $R:=\mathbb{C}[[a,b,c,d]]/(ab-c^4)$.  In this case the right $(\add R)$-approximation
\begin{eqnarray*}
0\to (a,c^2)\xrightarrow{(-\frac{c^2}{a}\ inc)}R\oplus R\xrightarrow{a\choose c^2}(a,c^2)\to 0
\end{eqnarray*}
shows that $\pd_\Lambda(\Lambda/[(a,c^2)])=2$, whilst the right $(\add (a,c^2))$-approximation
\begin{eqnarray*}
0\to R\xrightarrow{(-a\ c^2)}(a,c^2)\oplus (a,c^2)\xrightarrow{\left(\begin{smallmatrix}inc\\ \frac{c^2}{a}\end{smallmatrix}\right)}R
\end{eqnarray*}
shows that $\pd_\Lambda(\Lambda/[R])=2$.  Also $\Lambda/[M]=0$ and so trivially $\pd_\Lambda(\Lambda/[M])=0$.  Hence $\pd_\Lambda\Lambda_N\leq 3$ for all $0\neq N\in\add M$, however $\End_R(M\oplus(a,c))\in\CM R$ with $(a,c)\notin\add M$, so $M$ is not an MM module.
\end{remark}

Roughly speaking, mutation in dimension $d=3$ is controlled by the factor algebra $\Lambda_N$, in particular whether it is artinian or not. When it is artinian, the derived equivalence in  \ref{welldefined2} is given by a very explicit tilting module. 

\begin{thm}\label{Aifinite1}
Let $R$ be a normal 3-sCY ring with modifying module $M$.   Suppose that $0\neq N\in \add M$ and denote $\Lambda=\End_R(M)$.  If $\Lambda_N=\Lambda/[N]$ is artinian then \\
\t{(1)}  $T_{1}:=\Hom_R(M,\NU{N}(M))$ is a tilting $\Lambda$-module such that $\End_\Lambda(T_{1})\cong \End_R(\NU{N}(M))$. \\
\t{(2)} $T_{2}:=\Hom_R(M^{*},\MU{N}(M)^{*})$ is a tilting $\Lambda^{\rm op}$-module such that $\End_{\Lambda^{\rm op}}(T_{2})\cong \End_R(\MU{N}(M))^{\rm op}$. 
\end{thm}

\begin{remark}\label{flartinian}
In the setting of \ref{Aifinite1}, we have the following.\\
\t{(1)} $\Lambda_N$ is artinian if and only if $\add M_\p=\add N_\p$ for all $\p\in \Spec R$ with $\hgt \p=2$.\\
\t{(2)} If $R$ is finitely generated over a field $k$ then $\Lambda_N$ is artinian if and only if $\dim_k\Lambda_N<\infty$.  Thus if the reader is willing to work over $\C{}$, they may replace the condition $\Lambda_N$ is artinian by $\dim_\C{}\Lambda_N<\infty$ throughout.
\end{remark}
\noindent
{\it Proof of \ref{Aifinite1}.}
(1) Denote $\mathbb{G}:=\Hom_{R}(N,-)$ and $\Gamma:=\End_{R}(N)$. Applying $\Hom_{R}(M,-)$ to (\ref{K1D}) and $\Hom_{\Gamma}(\mathbb{G}M,-)$ to \eqref{K1Da} gives a commutative diagram
\[
{\SelectTips{cm}{10}
\xy
(0,0)*+{0}="0",
(20,0)*+{\Hom_{\Gamma}(\mathbb{G}M,\mathbb{G}M)}="1",
(52,0)*+{\Hom_{\Gamma}(\mathbb{G}M,\mathbb{G}N_{1})}="2",
(84,0)*+{\Hom_{\Gamma}(\mathbb{G}M,\mathbb{G}K_{1}^{*})}="3",
(116,0)*+{\Ext_{\Gamma}^{1}(\mathbb{G}M,\mathbb{G}M)}="4",
(0,-10)*+{0}="0a",
(20,-10)*+{\Hom_{R}(M,M)}="1a",
(52,-10)*+{\Hom_{R}(M,N_{1})}="2a",
(84,-10)*+{\Hom_{R}(M,K_{1}^*)}="3a",
(108,-10)*+{C}="4a",
(120,-10)*+{0}="5a",
\ar"0";"1"
\ar"1";"2"
\ar"2";"3"
\ar"3";"4"
\ar"0a";"1a"
\ar"1a";"2a"^{\cdot b^{*}}
\ar"2a";"3a"^{\cdot d^{*}}
\ar"3a";"4a"
\ar"4a";"5a"
\ar@{=}"1";"1a"
\ar@{=}"2";"2a"
\ar@{=}"3";"3a"
\endxy}
\]
where the vertical maps are isomorphisms by \ref{reflequiv}(4), hence $C\subseteq \Ext_{\Gamma}^{1}(\mathbb{G}M,\mathbb{G}M)$.  We first claim that $C=0$.  Since $\End_\Gamma(\mathbb{G}M)\cong \Lambda$ by reflexive equivalence \ref{reflequiv},  by \ref{reflandCM} $\fl \Ext_{\Gamma}^{1}(\mathbb{G}M,\mathbb{G}M)=0$.  On the other hand $\Hom_{R}(N,-)$ applied to (\ref{K1D}) is exact (by \ref{dualityofapprox}), so $C$ is annihilated by $[N]$ and consequently $C$ is a $\Lambda_{N}$-module. Since  $\Lambda_{N}$ is artinian so too is $C$, thus it has finite length.  Hence $C=0$ and so 
\begin{eqnarray}
0\to\Hom_{R}(M,M)\to \Hom_{R}(M,N_{1})\to \Hom_{R}(M,K_{1}^{*})\to 0\label{LAB2}
\end{eqnarray}
is exact. Thus the tilting module $V\oplus Q$ in the proof of \ref{welldefined2}(1) is simply $\Hom_{R}(M,K_{1}^{*})\oplus \Hom_{R}(M,N)=T_1$. The remaining statements are contained in \ref{welldefined2}(1).\\
(2) Similarly to the above one can show that applying $\Hom_{R}(M^*,-)$ to \eqref{K0D}  gives an exact sequence
\begin{eqnarray}
0\to\Hom_{R}(M^{*},M^{*})\to \Hom_{R}(M^{*},N_{0}^{*})\to \Hom_{R}(M^{*},K_{0}^{*})\to 0\label{LAB1}
\end{eqnarray}
and so the tilting module $V\oplus Q$ in the proof of \ref{welldefined2}(2) is simply $\Hom_{R}(M^{*},K_{0}^{*})\oplus \Hom_{R}(M^{*},N^{*})=\Hom_{R}(M^{*},\MU{N}(M)^{*})$.
\qed

\begin{remark}
Note that the statement in \ref{Aifinite1} is quite subtle.  
There are examples where $\Hom_R(M,\MU{N}(M))$
(respectively, $\Hom_R(M^*,\NU{N}(M)^*)$) is \emph{not} a
tilting $\End_R(M)$-module (respectively,
$\End_R(M)^{\op}$-module).  Note however that these are always tilting modules if $M$ is an MM module, by combining \ref{Homtilting}(2) and \ref{stillmodifying}(4).
\end{remark}

If $\Lambda_N$ is artinian, the module $M$ changes under mutation:

\begin{prop}\label{Aifinite2}
Let $R$ be a normal 3-sCY ring with modifying module $M$.   Suppose $0\neq N\in \add M$, denote $\Lambda=\End_R(M)$ and define $\Lambda_N:=\Lambda/[N]$.  If $\Lambda_N$ is artinian then \\
\t{(1)} If $\add N\neq\add M$ then $\add \MU{N}(M)\neq \add M$.\\
\t{(2)}  If $\add N\neq\add M$ then $\add\NU{N}(M)\neq \add M$.  
\end{prop}
\begin{proof}
(1) Since $\Lambda_N$ is artinian, the sequence (\ref{LAB1})
\[
0\to \Hom_R(M^{*},M^{*})\to \Hom_R(M^{*},N_0^{*})\to \Hom_R(M^{*},K_{0}^{*})\to 0 
\]
is exact.  If this splits then by reflexive equivalence (\ref{reflequiv}(4)) $M^{*}$ is a summand of $N_0^{*}$, contradicting $\add N\neq\add M$.  Thus the above cannot split so $\Hom_R(M^{*},K_{0}^{*})$ cannot be projective, hence certainly $K_{0}^{*}\notin\add M^{*}$ and so $K_{0}\notin\add M$.  This implies $\add \MU{N}(M)\neq \add M$.\\
\t{(2)} Similarly, the exact sequence (\ref{LAB2}) cannot split, so $K_{1}^{*}\notin\add M$.  
\end{proof}

\begin{remark} It is natural to ask under what circumstances the hypothesis $\Lambda_N$ is artinian in \ref{Aifinite1}, \ref{Aifinite2} holds.  In the situation of \ref{1dfibre} the answer seems to be related to the contractibility of the corresponding curves; we will come back to this question in future work.  
\end{remark}

One case where $\Lambda_N$ is always artinian is when $R$ has isolated singularities:

\begin{lemma}\label{depthoffactor}
Suppose $R$ is a normal 3-sCY ring.  Let $M$ be a modifying module with $0\neq N\in \add M$,  denote $\Lambda=\End_R(M)$ and set $\Lambda_N=\Lambda/[N]$. Then \\
\t{(1)} $\dim_{R} \Lambda_N\leq 1$.\\
\t{(2)} $\depth_{R_\m}{(\Lambda_N)}_{\m}\leq 1$ for all $\m\in\Max R$.\\
\t{(3)} If $R$ is an isolated singularity then $\Lambda_N$ is artinian.\\
\t{(4)} If $\pd_{\Lambda}\Lambda_{N}<\infty$ then $\id_{\Lambda_{N}}\Lambda_{N}\leq 1$.
\end{lemma}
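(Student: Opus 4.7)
The plan is to split into two regimes. Parts (1)--(3) are pure localisation arguments that reduce to the fact that normal Noetherian $R$ has $R_\p$ regular at small-height primes; at such primes $M_\p$ becomes free by reflexivity and Morita equivalence handles the rest. Part (4) is the serious homological step and I will argue via Auslander--Buchsbaum combined with a dualising-complex argument using the locally symmetric structure of $\Lambda$.

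For (1), I will prove the stronger local statement that $(\Lambda_N)_\p = 0$ for every $\p\in\Spec R$ of height $\leq 1$. At such a $\p$, normality gives $R_\p$ regular of dimension $\leq 1$, and combined with \ref{CMinheight2} this forces $M_\p$ to be free over $R_\p$; hence $\Lambda_\p=\End_{R_\p}(M_\p)$ is Morita equivalent to $R_\p$ via $M_\p$. Since $N$ is reflexive (hence $R$-torsion-free, so $N_\p\neq 0$) and a summand of some $M^{\oplus k}$, the module $N_\p$ is a non-zero free $R_\p$-module, so under Morita it corresponds to a progenerator of $\mod\Lambda_\p$. In particular $M_\p\in\add_{\Lambda_\p}N_\p$, whence $[N]_\p=\Lambda_\p$. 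This forces $\Supp_R\Lambda_N$ to consist of primes of height $\geq 2$, so $\dim_R\Lambda_N\leq 1$. Part (2) is then immediate from (1) and $\depth\leq\dim$. For (3), the isolated-singularity hypothesis lets the same argument run at every non-maximal prime (since then $R_\p$ is regular for $\hgt\p\leq 2$, and reflexive modules over regular local rings of dimension $\leq 2$ are free), killing all non-maximal localisations and forcing $\dim_R\Lambda_N=0$.

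For (4), localise at $\m$ and set $\Gamma:=\Lambda_N$. By \ref{3CY-nonlocal} $\Lambda$ is $3$-CY$^-$, and by \ref{2.14a} it is therefore a locally symmetric Gorenstein $R$-order, so Auslander--Buchsbaum (\ref{ABlocal}) gives $\pd_\Lambda\Gamma=3-\depth_R\Gamma$, which by (2) lies in $\{2,3\}$. The symmetric structure yields $\Hom_\Lambda(-,\Lambda)\cong\Hom_R(-,R)$ and hence $\Ext^i_\Lambda(\Gamma,\Lambda)\cong\Ext^i_R(\Gamma,R)$; since $\dim_R\Gamma\leq 1$ by (1), the grade of $\Gamma$ over the Gorenstein ring $R$ is $\geq 2$, so these groups vanish for $i<2$, while they also vanish for $i>3$. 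Hence $\mathbf{R}\Hom_\Lambda(\Gamma,\Lambda)$ has cohomology concentrated in degrees $\{2,3\}$, i.e.\ amplitude $\leq 1$. To convert this into $\id_\Gamma\Gamma\leq 1$, I would use that $\Gamma$ is perfect over $\Lambda$ (since $\pd_\Lambda\Gamma<\infty$) and exploit the adjunction
\[
\mathbf{R}\Hom_\Lambda(X,\Gamma)\cong\mathbf{R}\Hom_\Gamma\bigl(X,\mathbf{R}\Hom_\Lambda(\Gamma,\Gamma)\bigr)
\]
for $X\in\mod\Gamma$, together with the perfect-duality identification $\mathbf{R}\Hom_\Lambda(\Gamma,\Gamma)\cong\mathbf{R}\Hom_\Lambda(\Gamma,\Lambda)\otimes^L_\Lambda\Gamma$; a Grothendieck spectral sequence propagating the amplitude bound on $\mathbf{R}\Hom_\Lambda(\Gamma,\Lambda)$, combined with $\Ext^{>3}_\Lambda(X,\Gamma)=0$ (which uses that $\Lambda$ is Iwanaga--Gorenstein together with $\pd_\Lambda\Gamma<\infty$), should then force $\Ext^i_\Gamma(X,\Gamma)=0$ for $i\geq 2$.

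The main obstacle is precisely this last step of (4): promoting the cohomological concentration of the dualising object $\mathbf{R}\Hom_\Lambda(\Gamma,\Lambda)$ into a self-injective-dimension bound for $\Gamma$ requires verifying carefully that the right $\Lambda$-module structures on the cohomology groups $\Ext^i_\Lambda(\Gamma,\Lambda)$ descend, in the range relevant to the spectral sequence, to right $\Gamma$-module structures, and that the differentials of the spectral sequence do not produce unwanted contributions to $\Ext^{\geq 2}_\Gamma(X,\Gamma)$.
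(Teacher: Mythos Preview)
Your arguments for (1)--(3) are correct and essentially coincide with the paper's: the paper also localises at primes outside the singular locus, observes that the reflexive $M_\p$ becomes free over the regular local ring $R_\p$, and concludes that the trace ideal $[N]_\p$ is all of $\Lambda_\p$ (the paper phrases this via a Morita equivalence and the fact that a nonzero idempotent generates a matrix ring over a local ring, but the content is the same).  Your explicit extension of the argument to height-$2$ primes in (3) is exactly what the paper means by its terse ``by (1)''.

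For (4) the paper does not argue directly at all: it simply records that $\Lambda$ is $3$-CY$^-$ by \ref{3CY-nonlocal} and then cites \cite[5.5(3)]{IR}, noting that the proof there for $d$-CY algebras carries over verbatim to the $d$-CY$^-$ setting once $\pd_\Lambda\Lambda_N<\infty$ is assumed.  Your direct attempt correctly establishes that $\mathbf{R}\Hom_\Lambda(\Gamma,\Lambda)$ is concentrated in degrees $\{2,3\}$, but the step you flag as the obstacle is a genuine gap and your proposed spectral-sequence route does not close it.  The adjunction you write computes $\mathbf{R}\Hom_\Gamma(X,\mathbf{R}\Hom_\Lambda(\Gamma,\Gamma))$, not $\mathbf{R}\Hom_\Gamma(X,\Gamma)$; an amplitude bound on the relative dualising object $\mathbf{R}\Hom_\Lambda(\Gamma,\Lambda)$ tells you only that $\Gamma$ admits \emph{some} dualising complex of small injective amplitude, which is strictly weaker than $\id_\Gamma\Gamma\le 1$.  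Indeed, even in the best case where $\Ext^{>0}_\Lambda(\Gamma,\Gamma)=0$ the change-of-rings spectral sequence degenerates to $\Ext^p_\Gamma(X,\Gamma)\cong\Ext^p_\Lambda(X,\Gamma)$, and your bound $\id_\Lambda\Gamma\le 3$ then yields only $\id_\Gamma\Gamma\le 3$.  The missing ingredient, supplied in \cite{IR}, is a direct use of the $d$-CY$^-$ duality on $\Lambda$ to identify the nonvanishing $\Ext^i_\Lambda(\Gamma,\Lambda)$ with (twists of) $\Gamma$ itself as one-sided $\Gamma$-modules --- this is what forces $\Gamma$ to be its own dualising complex up to shift, rather than merely admitting one.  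The paper's later proof of \ref{Ai_infinite2} makes this visible: there one shows $\Ext^2_\Lambda(\Lambda_i,\Lambda)\cong\Lambda_i$ as a $\Lambda_i^{\rm op}$-module via \cite[1.1(3)]{GN}, which is exactly the identification your argument lacks.
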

\begin{proof}
(1)  We have $(\End_R(M)/[N])_\p=\End_{R_\p}(M_\p)/[N_\p]$ for all $\p\in \Spec R$. Since $R$ is normal, $\add M_\p=\add R_\p=\add N_\p$ for all $\p\in \Spec R$ with $\hgt \p=1$.
Thus we have $(\End_R(M)/[N])_\p=\End_{R_\p}(M_\p)/[N_\p]=0$ for all these primes, and so the assertion follows.\\
(2) is immediate from (1).\\
(3) If $R$ is isolated then by the argument in the proof of (1) we have $\dim_R\Lambda_N=0$ and so $\Lambda_N$ is supported only at a finite number of maximal ideals.  Hence $\Lambda_N$ has finite length and so $\Lambda_N$ is artinian. \\
(4) Notice that $\Lambda$ is 3-sCY by \ref{3CY-nonlocal}.
Hence the assertion follows from \cite[5.5(3)]{IR} for 3-CY algebras,
which is also valid for 3-sCY algebras under the assumption that
$\pd_\Lambda\Lambda_N<\infty$.
\end{proof}

We now show that mutation does not change the factor algebra $\Lambda_N$.  Suppose $M$ is modifying and $N$ is such that $0\neq N\in \add M$, and consider an exchange sequence (\ref{K0})
\[
0\to K_0\stackrel{c}\to N_0\stackrel{a}\to M.
\]
We know by definition that $a$ is a right $(\add N)$-approximation, and by (\ref{K0b}) that $c$ is a left $(\add N)$-approximation. 

Since $\Lambda_N$ is by definition $\End_R(M)$ factored out by the ideal of all morphisms $M\to M$ which factor through a module  in $\add N$, in light of the approximation property of the map $a$, this ideal is the just the ideal $I_a$ of all morphisms $M\to M$ which factor as $xa$ where $x$ is some morphism $M\to N_{0}$.  Thus $\Lambda_N=\End_R(M)/I_a$.  

On the other hand taking the choice $\MU{N}(M)=K_{0}\oplus N$ coming from the above exchange sequence, $\Lambda'_{N}$ is by definition $\End_{R}(\MU{N}(M))=\End_R(K_{0}\oplus N)$ factored out by the ideal of all morphisms $K_{0}\oplus N\to K_{0}\oplus N$ which factor through a module in $\add N$.  Clearly this is just $\End_R(K_{0})$ factored out by those morphisms which factor through $\add N$. In light of the approximation property of the map $c$,  $\Lambda^\prime_N=\End_R(K_0)/I_c$ where $I_c$ is the ideal of all morphisms $K_0\to K_0$ which factor as $cy$ where $y$ is some morphism $K_0\to N_0$.

\begin{thm}
Let $R$ be a normal $d$-sCY ring, and let $M$ be a modifying module with $0\neq N\in \add M$. With the notation and choice of exchange sequence as above, we have $\Lambda_N\cong \Lambda^\prime_N$ as $R$-algebras. In particular\\
\t{(1)} $\Lambda'_{N}$ is independent of the choice of exchange sequence, up to isomorphism.\\
\t{(2)} $\Lambda_N$ is artinian if and only if $\Lambda^\prime_N$ is artinian. 
\end{thm}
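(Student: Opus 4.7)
The plan is to construct an explicit ring isomorphism $\bar\varphi\colon\Lambda_N\to\Lambda_N'$ directly from the exchange sequence, together with an inverse $\bar\psi$ obtained by applying the analogous construction to the dualized exchange sequence \eqref{K0D}. Once this is done both corollaries are immediate: (1) holds because $\Lambda_N'$ is isomorphic to $\Lambda_N$, which is manifestly independent of the choice of exchange sequence, and (2) holds because being artinian is an isomorphism invariant.

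For $\bar\varphi$, given $f\in\End_R(M)$, the morphism $af\colon N_0\to M$ lies in $\Hom_R(N_0,M)$, and since $N_0\in\add N$ and $a$ is a right $\add N$-approximation, the Hom-exact sequence \eqref{K0a} (with $N$ replaced by the summand $N_0\in\add N$) produces $g\in\End_R(N_0)$ with $ga=af$. Because $ca=0$ we have $cga=caf=0$, so $cg\colon K_0\to N_0$ lands in $\ker a=\Im c$; injectivity of $c$ then yields a unique $h\in\End_R(K_0)$ with $hc=cg$, and I set $\bar\varphi(f):=h+I_c$. Well-definedness modulo $I_c$ follows from the fact that two lifts $g,g'$ differ by some $kc$ with $k\colon N_0\to K_0$ (this is exactness of
\[
0\to\Hom_R(N_0,K_0)\xrightarrow{\cdot c}\End_R(N_0)\xrightarrow{\cdot a}\Hom_R(N_0,M)\to 0),
\]
whence $(h-h')c=c(g-g')=ckc$, and injectivity of $c$ forces $h-h'=ck\in I_c$. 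Multiplicativity is immediate by taking $g_1g_2$ and $h_1h_2$ as lifts for $f_1f_2$, and $\bar\varphi$ kills $I_a$ because any $xa\in I_a$ admits $g=ax$ as a lift, giving $cg=cax=0$ and hence $h=0$.

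For $\bar\psi$, I would apply the identical construction to the dual exchange sequence $0\to M^*\xrightarrow{a^*}N_0^*\xrightarrow{c^*}K_0^*$ from \eqref{K0D}, now with $c^*$ playing the role of the right $\add N^*$-approximation (surjectivity is \eqref{K0Da}) and with injectivity of $a^*$ replacing that of $c$. Given $h\in\End_R(K_0)$, dualize to $h^*\in\End_R(K_0^*)$, lift $c^*h^*$ through $c^*$ to obtain $g^\sharp\in\End_R(N_0^*)$ with $g^\sharp c^*=c^*h^*$, observe that $a^*g^\sharp c^*=a^*c^*h^*=0$ so that $a^*g^\sharp$ factors through $\ker c^*=\Im a^*$, and extract a unique $f^\sharp\in\End_R(M^*)$ with $f^\sharp a^*=a^*g^\sharp$; then set $\bar\psi(h):=(f^\sharp)^*+I_a$, using $M^{**}\cong M$.

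It remains to check that $\bar\varphi$ and $\bar\psi$ are mutually inverse. Starting from $f$ with chosen lift $g$, the dualized element $g^*\in\End_R(N_0^*)$ is a valid choice of $g^\sharp$ for $h^*=\bar\varphi(f)^*$, because $g^*c^*=(cg)^*=(hc)^*=c^*h^*$; then $a^*g^*=(ga)^*=(af)^*=f^*a^*$ forces $f^\sharp=f^*$, and dualizing returns $f$. The reverse composition follows by symmetry. The main obstacle in this plan is precisely this last bookkeeping: one must confirm that the two constructions, phrased in terms of different Hom-exact sequences, dualize to one another, which relies on the compatibility among \eqref{K0a}, \eqref{K0b}, \eqref{K0Da}, \eqref{K0Db} furnished by the 3-CY$^{-}$ duality of Proposition~\ref{dualityofapprox}.
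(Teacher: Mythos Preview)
Your proposal is correct and follows essentially the same approach as the paper. Your map $\bar\varphi$ is exactly the paper's map $\alpha$, constructed and shown well-defined in the same way; the only organizational difference is that the paper proves bijectivity by showing surjectivity (via the same dualization you use to build $\bar\psi$, producing a preimage for each $t\in\End_R(K_0)$) and then injectivity by a direct diagram chase, whereas you package both directions as an explicit inverse $\bar\psi$ and verify $\bar\psi\bar\varphi=\mathrm{id}$ and $\bar\varphi\bar\psi=\mathrm{id}$. Your route is a touch more symmetric and avoids the separate injectivity chase; just be sure when you invoke ``by symmetry'' for $\bar\varphi\bar\psi=\mathrm{id}$ that you record the double-dual identifications $a^{**}=a$, $c^{**}=c$ on reflexive modules, and that well-definedness of $\bar\psi$ (independence of $g^\sharp$ and annihilation of $I_c$) is indeed covered by the exactness in \eqref{K0Da} and \eqref{K0Db} exactly as for $\bar\varphi$.
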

\begin{proof}
We construct a map $\alpha:\Lambda_N=\End_R(M)/I_a\to\End_R(K_0)/I_c=\Lambda^\prime_N$ as follows: given $f\in\End_R(M)$ we have
\[
{\SelectTips{cm}{10}
\xy
(0,0)*+{0}="0",
(10,0)*+{K_{0}}="1",
(25,0)*+{N_{0}}="2",
(40,0)*+{M}="3",
(0,-12)*+{0}="0a",
(10,-12)*+{K_{0}}="1a",
(25,-12)*+{N_{0}}="2a",
(40,-12)*+{M}="3a",
\ar"0";"1"
\ar"1";"2"^c
\ar"2";"3"^a
\ar"0a";"1a"
\ar"1a";"2a"^c
\ar"2a";"3a"^a
\ar@{.>}"1";"1a"^{\exists\,h_{f}}
\ar@{.>}"2";"2a"^{\exists\,g_f}
\ar"3";"3a"^{f}
\endxy}
\]
where the $g_f$ exists (non-uniquely) since $a$ is an approximation.  Define $\alpha$ by $\alpha(f+I_a)=h_{f}+I_c$. We will show that $\alpha:\Lambda_N\to\Lambda^\prime_N$ is a well-defined map, which is independent of the choice of $g_f$.
Take $f'\in\Lambda$ satisfying $f-f'\in I_a$. We have a commutative diagram
\[
{\SelectTips{cm}{10}
\xy
(0,0)*+{0}="0",
(10,0)*+{K_{0}}="1",
(25,0)*+{N_{0}}="2",
(40,0)*+{M}="3",
(0,-12)*+{0}="0a",
(10,-12)*+{K_{0}}="1a",
(25,-12)*+{N_{0}}="2a",
(40,-12)*+{M}="3a",
\ar"0";"1"
\ar"1";"2"^c
\ar"2";"3"^a
\ar"0a";"1a"
\ar"1a";"2a"^c
\ar"2a";"3a"^a
\ar@{.>}"1";"1a"^{\exists\,h_{f'}}
\ar@{.>}"2";"2a"^{\exists g_{f'}}
\ar"3";"3a"^{f'}
\endxy}
\]
There exists $x:M\to N_{0}$ such that $xa=f-f'$.  Thus $(g_f-g_{f'}-ax)a=0$ so there exists $y:N_{0}\to K_{0}$ such that $yc=g_f-g_{f'}-ax$. This implies $cyc=c(g_f-g_{f'}-ax)=(h_f-h_{f'})c$, so since $c$ is a monomorphism we have $cy=h_f-h_{f'}$. Thus $h_f+I_c=h_{f'}+I_c$ holds, and we have the assertion.

It is easy to check that $\alpha$ is an $R$-algebra homomorphism since $\alpha$ is independent of the choice of $g_f$.

We now show that $\alpha$ is bijective by constructing the inverse map $\beta:\Lambda^\prime_N\to\Lambda_N$.  Let $t:K_0\to K_0$ be any morphism then on dualizing we have
\[
{\SelectTips{cm}{10}
\xy
(0,0)*+{0}="0",
(10,0)*+{M^{*}}="1",
(25,0)*+{N_{0}^{*}}="2",
(40,0)*+{K_{0}^{*}}="3",
(0,-12)*+{0}="0a",
(10,-12)*+{M^{*}}="1a",
(25,-12)*+{N_{0}^{*}}="2a",
(40,-12)*+{K_{0}^{*}}="3a",
\ar"0";"1"
\ar"1";"2"^{a^{*}}
\ar"2";"3"^{c^{*}}
\ar"0a";"1a"
\ar"1a";"2a"^{a^{*}}
\ar"2a";"3a"^{c^{*}}
\ar"3a";"3"^{t^{*}}
\ar@{.>}"2a";"2"^{\exists\, s}
\ar@{.>}"1a";"1"^{\exists\, r}
\endxy}
\]
where the rows are exact by (\ref{K0D}), $s$ exists (non-uniquely) by (\ref{K0Da}) and $r$ exists since $a^{*}$ is the kernel of $c^{*}$. Let $\beta(t+I_c):=r^*+I_a$. By the same argument as above, we have that $\beta:\Lambda^\prime_N\to\Lambda_N$ is a well-defined map.

Dualizing back gives a commutative diagram
\[
{\SelectTips{cm}{10}
\xy
(0,0)*+{0}="0",
(10,0)*+{K_{0}}="1",
(25,0)*+{N_{0}}="2",
(40,0)*+{M}="3",
(0,-12)*+{0}="0a",
(10,-12)*+{K_{0}}="1a",
(25,-12)*+{N_{0}}="2a",
(40,-12)*+{M}="3a",
\ar"0";"1"
\ar"1";"2"^c
\ar"2";"3"^a
\ar"0a";"1a"
\ar"1a";"2a"^c
\ar"2a";"3a"^a
\ar"1";"1a"^{t}
\ar@{.>}"2";"2a"^{s^{*}}
\ar@{.>}"3";"3a"^{r^{*}}
\endxy}
\]
which shows that $\beta$ is the inverse of $\alpha$.
\end{proof}

\subsection{Complete Local Case}\label{CLcase}

In this subsection we assume that $R$ is a \emph{complete local} normal Gorenstein $d$-dimensional ring, then since we have Krull--Schmidt decompositions we can say more than in the previous section.  Note that with these assumptions $R$ is automatically $d$-sCY by \ref{3.2IR}.  For a modifying module $M$ we write 
\[
M=M_1\oplus\hdots\oplus M_n=\bigoplus_{i\in I}M_{i} 
\]
as its Krull--Schmidt decomposition into indecomposable submodules, where $I=\{ 1,\hdots, n\}$. Throughout we assume that $M$ is \emph{basic}, i.e.\ the $M_i$'s are mutually non-isomorphic. With the new assumption on $R$ we may take minimal approximations and so the setup in the previous section can be simplified: for $\emptyset\neq J\subseteq I$ set $M_{J}:=\bigoplus_{j\in J}M_{j}$ and $\frac{M}{M_{J}}:=\bigoplus_{i\in I\backslash J}M_{i}$. Then 
\begin{itemize}
\item[(a)] Denote $L_0\stackrel{a}\to M_J$ to be a right $(\add\frac{M}{M_J})$-approximation of $M_J$ which is \emph{right minimal}.  If $\frac{M}{M_J}$ contains $R$ as a summand then necessarily $a$ is surjective.  
\item[(b)] Similarly denote $L_1^{*}\stackrel{b}\to M_J^{*}$ to be a right $(\add\frac{M^{*}}{M_J^{*}})$-approximation of $M_J^{*}$  which is \emph{right minimal}.  Again if $\frac{M}{M_J}$ contains $R$ as a summand then $b$ is surjective.
\end{itemize}
Recall that a morphism $a:X\to Y$ is called \emph{right minimal}
if any $f\in \End_R(X)$ satisfying $a=fa$ is an automorphism.
In what follows we denote the kernels of the above right minimal approximations by
\[
\begin{array}{ccc}
0\to C_0\stackrel{c}\to L_0\stackrel{a}\to M_J &\mbox{and}&
0\to C_1\stackrel{d}\to L_1^{*}\stackrel{b}\to M_{J}^{*}.
\end{array}
\]
This recovers the mutations from the previous subsection:
\begin{lemma}
With notation as above, $\MU{\frac{M}{M_{J}}}(M)=\frac{M}{M_{J}}\oplus C_{0}$ and $\NU{\frac{M}{M_{J}}}(M)=\frac{M}{M_{J}}\oplus C_{1}^{*}$.
\end{lemma}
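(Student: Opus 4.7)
The plan is to reduce the mutation formulas from the general setup of \S6.1 to the minimal-approximation setup introduced just above the lemma, by exhibiting a canonical right approximation of $M$ built out of the approximation of $M_J$.

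First I would treat the right mutation. In the definition of $\mu_{N}(M)$ the input is any right $\add N$-approximation $N_{0} \xrightarrow{\,\alpha\,} M$ of $M$ itself; here $N = \frac{M}{M_{J}}$. Using the Krull--Schmidt decomposition $M = M_{J} \oplus \frac{M}{M_{J}}$, I would form the morphism
\[
a \oplus \mathrm{id}_{\frac{M}{M_{J}}}: L_{0} \oplus \tfrac{M}{M_{J}} \longrightarrow M_{J} \oplus \tfrac{M}{M_{J}} = M.
\]
This is a right $\add(\frac{M}{M_{J}})$-approximation of $M$, because a direct sum of right $\add(\frac{M}{M_{J}})$-approximations of each summand is an approximation of the direct sum, and $\mathrm{id}_{\frac{M}{M_{J}}}$ is tautologically such an approximation. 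Its kernel is evidently $C_{0} \oplus 0 \cong C_{0}$, so the exchange sequence (\ref{K0}) takes the shape
\[
0 \to C_{0} \xrightarrow{(c,\,0)} L_{0} \oplus \tfrac{M}{M_{J}} \xrightarrow{a \oplus \mathrm{id}} M.
\]
Applying the definition $\mu_{N}(M) = N \oplus K_{0}$ then gives $\mu_{\frac{M}{M_{J}}}(M) = \frac{M}{M_{J}} \oplus C_{0}$ on the nose for this choice of approximation; by Lemma~\ref{uptoadd} any other choice yields the same answer up to additive closure, but because we are working complete locally with the minimal approximation $a$, the representative $C_{0}$ is canonical.

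For the left mutation the argument is identical after dualising: the input is a right $\add N^{*}$-approximation of $M^{*}$, and
\[
b \oplus \mathrm{id}_{\frac{M^{*}}{M_{J}^{*}}}: L_{1}^{*} \oplus \tfrac{M^{*}}{M_{J}^{*}} \longrightarrow M_{J}^{*} \oplus \tfrac{M^{*}}{M_{J}^{*}} = M^{*}
\]
is a right $\add(\frac{M^{*}}{M_{J}^{*}})$-approximation of $M^{*}$ with kernel $C_{1}$. Thus the exchange sequence (\ref{K1}) becomes $0 \to C_{1} \to L_{1}^{*} \oplus \frac{M^{*}}{M_{J}^{*}} \to M^{*}$, and the definition $\nu_{N}(M) = N \oplus K_{1}^{*}$ delivers $\nu_{\frac{M}{M_{J}}}(M) = \frac{M}{M_{J}} \oplus C_{1}^{*}$.

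There is no real obstacle here; the whole content is the observation that building a right $\add(\frac{M}{M_{J}})$-approximation of $M$ only requires a nontrivial approximation on the $M_{J}$-part, which is exactly what the datum $a : L_{0} \to M_{J}$ provides (and dually for $b$). The only mild point to mention is that the complete local hypothesis guarantees Krull--Schmidt and the existence of minimal approximations, which is what lets us promote the general ``equality up to $\add$'' from Lemma~\ref{uptoadd} to a genuine equality of modules with the canonical representatives $C_{0}$, $C_{1}^{*}$.
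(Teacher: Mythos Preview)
Your proof is correct and follows exactly the same route as the paper: both construct the right $\add(\frac{M}{M_J})$-approximation $a\oplus\mathrm{id}\colon L_0\oplus\frac{M}{M_J}\to M$, identify its kernel as $C_0$, and then invoke Lemma~\ref{uptoadd}; the dual argument for $\nu$ is likewise identical.
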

\begin{proof}
There is an exact sequence 
\[
0\to C_{0}\stackrel{{}_{ (c\, 0)}}{\to} L_{0}\oplus\tfrac{M}{M_{J}}\stackrel{a\, 0\choose 0\, 1}{\to} M_{J}\oplus\tfrac{M}{M_{J}}=M\to 0
\]
with a right $(\add\frac{M}{M_J})$-approximation
$\left(\begin{smallmatrix}a&0\\ 0&1\end{smallmatrix}\right)$.
Thus the assertion follows.
\end{proof}

Since we have minimal approximations from now on we \emph{define} our mutations in terms of them. We thus define $\MU{J}(M):=C_{0}\oplus\frac{M}{M_{J}}$ and $\NU{J}(M):=C_{1}^{*}\oplus\frac{M}{M_{J}}$.   When $J=\{ i \}$ we often write $\NU{i}$ and $\MU{i}$ instead of $\NU{\{ i\}}$ and $\MU{\{ i\}}$ respectively.  Note that using this new definition of mutation involving minimal approximations,  $\MU{J}$ and $\NU{J}$ are now inverse operations up to isomorphism, not just additive closure.  This strengthens \ref{inverseoperations}.

We now investigate, in dimension three, the mutation of an MM module at an indecomposable summand $M_i$. Let $e_i$ denote the idempotent in $\Lambda:=\End_R(M)$ corresponding to the summand $M_i$, then the theory depends on whether or not $\Lambda_i:=\Lambda/\Lambda(1-e_i)\Lambda$ is artinian.

\begin{thm}\label{Aiartinian}
Suppose $R$ is complete local normal $3$-sCY and let  $M$ be an MM module with indecomposable summand $M_i$.  Denote $\Lambda=\End_R(M)$, let $e_i$ be the idempotent corresponding to $M_i$ and denote $\Lambda_i=\Lambda/\Lambda(1-e_i)\Lambda$.  If $\Lambda_i$ is artinian, then $\MU{i}(M)=\NU{i}(M)$ and this is not equal to $M$.
\end{thm}
\begin{proof}
We know that $M$, $\MU{i}(M)$ and $\NU{i}(M)$ are all MM modules by \ref{stillmodifying}, thus by \ref{gRgMRrigid}(1) it follows that $\Hom_R(M,\MU{i}(M))$ and $\Hom_R(M,\NU{i}(M))$ are both tilting $\End_R(M)$-modules.  But since $\MU{i}(M)\neq M$ and $\NU{i}(M)\neq M$ by \ref{Aifinite2},  neither of these tilting modules equal $\Hom_R(M,M)$.  Further by construction, as $\End_R(M)$-modules  $\Hom_R(M,\MU{i}(M))$ and $\Hom_R(M,\NU{i}(M))$ share all summands except possibly one, thus by a Riedtmann--Schofield type theorem  \cite[4.2]{IR} \cite[1.3]{RS}, they must coincide, i.e.\ $\Hom_R(M,\MU{i}(M))\cong\Hom_R(M,\NU{i}(M))$.  By reflexive equivalence \ref{reflequiv}(4) we deduce that $\MU{i}(M)\cong\NU{i}(M)$.
\end{proof}

The case when $\Lambda_i$ is not artinian is very different:

\begin{thm}\label{Ai_infinite2}
Suppose $R$ is complete local normal $3$-sCY and let  $M$ be a modifying module with indecomposable summand $M_i$.  Denote $\Lambda=\End_R(M)$, let $e_i$ be the idempotent corresponding to $M_i$ and denote $\Lambda_i=\Lambda/\Lambda(1-e_i)\Lambda$.  If $\Lambda_i$ is not artinian, then\\
\t{(1)} If $\pd_\Lambda\Lambda_i<\infty$, then $\MU{i}(M)=\NU{i}(M)=M$.\\
\t{(2)} If $M$ is an MM module, then always $\MU{i}(M)=\NU{i}(M)=M$.
\end{thm}
\begin{proof}
(1) It is always true that $\depth_R \Lambda_i\leq\dim_R\Lambda_i\leq \id_{\Lambda_i}\Lambda_i$ by \cite[3.5]{GN}  (see \cite[2.1]{IR}).  Since $\pd_\Lambda\Lambda_i<\infty$, by \ref{depthoffactor}(4) we know that $\id_{\Lambda_i}\Lambda_i\le 1$. Since $\Lambda_i$ is local and $\id_{\Lambda_i}\Lambda_i\le 1$, $\depth_R \Lambda_i=\id_{\Lambda_i}\Lambda_i$ by Ramras \cite[2.15]{Ram}.  If $\dim_R \Lambda_i=0$ then $\Lambda_i$ has finite length, contradicting the assumption that $\Lambda_i$ is not artinian.  Thus $\depth_R \Lambda_i= \dim_R \Lambda_i=\id_{\Lambda_i}\Lambda_i=1$.  In particular $\Lambda_i$ is a CM $R$-module of dimension 1.

Now $\Lambda$ is a Gorenstein $R$-order by \ref{2.14a} and \ref{3CY-nonlocal} so by Auslander--Buchsbaum \ref{ABlocal}, since $\pd_\Lambda\Lambda_i<\infty$ necessarily $\pd_\Lambda\Lambda_i=3-\depth_R\Lambda_{i}=2$.  Thus we have a minimal projective resolution
\begin{eqnarray}
0\to P_2\to P_1\stackrel{f}{\to}\Lambda e_{i}\to \Lambda_i\to 0.\label{minprojres}
\end{eqnarray}
where $f$ is a minimal right $(\add\Lambda(1-e_i))$-approximation
since it is a projective cover of $\Lambda(1-e_i)\Lambda e_{i}$.
By \cite[3.4(5)]{IR} we have
\[
\Ext^2_\Lambda(\Lambda_i,\Lambda)\cong \Ext^2_R(\Lambda_i,R)
\]
and this is a projective $\Lambda_i^{\rm op}$-module by \cite[1.1(3)]{GN}.  It is a free $\Lambda_i^{\rm op}$-module since $\Lambda_i$ is a local ring. Since $\Lambda_i$ is a CM $R$-module of dimension 1, we have $\Ext^2_R(\Ext^2_R(\Lambda_i,R),R)\cong\Lambda_i$ as $\Lambda_i$-modules. Thus the rank has to be one and  we have $\Ext^2_R(\Lambda_i,R)\cong\Lambda_i$ as $\Lambda_i^{\rm op}$-modules.  Applying $\Hom_\Lambda(-,\Lambda)$ to (\ref{minprojres}) gives an exact sequence
\[
\Hom_\Lambda(P_1,\Lambda) \to \Hom_\Lambda(P_2,\Lambda) \to \Lambda_i \to 0
\]
which gives a minimal projective presentation of the $\Lambda^{\rm op}$-module
$\Lambda_i$. Thus we have $\Hom_\Lambda(P_2,\Lambda)\cong e_i\Lambda$ and $P_2\cong \Lambda e_i$.

Under the equivalence $\Hom_R(M,-):\add M\to \proj\Lambda$, the sequence (\ref{minprojres}) corresponds to a complex
\[
0\to M_i\stackrel{h}{\to} L_0\stackrel{g}{\to} M_i
\]
with $g$ a minimal right $(\add \frac{M}{M_i})$-approximation.
Since the induced morphism $M_i\to \Ker g$ is sent to an isomorphism
under the reflexive equivalence $\Hom_R(M,-):\refl R\to \refl\Lambda$ (\ref{reflequiv}(4)), it is an isomorphism and so $h=\ker g$.
Consequently we have $\MU{i}(M)=\frac{M}{M_i}\oplus M_i= M$.
This implies that $\NU{i}(M)=M$ by \ref{inverseoperations}.\\
(2) This follows from (1) since $\pd_\Lambda\Lambda_i<\infty$ by \ref{almost characterize}.
\end{proof}

\begin{remark}
The above theorem needs the assumption that $M_i$ is indecomposable.  If we assume that $|J|\ge 2$ and $\Lambda_J$ is still not artinian, then both examples with $\NU{J}(M)\neq M$ and those with $\NU{J}(M)=M$ exist.  See for example \cite[\S5]{IW6} for more details.
\end{remark}

In dimension three when the base $R$ is complete local, we have the following summary, which completely characterizes mutation at an indecomposable summand.
\begin{summary}\label{main}
Suppose $R$ is complete normal $3$-sCY with MM module $M$.  Denote $\Lambda=\End_R(M)$, let $M_i$ be an indecomposable summand of $M$ and consider $\Lambda_i:=\Lambda/\Lambda(1-e_i)\Lambda$ where  $e_i$ is the idempotent in $\Lambda$ corresponding to $M_i$.  Then\\
\t{(1)}  If $\Lambda_i$ is not artinian then $\MU{i}(M)=M=\NU{i}(M)$.\\
\t{(2)}  If $\Lambda_i$ is artinian then $\MU{i}(M)=\NU{i}(M)$ and this is not equal to $M$.\\
In either case denote $\mu_{i}:=\MU{i}=\NU{i}$ then it is also true that\\
\t{(3)} $\mu_{i}\mu_{i}(M)=M$.\\
\t{(4)} $\mu_{i}(M)$ is a MM module.\\
\t{(5)} $\End_R(M)$ and $\End_R(\mu_{i}(M))$ are derived equivalent, via the tilting $\End_{R}(M)$-module $\Hom_{R}(M,\mu_{i}(M))$.
\end{summary}
\begin{proof}
(1) is \ref{Ai_infinite2} and (2) is \ref{Aiartinian}. The remainder is trivially true in the case when $\Lambda_i$ is not artinian (by \ref{Ai_infinite2}), thus we may assume that $\Lambda_i$ is artinian.  Now $\mu_{i}\mu_{i}(M)=\MU{i}(\NU{i}M)=M$ by  \ref{uptoadd}, proving (3).  (4) is contained in \ref{stillmodifying} and (5) is \ref{Aifinite1}(1).
\end{proof}

\begin{example}
Consider the subgroup $G=\frac{1}{2}(1,1,0)\oplus\frac{1}{2}(0,1,1)$ of $\SL(3,k)$ and let $R=k[[x,y,z]]^G$.  We know by \ref{skewgp} that $M=k[[x,y,z]]$ is a CT $R$-module, and in this example it decomposes into 4 summands $R\oplus M_{1}\oplus M_{2}\oplus M_{3}$ with respect to the characters of $G$.  The quiver of $\End_R(M)$ is the McKay quiver
\[
\begin{tikzpicture} 
\node[name=s,regular polygon, regular polygon sides=4, minimum size=2cm] at (0,0) {}; 
\node (1) at (s.corner 1)  {$\scriptstyle M_{2}$};
\node (2) at (s.corner 2)  {$\scriptstyle M_{1}$};
\node (3) at (s.corner 3)  {$\scriptstyle R$};
\node (4) at (s.corner 4)  {$\scriptstyle M_{3}$};
\draw[->,black] (4)+(110:6.5pt) -- node[gap]{$\scriptstyle x$}  ($(1)+(-110:6.5pt)$);
\draw[<-,black] (4)+(70:6.5pt) -- node[gap]{$\scriptstyle x$}  ($(1)+(-70:6.5pt)$);
\draw[->,black] (3)+(110:6.5pt) -- node[gap]{$\scriptstyle x$}  ($(2)+(-110:6.5pt)$);
\draw[<-,black] (3)+(70:6.5pt) -- node[gap]{$\scriptstyle x$}  ($(2)+(-70:6.5pt)$);
\draw[<-,black] (2)+(-20:6.5pt) -- node[gap]{$\scriptstyle y$}  ($(1)+(-160:6.5pt)$);
\draw[->,black] (2)+(20:6.5pt) -- node[gap]{$\scriptstyle y$}  ($(1)+(-200:6.5pt)$);
\draw[<-,black] (3)+(-20:6.5pt) -- node[gap]{$\scriptstyle y$}  ($(4)+(-160:6.5pt)$);
\draw[->,black]  (3)+(20:6.5pt) -- node[gap]{$\scriptstyle y$}  ($(4)+(-200:6.5pt)$);
\draw[<-,black]  (3)+(30:6.5pt) -- node[inner sep=0.75pt,fill=white,pos=0.54] {} node[inner sep=0.75pt,fill=white,pos=0.46] {}node[inner sep=0.5pt,fill=white,pos=0.75]  {$\scriptstyle z$}  ($(1)+(-120:6.5pt)$);
\draw[]  (3)+(60:6.5pt) -- node[inner sep=0.75pt,fill=white,pos=0.54] {} node[inner sep=0.75pt,fill=white,pos=0.46] {} node[inner sep=0.5pt,fill=white,pos=0.25]  {$\scriptstyle z$} ($(1)+(-150:6.5pt)$);
\draw[<-,black]  (4)+(120:6.5pt) -- node[inner sep=0.5pt,fill=white,pos=0.75]  {$\scriptstyle z$}  ($(2)+(-30:6.5pt)$);
\draw[->,black]  (4)+(150:6.5pt) -- node[inner sep=0.5pt,fill=white,pos=0.25]  {$\scriptstyle z$}  ($(2)+(-60:6.5pt)$);
\end{tikzpicture} 
\]
and so to mutate at $M_2$ it is clear that the relevant approximation is
\[
R\oplus M_1\oplus M_3\stackrel{\left( \begin{smallmatrix} z\\ y\\x \end{smallmatrix} \right)}\to M_2\to 0
\]
Thus the mutation at vertex $M_2$ changes $M=R\oplus M_{1}\oplus M_{2}\oplus M_{3}$ into $R\oplus M_{1}\oplus K_{2}\oplus M_{3}$ where $K_2$ is the kernel of the above map which (by counting ranks) has rank 2.   On the level of quivers of the endomorphism rings, this induces the mutation
\[
\begin{array}{ccc}
\begin{array}{c}
\begin{tikzpicture} 
\node[name=s,regular polygon, regular polygon sides=4, minimum size=2.5cm] at (0,0) {}; 
\node (1) at (s.corner 1)  {\color{red}{$\scriptstyle M_{2}$}};
\node (2) at (s.corner 2)  {$\scriptstyle M_{1}$};
\node (3) at (s.corner 3)  {$\scriptstyle R$};
\node (4) at (s.corner 4)  {$\scriptstyle M_{3}$};
\draw[->,black]  (4)+(105:6.5pt) --  ($(1)+(-105:6.5pt)$);
\draw[<-,black]  (4)+(75:6.5pt) --     ($(1)+(-75:6.5pt)$);
\draw[->,black]  (3)+(105:6.5pt) --   ($(2)+(-105:6.5pt)$);
\draw[<-,black]  (3)+(75:6.5pt) --   ($(2)+(-75:6.5pt)$);
\draw[<-,black]  (2)+(-15:6.5pt) -- ($(1)+(-175:6.5pt)$);
\draw[->,black]  (2)+(15:6.5pt) --   ($(1)+(-195:6.5pt)$);
\draw[<-,black]  (3)+(-15:6.5pt) --  ($(4)+(-175:6.5pt)$);
\draw[->,black]  (3)+(15:6.5pt) --   ($(4)+(-195:6.5pt)$);
\draw[<-,black]  (3)+(30:6.5pt) -- node[inner sep=0.75pt,fill=white,pos=0.53] {} node[inner sep=0.75pt,fill=white,pos=0.47] {}  ($(1)+(-120:6.5pt)$);
\draw[]  (3)+(60:6.5pt) -- node[inner sep=0.75pt,fill=white,pos=0.53] {} node[inner sep=0.75pt,fill=white,pos=0.47] {}  ($(1)+(-150:6.5pt)$);
\draw[<-,black]  (4)+(120:6.5pt) --  ($(2)+(-30:6.5pt)$);
\draw[->,black]  (4)+(150:6.5pt) --  ($(2)+(-60:6.5pt)$);
\end{tikzpicture} \end{array}
&
\begin{array}{c}
\begin{tikzpicture} 
\draw [->,decorate, 
decoration={snake,amplitude=.6mm,segment length=3mm,post length=1mm}] 
(0,0) -- node[above] {$\scriptstyle \mu_{2}$} (1,0); 
\end{tikzpicture}
\end{array}
&
\begin{array}{c}
\begin{tikzpicture}[transform shape, rotate=0]
\node[rotate=60,name=s,regular polygon, regular polygon sides=3, minimum size=3cm] at (0,0) {}; 
\node (R) at (s.corner 2)  {$\scriptstyle R$};
\node (Ra) at ($(s.corner 2)+(-90:1pt)$)  {};
\node (1) at (s.corner 1)  {$\scriptstyle M_{1}$};
\node (1a) at ($(s.corner 1)+(-210:3pt)$) {};
\node (2) at (s.corner 3)  {$\scriptstyle M_{3}$};
\node (2a) at ($(s.corner 3)+(-330:3pt)$) {};
\node (M) at (s.center) {$\color{red}{\scriptstyle K_{2}}$};
\draw[->]  (Ra) edge [in=-55,out=-125,loop,looseness=7] (Ra);
\draw[<-]  (1a) edge [in=-175,out=-245,loop,looseness=9] (1a);
\draw[->]  (2a) edge [in=-295,out=-5,loop,looseness=9] (2a);
\draw[->]  (R) edge [in=-100,out=100,looseness=1] (M);
\draw[->]  (M) edge [in=80,out=-80,looseness=1] (R);
\draw[->]  (1) edge [in=160,out=-40,looseness=1] (M);
\draw[->]  (M) edge [in=-20,out=140,looseness=1] (1);
\draw[->]  (2) edge [in=40,out=-160,looseness=1] (M);
\draw[->]  (M) edge [in=-140,out=20,looseness=1] (2);
\end{tikzpicture}  \end{array}
\end{array}
\]
Due to the relations in the algebra $\End_R(\mu_{2}(M))$ (which we suppress), the mutation at $R$, $M_1$ and $M_3$ in the new quiver are trivial, thus in $\End_R(\mu_{2}(M))$ the only vertex we can non-trivially mutate at is $K_2$, which gives us back our original $M$.  By the symmetry of the situation we obtain the beginning of the mutation graph:
\[
\begin{array}{c}
{\xy
\POS (0,0)*+{{}_{R\oplus M_1\oplus M_2\oplus M_3}}="2", (-30,0)*+{{}_{R\oplus K_1\oplus M_2\oplus M_3}}="1", (30,0)*+{{}_{R\oplus M_1\oplus K_2\oplus M_3}}="3", (0,12)*+{{}_{R\oplus M_1\oplus M_2\oplus K_3}}="0"
\POS"1"\ar@{.}^{\mu_{1}}"2"
\POS"3"\ar@{.}_{\mu_{2}}"2"
\POS"0"\ar@{.}^{\mu_{3}}"2"
\endxy} \end{array}
\]
We remark that mutating at any of the decomposable modules $M_1\oplus M_2$, $M_1\oplus M_3$, $M_2\oplus M_3$ or $M_1\oplus M_2\oplus M_3$ gives a trivial mutation.  Note that the mutation $\MU{M/R}(M)$ at the vertex $R$ is not a CM $R$-module, and so we suppress the details.
\end{example}

\noindent
{\bf Acknowledgements.}  We would like to thank Michel Van den Bergh, Vanya Cheltsov, Constantin Shramov, Ryo Takahashi and Yuji Yoshino for stimulating discussions and valuable suggestions.  We also thank the anonymous referee for carefully reading the paper, and offering many useful insights and suggested improvements.


\begin{thebibliography}{Aus86}


\bibitem[ASS]{ASS} I. Assem, D. Simson, A. Skowro\'nski,
 \emph{Elements of the representation theory of associative
   algebras. Vol.~1. Techniques of representation theory}, London
 Mathematical Society Student Texts, \textbf{65}. Cambridge University Press,  Cambridge, 2006.


\bibitem[Aus78]{Aus78}
M. Auslander,
\emph{Functors and morphisms determined by objects}.  Representation
theory of algebras (Proc.\ Conf., Temple Univ., Philadelphia, Pa.,
1976), pp. 1--244. Lecture Notes in Pure Appl. Math., Vol. 37,
Dekker, New York, 1978.


\bibitem[Aus84]{Auslanderisolated}
\bysame,
\emph{Isolated singularities and existence of almost split sequences}. Representation theory, II (Ottawa, Ont., 1984), 194--242, Lecture Notes in Math., 1178, Springer, Berlin, 1986. 

\bibitem[Aus86]{Auslander_rational}
\bysame,
\emph{Rational singularities and almost split sequences}. Trans. Amer. Math. Soc. \textbf{293} (1986), no.~2, 511--531.

\bibitem[AG60]{AG}
M.~Auslander and O.~Goldman, \emph{Maximal orders}, Trans.\ Amer.\ Math.\ Soc.\ \textbf{97} (1960), 1--24.

\bibitem[AR]{AusReiten}
M.~Auslander and I.~Reiten, \emph{Almost split sequences for
Cohen-Macaulay modules}, Math. Ann. \textbf{277} (1987), 345--349.

\bibitem[BBD]{BBD}
A.~A.~Beilinson, J.~Bernstein and P.~Deligne, \emph{Faisceaux pervers}, Analysis and topology on singular spaces, I (Luminy, 1981), 5--171, Ast\'erisque, \textbf{100}, Soc. Math. France, Paris, 1982. 

\bibitem[B80]{Bongartz}
K.~Bongartz. \emph{Tilted algebras}. Representations of algebras (Puebla, 1980), pp. 26--38, Lecture Notes in Math., \textbf{903}, Springer, Berlin-New York, 1981.

\bibitem[Bou68]{Bou68}
N.~Bourbaki, \emph{Groupe et Algebre de Lie}, Chapter 5, Hermann, Paris, 1968.
 

\bibitem[BH]{BH}
W.~Bruns and J.~Herzog, \emph{Cohen-Macaulay rings. Rev. ed.}, Cambridge Studies in Advanced Mathematics. \textbf{39} xiv+453 pp.

\bibitem[B06]{BMRRT} 
A.~Buan, R.~Marsh, M.~Reineke, I.~Reiten and
G.~Todorov, \emph{Tilting theory and cluster combinatorics}.
Adv.\ Math.\ \textbf{204} (2006), no.~2, 572--618.

\bibitem[Bu86]{Buch}
R.~O.~Buchweitz, \emph{Maximal Cohen--Macaulay modules and Tate--cohomology over Gorenstein rings}, preprint, 1986.



\bibitem[BIRS]{BIRS}
A.~Buan, O.~Iyama, I.~Reiten and  D.~Smith, \emph{Mutation of cluster-tilting objects and potentials}, Amer.\ J.\ Math.\ \textbf{133} (2011), no.\ 4, 835--887.

\bibitem[CB]{BcB}
W.~Crawley-Boevey, \emph{On the exceptional fibres of Kleinian singularities}, Amer. J. Math. \textbf{122} (2000), no. 5, 1027--1037.

\bibitem[CE99]{CE99} H.~Cartan and S.~Eilenberg, \emph{Homological algebra}. Reprint of the 1956 original. Princeton Landmarks in Mathematics. Princeton University Press, 1999.


\bibitem[C02]{Chen}
J-C.~Chen, \emph{Flops and equivalences of derived categories for threefolds with only terminal Gorenstein singularities}, J. Differential Geom. \textbf{61} (2002), no. 2, 227--261.

\bibitem[C55]{Chev} 
C.~Chevalley, \emph{Invariants of finite groups generated by reflections}, Amer.\ J.\ Math.\ \textbf{77} (1955), 778--782.

\bibitem[CR90]{CR90}
C.~W.~Curtis and I.~Reiner, \emph{Methods of representation theory. Vol. I. With applications to finite groups and orders}. Reprint of the 1981 original. John Wiley \& Sons, Inc., New York, 1990. 

\bibitem[Ei95]{Eisenbud}
D.~Eisenbud, \emph{Commutative algebra. With a view toward algebraic geometry}, Graduate Texts in Mathematics, \textbf{150} Springer-Verlag, New York, 1995. xvi+785 pp.

\bibitem[EG85]{EG}
E.~G.~Evans and P.~Griffith, \emph{Syzygies}, London Math.\ Soc.\ Lecture Note Ser., vol.\ 106, Cambridge University Press, Cambridge, 1985.

\bibitem[GLS]{GLS}
C.~Geiss, B.~Leclerc and J.~Schr\"{o}er, \emph{Rigid modules over preprojective algebras}, Invent.\ Math.\ \textbf{165} (2006), no. 3, 589--632.

\bibitem[GN02]{GN}
S.~Goto and K.~Nishida, \emph{Towards a theory of Bass numbers with application to Gorenstein algebras}, Colloq. Math. \textbf{91} (2002), no. 2, 191--253.

\bibitem[Iya07]{IyamaAR}
O. Iyama, \emph{Higher-dimensional Auslander--Reiten theory on maximal orthogonal subcategories}, Adv.\ Math.\ \textbf{210} (2007), no. 1, 22--50.


\bibitem[IR08]{IR}
O. Iyama and I. Reiten, \emph{Fomin-Zelevinsky mutation and tilting modules over Calabi-Yau algebras}, Amer.\ J.\ Math.\ \textbf{130} (2008), no. 4, 1087--1149.

\bibitem[IT10]{IT}
O.~Iyama and R.~Takahashi, \emph{Tilting and cluster tilting for
quotient singularities}, Math.\ Ann.\ \textbf{356} (2013), no.~3, 1065--1105. 


\bibitem[IW08]{IW}
O. Iyama and M. Wemyss, \emph{The classification of special Cohen Macaulay modules}, Math. Z. \textbf{265} (2010), no.1, 41--83.


\bibitem[IW11]{IW5}
O.~Iyama and M.~Wemyss, \emph{Singular derived categories of $\mathds{Q}$-factorial terminalizations and maximal modification algebras},  {\sf arXiv:1108.4518}.


\bibitem[IW12]{IW6}
O.~Iyama and M.~Wemyss, \emph{Reduction of triangulated categories and Maximal Modification Algebras for $cA_n$ singularities},  {\sf ArXiv:1304.5259}.


\bibitem[IY08]{Iyama-Yoshino}
O.~Iyama and Y.~Yoshino, \emph{Mutation in triangulated categories and rigid Cohen-Macaulay modules}, Invent.\ Math.\ \textbf{172} (2008), no.\ 1, 117--168.


\bibitem[KY11]{KY}
B.~Keller and D.~Yang, \emph{Derived equivalences from mutations of quivers with potential}, Adv.\ Math.\ \textbf{226} (2011), 2118--2168.

\bibitem[K10]{Krause}
H.~Krause, \emph{Localization theory for triangulated categories}, in: Triangulated categories, 161--235, London Math. Soc. Lecture Note Ser. \textbf{375}, Cambridge Univ. Press, Cambridge, 2010.


\bibitem[Mat]{Mat}
H.~Matsumura, \emph{Commutative ring theory} (Second edition), Cambridge Studies in Advanced Mathematics, \textbf{8}, Cambridge University Press, Cambridge, 1989.

\bibitem[M03]{M03} J.~Miyachi, \emph{Recollement and tilting complexes},
J.\ Pure Appl.\ Algebra \textbf{183} (2003) 245--273.

\bibitem[O76]{oo}
A.~Ooishi, \emph{Matlis duality and the width of a module}, Hiroshima Math. J. \textbf{6} (1976), 573--587.


\bibitem[Ram]{Ram}
M.~Ramras, \emph{Maximal orders over regular local rings of dimension two}, Trans. Amer. Math. Soc. \textbf{142} (1969), 457--479.


\bibitem[RV89]{RV89}
I.~Reiten and M.~Van den Bergh, \emph{{T}wo-dimensional tame and maximal orders of finite representation type.}, Mem. Amer. Math. Soc. \textbf{408} (1989),
 vii+72pp.


\bibitem[R89]{Rickard}
J.~Rickard, \emph{Morita theory for derived categories}, J. London Math. Soc. (2) \textbf{39} (1989), 436--456.


\bibitem[RS91]{RS}
C.~Riedtmann and A.~H.~Schofield, \emph{On a simplicial complex associated with tilting modules}, Comment. Math. Helv. \textbf{66} (1991), no. 1, 70--78.


\bibitem[RZ03]{RZ}
R.~Rouquier and A.~Zimmermann, \emph{Picard groups for derived module categories}, Proc.\ London Math.\ Soc.\ (3) \textbf{87} (2003), no.\ 1, 197--225. 


\bibitem[V04a]{VdB1d}
M.~Van den Bergh, \emph{Three-dimensional flops and noncommutative rings}, 
Duke Math. J. \textbf{122} (2004), no.~3, 423--455. 

\bibitem[V04b]{VdBNCCR}
\bysame, \emph{Non-commutative crepant resolutions}, The legacy of Niels Henrik Abel, 749--770, Springer, Berlin, 2004.


\bibitem[V09]{V}
J.~Vitoria, \emph{Mutations vs. Seiberg duality}, J.\ Algebra, \textbf{321} (2009), no.\ 3, 816--828.

\bibitem[Y90]{Y}
Y. Yoshino, \emph{Cohen-Macaulay modules over Cohen-Macaulay rings},
London Mathematical Society Lecture Note Series, 146. Cambridge University Press, Cambridge, 1990.

\end{thebibliography}
\end{document}